\def\thetitle{Instrumental variables regression}
\def\theruntitle{gaussian comparison}
\def\theabstract{
	IV regression in the context of a re-sampling is considered in the work. Comparatively, the contribution in the development is a structural identification in the IV model. The work also contains a multiplier-bootstrap justification.  
}
\def\amsPrime{60F17}
\def\amsSec{60F25,60B12}
\def\thekeywords{Kolmogorov distance, Gaussian Comparison}
\def\authora{Andzhey Koziuk}
\def\authorb{Vladimir Spokoiny}
\def\runauthora{koziuk, a.}
\def\runauthorb{spokoiny, v.}
\def\addressa{
	Weierstrass Institute, \\
	Institute for Information Transmission Problems of RAS \\
	Mohrenstrasse 39 \\
	10117 Berlin, Germany
}
\def\addressb{
	Weierstrass Institute and \\ Humboldt University Berlin, \\ Moscow Institute of
	Physics and Technology \\
	Mohrenstr. 39, \\
	10117 Berlin, Germany
}
\def\emaila{andzhey.koziuk@wias-berlin.de}
\def\emailb{spokoiny@wias-berlin.de}
\def\month{August}
\def\year{2017}
\newcommand{\ifpaper}[2]{#2}	  % #1 - paper; #2 - proposal
\newcommand{\ifau}[3]{#2}			% #1 - 1 author, #2 - 2 authors, #3 - 3 authors
\newcommand{\style}[2]{#2}			% Style adjustments
\newenvironment{myproof}{\text{}\\$\boldsymbol{Proof:}$\text{}\\}{\text{}\\$\boldsymbol{End\;of\;Proof}$\\\text{}}
\newcommand{\Matrix}[4]{\left(\begin{array}{cc} #1 & #2 \\ #3 & #4 \end{array}\right)}
\newcommand{\Svec}[2]{\left(\begin{array}{c} #1 \\ #2 \end{array}\right)}
\newcommand{\dx}[1]{d#1}
\def\tvv{\widetilde{\vv}} %MLE estimate in semi-parametric set up
\def\svv{\vv^*}  %true target parameter in semi-parametric set up
\def\tvvb{\widetilde{\vv}^{\sbt}} %bMLE estimate in semi-parametric set up
\def\svvb{\vv^*_{\sbt}}  %true target parameter in semi-parametric set up
\def\Lnullc{($\boldsymbol{\mathcal{L}_0}$)} %L_0 condition
\def\Lc{($\boldsymbol{\mathcal{L}}$)} %L condition
\def\EDnullc{($\boldsymbol{\mathcal{ED}_0}$)} %ED_0 condition
\def\EDtwoc{($\boldsymbol{\mathcal{ED}_2}$)} %ED_2 condition
\def\Indc{$\boldsymbol{\mathcal{I}}$} % I codition
\def\EB{($\boldsymbol{\mathcal{EB}}$)\;} % Eb codition
\def\SmallmBias{$\boldsymbol{\mathcal{SMB}}$} % SMB codition
\def\IndB{$\boldsymbol{\mathcal{IB}}$} % IB codition
\def\EDBnullc{($\boldsymbol{\mathcal{EDB}_0}$)} % EDB_0 codition
\def\EDBtwoc{($\boldsymbol{\mathcal{EDB}_2}$)} % EDB_2 codition
\def\Hnull{\mathcal{H}_0} %set of all target parameters identified in H_0
\def\Halt{\mathcal{H}_1} %set of all target parameters identified in H_1
\def\Hnullb{\mathcal{H}_0^{\sbt}} % H_0 in bootstrap world
\def\sthetav{\thetav^*}  %p_1 first coordinates in true target parameter v
\def\setav{\etav^*}  %p-p_1 last coordinates in true target parameter v
\def\zetab{\zeta^{\sbt}} % zeta in bootstrap world
\def\xivh{\xiv_{\mathcal{H}}} % \xiv in sqWilks expansion result
\def\diamondsuitb{\diamondsuit^{\sbt}} % diamond in B world
\def\Dpn{\D_0^{\prime}} % curvature matrix in rotated basis 
\def\svvp{\vv^{\prime,*}} % true target value in rotated basis
\def\vvp{\vv^{\prime}} % target parameter in rotated basis
\def\Dh{\widehat{\D}} % diagonal analogue for square root of curv matrix
\def\alphab{\alpha^{\sbt}} % difference between quadr form and log-lik for BW 
\def\Tv{\boldsymbol{T}} % definition of bold T
\def\epsv{\boldsymbol{\epsilon}} % definition of vector of errors
\def\piv{\boldsymbol{\pi}} % definition of bold \pi
\def\tthetav{\widetilde{\thetav}} % definition of MLE \theta
\def\eqdef{\stackrel{\operatorname{def}}{=}}
\def\Sv{\bf{S}}
\def\xiv{\bf{\xi}}
\def\eps{\varepsilon}
\newcommand{\bb}[1]{\boldsymbol{#1}}
\renewcommand{\hat}[1]{\widehat{#1}}
\renewcommand{\tilde}[1]{\widetilde{#1}}
\def\Gamma{\varGamma}
\def\Pi{\varPi}
\def\Sigma{\varSigma}
\def\Delta{\varDelta}
\def\Lambda{\varLambda}
\def\Psi{\varPsi}
\def\Phi{\varPhi}
\def\Theta{\varTheta}
\def\Omega{\varOmega}
\def\Xi{\varXi}
\def\Upsilon{\varUpsilon}
\def\suml{\sum\limits}
\def\Var{\operatorname{Var}}
\def\argmax{\operatornamewithlimits{argmax}}
\def\argmin{\operatornamewithlimits{argmin}}
\def\D{\boldsymbol{D}}
\def\R{I\!\!R}
\def\E{I\!\!E}
\def\P{I\!\!P}
\def\kappa{\varkappa}
\def\av{\bb{a}}
\def\bv{\bb{b}}
\def\sv{\bb{s}}
\def\uv{\bb{u}}
\def\vv{\bb{v}}
\def\xv{\bb{x}}
\def\Sv{\bb{S}}
\def\Yv{\bb{Y}}
\def\Zv{\bb{Z}}
\def\gammav{\bb{\gamma}}
\def\epsv{\bb{\varepsilon}}
\def\etav{\bb{\eta}}
\def\gammav{\bb{\gamma}}
\def\xiv{\bb{\xi}}
\def\Psiv{\bb{\Psi}}
\definecolor{blue(pigment)}{rgb}{0.2, 0.2, 0.6}
\definecolor{ultramarine}{rgb}{0.07, 0.04, 0.56}
\definecolor{darkspringgreen}{rgb}{0.09, 0.45, 0.27}
\definecolor{hookersgreen}{rgb}{0.0, 0.44, 0.0}
\definecolor{plum(traditional)}{rgb}{0.56, 0.27, 0.52}
\definecolor{purple(html/css)}{rgb}{0.5, 0.0, 0.5}
\definecolor{magenta(dye)}{rgb}{0.79, 0.08, 0.48}
\def\Ind{\operatorname{1}\hspace{-4.3pt}\operatorname{I}}
\def\xivb{\xiv_{\rdb}}
\def\bias{\bb{b}}
\def\thetav{\bb{\theta}}
\def\eps{\epsilon}
\def\vv{\bb{v}}
\def\eps{\varepsilon}
\def\txiv{\tilde{\xiv}}
\def\sbt{\circ}
\def\Pb{\P^{*}}
\def\sbt{\hspace{1pt} \flat}
\def\xivb{\xiv^{\sbt}}
\newcounter{example}[section]
\newcounter{remark}[section]
\numberwithin{equation}{section}
\numberwithin{figure}{section}
\numberwithin{example}{section}
\numberwithin{remark}{section}
\newtheorem{theorem}{Theorem}[section]
\newtheorem{lemma}[theorem]{Lemma}
\newtheorem{corollary}[theorem]{Corollary}
\newtheorem{definition}[theorem]{Definition}
\newtheorem{exmp}[example]{Example}
\newtheorem{rmrk}[remark]{Remark}
\newenvironment{example}{\begin{exmp}\rm}{\end{exmp}}
\newenvironment{remark}{\begin{rmrk}\rm}{\end{rmrk}}
	\title{\thetitle}
	\date{\month, \year \vspace{5cm}}
	\renewenvironment{abstract}
	{\centerline{\textbf{Abstract}}\bigskip
		\begin{center}
			\begin{minipage}{10cm}
				\begin{small}
				}
				{   \end{small}
			\end{minipage}
		\end{center}
		\bigskip
	}
	\title{\thetitle}
	\date{}
	\renewenvironment{abstract}
	{\centerline{\textbf{Abstract}}
		\begin{center}
			\begin{minipage}{15cm}
				\begin{small}
				}
				{   \end{small}
			\end{minipage}
		\end{center}
	}
\begin{document}
	\thispagestyle{empty}	
	\ifpaper
	{
		\title{\thetitle}
		\ifau{ % 1 author
			\author
			{
				\authora
				\ifdef{\thanksa}{\thanks{\thanksa}}{}
				\\[5.pt]
				\addressa \\
				\texttt{ \emaila}
			}
		}
		{  % 2 authors
			\author{
				\authora
				\ifdef{\thanksa}{\thanks{\thanksa}}{}
				\\[5.pt]
				\addressa \\
				\texttt{ \emaila}
				\and
				\authorb
				\ifdef{\thanksb}{\thanks{\thanksb}}{}
				\\[5.pt]
				\addressb \\
				\texttt{ \emailb}
			}
		}
		{   % 3 authors
			\author{
				\authora
				\ifdef{\thanksa}{\thanks{\thanksa}}{}
				\\[5.pt]
				\addressa \\
				\texttt{ \emaila}
				\and
				\authorb
				\ifdef{\thanksb}{\thanks{\thanksb}}{}
				\\[5.pt]
				\addressb \\
				\texttt{ \emailb}
				\and
				\authorc
				\ifdef{\thanksc}{\thanks{\thanksc}}{}
				\\[5.pt]
				\addressc \\
				\texttt{ \emailc}
			}
		}
		\thispagestyle{empty}
		\maketitle
		\pagestyle{myheadings}
		\markboth
		{\hfill \textsc{ \small \theruntitle} \hfill}
		{\hfill
			\textsc{ \small
				\ifau{\runauthora}
				{\runauthora and \runauthorb}
				{\runauthora, \runauthorb, and \runauthorc}
			}
			\hfill}
		\thispagestyle{empty}
		\newpage
		\begin{abstract}
			\theabstract
		\end{abstract}
		\pagenumbering{arabic}

		\par\noindent\emph{AMS 2000 Subject Classification:} Primary \amsPrime. Secondary \amsSec
		%			\par\noindent\emph{JEL codes}: \amsPrime

		\par\noindent\emph{Keywords}: \thekeywords
	}{			
		%		\maketitle
		\ifau{ % 1 author
			\author
			{
				\authora
				\ifdef{\thanksa}{\thanks{\thanksa}}{}
				\\[5.pt]
				\addressa \\
				\texttt{ \emaila}
			}
		}
		{  % 2 authors
			\author{
				\authora
				\ifdef{\thanksa}{\thanks{\thanksa}}{}
				\\[5.pt]
				\addressa \\
				\texttt{ \emaila} \\
				\and \\
				\authorb
				\ifdef{\thanksb}{\thanks{\thanksb}}{}
				\\[5.pt]
				\addressb \\
				\texttt{ \emailb}
			}
		}
		{   % 3 authors
			\author{
				\authora
				\ifdef{\thanksa}{\thanks{\thanksa}}{}
				\\[5.pt]
				\addressa \\
				\texttt{ \emaila}
				\and
				\authorb
				\ifdef{\thanksb}{\thanks{\thanksb}}{}
				\\[5.pt]
				\addressb \\
				\texttt{ \emailb}
				\and
				\authorc
				\ifdef{\thanksc}{\thanks{\thanksc}}{}
				\\[5.pt]
				\addressc \\
				\texttt{ \emailc}
			}
		}
		\maketitle
		\begin{abstract}
			\theabstract
		\end{abstract}
		\newpage
		\tableofcontents
		\newpage
		\pagenumbering{arabic}
	}

\section{Introduction}
In the work a non-parametric regression with instrumental variables is considered. A general framework is introduced and identification of a target of inference is discussed. Furthermore, multiplier bootstrap in a general form is considered and justified. Moreover, the procedure is used to test a hypothesis on a target function. 
\section{Identification in non-parametric IV regression}\label{NonParamteric}
\subsection{iid model}
Introduce independent identically distributed observations
\begin{equation}\label{SMPL}
\left(Y_i,X_i,\{W^k_i\}_{k=\overline{1,K}}\right)_{i=\overline{1,n}}\in\Omega
\end{equation}
from a sample set 
\[
\Omega\eqdef\R\otimes\mathbf{Q}\otimes\R^{\otimes K}
\]
on a probability space $(\Omega,\mathcal{F}\left(\Omega\right),\P)$. Let $\mathbf{Q}\subset\R$ be a compact and random variables are respectively coming from $Y_1\in\R$, $X_1\in\mathbf{Q}$ and $W^k_1\in\R$. 

Assume a system of $K+1$ non-linear equations
\begin{equation}\label{MIV}
\begin{cases}
\E W_1^1\left(Y_1-f(X_1)\right)=0,\\
\E W_1^2\left(Y_1-f(X_1)\right)=0,\\
...\\
\E W_1^K\left(Y_1-f(X_1)\right)=0,\\
\int_{\mathbf{Q}}f^2(x)dx=const.
\end{cases}
\end{equation}
\text{}\\
A parametric relaxation of the system introduces a non-parametric bias. For an orthonormal functional basis $\{\psi_j(x):\mathbf{Q}\rightarrow\R\}_{j=\overline{1,\infty}}$ define
\begin{equation}\label{A}
\hat{f}(x)\eqdef\suml_{j=1}^J\psi_j(x)\theta^*_j\eqdef\Psiv(x)^T\sthetav
\end{equation}
such that 
\[
\theta^*_j\eqdef\int_{\mathbf{Q}}f(x)\psi_{j}(x)dx.
\]
Then a substitution $f(x)\rightarrow\hat{f}(x)$ transforms ($\ref{MIV}$) and gives
\begin{equation}\label{AMIV}
\begin{cases}
\E W_1^1\left(Y_1-\hat{f}(X_1)\right)=\delta_1,\\
\E W_1^2\left(Y_1-\hat{f}(X_1)\right)=\delta_2,\\
...\\
\E W_1^K\left(Y_1-\hat{f}(X_1)\right)=\delta_K,\\
\int_{\mathbf{Q}}\hat{f}^2(x)dx=const,
\end{cases}
\end{equation}
with a bias
\begin{equation}\label{NB}
\forall k>0\;\;\;\;\delta_k\eqdef\E W_1^k\left(f(X_1)-\hat{f}(X_1)\right).
\end{equation}
Particular case of ($\ref{AMIV}$) under parametric assumption ($\delta_k=0$) and with a single instrument ($K=1$) is a popular choice of a model with instrumental variables (\cite{DA06},\cite{MJM03}). The system is rewritten as
\begin{equation}\label{SI}
\begin{cases}
\E W_1^1\left(Y_1-\hat{f}(X_1)\right)=0,\\
\int_{\mathbf{Q}}\hat{f}^2(x)dx=const,
\end{cases}
\Rightarrow
\begin{cases}
\etav_1^{*T}\thetav=\E W_1^1Y_1,\\
\suml_{j=1}^J\theta^{2}_j=const
\end{cases}
\end{equation}
with the definition $\etav^{*T}_1\eqdef\left(\E W^1_1\psi_1(X_1),\E W^1_1\psi_2(X_1),...,\E W^1_1\psi_J(X_1)\right)$.
\begin{lemma}\label{LL}
	The statements are equivalent.
	\begin{enumerate}
		\item There exists and unique solution $\sthetav$ to ($\ref{SI}$).
		\item $\exists! \beta>0$ such that $\sthetav=\beta\setav_1$ is a solution of ($\ref{SI}$).
	\end{enumerate}
\end{lemma}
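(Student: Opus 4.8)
The plan is to recast \eqref{SI} as an elementary hyperplane--sphere intersection problem in $\R^J$. Under the identification \eqref{A} of $\hat f$ with its coefficient vector $\thetav$, set $c\eqdef\E W_1^1Y_1$ and let $R^2\ge0$ be the value prescribed by the normalisation $\int_{\mathbf{Q}}\hat f^2(x)\,dx=const$; then \eqref{SI} becomes the pair $\langle\setav_1,\thetav\rangle=c$ and $\|\thetav\|^2=R^2$, so its solution set is $H\cap S$, where $H\eqdef\{\thetav\in\R^J:\langle\setav_1,\thetav\rangle=c\}$ is an affine hyperplane and $S$ is the centred sphere of radius $R$. I would use throughout the standing non-degeneracy $\setav_1\neq0$ (instrument relevance) and $R>0$, without which $H$ or $S$ is degenerate and the statement is vacuous.

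The computational heart is a single orthogonal decomposition. The foot of the perpendicular from the origin onto $H$ is $\qv_0\eqdef\frac{c}{\|\setav_1\|^2}\setav_1$, and for every $\thetav\in H$ one has $\langle\setav_1,\thetav-\qv_0\rangle=c-c=0$, so $\thetav-\qv_0\perp\setav_1$ and hence $\thetav-\qv_0\perp\qv_0$; therefore
\[
  \thetav\in H \ \Longrightarrow\ \|\thetav\|^2=\|\qv_0\|^2+\|\thetav-\qv_0\|^2=\frac{c^2}{\|\setav_1\|^2}+\|\thetav-\qv_0\|^2 .
\]
Consequently $H\cap S=\bigl\{\thetav\in H:\|\thetav-\qv_0\|^2=R^2-c^2/\|\setav_1\|^2\bigr\}$, which is empty when $R^2\|\setav_1\|^2<c^2$, equals the single point $\{\qv_0\}$ when $R^2\|\setav_1\|^2=c^2$, and is a sphere of positive radius inside the hyperplane $H$ --- hence has more than one point, for $J\ge2$ --- when $R^2\|\setav_1\|^2>c^2$. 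In particular $H\cap S$ is a singleton if and only if $R^2\|\setav_1\|^2=c^2$, in which case the unique solution is $\qv_0$.

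Both implications then drop out. For $(1)\Rightarrow(2)$: existence and uniqueness of $\sthetav$ forces $R^2\|\setav_1\|^2=c^2$, whence $\sthetav=\qv_0=\beta\setav_1$ with $\beta=c/\|\setav_1\|^2$; this $\beta$ is the unique scalar with $\beta\setav_1=\sthetav$ since $\setav_1\neq0$, and it is positive under the sign normalisation $\E W_1^1Y_1>0$ (the orientation of $W^1$ being fixed accordingly). For $(2)\Rightarrow(1)$: substituting $\sthetav=\beta\setav_1$ with $\beta>0$ into the two equations of \eqref{SI} yields $\beta\|\setav_1\|^2=c$ and $\beta^2\|\setav_1\|^2=R^2$, hence $c^2=R^2\|\setav_1\|^2$, and the dichotomy above makes $H\cap S$ a singleton, which is~(1).

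The manipulations are routine; what needs care are the side conditions rather than the algebra. One must keep $\setav_1\neq0$ and $R^2>0$ in force, and one must flag the sign issue: the constraint $\beta>0$ in statement~2 is met only because $\E W_1^1Y_1>0$; a negative reduced-form covariance would still yield a solution of the form $\beta\setav_1$, but with $\beta<0$. In essence, the lemma asserts that the parametric IV system \eqref{SI} is well posed precisely when its solution is a positive rescaling of the reduced-form direction $\setav_1$.
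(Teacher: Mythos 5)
Your proof is correct and rests on the same geometric idea as the paper's own: decompose a candidate solution into its projection onto $\setav_1$ and an orthogonal remainder, and observe that the $\mathbb{L}_2$ constraint forces that remainder to vanish if the solution is to be unique. The paper writes $\sthetav=\alpha Q_{\perp}\setav_{\perp}+\beta\setav_1$ and argues informally that $\alpha\neq0$ would generate infinitely many rotated solutions; you instead phrase it as a tangency criterion via the Pythagorean identity $\|\thetav\|^2=\|\qv_0\|^2+\|\thetav-\qv_0\|^2$ with $\qv_0=(c/\|\setav_1\|^2)\setav_1$ the foot of the perpendicular from the origin to the hyperplane. Your version is sharper: it yields the explicit condition $R^2\|\setav_1\|^2=c^2$ and the closed form $\beta=c/\|\setav_1\|^2$, where the paper leaves both implicit. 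You also flag something the paper silently assumes: the claimed positivity $\beta>0$ holds only when $c=\E W_1^1 Y_1>0$, i.e.\ under a sign normalisation on $W^1$; without it the lemma's conclusion should read $\sthetav=\beta\setav_1$ with $\beta\neq0$ rather than $\beta>0$. That observation identifies a genuine, if minor, gap in the paper's statement and is worth making explicit.
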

\begin{proof}
	A solution to ($\ref{SI}$) can be represented as
	\[
	\sthetav=\alpha Q_{\perp}\setav_{\perp}+\beta\setav_1
	\]
	for a fixed $\alpha$, $\beta$ and $Q_{\perp}\setav_{\perp}$ such that $\etav_{\perp}^{*T}\setav_1=0$ and $Q_{\perp}$ is a rotation of an orthogonal to $\setav_1$ linear subspace in $\R^J$. If the vector $\sthetav$ is unique then $\alpha$ must be zero otherwise there exist infinitely many distinct solutions ($Q_{\perp}\setav_{\perp}\neq Q^{\prime}_{\perp}\setav_{\perp}$ ). On the other hand for $\alpha=0$ the vector $\sthetav$ is unique. 
\end{proof}
\text{}\\
The second statement helps to obtain exact form of a solution to ($\ref{SI}$)
\begin{equation}\label{STM}
\hat{f}(x)=\beta\suml_{j=1}^J\psi_j(x)\eta^*_{1j}=\frac{\E W^1_1 Y_1}{\suml_{j=1}^{J}\left(\E W_1^1 \psi_j(X_1)\right)^2}\suml_{j=1}^J\psi_j(x)\E W_1^1 \psi_j(X_1).
\end{equation}
Hence, the correlation of instrumental variable $W^1$ with features $X_1$ (note $\eta^*_{1j}=\E W_1^1 \psi_j(X_1)$) identifies $\hat{f}(x)$ (up to a scaling) making the choice of the variable $W^1$ a crucial task. An empirical relaxation to ($\ref{SI}$) in the literature (see \cite{DA06},\cite{MJM03}) closely resembles the following form
\begin{equation}\label{SIVR}
\begin{cases}
\Yv_1=\Zv^T\piv\beta+\epsv_1,\\
\Yv_2=\Zv^T\piv+\epsv_2,
\end{cases}
\end{equation}
for $\Yv_1,\Yv_2,\epsv_1,\epsv_2\in\R^n$, $\Zv\in\R^{J\times n}$, $\piv\in\R^J$, $\beta\in\R$ and
\[
\Svec{\epsv_{1,i}}{\epsv_{2,i}}\sim\mathcal{N}\left(0,\Matrix{\lambda_1}{\rho}{\rho}{\lambda_2}\right)
\]
or alternatively (lemma [$\ref{LL}$])
\[
\begin{cases}
\E W_1^1Y_1=\etav_1^{*T}\thetav^*,\\
\|\etav_1^{*}\|^2=const
\end{cases}
\Rightarrow\;\;\;\;
\begin{cases}
W^1_{1,i}Y_{1,i}=W^1_{1,i}\Psiv^T(X_{1,i})\thetav+\eps_{1,i},\\
\|W_{1,i}\Psiv(X_{1,i})\|^2=W^1_{1,i}\Psiv^T(X_{1,i})\thetav/\beta+\eps_{2,i}
\end{cases}
\]
corresponding to the latter system up to a notational convention
\[
W^1_{1,i}Y_{1,i}\eqdef\Yv_{1,i},\;\;\|W^1_{1,i}\Psiv(X_{1,i})\|^2\eqdef\Yv_{2,i},\;\;W^1_{1,i}\psi_j(X_{1,i})\eqdef \Zv_{ji}\;\;\text{and}\;\;\thetav\eqdef\beta\piv.
\]
\begin{comment}
\begin{rmrk}
Separate interest in ($\ref{SIVR}$) presents the parameter $\beta$. Uniqueness of a solution requires $\beta\eqdef\frac{\|\sthetav\|^2}{\|\etav_1^{*}\|^2}$ and an estimation of $\sthetav,\etav_1^{*}\in\R^p$ in the case of Gaussian errors leads to normally distributed vectors $\tthetav,\widetilde{\etav}_1\in\R^p$. Therefore, one can approximate in homoskedastic case $\widetilde{\beta}\eqdef\frac{\|\tthetav\|^2}{\|\widetilde{\etav}_1\|^2}\sim F(p,p)$ and exploit F-distribution to construct a test on $\beta$.
\end{rmrk}
\text{}\\
\end{comment}
The model was theoretically and numerically investigated in a number of papers (see \cite{DA06},\cite{MJM03}) and in the article (see 'Numerical') is used as a numerical benchmark. 

The lemma [$\ref{LL}$] is a special case example of a more general statement on identification in ($\ref{AMIV}$).
\begin{lemma}\label{ID}
	The statements are equivalent.
	\begin{enumerate}
		\item There exists and unique solution $\hat{f}(x)$ to the system ($\ref{AMIV}$).
		\item A solution to ($\ref{AMIV}$) is given by $\hat{f}(x)=\suml_{j=1}^J\psi_j(x)\thetav^{id}_j$ where $\thetav^{id}$ is a solution to an optimization problem
		\begin{equation}\label{opt}
		\thetav^{id}=\argmin_{\xv\in\R^J}\|\xv\|^2\;\text{s.t.}\;\;\begin{cases}
		\etav_1^{*T}\xv=\E W_1^1Y_1-\delta_1,\\
		\etav_2^{*T}\xv=\E W_1^2Y_1-\delta_2,\\
		...,\\
		\etav_K^{*T}\xv=\E W_1^KY_1-\delta_K
		\end{cases}
		\end{equation}
		with $\etav^{*T}_k\eqdef\left(\E W^k_1\psi_1(X_1),\E W^k_1\psi_2(X_1),...,\E W^k_1\psi_J(X_1)\right)$.
	\end{enumerate}
\end{lemma}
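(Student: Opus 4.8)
The plan is to strip (\ref{AMIV}) down to a finite-dimensional problem in $\R^J$ and then exploit the orthogonal (Pythagorean) splitting of a vector along the row space and the kernel of the matrix of linear constraints, following the same idea as in the proof of Lemma~\ref{LL} but now with $K$ constraints instead of one.

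First I would record the reduction. With $\hat f(x)=\Psiv(x)^{\T}\thetav$, $\thetav\in\R^J$, the correspondence $\thetav\leftrightarrow\hat f$ is a bijection onto $\Span\{\psi_1,\dots,\psi_J\}$ because the $\psi_j$ are orthonormal, hence linearly independent; so uniqueness of $\hat f$ and uniqueness of $\thetav$ are the same. By orthonormality $\int_{\mathbf{Q}}\hat f^2(x)\,dx=\|\thetav\|^2$, and $\E W_1^k\big(Y_1-\hat f(X_1)\big)=\E W_1^kY_1-(\etav_k^{*})^{\T}\thetav$, so (\ref{AMIV}) is equivalent to the pair $M\thetav=\cv$, $\|\thetav\|^2=const$, where $M\in\R^{K\times J}$ has rows $(\etav_1^{*})^{\T},\dots,(\etav_K^{*})^{\T}$ and $\cv=(\E W_1^1Y_1-\delta_1,\dots,\E W_1^KY_1-\delta_K)^{\T}$; and (\ref{opt}) is exactly the minimisation of $\|\xv\|^2$ over the affine set $\Pset\eqdef\{\xv\in\R^J:M\xv=\cv\}$.

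Next I would set up the geometry. When $\Pset\neq\emptyset$ it equals $\thetav^{id}+\ker M$, where $\thetav^{id}$ is the unique point of $\Pset$ in $(\ker M)^{\perp}$ (the row space of $M$); this $\thetav^{id}$ is the solution of (\ref{opt}), unique by strict convexity of $\|\cdot\|^2$ on the nonempty affine set $\Pset$. For $\vv\in\ker M$, $\thetav^{id}\perp\vv$ yields $\|\thetav^{id}+\vv\|^2=\|\thetav^{id}\|^2+\|\vv\|^2$, whence the solution set of (\ref{AMIV}) is $\{\thetav^{id}+\vv:\vv\in\ker M,\ \|\vv\|^2=const-\|\thetav^{id}\|^2\}$. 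Then $(1)\Rightarrow(2)$: uniqueness forces this set to be a singleton, so it is nonempty ($const\ge\|\thetav^{id}\|^2$) and not a two-point set; if $const>\|\thetav^{id}\|^2$ then $\{\vv\in\ker M:\|\vv\|^2=const-\|\thetav^{id}\|^2\}$ is a sphere of positive radius in a linear subspace, which has $0$, $2$ or infinitely many points but never exactly one --- impossible; hence $const=\|\thetav^{id}\|^2$ and the unique solution is $\hat f=\Psiv^{\T}\thetav^{id}$, i.e.\ statement (2). And $(2)\Rightarrow(1)$: statement (2) furnishes a solution $\thetav^{id}$ of (\ref{opt}) whose $\hat f$ satisfies (\ref{AMIV}), which forces $\|\thetav^{id}\|^2=const$; then every solution $\thetav^{id}+\vv$, $\vv\in\ker M$, has $\|\vv\|^2=const-\|\thetav^{id}\|^2=0$, so it equals $\thetav^{id}$ --- uniqueness.

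The only point needing care is the case analysis in $(1)\Rightarrow(2)$: one must observe that $\dim\ker M=1$ yields \emph{two} solutions, not one, so a positive-radius sphere in $\ker M$ is never a singleton; and one must keep in mind that $\|\thetav\|^2=const$ is imposed with a fixed value of $const$, so that solvability of (\ref{AMIV}) already pins $const$ to $\|\thetav^{id}\|^2$. The rest --- the bijection $\thetav\leftrightarrow\hat f$, the reduction, the Pythagorean identity, uniqueness of the minimiser of (\ref{opt}) --- is routine.
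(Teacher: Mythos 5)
Your proof is correct and takes essentially the same approach as the paper: the paper describes the solution set of the reduced system as the intersection of a sphere with an affine subspace and invokes tangency (the tangent point being the closest point to the origin, i.e.\ the minimiser of (\ref{opt})), which is exactly the Pythagorean splitting of $\thetav$ along $\ker M$ and its orthogonal complement that you write out explicitly. Your spelled-out case analysis for $(1)\Rightarrow(2)$ and the observation that statement (2) already pins $const=\|\thetav^{id}\|^2$ merely make precise what the paper's shorter tangency argument leaves implicit.
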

\begin{proof}
	The model ($\ref{AMIV}$) turns into
	\begin{equation}\label{P}
	\begin{cases}
	\E W_1^1\left(Y_1-\hat{f}(X_1)\right)=\delta_1,\\
	\E W_1^2\left(Y_1-\hat{f}(X_1)\right)=\delta_2,\\
	...\\
	\E W_1^K\left(Y_1-\hat{f}(X_1)\right)=\delta_K,\\
	\int_{\mathbf{Q}}\hat{f}^2(x)dx=const,
	\end{cases}
	\Rightarrow
	\begin{cases}
	\etav_1^{*T}\thetav=\E W_1^1Y_1-\delta_1,\\
	\etav_2^{*T}\thetav=\E W_1^2Y_1-\delta_2,\\
	...,\\
	\etav_K^{*T}\thetav=\E W_1^KY_1-\delta_K,\\
	\suml_{j=1}^J\theta^{2}_j=const.
	\end{cases}
	\end{equation}
	A solution to ($\ref{P}$) is an intersection of a $J$-sphere and a hyperplane $\R^{J-K}$. If it is unique the hyperplane is a tangent linear subspace to the $J$-sphere and the optimization procedure ($\ref{opt}$) is solved by definition of the intersection point. Conversely, if there exist a solution to the optimization problem then it is guaranteed to be unique as a solution to a convex problem with linear constraints and by definition $\hat{f}(x)$ satisfy ($\ref{AMIV}$).
	\begin{comment}
	Since a solution can be written in a form 
	\[
	\sthetav=\alpha Q_{\perp}\setav_{\perp}+\beta\setav
	\]
	for some fixed $\alpha,\beta$ a solution $\setav$ to the system 
	\[
	\begin{cases}
	\beta\etav_1^{*T}\setav=\E W_1^1Y_1-\delta_1,\\
	\beta\etav_2^{*T}\setav=\E W_1^2Y_1-\delta_2,\\
	...,\\
	\beta\etav_K^{*T}\setav=\E W_1^KY_1-\delta_K
	\end{cases}
	\]
	such that $\setav=\argmin_{\etav}\|\cdot\|^2$ and a vector $Q_{\perp}\setav_{\perp}$ such that $\forall k:\;\etav_{\perp}^{*T}\setav_k=0$ and $Q_{\perp}$ is a rotation orthogonal to all $\setav_k$ in $\R^J$. Then if $\sthetav$ is unique $\alpha$ must be zero otherwise there exist infinitely many distinctly rotated solutions ($Q_{\perp}\setav_{\perp}\neq Q^{\prime}_{\perp}\setav_{\perp}$ ). On the other hand if $\alpha=0$ then the vector $\sthetav$ is unique.
	\end{comment}
\end{proof}
\text{}\\
An important identification corollary follows from the lemma [$\ref{ID}$].
\begin{theorem}[Identifiability]\label{IDIV}
	Let $f(x)\in\mathcal{H}\left[\mathbf{Q}\right]$ and instrumental variables $\{W^k\}_{k=\overline{1,K}}$ to be such that
	\[
	\lim_{J\rightarrow\infty}\delta_k=0,
	\]
	then $\exists!\;C_I>0$ s.t. a functions on a ball 
	\[
	\mathcal{F}\eqdef\{\|f\|^2_{\mathbb{L}_2\left[\mathbf{Q}\right]}=C_I\}
	\]
	contain a single solution to ($\ref{MIV}$).
\end{theorem}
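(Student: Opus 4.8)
The plan is to obtain Theorem~\ref{IDIV} by letting $J\to\infty$ in Lemma~\ref{ID} and reading off $C_I$ as the squared $\mathbb{L}_2[\mathbf{Q}]$-norm of the minimum-norm solution of the limiting moment equations. Fix $J$ first. By Lemma~\ref{ID}, the relaxed system (\ref{AMIV}) has a unique solution precisely when the constant in its last line equals $\|\thetav^{id}\|^2=\suml_{j=1}^J(\theta^{id}_j)^2$, with $\thetav^{id}$ the minimiser in (\ref{opt}); since $\{\psi_j\}$ is orthonormal in $\mathbb{L}_2[\mathbf{Q}]$, Parseval gives $\|\thetav^{id}\|^2=\|\hat f\|_{\mathbb{L}_2[\mathbf{Q}]}^2$, so for each $J$ there is exactly one admissible value $C_I^{(J)}\eqdef\|\hat f^{(J)}\|_{\mathbb{L}_2[\mathbf{Q}]}^2$. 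The minimiser in (\ref{opt}) is explicit: writing $A_J\in\R^{K\times J}$ for the matrix with rows $\etav_k^{*T}$ (truncated at level $J$) and $b_J\in\R^K$ for the right-hand side with entries $\E W_1^k Y_1-\delta_k$, one has $\thetav^{id}=A_J^{T}(A_JA_J^{T})^{-1}b_J$ as soon as the Gram matrix $A_JA_J^{T}$ is invertible, a non-collinearity requirement on the instruments.

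Now let $J\to\infty$. Because $f\in\mathcal{H}[\mathbf{Q}]\subset\mathbb{L}_2[\mathbf{Q}]$, its Fourier coefficients $\theta^*_j=\int_{\mathbf{Q}}f\psi_j$ are square-summable; and for each $k$ the zero-padded vectors $\etav_k^{*}$ converge in $\ell_2$ to the sequence $(\E W_1^k\psi_j(X_1))_{j\ge1}$, whose $\ell_2$-norm is finite under mild integrability of $x\mapsto\E[W_1^k\mid X_1=x]$. Hence the Gram matrices $A_JA_J^{T}$ converge entrywise to the $K\times K$ matrix $A_\infty A_\infty^{T}$ with entries $\langle\etav_j^{*},\etav_k^{*}\rangle_{\ell_2}$, and $b_J\to b_\infty$ with entries $\E W_1^k Y_1$, the biases $\delta_k$ vanishing in the limit by hypothesis. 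If $A_\infty A_\infty^{T}$ is non-singular, then $\thetav^{id,(J)}\to\thetav^{id}\eqdef A_\infty^{T}(A_\infty A_\infty^{T})^{-1}b_\infty$ in $\ell_2$, the approximants $\hat f^{(J)}$ converge in $\mathbb{L}_2[\mathbf{Q}]$ to $f^{id}\eqdef\suml_{j\ge1}\psi_j\theta^{id}_j$, and $C_I^{(J)}\to\|f^{id}\|_{\mathbb{L}_2[\mathbf{Q}]}^2\eqdef C_I$. Since the true $f$ satisfies the exact moment equations in (\ref{MIV}), $\thetav^*$ is feasible for the limiting linear constraints, so $0<C_I\le\|f\|_{\mathbb{L}_2[\mathbf{Q}]}^2<\infty$.

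It remains to show that $C_I$ is the \emph{only} constant for which $\mathcal{F}$ contains a single solution of (\ref{MIV}); this is the Hilbert-space analogue of the tangency argument already used for Lemma~\ref{ID}. The set $\mathcal{A}$ of $f\in\mathbb{L}_2[\mathbf{Q}]$ satisfying $\E W_1^k(Y_1-f(X_1))=0$ for $k=1,\dots,K$ is a closed affine subspace of codimension $K$; by strict convexity of $\|\cdot\|_{\mathbb{L}_2[\mathbf{Q}]}$ it carries a unique element of minimal norm, which is exactly $f^{id}$, with $\|f^{id}\|^2_{\mathbb{L}_2[\mathbf{Q}]}=C_I$ and $f^{id}\perp(\mathcal{A}-f^{id})$. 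Consequently the sphere $\{\|f\|^2_{\mathbb{L}_2[\mathbf{Q}]}=C\}$ misses $\mathcal{A}$ when $C<C_I$, meets it in the single point $f^{id}$ when $C=C_I$, and meets it in an infinite-dimensional sphere when $C>C_I$. Adjoining the norm constraint $\int_{\mathbf{Q}}f^2=C_I$ to the $K$ moment equations, (\ref{MIV}) with this value of the constant has $f^{id}$ as its unique solution and no other value yields uniqueness, which is the assertion with this $C_I$ (the identified target $f^{id}$ coincides with the true $f$ exactly when the latter is the minimum-norm solution of its own moment equations).

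The main obstacle is the passage to the limit. One must verify that the finite-dimensional minimum-norm solutions $\thetav^{id,(J)}$ do not degenerate, i.e. that the limiting Gram matrix $A_\infty A_\infty^{T}$ is non-singular — a genuine non-collinearity condition on the instruments in the limit, of the kind made quantitative later in the paper — and that the truncation biases $\delta_k$ together with the Fourier tails $\suml_{j>J}(\theta^{id}_j)^2$ vanish fast enough for the convergences $b_J\to b_\infty$, $\thetav^{id,(J)}\to\thetav^{id}$ and $C_I^{(J)}\to C_I$ to be legitimate; this is precisely the role of the smoothness hypothesis $f\in\mathcal{H}[\mathbf{Q}]$ and of the assumption $\delta_k\to0$. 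Everything else reduces to the finite-dimensional sphere/hyperplane geometry already exploited in Lemmas~\ref{LL} and~\ref{ID}.
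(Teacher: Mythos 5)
Your proof is correct and lands on the same essential geometry the paper uses, but the route to uniqueness of $C_I$ is genuinely different. The paper also passes to the limit $J\to\infty$ via Lemma~\ref{ID}, but then argues uniqueness by contradiction at the \emph{parametric} level: assuming two admissible constants $C\neq C_I$, it invokes two finite-$J$ optimization problems of type (\ref{opt}) with distinct biases $\delta^C_k\neq\delta^{C_I}_k$ and observes that as $J\to\infty$ both bias sequences vanish, so the two optimization objectives coincide, contradicting distinctness. You instead do the uniqueness argument directly in the infinite-dimensional space: $\mathcal{A}$ (the solution set of the $K$ moment equations) is a closed affine subspace of $\mathbb{L}_2[\mathbf{Q}]$, it has a unique minimum-norm element $f^{id}$, and the sphere $\{\|f\|^2=C\}$ misses $\mathcal{A}$ for $C<C_I$, touches it at exactly one point for $C=C_I$, and meets it in an infinite-dimensional sphere for $C>C_I$. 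This is cleaner and avoids the somewhat opaque bookkeeping of bias sequences indexed by two different norm constraints.

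Your version also makes explicit two things the paper's proof leaves implicit and which arguably should be stated: (i) convergence $\thetav^{id,(J)}\to\thetav^{id}$ requires a non-degeneracy (non-collinearity) condition on the limiting Gram matrix $A_\infty A_\infty^T$ of the instrument moment functionals — the explicit formula $\thetav^{id}=A_J^{T}(A_JA_J^{T})^{-1}b_J$ is a useful addition for this; (ii) the identified $f^{id}$ coincides with the structural $f$ of the theorem only when the latter is the minimum-norm element of $\mathcal{A}$, whereas the paper's line ``$\lim_{J\to\infty}\hat f=f$'' silently assumes this. Neither point invalidates the theorem as stated (which asserts only uniqueness of $C_I$), but your explicitness is a genuine improvement over the paper's terse treatment. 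One minor caveat: the non-singularity of $A_\infty A_\infty^T$ is needed for your explicit formula and for the $\ell_2$-convergence of $\thetav^{id,(J)}$, but the abstract projection argument (existence and uniqueness of the minimum-norm element of a closed affine subspace in Hilbert space) does not actually require full-rank constraints; if some moment functionals are asymptotically collinear, $\mathcal{A}$ simply has codimension less than $K$ and the geometric argument goes through unchanged. So the final step of your proof is more robust than the convergence step that precedes it.
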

\begin{proof}
	In ($\ref{AMIV}$) identifiability is equivalent to $\int_{\mathbf{Q}}f(x)\Psiv(x)dx=\thetav^{id}$ with $\|\thetav^{id}\|<\infty$ (lemma [$\ref{ID}$]) and the approximation converges $\lim_{J\rightarrow\infty}\hat{f}(x)=f(x)$ in complete metric space $\mathcal{H}\left[\mathbf{Q}\right]$ to a solution of
	\[
	\begin{cases}
	\E W_1^1\left(Y_1-\hat{f}(X_1)\right)=\delta_1,\\
	\E W_1^2\left(Y_1-\hat{f}(X_1)\right)=\delta_2,\\
	...\\
	\E W_1^K\left(Y_1-\hat{f}(X_1)\right)=\delta_K,\\
	\int_{\mathbf{Q}}\hat{f}^2(x)dx=const,
	\end{cases}
	\Rightarrow
	\begin{cases}
	\E W_1^1\left(Y_1-f(X_1)\right)=0,\\
	\E W_1^2\left(Y_1-f(X_1)\right)=0,\\
	...\\
	\E W_1^K\left(Y_1-f(X_1)\right)=0,\\
	\int_{\mathbf{Q}}f^2(x)dx=const.
	\end{cases}
	\]
	Then it inherits the equivalence from the lemma [$\ref{LL}$] and the ball
	\[
	\mathcal{F}\eqdef\{\|f\|^2_{\mathbb{L}_2\left[\mathbf{Q}\right]}=C_I\}
	\]
	with $C_I\eqdef\|\thetav^{id}\|^2_{\mathbb{L}_2\left[\mathbf{Q}\right]}<\infty$, contains only a single solution. 
	
	Assume otherwise, there exists $C\neq C_I$ s.t. $\{\|f\|^2_{\mathbb{L}_2\left[\mathbf{Q}\right]}=C\}$ and $\{\|f\|^2_{\mathbb{L}_2\left[\mathbf{Q}\right]}=C_I\}$ contain unique solutions, then they must be distinct as $\{\|f\|^2_{\mathbb{L}_2\left[\mathbf{Q}\right]}=C\}\cap\{\|f\|^2_{\mathbb{L}_2\left[\mathbf{Q}\right]}=C_I\}=\emptyset$. Thus, by definition solutions to a respective parametric relaxations of ($\ref{MIV}$) are unique and distinct for any $J>J_0$ greater than some fixed $J_0$ ($\delta_k^C\neq\delta_k^{C_J}$)
	\[
	\begin{cases}
	\E W_1^1\left(Y_1-\hat{f}(X_1)\right)=\delta^{C}_1,\\
	\E W_1^2\left(Y_1-\hat{f}(X_1)\right)=\delta^{C}_2,\\
	...\\
	\E W_1^K\left(Y_1-\hat{f}(X_1)\right)=\delta^{C}_K,\\
	\int_{\mathbf{Q}}\hat{f}^2(x)dx=C,
	\end{cases}
	\leftrightarrow\;\;\;\;
	\begin{cases}
	\E W_1^1\left(Y_1-\hat{f}(X_1)\right)=\delta^{C_I}_1,\\
	\E W_1^2\left(Y_1-\hat{f}(X_1)\right)=\delta^{C_I}_2,\\
	...\\
	\E W_1^K\left(Y_1-\hat{f}(X_1)\right)=\delta^{C_I}_K,\\
	\int_{\mathbf{Q}}\hat{f}^2(x)dx=C_I.
	\end{cases}
	\]
	Alternatively the lemma [$\ref{ID}$] states that there exist two distinct solutions to the respective optimization problem ($\ref{opt}$). However, in the limit $J\rightarrow\infty$ - $\delta_k^{C_I}\rightarrow 0$ and $\delta_k^C\rightarrow 0$ - optimization objectives coincide contradicting the assumption.
\end{proof}
\begin{rmrk}
	One can trace in the lemma [$\ref{LL}$] as well as in the theorem [$\ref{IDIV}$] that a restriction in $\mathbb{L}_2\left[\mathbf{Q}\right]$ norm in ($\ref{MIV}$) enables identifiability. Otherwise an $\mathbb{L}_q\left[\mathbf{Q}\right]$ norm leads to an ill-posed problem.
\end{rmrk}
\begin{comment}
Concluding the section identify an optimization objective to solve ($\ref{MIV}$) numerically. 
\end{comment}
\subsection{non-iid model}
Redefine
\begin{equation}\label{NSMPL}
\left(Y_i,X_i,\{W^k_i\}_{k=\overline{1,K}}\right)_{i=\overline{1,n}}\in\Omega=\R\otimes\mathbf{Q}\otimes\R^{\otimes K}
\end{equation}
on a probability space $(\Omega,\mathcal{F}\left(\Omega\right),\P)$. Let $\mathbf{Q}\subset\R$ be a compact, random variables from $Y_i\in\R$, $X_i\in\mathbf{Q}$, $W^k_i\in\R$ and let the observations identify uniquely a solution to the system 
\begin{equation}\label{NMIV}
\forall i=\overline{1,n}\;\;\;\;\begin{cases}
\E W_i^1\left(Y_i-\hat{f}(X_i)\right)=\delta_1,\\
\E W_i^2\left(Y_i-\hat{f}(X_i)\right)=\delta_2,\\
...\\
\E W_i^K\left(Y_i-\hat{f}(X_i)\right)=\delta_K,\\
\int_{\mathbf{Q}}\hat{f}^2(x)dx=C_I.
\end{cases}
\Rightarrow\forall i=\overline{1,n}\;\;\;\;
\begin{cases}
\etav^*_{1,i}\etav_{1,i}^{*T}\thetav=\etav^*_{1,i}Z^i_k\,\\
\etav^*_{2,i}\etav_{2,i}^{*T}\thetav=\etav^*_{2,i}Z^i_k\,\\
...,\\
\etav^*_{K,i}\etav_{K,i}^{*T}\thetav=\etav^*_{K,i}Z^i_k\,\\
\suml_{j=1}^J\theta^{2}_j=C_I.
\end{cases}
\end{equation}
in the particular case with 
\[
\etav^{*T}_{k,i}\eqdef\left( \E W^k_{i}\psi_1(X_{i}),\E W^k_{i}\psi_2(X_{i}),...,\E W^k_{i}\psi_J(X_{i})\right)\;\;\text{and}\;\;Z^i_k\eqdef W_i^kY_i-\delta_k.
\]
Identification in non iid case complicates the fact that $n$ is normally larger than $J$ leading to possibly different identifiability scenarios. Distinguish them based on a rank of a matrix
\begin{equation}\label{R}
r\eqdef rank\left(\suml_{i=1}^n\suml_{k=1}^K\etav^*_{k,i}\etav_{k,i}^{*T}\right)=rank\left(\suml_{i=1}^n\suml^K_{k=1}\E W_i^{k}\Psiv(X_i)\E\Psiv^T(X_i)W_i^{k}\right).
\end{equation}
Note that the rank and, thus, a solution to [$\ref{NMIV}$] depends on a sample size $n$ ($K$ is assumed to be fixed). However, there is no prior knowledge of what $r$ corresponds to the identifiable function $f(x)\in\mathcal{H}\left[\mathbf{Q}\right]$. Therefore, the discussion requires an agreement on the target of inference. 

A way to reconcile uniqueness with the observed dependence is to require the function $f(x)\in\mathcal{H}\left[\mathbf{Q}\right]$ and $r$ to be independent from $n$. The model ($\ref{NMIV}$) makes sense if it points consistently at a single function independently from a number of observations. Define accordingly a target function.
\begin{definition}\label{TF}
	Assume $\exists N\leq\infty$ s.t. $\forall n\geq N$ the rank $r=const$, then call a function $\hat{f}(x)\in\mathcal{H}\left[\mathbf{Q}\right]$ a \textbf{target} if it solves ($\ref{NMIV}$) $\forall n\geq N$.
\end{definition}
\begin{rmrk}
	In the case of $n<N$ a bias between a solution and the target $n>N$ has to be considered.
	However, in the subsequent text it is implicitly assumed that a sample size $n>N$. 
\end{rmrk}
\begin{comment}
\begin{equation}\label{MB}
\delta_{MB}\eqdef h\left(\hat{f}_{n},\hat{f}\right)
\end{equation}
where $h:\R^J\times\R^J\rightarrow \R$ is an arbitrary divergence, $\hat{f}_{n}$ solves ($\ref{NMIV}$) for $n<N$ and $\hat{f}$ is defined in [$\ref{TF}$]. However, there will be minor remarks explaining how the bias ($\ref{MB}$) alters BLR test construction. 
\end{comment}
Based on the convention [$\ref{TF}$] introduce a classification:
\begin{enumerate}
	\item Complete model: $\forall J>0$ $\exists N\leq\infty$ s.t. $\forall n>N$ the rank $r=J$.
	\item Incomplete model: $\exists J_1>0$ s.t $\forall J>J_1,n>0$ the rank $r\leq J_1$.
\end{enumerate}
\text{}\\
Identification in the 'incomplete' model is equivalent to the iid case with the notational change for the number of instruments $K\leftrightarrow J_1$ and respective change of $K$ equations with instruments to the $J_1$ equations from ($\ref{NMIV}$). Otherwise 'completeness' of a model allows for a direct inversion of ($\ref{NMIV}$). Generally a complete model is given without the restriction $\mathcal{F}\eqdef\{\|f\|^2_{\mathbb{L}_2\left[\mathbf{Q}\right]}=C_I\}$
\begin{equation}\label{CNMIV}
\forall n>N:\;\forall i=\overline{1,n}\;\;\;\;\begin{cases}
\E W_i^1\left(Y_i-\hat{f}(X_i)\right)=\delta_1,\\
\E W_i^2\left(Y_i-\hat{f}(X_i)\right)=\delta_2,\\
...\\
\E W_i^K\left(Y_i-\hat{f}(X_i)\right)=\delta_K.
\end{cases}
\end{equation}
In this case a natural objective function for an inference is a quasi log-likelihood
\begin{equation}\label{LM}
L(\thetav)\eqdef-\frac{1}{2}\suml_{k=1}^{K}\suml_{i=1}^n\left(Z^i_k-\etav^{iT}_k\thetav\right)^2
\end{equation}
again with
\[
\etav^{iT}_k\eqdef\left( W^k_{i}\psi_1(X_{i}), W^k_{i}\psi_2(X_{i}),..., W^k_{i}\psi_J(X_{i})\right)
\]
and
\[
Z^i_k\eqdef W_i^kY_i-\delta_k.
\]
\section{Testing a linear hypothesis: bootstrap log-likelihood ratio test} 
Introduce an empirical relaxation of the biased ($\ref{AMIV}$) 
\begin{equation}\label{EIV}
\begin{cases}
W_i^1\Psiv^T(X_i)\thetav=W_i^1Y_i-\delta_1+\epsv_{1,i},\\
W_i^2\Psiv^T(X_i)\thetav=W_i^2Y_i-\delta_2+\epsv_{2,i},\\
...\\
W_i^K\Psiv^T(X_i)\thetav=W_i^KY_i-\delta_K+\epsv_{K,i},\\
\|\thetav\|^2=C_I
\end{cases}
\end{equation}
with centered errors $\epsv_{k,i}$. Courtesy of the lemma [$\ref{ID}$], a natural objective function is a penalized quasi log-likelihood
\begin{equation}\label{M1}
L(\thetav)\eqdef\suml_{i=1}^n\ell_i(\thetav)\eqdef-\frac{1}{2}\suml_{k=1}^{K}\suml_{i=1}^n\left(Z^i_k-\etav^{iT}_k\thetav\right)^2-\frac{\lambda\|\thetav\|^2}{2}
\end{equation}
with
\[
\etav^{iT}_k\eqdef\left( W^k_{i}\psi_1(X_{i}), W^k_{i}\psi_2(X_{i}),..., W^k_{i}\psi_J(X_{i})\right)\;\;\text{and}\;\;Z^i_k\eqdef W_i^kY_i-\delta_k.
\]
Maximum likelihood estimator (MLE) and its target are given
\[
\tthetav\eqdef\argmax_{\thetav\in\R^p}L\left(\thetav\right)\;\;\text{and}\;\;\;\sthetav\eqdef\argmax_{\thetav\in\R^p}\E L\left(\thetav\right).
\]
\begin{comment}
\begin{rmrk}
Notice that $\sthetav$ is equal to a solution $\sthetav\eqdef\int_{\mathbf{Q}}f(x)\Psiv(x)dx$ of ($\ref{AMIV}$).
\end{rmrk}
\end{comment}
For a fixed projector $\{\Pi\in\R^{J\times J}:\R^J\rightarrow \R^{J_1},\;J_1\leq J\}$ introduce a linear hypothesis and define a log-likelihood ratio test
\[
\Hnull:\;\sthetav\in\{\Pi\thetav=0\},
\]
\[
\Halt:\;\sthetav\in\{\R^p\setminus\{\Pi\thetav=0\}\},
\]
\begin{equation}\label{LRT}
T_{LR}\eqdef\sup_{\thetav} L\left(\thetav\right)-\sup_{\thetav\in \Hnull} L\left(\thetav\right).
\end{equation}
The test weakly converges $T_{LR}\rightarrow\chi^2_{J_1}$ to chi-square distribution (theorem $\ref{SquareRootWilksResult}$) and it is convenient to define a quantile as
\[
z_{\alpha}\;:\;\P\left(\left(T_{LR}-J\right)/\sqrt{J}<z_{\alpha}\right)\geq1-\alpha.
\]
It implies that $\lim_{J\rightarrow\infty}z_{\alpha}=\frac{1}{2}erf^{-1}(1-\alpha)$ and that $z_{\alpha}$ weakly depends on a dimension $\exists C<\infty$ s.t. $\forall J>0$, $z_{\alpha}<C$. 

For a set of re-sampling multipliers 
\[
\{u_i\sim\mathcal{N}\left(1,1\right)\}_{i=\overline{1,n}}
\]
define bootstrap $L^{\sbt}(\thetav)$ conditional on the original data
\[
L^{\sbt}(\thetav)=\suml_{i=1}^n\ell_i(\thetav)u_i\eqdef\suml_{i=1}^n\left(\suml_{k=1}^{K}\left(-\frac{\left(Z^i_k-\etav^{iT}_k\thetav\right)^2}{2}-\frac{\lambda\|\thetav\|^2}{2nK}\right)\right)u_i.
\]
and corresponding bootstrap MLE (bMLE) and its target
\[
\tthetav^{\sbt}\eqdef\argmax_{\thetav\in\R^p}L^{\sbt}\left(\thetav\right)\;\;\text{and}\;\;\;\tthetav\eqdef\argmax_{\thetav\in\R^p}\E L^{\sbt}\left(\thetav\right)=\argmax_{\thetav\in\R^p}L\left(\thetav\right).
\]
A centered hypothesis and a respective test are defined accordingly 
\[
\Hnullb:\;\tthetav\in\{\Pi(\thetav-\tthetav)=0\},
\]
\begin{equation}\label{BLRT}
T_{BLR}\eqdef\sup_{\thetav} L^{\sbt}\left(\thetav\right)-\sup_{\thetav\in \Hnullb} L^{\sbt}\left(\thetav\right).
\end{equation}
And analogously $z^{\sbt}_{\alpha}\;:\;\P^{\sbt}\left(\left(T_{BLR}-J\right)/\sqrt{J}<z^{\sbt}_{\alpha}\right)\geq1-\alpha$. The theorem [$\ref{BootstrapSquareRootWilksResult}$] enables the same convergence in growing dimension $\lim_{J\rightarrow\infty}z^{\sbt}_{\alpha}=\frac{1}{2}erf^{-1}(1-\alpha)$.

Under parametric assumption - $\forall k>0$ the non-parametric bias is zero $\delta_k=0$ - the bootstrap log-likelihood test is empirically attainable and the quantile $z^{\sbt}_{\alpha}$ is computed explicitly. On the other hand an unattainable quantile $z_{\alpha}$ calibrates $T_{LR}$. Between the two exists a direct correspondence. In the section [$\ref{GCA}$] it is demonstrated that $z^{\sbt}_{\alpha}$ can be used instead of $z_{\alpha}$. 
\begin{equation}\label{BP}
\textbf{\large{Multiplier bootstrap procdeure}:}
\end{equation}
\begin{itemize}
	\item Sample $\{u_i\sim\mathcal{N}\left(1,1\right)\}_{i=\overline{1,n}}$ computing $z^{\sbt}_{\alpha}$ satisfying $\P^{\sbt}\left(\left(T_{BLR}-J\right)/\sqrt{J}<z^{\sbt}_{\alpha}\right)\geq1-\alpha$
	\item Test $\Hnull$ against $\Halt$ using the inequalities 
	\[
	\Hnull:\;T_{LR}<J+z^{\sbt}_{\alpha}\sqrt{J}\;\;\text{and}\;\;\Halt:\;T_{LR}>J+z^{\sbt}_{\alpha}\sqrt{J}.
	\]
\end{itemize}
\text{}\\
The idea is numerically validated in the section 'Numerical'. Its theoretical justification follows immediately.

\section{Finite sample theory}\label{FST}
In a most general case neither an objective estimates consistently $f(x)\in\mathcal{H}\left[\mathbf{Q}\right]$ nor a model ($\ref{SMPL}$) is justified as a suitable for arbitrary $L(\thetav)$. Moreover, a regression with instrumental variables adds an additional concern, chosen instruments can be weakly identified (see section [$\ref{CL}$]) and an inference in the problem might involve a separate testing on weakness complicating an original problem. 

Finite sample approach (Spokoiny 2012 \cite{spokoiny2012penalized}) is an option to merry a structure of $L(\thetav)$ with a properties of a probability space ($\ref{SMPL}$) and automatically account for an unknown nature of instruments in a regression problem. 
\begin{comment}
and s.t. $\E\left(Z_{k_1}^i-\etav^{iT}_{k_1}\sthetav\right)^2=\E\left(Z_{k_0}^i-\etav^{iT}_{k_0}\sthetav\right)^2$
\end{comment}
\begin{equation}\label{FSC}
\textbf{\large{Finite sample theory}:}
\end{equation}
\begin{itemize}
	\begin{comment}
	\item \textbf{[Consistency/Modeling Bias]} $\forall n>0$ the target is invariant $\sthetav\eqdef\argmax_{\thetav\in\R^p}\E L(\thetav)$.
	\end{comment}
	\item \textbf{[Identifiability]} $\sigma_k^2\eqdef\E\left(Z_k^i-\etav^{iT}_k\sthetav\right)^2<\infty$ and $n\suml_{k=1}^K\left(\sigma_k^2-1\right)\E\etav^1_{k}\etav_{k}^{1T}<\lambda$
	\item \textbf{[Error/IV]} $\forall k$ an error $Z^i_k-\etav^{iT}_k\sthetav$ is independent from $Z^i_k$ and $\etav^{iT}_k$ 
	\item \textbf{[Design]} $\sup_j\|\suml_{k=1}^K\D_0^{-1}\etav^i_{k,j}\|\leq1/2$ with $\D_0^2=\left(n\suml_{k=1}^K\E\etav^1_{k}\etav_{k}^{1T}\right)+\lambda I$
	\item \textbf{[Moments]} $\exists \lambda_0,C_0<\infty$ s.t. $\E e^{\lambda_0\epsilon_i}\leq C_0$ with $\epsilon_i\eqdef\suml_{k=1}^K\left(Z_k^i-\E Z_k^i\right)$
	\item \textbf{[Target]} $\exists N>0$ s.t. for a sample size $\forall n\geq N$ and any subset $A$ of the size $|A|\geq N$ of the index set $\{1,2,3...,n\}$ the solution to  $\suml_{i\in A}\nabla\E\ell_i(\thetav)=0$ is unique.
\end{itemize}
\text{\\}
\begin{rmrk}
	The conditions validate the one from Spokoiny 2012 \cite{spokoiny2012penalized} p. 27 section 3.6 on penalized generalized linear model with the link function $g(v):\R\rightarrow\R$ in the considered case $g(v)\eqdef v^2$. As for the condition 'Target' see the discussion below.
\end{rmrk}
\subsection{Wilks expansion}
The conditions ($\ref{FSC}$) give a ground to statistical analysis of a quasi log-likelihood. An objective function assumes concentration of an estimation $\tthetav$ around the parameter $\sthetav$. Thus, the log-likelihood behavior dominantly depend on a local approximation in the vicinity of the target. Based on the conditions ($\ref{FSC}$) one can derive formally the Wilks expansion (Spokoiny 2012 \cite{spokoiny2012penalized}) for the quasi log-likelihood $L(\thetav)$.
\begin{comment}
\begin{rmrk}
In all of the results below it was accounted for the true dimensionality being smaller than an effective dimension.
\end{rmrk}
\end{comment}
\begin{theorem}\label{W}
	Suppose conditions ($\ref{FSC}$) are fulfilled. Define a score vector
	\[
	\xiv\eqdef\left(\Delta\E L(\sthetav)\right)^{-1/2}\nabla L(\sthetav).
	\]
	then it holds with a universal constant $C>0$
	\[
	\left|\sqrt{2L(\tthetav,\sthetav)}-\|\xiv\|\right|\leq C\left(J+x\right)/\sqrt{Kn}
	\]
	at least with the probability $1-5e^{-x}$.
\end{theorem}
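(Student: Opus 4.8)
The strategy is to obtain Theorem~\ref{W} as a specialization of the finite-sample square-root Wilks expansion of Spokoiny~\cite{spokoiny2012penalized} for penalized quasi-likelihoods, so the real task is to verify that the objective~(\ref{M1}) with quadratic link $g(v)=v^2$ fulfils the structural conditions $(\mathcal{L}_0)$, $(\mathcal{L})$, $(\mathcal{ED}_0)$, $(\mathcal{ED}_2)$ and $(\mathcal{I})$ that underlie that theorem, with the constants supplied by~(\ref{FSC}). Set the central point to $\svv=\sthetav$ and decompose $L=\E L+\zeta$ with $\zeta(\thetav)=L(\thetav)-\E L(\thetav)$. Since every $\ell_i(\thetav)$ is a quadratic form in $\thetav$, the Hessian $-\Delta\E L(\thetav)$ is the constant matrix $\D_0^2=n\sum_{k=1}^K\E\etav_k^1\etav_k^{1T}+\lambda I$ and $\nabla\zeta(\thetav)$ is affine in $\thetav$ with deterministic derivative; hence $(\mathcal{L}_0)$ holds with $\delta(r)$ arbitrarily small, $(\mathcal{L})$ with $b(r)\equiv1$, and $(\mathcal{ED}_2)$ collapses to a bilinear bound on $\D_0^{-1}\nabla^2\zeta\,\D_0^{-1}$ which is again controlled by the \textbf{Design} assumption $\sup_j\|\sum_k\D_0^{-1}\etav^i_{k,j}\|\le1/2$. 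The normalized score $\xiv$ of the statement then equals $\D_0^{-1}\nabla L(\sthetav)$, since $\D_0^2=-\Delta\E L(\sthetav)$.

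The substantive step is the exponential-moment bound $(\mathcal{ED}_0)$ for the score. Writing $\nabla\ell_i(\sthetav)=\sum_{k=1}^K(Z^i_k-\etav^{iT}_k\sthetav)\,\etav^i_k-(\lambda/n)\sthetav$ and invoking the \textbf{Error/IV} assumption that each residual $Z^i_k-\etav^{iT}_k\sthetav$ is independent of $(\etav^i_k,Z^i_k)$, one factors the moment generating function of an arbitrary linear functional $\gamma^T\D_0^{-1}\nabla\zeta(\sthetav)$ and bounds it through the \textbf{Moments} assumption $\E e^{\lambda_0\epsilon_i}\le C_0$ with $\epsilon_i=\sum_k(Z^i_k-\E Z^i_k)$; independence across $i$ then yields $(\mathcal{ED}_0)$ with $V_0^2=\sum_i\Var\{\nabla\ell_i(\sthetav)\}$, a fixed $\nu_0\ge1$, and $g\asymp\sqrt{Kn}$. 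The \textbf{Identifiability} part of~(\ref{FSC}) is exactly what makes $\D_0^{-2}V_0^2$ bounded, i.e. $(\mathcal{I})$, while the \textbf{Target} part guarantees $\sthetav$ is well defined and unique so that this local analysis is legitimate on the relevant range of $n$.

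Feeding these verified conditions into Spokoiny's theorem produces $|\sqrt{2L(\tthetav,\sthetav)}-\|\D_0^{-1}\nabla L(\sthetav)\||\le C\,\diamondsuit$ on an event of probability at least $1-5e^{-x}$ (a union over the score-concentration, concentration-of-$\tthetav$, and remainder events), where the generic remainder satisfies $\diamondsuit\asymp(\mathtt{p}+x)/g$ with $\mathtt{p}\asymp J$ the effective dimension and $g\asymp\sqrt{Kn}$, which gives the claimed $C(J+x)/\sqrt{Kn}$. I expect the main obstacle to be the exponential-moment step: the summand $(Z^i_k-\etav^{iT}_k\sthetav)\etav^i_k$ is a product of two random factors, so one must use the Error/IV independence to split the mgf and then track the $\D_0^{-1}$ normalization carefully enough that $g$ scales as $\sqrt{Kn}$; getting the factor $K$ under the square root, rather than a weaker dependence on the number of instruments, is the delicate bookkeeping that distinguishes this claim from the plain penalized-GLM estimate.
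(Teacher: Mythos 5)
Your proposal takes exactly the route the paper does: the paper gives no standalone argument for Theorem~\ref{W}, but rather declares in the remark after conditions~(\ref{FSC}) that they ``validate the one[s] from Spokoiny 2012 \cite{spokoiny2012penalized}\ldots on penalized generalized linear model with the link function $g(v)\eqdef v^2$,'' and then cites that theory for the Wilks expansion. Your verification of $(\mathcal{L}_0)$, $(\mathcal{L})$, $(\mathcal{ED}_0)$, $(\mathcal{ED}_2)$, $(\mathcal{I})$ from the quadratic structure and the \textbf{Error/IV}, \textbf{Moments}, \textbf{Design}, \textbf{Identifiability} clauses is a correct fleshing-out of that citation, and the $\sqrt{Kn}$ bookkeeping you flag as the delicate point is indeed the part the paper itself leaves implicit.
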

Bootstrap analogue of the Wilks expansion also follows. It was claimed in theorem B.4, section B.2 in Spokoiny, Zhilova 2015 \cite{spokoiny2015bootstrap}.
\begin{theorem}\label{BW}
	Suppose conditions ($\ref{FSC}$) are fulfilled. Define a bootstrap score vector
	\[
	\xivb\eqdef\left(\Delta\E L(\sthetav)\right)^{-1/2}\nabla\left(L^{\sbt}(\sthetav)-L(\sthetav)\right),
	\]
	then it holds with a universal constant $C>0$
	\[
	\left|\sqrt{2L^{\sbt}\left(\tthetav^{\sbt},\tthetav\right)}-\|\xiv^{\sbt}\|\right|\leq C\left(J+x\right)/\sqrt{Kn}
	\]
	at least with the probability $1-5e^{-x}$.
\end{theorem}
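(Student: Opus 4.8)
The plan is to deduce Theorem [$\ref{BW}$] from the non-bootstrap expansion of Theorem [$\ref{W}$], exploiting the exact structural parallel between the two problems and carrying the argument out conditionally on the observed sample $(Y_i,X_i,\{W^k_i\})_{i=\overline{1,n}}$. Under the bootstrap law $\P^{\sbt}$ induced by the Gaussian weights $u_i\sim\mathcal{N}(1,1)$ one has $\E^{\sbt}L^{\sbt}(\thetav)=L(\thetav)$, so $\tthetav=\argmax_{\thetav}\E^{\sbt}L^{\sbt}(\thetav)$ is the bootstrap target and plays exactly the role of $\sthetav$ for $L$; the bootstrap stochastic component is
\[
\zetab(\thetav)\eqdef L^{\sbt}(\thetav)-L(\thetav)=\suml_{i=1}^n\ell_i(\thetav)(u_i-1).
\]
Because each $\ell_i$ in ($\ref{M1}$) is quadratic in $\thetav$, so are $L$, $L^{\sbt}$ and $\zetab$, and hence $\D_{\sbt}^2\eqdef-\nabla^2 L^{\sbt}$ and $\D^2\eqdef-\nabla^2 L$ are constant in $\thetav$. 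Consequently the second order expansion of $L^{\sbt}$ about its maximiser is \emph{exact}: $\nabla L^{\sbt}(\sthetav)=\nabla\zetab(\sthetav)$ since $\nabla L(\sthetav)=0$, $\tthetav^{\sbt}-\tthetav=\D_{\sbt}^{-2}\nabla\zetab(\tthetav)$, and
\[
2L^{\sbt}(\tthetav^{\sbt},\tthetav)=\bigl\|\D_{\sbt}^{-1}\nabla\zetab(\tthetav)\bigr\|^2,
\]
so the claim reduces to comparing the right side with $\|\xivb\|^2=\|\D_0^{-1}\nabla\zetab(\sthetav)\|^2$, where $\D_0^2=\Delta\E L(\sthetav)$.

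Next I would fix $x$ and produce a single data-measurable event $\mathcal{A}_x$ with $\P(\mathcal{A}_x)\ge 1-4e^{-x}$ on which the hypotheses needed to run the finite-sample machinery for $L^{\sbt}$ hold. \textbf{Information matrices.} $\D^2$ differs from the deterministic $\D_0^2$ only through the empirical average $\suml_{k,i}\etav^i_k\etav^{iT}_k$; a matrix Bernstein/Rosenthal bound under the [Design] and [Moments] parts of ($\ref{FSC}$) yields $\D_0^{-1}\D^2\D_0^{-1}=I+O(\sqrt{(J+x)/(Kn)})$ in operator norm, and the Gaussian reweighting adds at most the same order, so $\D^{-1}\D_{\sbt}^2\D^{-1}=I+O(\sqrt{(J+x)/(Kn)})$ and in particular $\D_{\sbt}^2$ is positive definite on $\mathcal{A}_x$ — this last step is where the multiplier/bias conditions $(\mathcal{EB})$ and $(\mathcal{SMB})$ are used, to keep the reweighted moment constants bounded. \textbf{Bootstrap score.} Conditionally on the data, $\nabla\zetab(\sthetav)=\suml_i\nabla\ell_i(\sthetav)(u_i-1)$ is a centered Gaussian vector with covariance $B_0^2=\suml_i\nabla\ell_i(\sthetav)\nabla^T\ell_i(\sthetav)$, so the exponential-moment condition $(\mathcal{ED}_0)$ holds exactly with $V_0^2=B_0^2$, while $B_0^2\le C\D_0^2$ on $\mathcal{A}_x$ by [Design]/[Moments] and $(\mathcal{SMB})$; the increment $\nabla\zetab(\thetav)-\nabla\zetab(\sthetav)=(\D^2-\D_{\sbt}^2)(\thetav-\sthetav)$ is again Gaussian with variance governed by the matrix bound above, giving $(\mathcal{ED}_2)$. \textbf{Concentration of $\tthetav^{\sbt}$.} The [Target] condition in ($\ref{FSC}$) makes the root of $\nabla L=0$ unique for $n\ge N$, so $\tthetav$ is well defined, and a deviation bound for quadratic objectives places $\tthetav^{\sbt}\in\Upsilon(r_0)$ with $\P^{\sbt}$-probability $\ge 1-e^{-x}$.

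Granting these, the conclusion follows by the same algebra as in the proof of Theorem [$\ref{W}$]. Starting from $2L^{\sbt}(\tthetav^{\sbt},\tthetav)=\|\D_{\sbt}^{-1}\nabla\zetab(\tthetav)\|^2$, I would (i) replace $\nabla\zetab(\tthetav)$ by $\nabla\zetab(\sthetav)$, the difference being $(\D^2-\D_{\sbt}^2)(\tthetav-\sthetav)$ with $\|\tthetav-\sthetav\|=O(\sqrt{(J+x)/(Kn)})$ and $\|\D_0^{-1}(\D^2-\D_{\sbt}^2)\D_0^{-1}\|=O(\sqrt{(J+x)/(Kn)})$; (ii) replace $\D_{\sbt}^{-1}$ by $\D_0^{-1}$ using $\D_0^{-1}\D_{\sbt}^2\D_0^{-1}=I+O(\sqrt{(J+x)/(Kn)})$ together with $\|\D_0^{-1}\nabla\zetab(\sthetav)\|=\|\xivb\|=O(\sqrt{J})$; and (iii) collect all resulting terms — each of the shape (operator-norm perturbation) $\times\,\|\xivb\|$ or (operator-norm perturbation) $\times\,\|\tthetav-\sthetav\|$ with bounded $\D_0$-factors — into the single bound $C(J+x)/\sqrt{Kn}$, using $\sqrt{J}\cdot\sqrt{(J+x)/(Kn)}\le (J+x)/\sqrt{Kn}$. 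Adding the failure probability of $\mathcal{A}_x$ and of the concentration event for $\tthetav^{\sbt}$ gives the stated $1-5e^{-x}$.

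The main obstacle is step (ii) and the operator-norm estimate behind it: one must show, simultaneously and uniformly, that the \emph{randomly reweighted} Hessian $\D_{\sbt}^2$ stays within an $O(\sqrt{(J+x)/(Kn)})$ relative perturbation of the deterministic $\D_0^2$, and this is precisely where the bootstrap-specific moment conditions $(\mathcal{EB})$ and $(\mathcal{SMB})$ are indispensable and where the Gaussianity of the weights is used most heavily; the rest is either exact, owing to the quadratic structure of $L$, or a direct transcription of the argument behind Theorem [$\ref{W}$], i.e. of theorem B.4 in Spokoiny, Zhilova 2015 \cite{spokoiny2015bootstrap}.
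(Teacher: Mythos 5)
Your proposal is substantively correct but takes a genuinely different route from the paper's. The paper does not prove Theorem~\ref{BW} from scratch; it invokes the general bootstrap square-root Wilks result of Spokoiny and Zhilova (theorem~B.4 in \cite{spokoiny2015bootstrap}) after verifying, in the appendix, that the bootstrap analogues of the abstract finite-sample conditions hold: Lemma~\ref{EDB_0} gives \EDBnullc, Lemma~\ref{EDB_2} gives \EDBtwoc, Theorem~\ref{DeviationOnBootstrapScore} gives a deviation bound for $\|\xivb\|$, Theorem~\ref{BootstrapBoundOnStochasticPart} bounds the bootstrap stochastic gradient, and the concentration section localizes $\tthetav^{\sbt}$ in $\Upsilon(r_0)$. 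That machinery is designed for non-quadratic log-likelihoods. You instead observe that in this model $L$, $L^{\sbt}$ and $\zetab$ are exactly quadratic (the penalized Gaussian-type contrast of eq.~\ref{M1}), so the second-order Taylor step that the general theory only controls up to a remainder is, here, an identity; the entire error then reduces to an operator-norm perturbation of the reweighted Hessian $\D_{\sbt}^2$ around $\D_0^2$ together with the fluctuation of $\nabla\zetab$ between $\tthetav$ and $\sthetav$. This is cleaner and more transparent for the present model, and the matrix Bernstein machinery you invoke is exactly what the paper supplies in Section~\ref{Matrix} (Theorem~\ref{MatrixBernstein2} and Corollary~\ref{Matrix_Brenstein_2_Constants}); what the paper's route buys in return is portability to non-quadratic $L$, where your exactness shortcut is unavailable.

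Two small corrections to your write-up. First, you wrote ``$\nabla L^{\sbt}(\sthetav)=\nabla\zetab(\sthetav)$ since $\nabla L(\sthetav)=0$''; that equality is false at $\sthetav$ (only $\nabla\E L(\sthetav)=0$). The correct statement, which you use implicitly two lines later, is $\nabla L^{\sbt}(\tthetav)=\nabla\zetab(\tthetav)$ because $\nabla L(\tthetav)=0$ by definition of the bootstrap target. Second, in step~(i) the product that controls the replacement $\nabla\zetab(\tthetav)\to\nabla\zetab(\sthetav)$ is $\|\D_0^{-1}(\D^2-\D_{\sbt}^2)\D_0^{-1}\|\cdot\|\D_0(\tthetav-\sthetav)\|$, i.e.\ it needs the Fisher-normalized quantity $\|\D_0(\tthetav-\sthetav)\|=O(\sqrt{J+x})$, not the raw distance $\|\tthetav-\sthetav\|$; your final rate $(J+x)/\sqrt{Kn}$ is still correct once this is written out, but as stated the bookkeeping does not quite close.
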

\begin{comment}
\begin{rmrk}
Modeling bias (Spokoiny, Zhilova 2015 \cite{spokoiny2015bootstrap}) in the bootstrap is zero for the iid model. Generally it is zero as long as the target $\sthetav\eqdef\argmax_{\thetav\in\R^p}\E L(\thetav)$ is invariant $\forall n>N$.
\end{rmrk}
\end{comment}
Moreover, the log-likelihood statistic follows the same local approximation in the context of hypothesis testing and the $T_{LR}$ satisfies (see appendix - section ($\ref{SquareRootWilksResultsForRealandBootstrapWorldsAndBootstrapValidity}$)). 
\begin{theorem}\label{SquareRootWilksResult}
	Assume conditions ($\ref{FSC}$) are satisfied then with a universal constant $C>0$
	\[
	\left|\sqrt{2T_{LR}}-\|\xiv^s\|\right|\leq C\left(J+x\right)/\sqrt{Kn}
	\]
	with probability $\geq1-Ce^{-x}$. The score vector is defined respectively
	\[
	\xiv^s\eqdef D_0^{-1/2}\left(\nabla_{\Pi\thetav} L(\sthetav)-\left(I-\Pi\right)\Delta\E L(\sthetav)\Pi^T\left(\left(I-\Pi\right)\Delta\E L(\sthetav)\left(I-\Pi\right)^T\right)^{-1}\nabla_{\left(I-\Pi\right)\thetav} L(\sthetav)\right),
	\]
	and Fisher information matrix
	\[
	D_0^2\eqdef -\Pi\Delta\E L(\sthetav)\Pi^T+\left(I-\Pi\right)\Delta\E L(\sthetav)\Pi^T\left(\left(I-\Pi\right)\Delta\E L(\sthetav)\left(I-\Pi\right)^T\right)^{-1}\Pi\Delta\E L(\sthetav)\left(I-\Pi\right)^T.
	\]
\end{theorem}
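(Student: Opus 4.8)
The plan is to rewrite $T_{LR}$ as the excess of a profile log-likelihood and then invoke the profile (semiparametric) version of the Wilks expansion of Theorem \ref{W}, i.e. the finite-sample profile bound of \cite{spokoiny2012penalized}. Assume first that $\Hnull$ holds, so $\Pi\sthetav=0$ (otherwise a deterministic modeling-bias shift is added to $\xiv^s$ in the usual way). Split the coordinates as $\thetav=(\thetav_1,\thetav_2)$ with $\thetav_1=\Pi\thetav$ the tested block and $\thetav_2=(I-\Pi)\thetav$ the nuisance block, and write $\breve\thetav(\thetav_1)\eqdef\argmax_{\thetav_2}L(\thetav_1,\thetav_2)$ for the partial maximizer. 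Then $\sup_{\thetav\in\Hnull}L(\thetav)=L\bigl(0,\breve\thetav(0)\bigr)$ while $\sup_\thetav L(\thetav)=L\bigl(\tthetav_1,\breve\thetav(\tthetav_1)\bigr)$, so $T_{LR}=L(\Pi\tthetav,\Pi\sthetav)$ in the excess notation of Theorem \ref{W}, i.e. the gap of the profile log-likelihood $\breve L(\thetav_1)\eqdef L\bigl(\thetav_1,\breve\thetav(\thetav_1)\bigr)$ between its maximizer and the point $\thetav_1=\Pi\sthetav=0$.

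Next I would identify the objects in the statement. Writing $\mathcal{D}\eqdef-\Delta\E L(\sthetav)$ and partitioning it blockwise along $\Pi$, $(I-\Pi)$ into $\mathcal{D}_{11}=\Pi\mathcal{D}\Pi^\T$, $\mathcal{D}_{22}=(I-\Pi)\mathcal{D}(I-\Pi)^\T$, $\mathcal{D}_{12}=\mathcal{D}_{21}^\T=\Pi\mathcal{D}(I-\Pi)^\T$, the efficient Fisher information of the tested block is the Schur complement $D_0^2=\mathcal{D}_{11}-\mathcal{D}_{12}\mathcal{D}_{22}^{-1}\mathcal{D}_{21}$, which is exactly the matrix displayed in the theorem (the sign pattern there reflects $\Delta\E L(\sthetav)\preceq 0$), and the efficient score is $\xiv^s=D_0^{-1/2}\bigl(\nabla_1 L(\sthetav)-\mathcal{D}_{12}\mathcal{D}_{22}^{-1}\nabla_2 L(\sthetav)\bigr)$, again as displayed, where $\nabla_1,\nabla_2$ denote the gradient blocks $\nabla_{\Pi\thetav}$, $\nabla_{(I-\Pi)\thetav}$. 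The hypotheses needed for the profile expansion are precisely the finite-sample conditions (\ref{FSC}): by the Remark after (\ref{FSC}) they imply the Spokoiny (2012) conditions for the penalized generalized linear model with link $g(v)=v^2$, and since $L(\cdot)$ is a negative quadratic in $\thetav$ the corresponding conditions for the profiled problem (local identifiability of $\mathcal{D}_{22}$, smoothness and deviation bounds for $\breve\thetav(\cdot)$) are inherited blockwise.

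The internal structure of the expansion is the standard two-step argument. Step (i): the basic square-root Wilks of Theorem \ref{W} already gives $\sqrt{2L(\tthetav,\sthetav)}=\|\mathcal{D}^{-1/2}\nabla L(\sthetav)\|+O\bigl((J+x)/\sqrt{Kn}\bigr)$ on an event of probability $1-5e^{-x}$, the rate coming from a self-bounding control of $L$ over a local elliptic ball of radius $\asymp\sqrt{(J+x)/(Kn)}$ (the $\mathcal{ED}$-type exponential bounds are furnished by Design and Moments, the curvature bounds by Identifiability and Target, and the effective sample size is $Kn$ since each observation supplies $K$ instrument equations). Step (ii): the same argument applied to the maximization restricted to $\{\Pi\thetav=0\}$ gives $\sqrt{2L(0,\breve\thetav(0))-2L(\sthetav)}=\|\mathcal{D}_{22}^{-1/2}\nabla_2 L(\sthetav)\|+O\bigl((J+x)/\sqrt{Kn}\bigr)$, together with a bound on $\breve\thetav(\thetav_1)-\breve\thetav(0)$ uniform over $\thetav_1$ in the local ball. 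Squaring (i) and (ii), subtracting, and using the algebraic identity $\|\mathcal{D}^{-1/2}\nabla L(\sthetav)\|^2-\|\mathcal{D}_{22}^{-1/2}\nabla_2 L(\sthetav)\|^2=\|\xiv^s\|^2$ (a direct Schur-complement computation), then taking square roots, yields
\[
\bigl|\sqrt{2T_{LR}}-\|\xiv^s\|\bigr|\le C(J+x)/\sqrt{Kn}
\]
on the intersection event, after absorbing the cross terms $\|\mathcal{D}^{-1/2}\nabla L(\sthetav)\|\cdot O((J+x)/\sqrt{Kn})$ into a term of the same order using that $\|\mathcal{D}^{-1/2}\nabla L(\sthetav)\|=O(\sqrt{J+x})$ with high probability.

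The step I expect to be the main obstacle is not the quadratic algebra but verifying that the profiled problem is well-posed uniformly in $n$: one needs $\mathcal{D}_{22}$, and hence the Schur complement $D_0^2$, non-degenerate and well-conditioned for all $n\ge N$. The rank identity (\ref{R}) only guarantees that $\mathcal{D}=-\Delta\E L(\sthetav)$ is invertible on a fixed $r$-dimensional identified subspace, so the argument requires that the range of $\Pi^\T$ sit inside that subspace and that the complementary (nuisance) block stay coercive — this compatibility is exactly what the Target condition in (\ref{FSC}) is designed to deliver, via invariance in $n$ of the solution of $\sum_{i\in A}\nabla\E\ell_i(\thetav)=0$. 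Once this is in hand the remainder is a mechanical tracking of constants through the block decomposition of the quadratic $L$, and the probability loss $1-Ce^{-x}$ in the statement is just the union bound over the $5e^{-x}$-losses of the two Wilks applications and the deviation bounds for the scores.
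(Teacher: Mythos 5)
Your algebraic skeleton is sound — the Schur-complement identity $\|\mathcal{D}^{-1/2}\nabla L(\sthetav)\|^{2}-\|\mathcal{D}_{22}^{-1/2}\nabla_{2}L(\sthetav)\|^{2}=\|\xiv^{s}\|^{2}$ is correct, and $T_{LR}$ does split into a full Wilks excess minus a restricted one. But this is a genuinely different route from the paper. The paper's proof (Theorem~\ref{SQW} in the appendix) never applies the scalar Wilks bound of Theorem~\ref{W} twice and subtracts. Instead it introduces the quadratic surrogate $\mathbb{L}(\vvp,\vvp_{1})=(\vvp-\vvp_{1})^{T}\nabla L(\vvp_{1})-\tfrac12\|\Dpn(\vvp-\vvp_{1})\|^{2}$, bounds the gradient of the residual $\alpha=L-\mathbb{L}$ on the local ball by $\diamondsuit(r_0,x)$, changes coordinates via the block-triangular matrix $K$ with $\Dh=\Dpn K$ to kill the $\thetav$–$\etav$ cross block, and reads off a single inequality $\big|\sqrt{2T_{LR}}-\|\Dh(\tvv''-\vv_{1}'')+\bias\|\big|\le4\diamondsuit(r_0,x)$ directly on the $\sqrt{\cdot}$ scale, followed by two Fisher expansions of the transformed estimators. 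This is the Andresen–Spokoiny profile/semiparametric Wilks argument, done inside the orthogonalized coordinates, and it is not a subtraction of two scalar Wilks results.

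The concrete gap in your version is the last arithmetic step. From (i) and (ii) you obtain, with $\delta\eqdef C(J+x)/\sqrt{Kn}$, that $|2L(\tthetav,\sthetav)-\|\mathcal{D}^{-1/2}\nabla L(\sthetav)\|^{2}|\lesssim\|\mathcal{D}^{-1/2}\nabla L(\sthetav)\|\cdot\delta$ and analogously for the restricted part, so after subtraction $|2T_{LR}-\|\xiv^{s}\|^{2}|\lesssim\sqrt{J+x}\,\delta$. Converting this back to $\big|\sqrt{2T_{LR}}-\|\xiv^{s}\|\big|$ requires dividing by $\sqrt{2T_{LR}}+\|\xiv^{s}\|$, and that denominator is of order $\sqrt{J_{1}+x}$ where $J_{1}=\dim(\Pi\thetav)$, not $\sqrt{J+x}$: the score $\xiv^{s}$ lives in $\R^{J_1}$ and $2T_{LR}\approx\chi^{2}_{J_1}$. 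You therefore land at $\big|\sqrt{2T_{LR}}-\|\xiv^{s}\|\big|\lesssim\sqrt{(J+x)/(J_{1}+x)}\;\delta$, which reproduces the paper's bound only when $J_{1}\asymp J$; in the semiparametric regime $J_{1}\ll J$ — the regime the projector $\Pi$ is actually for — you lose a factor of order $\sqrt{J/J_{1}}$. Your remark that the cross term $\|\mathcal{D}^{-1/2}\nabla L(\sthetav)\|\cdot O(\delta)$ is "of the same order" conflates the error scale on $2T_{LR}$ with the error scale on $\sqrt{2T_{LR}}$; that is exactly where the loss hides. The paper avoids this because the block-diagonalized local expansion produces the $\sqrt{\cdot}$-scale bound directly (the factor $\sqrt{-2\mathbb{L}(\vvp_{1},\vvp_{2})}$ on the left of the intermediate inequality cancels against $\|\mathcal{D}_{0}(\vvp_{1}-\vvp_{2})\|$ on the right), so there is no square–subtract–root detour. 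To repair your version you would either need to exhibit cancellation between the two quadratic-approximation residuals — they both stem from the same $\alpha(\cdot,\cdot)$ and are not independent — or pass to the profiled quadratic form and apply the Wilks argument once in the $J_{1}$-dimensional reduced parameter, which is in effect what the paper does.
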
 
Similar statement can be proven in the bootstrap world.
\begin{theorem}\label{BootstrapSquareRootWilksResult}
	Assume conditions ($\ref{FSC}$) are fulfilled  then with probability $\geq1-Ce^{-x}$ holds
	\[
	\left|\sqrt{2T_{BLR}}-\|\xiv^s_{\sbt}\|\right|\leq C\left(J+x\right)/\sqrt{Kn},
	\]
	with a universal constant $C>0$, where a score vector is given
	\[
	\xiv^s_{\sbt}\eqdef D_0^{-1/2}\left(\nabla_{\Pi\thetav} L^{\sbt}(\sthetav)-\left(I-\Pi\right)\Delta\E L(\sthetav)\Pi^T\left(\left(I-\Pi\right)\Delta\E L(\sthetav)\left(I-\Pi\right)^T\right)^{-1}\nabla_{\left(I-\Pi\right)\thetav} L^{\sbt}(\sthetav)\right).
	\]
\end{theorem}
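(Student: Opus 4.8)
The strategy is to replay the argument that produces Theorem~\ref{SquareRootWilksResult}, but inside the bootstrap world, conditionally on the sample $\left(Y_i,X_i,\{W^k_i\}_{k=\overline{1,K}}\right)_{i=\overline{1,n}}$, with the maximum likelihood estimator $\tthetav$ in the role of the target. The point of departure is that $\tthetav=\argmax_{\thetav}\E L^{\sbt}(\thetav)=\argmax_{\thetav}L(\thetav)$, so $\nabla L(\tthetav)=0$ and $\E L^{\sbt}=L$; in particular the Hessian of $L^{\sbt}$ equals the Hessian of $L$ up to the reweighting by the $u_i$, and the Fisher information that governs the expansion is again $D_0^2=-\Delta\E L(\sthetav)$. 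The condition \textbf{[Target]} in ($\ref{FSC}$) is exactly what guarantees that both $\sthetav$ and $\tthetav$ are well defined and stable for $n\geq N$.

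\textbf{Step 1 (transfer of the finite-sample conditions).} First I would verify that, on a data event of probability $\geq1-Ce^{-x}$, the reweighted objective $L^{\sbt}(\thetav)=\suml_{i=1}^n\ell_i(\thetav)u_i$ fulfils the bootstrap analogues of the conditions ($\ref{FSC}$) in a local set $\Upsilon(r)$ around $\tthetav$. The stochastic component is $\nabla\left(L^{\sbt}(\thetav)-L(\thetav)\right)=\suml_{i=1}^n(u_i-1)\nabla\ell_i(\thetav)$, centred given the data; its exponential moments follow by combining the Gaussianity $u_i\sim\mathcal{N}(1,1)$ with the moment bound \textbf{[Moments]}, which yields an $\mathcal{ED}_0$/$\mathcal{ED}_2$-type estimate with the same variance proxy $D_0^2$. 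The reweighted Hessian $-\Delta L^{\sbt}(\thetav)=\suml_{i=1}^n\left(-\Delta\ell_i(\thetav)\right)u_i$ deviates from $-\Delta L(\thetav)$ by a mean-zero reweighted matrix sum whose operator norm, after conjugation by $D_0^{-1}$, is controlled at rate $\sqrt{(J+x)/(Kn)}$ via a matrix Bernstein / Gaussian deviation inequality and the bound \textbf{[Design]}; together with the already available concentration of $-\Delta L(\thetav)$ around $D_0^2$ this supplies the $\mathcal{L}_0$-type local identifiability on $\Upsilon(r)$ needed by the square-root Wilks machinery. This is the main obstacle: one must keep these conditional bounds uniform over $\Upsilon(r)$ with constants that do not blow up when the instruments $\{W^k\}$ are weakly identified (the concern raised at the start of Section~\ref{FST}), and it is precisely \textbf{[Design]} that keeps them under control.

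\textbf{Step 2 (semiparametric square-root Wilks for $L^{\sbt}$).} Given Step 1 I would apply the profile (semiparametric) form of the square-root Wilks expansion — the same Andresen--Spokoiny construction that underlies Theorem~\ref{SquareRootWilksResult}, in the bootstrap variant of Theorem~\ref{BW} and \cite{spokoiny2015bootstrap} — to $L^{\sbt}$ around $\tthetav$ with the projector $\Pi$. Since $T_{BLR}=\sup_{\thetav}L^{\sbt}(\thetav)-\sup_{\thetav\in\Hnullb}L^{\sbt}(\thetav)$ with $\Hnullb=\{\Pi(\thetav-\tthetav)=0\}$, the expansion gives $\bigl|\sqrt{2T_{BLR}}-\|\zeta\|\bigr|\leq C(J+x)/\sqrt{Kn}$, where $\zeta=D_0^{-1/2}\left(\nabla_{\Pi\thetav}L^{\sbt}(\tthetav)-\left(I-\Pi\right)\Delta\E L(\sthetav)\Pi^T\left(\left(I-\Pi\right)\Delta\E L(\sthetav)\left(I-\Pi\right)^T\right)^{-1}\nabla_{\left(I-\Pi\right)\thetav}L^{\sbt}(\tthetav)\right)$ is the profile bootstrap score, the block coefficients being those of $D_0^2$ because the Hessian of $L^{\sbt}$ is $D_0^2$ up to the reweighting handled in Step 1. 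Observe that $\nabla L^{\sbt}(\tthetav)=\suml_{i=1}^n(u_i-1)\nabla\ell_i(\tthetav)=\nabla\left(L^{\sbt}-L\right)(\tthetav)$ since $\nabla L(\tthetav)=0$, so $\zeta$ is exactly the centred bootstrap score of Theorem~\ref{BW}, only evaluated at $\tthetav$ instead of $\sthetav$.

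\textbf{Step 3 (moving the score from $\tthetav$ to $\sthetav$).} It remains to replace $\nabla\left(L^{\sbt}-L\right)(\tthetav)$ by $\nabla\left(L^{\sbt}-L\right)(\sthetav)$, which turns $\zeta$ into $\xiv^s_{\sbt}$. Theorem~\ref{W} applied to $L$ yields the concentration $\|D_0(\tthetav-\sthetav)\|\leq C\sqrt{(J+x)/(Kn)}$ together with the linearisation $\tthetav-\sthetav=\left(-\Delta L\right)^{-1}\nabla L(\sthetav)=D_0^{-2}\nabla L(\sthetav)+O\left((J+x)/(Kn)\right)$; a Taylor expansion of $\nabla L^{\sbt}$ between $\sthetav$ and $\tthetav$ then gives $\nabla L^{\sbt}(\tthetav)-\nabla L^{\sbt}(\sthetav)=\Delta L^{\sbt}(\tthetav-\sthetav)=-\left(-\Delta L^{\sbt}\right)\left(-\Delta L\right)^{-1}\nabla L(\sthetav)+\text{rem}=-\nabla L(\sthetav)+\text{rem}$, where the remainder, arising from the $D_0^{-1}$-conjugated Hessian deviation of Step 1 acting on $D_0^{-1}\nabla L(\sthetav)$ of norm $\|\xiv\|$ and (in a general link) from the $\mathcal{ED}_2$-type curvature bound, is of order $\sqrt{(J+x)/(Kn)}\,\|\xiv\|=O\left((J+x)/\sqrt{Kn}\right)$ in the $D_0^{-1}$-norm. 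Hence $\nabla L^{\sbt}(\tthetav)=\nabla L^{\sbt}(\sthetav)-\nabla L(\sthetav)+O\left((J+x)/\sqrt{Kn}\right)=\nabla\left(L^{\sbt}-L\right)(\sthetav)+O\left((J+x)/\sqrt{Kn}\right)$; since the same projection coefficients act on the $\Pi$- and $(I-\Pi)$-blocks, $\|\zeta-\xiv^s_{\sbt}\|\leq C(J+x)/\sqrt{Kn}$, and triangulating with the display of Step 2 gives $\bigl|\sqrt{2T_{BLR}}-\|\xiv^s_{\sbt}\|\bigr|\leq C(J+x)/\sqrt{Kn}$ on the intersection of the event of Step 1 and the conditional event of Step 2, of total probability $\geq1-Ce^{-x}$. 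Steps 2 and 3 are a routine transcription of the real-world argument; the genuine work is the conditional concentration of Step 1.
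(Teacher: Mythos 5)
Your proposal is sound and follows the same overall machinery, but through an extra intermediate step that the paper's terse appendix proof (Theorem~\ref{BSQW}) does not spell out. The paper works directly with the local residual: it writes the bootstrap residual $\alphab$ as the real-world residual $\alpha$ plus a bootstrap stochastic increment, bounds $\|\D_0^{-1}\nabla\alphab\|\leq\|\D_0^{-1}\nabla\zetab\|+\|\D_0^{-1}\nabla\alpha\|$ via Lemmas~\ref{EDB_0},~\ref{EDB_2} and Theorem~\ref{BootstrapBoundOnStochasticPart} (the substance of your Step~1), and then replays the real-world block-diagonalisation with $\diamondsuit\rightarrow\diamondsuitb$ and a zero bias term because $\Hnullb$ is exactly centred at $\tthetav$. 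You instead expand $L^{\sbt}$ explicitly around $\tthetav$, which makes the conditional centring automatic since $\nabla L(\tthetav)=0$, and only afterwards transport the score to $\sthetav$ using the Fisher expansion and a Hessian-deviation bound (your Step~3). Your Step~3 is additional work the paper's route avoids, but it buys one genuine clarification: it shows that the object one ends with is the centred, profiled $\nabla\left(L^{\sbt}-L\right)(\sthetav)$, not the raw $\nabla L^{\sbt}(\sthetav)$ that the theorem literally prints. Those two differ by the profiled $\nabla L(\sthetav)$, whose $\D_0^{-1}$-norm is of order $\sqrt{J+x}$, well outside the claimed error order; the centred form is the one that matches the centred score in Theorem~\ref{BW}, has the conditional covariance $\frac{1}{n}\suml_i\xiv_i^s\xiv_i^{sT}$ invoked later in the bootstrap-validity proof, and is therefore what the statement must intend. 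Your derivation lands exactly there, so the proposal is correct and in fact more careful than the paper's own writeup on this point.
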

The theorem is effectively the same for $L(\thetav)$ as the re-sampling procedure replicates sufficient for the statement assumptions of a quasi log-likelihood (shown in section $\ref{BootstrapAnaloguesOfRealWorldConditionsAndDeviationBoundsOnScoreVector}$ Appendix).
\subsection{Small Modelling Bias}
In view of the re-sampling justification a separate discussion deserves a small modeling bias from Spokoiny, Zhilova 2015 \cite{spokoiny2015bootstrap}. The condition appears from the general way to prove the re-sampling procedure. Namely, for a small error term it is claimed
\[
\sup_t\left|\P\left(T_{LR}<t\right)-\P\left(T_{BLR}<t\right)\right|\leq \textit{error}+\|H_0^{-1}B_0^2H_0^{-1}\|_{op}
\]
with the matrices
\[
H_0^2=\suml_{i=1}^n\E\nabla\ell_i(\sthetav)\nabla^T\ell_i(\sthetav)\;\;\text{and}\;\;B_0^2=\suml_{i=1}^n\nabla\E\ell_i(\sthetav)\nabla^T\E\ell_i(\sthetav),
\]
where the term $\|H_0^{-1}B_0^2H_0^{-1}\|_{op}$ is assumed to be of the error order essentially meaning that the deterministic bias is small. However, the assumption 
\[
\|H_0^{-1}B_0^2H_0^{-1}\|_{op}\sim\textit{error}
\]
appears in the current development only in the form of the condition 'Target' in ($\ref{FSC}$). The substitution is possible due to the next lemma.
\begin{theorem}
	Assume that the condition 'Target' holds, then $\|H_0^{-1}B_0^2H_0^{-1}\|_{op}=0$.
\end{theorem}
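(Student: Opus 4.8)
The plan is to first strip away the matrix dressing. Since the design part of condition \textbf{(\ref{FSC})} makes $H_0^2$ positive definite, $H_0$ (its symmetric square root) is invertible, so $H_0^{-1} B_0^2 H_0^{-1}$ is positive semidefinite and its operator norm vanishes if and only if the matrix itself is zero, which — conjugating back by the invertible $H_0$ — happens if and only if $B_0^2 = 0$. Now $B_0^2 = \suml_{i=1}^n \nabla\E\ell_i(\sthetav)\,\nabla^T\E\ell_i(\sthetav)$ is a sum of rank-one positive semidefinite blocks, and for every $\thetav$ one has $\thetav^T B_0^2 \thetav = \suml_{i=1}^n\bigl(\nabla^T\E\ell_i(\sthetav)\,\thetav\bigr)^2$; hence $B_0^2 = 0$ if and only if $\nabla\E\ell_i(\sthetav) = 0$ for every $i \le n$. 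So the whole claim reduces to showing that condition \textbf{[Target]} forces each per-observation score $\nabla\E\ell_i$ to vanish at the target.

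\textbf{Telescoping the score equations.} By definition $\sthetav = \argmax_{\thetav}\E L(\thetav)$, so $\suml_{i=1}^n\nabla\E\ell_i(\sthetav) = 0$; applying \textbf{[Target]} with $A = \{1,\dots,n\}$ (admissible since $n \ge N$) this is moreover the \emph{unique} solution of that equation. Fix an arbitrary index $j \le n$ and set $A_j = \{1,\dots,n\}\setminus\{j\}$. In the regime under consideration $n > N$, so $|A_j| = n-1 \ge N$ and \textbf{[Target]} supplies a unique solution $\thetav_{A_j}$ of $\suml_{i \in A_j}\nabla\E\ell_i(\thetav) = 0$. The content of the target convention — Definition \ref{TF} together with the surrounding discussion that the model must point consistently at a single function irrespective of the sample — is exactly that this solution does not depend on which admissible index set one restricts to: it is the target, i.e. $\thetav_{A_j} = \sthetav$. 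Therefore $\suml_{i \ne j}\nabla\E\ell_i(\sthetav) = 0$, and subtracting this from $\suml_{i=1}^n\nabla\E\ell_i(\sthetav) = 0$ leaves $\nabla\E\ell_j(\sthetav) = 0$. Since $j$ was arbitrary, $\nabla\E\ell_i(\sthetav) = 0$ for all $i$, hence $B_0^2 = 0$ and, by the reduction above, $\|H_0^{-1}B_0^2 H_0^{-1}\|_{op} = 0$.

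\textbf{Where the difficulty sits.} The only non-mechanical step is the identification $\thetav_{A_j} = \sthetav$ in the previous paragraph. Viewed as pure algebra about the quadratic maps $\thetav \mapsto \nabla\E\ell_i(\thetav)$, uniqueness of the root of each sub-sample equation does \emph{not} by itself force those roots to coincide; what makes them coincide is the modelling premise encoded in Definition \ref{TF}, namely that the $\ell_i$ genuinely share one target (equivalently, that enlarging or shrinking the index set among sets of size $\ge N$ leaves the solution unmoved). The proof should therefore invoke that premise explicitly rather than attempt to extract it from the uniqueness clause alone; once it is in place, the spectral reduction and the one-index telescoping are routine.
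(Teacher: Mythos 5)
Your proof is correct and follows essentially the same route as the paper's: both deduce $\nabla\E\ell_j(\sthetav)=0$ for each $j$ by comparing the score equation over two admissible index sets differing in a single element (you drop $j$ from $\{1,\dots,n\}$, the paper adjoins $j$ to $\{1,\dots,N\}$) and then conclude $B_0^2=0$. Your explicit remark that the bare uniqueness clause of the [Target] condition does not by itself force the sub-sample roots to coincide, and that one must also invoke the invariance-of-target premise from Definition \ref{TF}, is a welcome clarification of a step the paper asserts without comment ("The condition 'Target' implies that $\sthetav=\sthetav_0=\sthetav_1$").
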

\begin{proof}
	By definition of a target of estimation
	\[
	\suml_{i=1}^N\nabla\E\ell_i(\sthetav_0)=0,\;\;\text{and}\;\;\nabla\E\ell_{j}(\sthetav_1)+\suml_{i=1}^N\nabla\E\ell_i(\sthetav_1)=0.
	\]
	The condition 'Target' implies that $\sthetav=\sthetav_0=\sthetav_1$. Meaning, that any particular choice of the term $\nabla\E\ell_{j}(\sthetav)$ with the index $j\in\{1,2,3...,n\}$ is also zero - $\suml_{i=1}^N\nabla\E\ell_i(\sthetav_0)=\suml_{i=1}^N\nabla\E\ell_i(\sthetav_1)$. Thus, $B_0^2=0$ and the statement follows.
\end{proof}
\section{Gaussian comparison and approximation}
There are two results that constitute a basis for the re-sampling ($\ref{BP}$). The first - Gaussian comparison - is taken from {G{\"o}tze}, F. and {Naumov}, A. and {Spokoiny}, V. and {Ulyanov}, V. \cite{2017arXiv170808663G} and adapted to the needs and notations in the work.
\begin{theorem}\label{GC}
	Assume centered Gaussian vectors $\xiv_0\sim\mathcal{N}(0,\Sigma_0)$ and $\xiv_1\sim\mathcal{N}(0,\Sigma_1)$ then it holds
	\[
	\sup_t\left|\P\left(\|\xiv_1\|<t\right)-\P\left(\|\xiv_0\|<t\right)\right|\leq\sup_{j=\{0,1\}}C\sqrt{Tr\Sigma_j}\|I-\Sigma_0^{-1}\Sigma_1\|_{op}
	\]
	with a universal constant $C<\infty$, where $\|\cdot\|_{op}$ stands for the operator norm of a matrix.
\end{theorem}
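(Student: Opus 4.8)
The plan is to join $\Sigma_0$ and $\Sigma_1$ by the straight segment of covariance matrices and integrate the derivative of the corresponding distribution function, which reduces the claim to an anti-concentration (``large ball probability'') estimate of the kind established in \cite{2017arXiv170808663G}. Fix $t>0$ and put $R\eqdef\Sigma_0^{-1}\Sigma_1-I$, so that $\|R\|_{op}=\|I-\Sigma_0^{-1}\Sigma_1\|_{op}$. One may assume $\|R\|_{op}<1/2$, the complementary regime being covered by the trivial bound $\sup_t|\,\cdots\,|\le1$ together with the normalization of the covariances used in the applications. For $s\in[0,1]$ set $\Sigma_s\eqdef(1-s)\Sigma_0+s\Sigma_1=\Sigma_0(I+sR)$, which is positive definite as a convex combination of positive definite matrices and satisfies $\|(I+sR)^{-1}\|_{op}\le(1-\|R\|_{op})^{-1}\le2$ by the Neumann series. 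Writing $p_s$ for the density of $\xiv_s\sim\mathcal{N}(0,\Sigma_s)$ and $F_s(t)\eqdef\P(\|\xiv_s\|\le t)=\int_{\|x\|\le t}p_s(x)\,dx$, the distribution functions in the statement are $F_0$ and $F_1$, and $F_1(t)-F_0(t)=\int_0^1\partial_s F_s(t)\,ds$.

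The first step is a heat-equation identity. Differentiating the explicit Gaussian density in $s$ and simplifying --- a purely algebraic computation, valid for an arbitrary symmetric increment $\Sigma_1-\Sigma_0$ --- gives $\partial_s p_s=\tfrac12\operatorname{Tr}\!\big((\Sigma_1-\Sigma_0)\nabla^2 p_s\big)$ with $\nabla^2 p_s$ the Hessian of $p_s$ in $x$. Since $\Sigma_1-\Sigma_0$ is constant in $x$, the right-hand side equals $\tfrac12\,\nabla\!\cdot\!\big((\Sigma_1-\Sigma_0)\nabla p_s\big)$, so the divergence theorem on the ball $\{\|x\|\le t\}$ converts the bulk integral into a surface integral over the sphere $\{\|x\|=t\}$, whose outward unit normal is $x/t$:
\[
\partial_s F_s(t)=\frac12\int_{\|x\|\le t}\nabla\!\cdot\!\big((\Sigma_1-\Sigma_0)\nabla p_s(x)\big)\,dx=\frac1{2t}\int_{\|x\|=t}\big\langle(\Sigma_1-\Sigma_0)\nabla p_s(x),\,x\big\rangle\,dS(x).
\]
Inserting $\nabla p_s(x)=-p_s(x)\,\Sigma_s^{-1}x$ then yields
\[
\partial_s F_s(t)=-\frac1{2t}\int_{\|x\|=t}p_s(x)\,\big\langle(\Sigma_1-\Sigma_0)\Sigma_s^{-1}x,\,x\big\rangle\,dS(x).
\]

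Next I would bound the integrand. On $\{\|x\|=t\}$ one has $|\langle(\Sigma_1-\Sigma_0)\Sigma_s^{-1}x,x\rangle|\le t^2\|(\Sigma_1-\Sigma_0)\Sigma_s^{-1}\|_{op}$, and, the spectral norm being invariant under transposition, $\|(\Sigma_1-\Sigma_0)\Sigma_s^{-1}\|_{op}=\|\Sigma_s^{-1}(\Sigma_1-\Sigma_0)\|_{op}=\|(I+sR)^{-1}R\|_{op}\le2\|R\|_{op}$, where I used $\Sigma_s^{-1}(\Sigma_1-\Sigma_0)=(I+sR)^{-1}(\Sigma_0^{-1}\Sigma_1-I)=(I+sR)^{-1}R$. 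Hence
\[
|\partial_s F_s(t)|\le t\,\|R\|_{op}\int_{\|x\|=t}p_s(x)\,dS(x)=t\,\|R\|_{op}\,f_{\|\xiv_s\|}(t),
\]
where $f_{\|\xiv_s\|}$ denotes the density of $\|\xiv_s\|$: by the polar decomposition $dx=dr\,dS_r$, the surface integral $\int_{\|x\|=t}p_s\,dS$ is exactly $\tfrac{d}{dt}\P(\|\xiv_s\|\le t)$. Integrating over $s\in[0,1]$ gives $|F_1(t)-F_0(t)|\le\|R\|_{op}\int_0^1 t\,f_{\|\xiv_s\|}(t)\,ds$.

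Finally I would invoke the anti-concentration input: the large-ball-probability estimate of \cite{2017arXiv170808663G}, applied to $\xiv_s$, provides a universal $C$ with $\sup_{t>0}t\,f_{\|\xiv_s\|}(t)\le C\sqrt{\operatorname{Tr}\Sigma_s}$, and $\operatorname{Tr}\Sigma_s=(1-s)\operatorname{Tr}\Sigma_0+s\operatorname{Tr}\Sigma_1\le\max_{j\in\{0,1\}}\operatorname{Tr}\Sigma_j$. Taking the supremum over $t$ then gives $\sup_t|F_1(t)-F_0(t)|\le C\sqrt{\max_j\operatorname{Tr}\Sigma_j}\,\|I-\Sigma_0^{-1}\Sigma_1\|_{op}$, which is the assertion, the symmetric appearance of $j\in\{0,1\}$ being produced by the bound on $\operatorname{Tr}\Sigma_s$ (equivalently, by the reparametrization $s\mapsto1-s$ and the analogous estimate with the roles of $\Sigma_0,\Sigma_1$ exchanged). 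The hard part is precisely this last ingredient: obtaining $\sup_t t\,f_{\|\xiv_s\|}(t)\lesssim\sqrt{\operatorname{Tr}\Sigma_s}$ with a universal constant, uniformly along the segment of covariances and for possibly strongly anisotropic $\Sigma_s$, is the delicate analytic point, and it is exactly what the large ball probability machinery of \cite{2017arXiv170808663G} supplies; the interpolation above is the routine device that repackages it into the stated Kolmogorov-distance bound.
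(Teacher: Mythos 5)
The paper does not prove Theorem \ref{GC} at all: it is explicitly imported from \cite{2017arXiv170808663G} and ``adapted to the needs and notations in the work,'' so there is no internal proof for you to match. Your heat-equation interpolation is a legitimate and standard route to Gaussian comparison bounds, and the calculus in the first two-thirds is correct: the identity $\partial_s p_s=\tfrac12\operatorname{Tr}\bigl((\Sigma_1-\Sigma_0)\nabla^2 p_s\bigr)$, the divergence-theorem reduction to a surface integral, the substitution $\nabla p_s=-p_s\Sigma_s^{-1}x$, and the operator-norm bound $\|(\Sigma_1-\Sigma_0)\Sigma_s^{-1}\|_{op}=\|(I+sR)^{-1}R\|_{op}\le2\|R\|_{op}$ under $\|R\|_{op}<1/2$ are all sound and lead correctly to $|\partial_sF_s(t)|\le t\,\|R\|_{op}\,f_{\|\xiv_s\|}(t)$.

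The gap is in the final step, and it is not a routine one. You assert that the large-ball-probability machinery of \cite{2017arXiv170808663G} supplies a universal $C$ with $\sup_{t>0}t\,f_{\|\xiv_s\|}(t)\le C\sqrt{\operatorname{Tr}\Sigma_s}$. That bound is not what the cited paper provides, and it is false in general. The anti-concentration statement the present paper actually quotes from that reference (in its final multiplier-bootstrap proof) is $\P(t<\|\xv\|<t+\epsilon)\le C\epsilon/\|\Sigma\|_{Fr}$, i.e.\ a bound on the density $f_{\|\xv\|}(t)$ by $C/\|\Sigma\|_{Fr}$, with no factor of $t$ and with the Frobenius norm, not $\sqrt{\operatorname{Tr}\Sigma}$, in the denominator. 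To pass from a bound on $f$ to a bound on $t\,f(t)$ you need to control the size of $t$ on the range where $f$ is non-negligible, and this does not collapse to $\sqrt{\operatorname{Tr}\Sigma}$ without extra hypotheses. A concrete counterexample to your claimed inequality: take $\Sigma_s=p^{-1}I_p$, so $\operatorname{Tr}\Sigma_s=1$ and $\sqrt{\operatorname{Tr}\Sigma_s}=1$; then $\|\xiv_s\|^2\sim p^{-1}\chi^2_p$ and a direct computation with Stirling's formula gives $\sup_t t\,f_{\|\xiv_s\|}(t)=\sup_v 2v\,f_{\chi^2_p}(v)\asymp\sqrt{p/\pi}\to\infty$, so no universal constant $C$ can make your asserted anti-concentration true. (In fact this same family $\Sigma_0=p^{-1}I_p$, $\Sigma_1=(1+\delta)p^{-1}I_p$ with $p^{-1/2}\ll\delta\ll1$ shows that Theorem \ref{GC} itself cannot be literally correct without an implicit normalization of the covariance; the paper only ever applies it after scaling so that each eigenvalue of the score covariance is $\Theta(1)$ and $\operatorname{Tr}\Sigma\asymp J$.) If you want to complete the argument you should either carry the two quantities $\|\Sigma_s\|_{Fr}$ and $\operatorname{Tr}\Sigma_s$ separately, pairing the density bound $f_{\|\xiv_s\|}(t)\le C/\|\Sigma_s\|_{Fr}$ with a Gaussian-concentration bound restricting $t$ to $O(\sqrt{\operatorname{Tr}\Sigma_s})$ plus exponentially small tails, or state the extra eigenvalue normalization explicitly and restrict to it; either way the ``delicate analytic point'' you flagged is not a black box the citation hands you in the form you wrote.
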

The second - Gaussian approximation - has been developed in the appendix (section [$\ref{GAR}$]). 

Introduce the notations for the vectors 
\[
\xiv_1\eqdef\suml_{i=1}^{n}\xiv_{1,i},\;\; \text{and}\;\;\xiv_0\eqdef\suml_{i=1}^{n}\xiv_{0,i}
\]
such that
\begin{enumerate}
	\item $\xiv_{1,i_0}$ and $\xiv_{0,i_1}$ are independent and sub-Gaussian
	\item $\E\xiv_{1}\xiv_{1}^T=\E\xiv_{0}\xiv_{0}^T=\Sigma$.
\end{enumerate}
Then a simplified version of the theorem [$\ref{S1GAR}$] from the appendix holds.
\begin{theorem}\label{SGAR}
	Assume the framework above, then 
	\[
	\sup_{t}\left|\P\left(\|\xiv_1\|<t\right)-\P\left(\|\xiv_0\|<t\right)\right|\leq C\frac{\left(Tr\Sigma\right)^{3/2}}{\sqrt{n}}
	\]
	with the universal constant $C<\infty$.
\end{theorem}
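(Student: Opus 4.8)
The plan is to route the comparison through the reference Gaussian law $\mathcal{N}(0,\Sigma)$, so that the claim splits into two Gaussian-approximation bounds, one for each sub-Gaussian sum, and reduces to the one-sided Gaussian approximation bound of Theorem~\ref{S1GAR}. Let $\xiv_G\sim\mathcal{N}(0,\Sigma)$. By the triangle inequality
\[
\sup_{t}\bigl|\P(\|\xiv_1\|<t)-\P(\|\xiv_0\|<t)\bigr|\leq\sum_{j\in\{0,1\}}\sup_{t}\bigl|\P(\|\xiv_j\|<t)-\P(\|\xiv_G\|<t)\bigr|,
\]
and $\mathcal{N}(0,\Sigma)$ is a legitimate common approximant because $\E\xiv_1\xiv_1^{T}=\E\xiv_0\xiv_0^{T}=\Sigma$. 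Thus it suffices to bound, for each fixed $j\in\{0,1\}$, the Kolmogorov distance between $\|\xiv_j\|$ and $\|\xiv_G\|$ by $C(Tr\,\Sigma)^{3/2}/\sqrt{n}$; absorbing the factor $2$ into $C$ then gives the theorem. For completeness I sketch the proof of this one-summand-at-a-time bound.

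The tool is the Lindeberg telescoping method applied to the ball $\{x:\|x\|<t\}$. Set $\Sigma_{j,i}\eqdef\E\xiv_{j,i}\xiv_{j,i}^{T}$, so that $\sum_{i=1}^{n}\Sigma_{j,i}=\Sigma$ by independence and centering, and write $\xiv_G=\sum_{i=1}^{n}\gammav_{j,i}$ with independent $\gammav_{j,i}\sim\mathcal{N}(0,\Sigma_{j,i})$. Approximate $\mathbf{1}\{\|x\|<t\}$ from above by a smooth $g_{t,\eps}$ with $\mathbf{1}\{\|x\|<t\}\leq g_{t,\eps}\leq\mathbf{1}\{\|x\|<t+\eps\}$ and $\|\nabla^{k}g_{t,\eps}\|\leq C_{k}\eps^{-k}$ for $k\leq3$ (and symmetrically from below). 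Replacing $\xiv_{j,i}$ by $\gammav_{j,i}$ one index at a time, the zeroth, first and second order Taylor terms cancel, since the swapped variable is independent of the untouched block and shares the same mean and covariance $\Sigma_{j,i}$; only third order remainders survive, each at most $C\eps^{-3}\bigl(\E\|\xiv_{j,i}\|^{3}+\E\|\gammav_{j,i}\|^{3}\bigr)$. Sub-Gaussianity of the summands gives $\E\|\xiv_{j,i}\|^{3}\lesssim(Tr\,\Sigma_{j,i})^{3/2}$, and in the balanced regime implicit in the simplified, i.i.d.-type setting $Tr\,\Sigma_{j,i}\lesssim Tr\,\Sigma/n$, whence
\[
\sum_{i=1}^{n}\bigl(\E\|\xiv_{j,i}\|^{3}+\E\|\gammav_{j,i}\|^{3}\bigr)\lesssim n\Bigl(\frac{Tr\,\Sigma}{n}\Bigr)^{3/2}=\frac{(Tr\,\Sigma)^{3/2}}{\sqrt{n}},
\]
so the total Lindeberg error is $\lesssim\eps^{-3}(Tr\,\Sigma)^{3/2}/\sqrt{n}$.

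Passing between $g_{t,\eps}$ and the ball indicator in both directions costs only the Gaussian anti-concentration quantity $\sup_{t}\P(t-\eps\le\|\xiv_G\|\le t+\eps)$, which is $O(\eps)$ uniformly in $t$ with a prefactor controlled by spectral functionals of $\Sigma$. Choosing $\eps$ to balance this against the Lindeberg term, and then bounding the finer functionals of $\Sigma$ that appear by $Tr\,\Sigma$, yields $\sup_{t}|\P(\|\xiv_j\|<t)-\P(\|\xiv_G\|<t)|\leq C(Tr\,\Sigma)^{3/2}/\sqrt{n}$, and the theorem follows from the triangle inequality above. I expect the main obstacle to be the Gaussian anti-concentration estimate for $\|\xiv_G\|$ and checking that, after the $\eps$-optimization, the dependence collapses cleanly to $Tr\,\Sigma$; a secondary point is that keeping the per-summand covariances matched at each swap—which is what makes the second order terms vanish—is precisely why the argument is routed through $\mathcal{N}(0,\Sigma)$ rather than interpolating directly between $\xiv_1$ and $\xiv_0$.
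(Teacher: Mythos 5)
Your route is genuinely different from the paper's. The paper proves Theorem~\ref{SGAR} by invoking Theorem~\ref{S1GAR}, whose proof interpolates \emph{directly} between the two non-Gaussian sums (per-summand covariances matched by the assumption $\E\xiv_{1,i}\xiv_{1,i}^T=\E\xiv_{2,i}\xiv_{2,i}^T=\Sigma/n$) and smooths with the specific kernel $f(x)=\Ind(x>0)-\tfrac{1}{2}\text{sign}(x)e^{-|x|}$, then uses the ODE $g_{\alpha}=g_{\infty}+g_{\alpha}''/\alpha^{2}$, Fourier inversion, and the Cauchy residue theorem (Lemma~\ref{LinSmoothing}, Lemma~\ref{Inverse}, Theorem~\ref{MainTh}) to recover $g_{\infty}$ \emph{exactly} as a contour integral of the $g_{\alpha}$ over a \emph{fixed} contour $-i\mathbf{S}(r_0)$. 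Your idea of routing through a common $\xiv_G\sim\mathcal{N}(0,\Sigma)$ is a reasonable response to the fact that the SGAR framework, as stated, only matches the aggregate covariance rather than the per-summand ones, and it keeps the second-order Taylor cancellation intact; that part is fine.

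The gap is in the final $\eps$-optimization, and it is not a small one. You bound the Lindeberg error by $\lesssim\eps^{-3}(Tr\,\Sigma)^{3/2}/\sqrt{n}$ and the smoothing cost by the Gaussian anti-concentration $\lesssim\eps$, then assert that balancing yields $(Tr\,\Sigma)^{3/2}/\sqrt{n}$. It does not: setting $A\eqdef(Tr\,\Sigma)^{3/2}/\sqrt{n}$, the minimum of $\eps^{-3}A+\eps$ is attained at $\eps\asymp A^{1/4}$ and equals $\asymp A^{1/4}=(Tr\,\Sigma)^{3/8}/n^{1/8}$, which is a fourth root of the bound you need. Generic mollification with $\|\nabla^{k}g_{t,\eps}\|\lesssim\eps^{-k}$ plus anti-concentration inevitably charges for the smoothing both in the Taylor remainder and in the indicator-versus-mollifier comparison, and the two costs pull in opposite directions. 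The paper's Fourier/contour device is precisely what avoids this double payment: the smoothing scale is the fixed contour radius $r_0$ covering the poles of $G_{\infty}(\omega)$, not a parameter sent to zero, so Theorem~\ref{S1GAR} keeps only a constant factor $r_0^{4}$ (absorbed into $C$) and no anti-concentration step is needed at all. To repair your argument you would need either a smoothing with this exact-recovery property, or a different scheme (e.g.\ characteristic-function/Esseen-type, or Bergström-style induction) that does not incur the $\eps$-optimization penalty; as written, the sketch produces a bound of the wrong order in both $Tr\,\Sigma$ and $n$.
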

Finally, the critical value $z_{\alpha}$ and the empirical $z^{\sbt}_{\alpha}$ are glued together by a matrix concentration inequalities from the section ($\ref{Matrix}$). 

The essence of the re-sampling is to translate the closeness of $z_{\alpha}$ and $z^{\sbt}_{\alpha}$ into the closeness of the matrices $\E\xiv^s\xiv^{sT}\sim\E\xiv_{\sbt}^s\xiv_{\sbt}^{sT}$  -with the help of the Wilks expansion (theorems [$\ref{SquareRootWilksResult}$,$\ref{BootstrapSquareRootWilksResult}$]) and Gaussian comparison result - and approximate unknown $\xiv_s,\xiv_{\sbt}^s$ by the respective Gaussian counterparts. It all amounts to the central theorem.
\begin{theorem}
	The parametric model ($\ref{AMIV}$) in the introduction - $\delta_k=0$ -  under the assumption ($\ref{FSC}$) enables
	\[
	\left|\P\left(\left(T_{LR}-J\right)/\sqrt{J}>z^{\sbt}_{\alpha}\right)-\alpha\right|\leq C_0\frac{J^{3/2}}{\sqrt{Kn}}+C_1\sqrt{\frac{J\log J+x}{Kn}}
	\]
	with a dominating probability $>1-C_2e^{-x}$ and universal constants $C_0,C_1<\infty$.
\end{theorem}
\begin{rmrk}
	Note that the critical value $z^{\sbt}_{\alpha}$ depends on experimental data at hand and is fixed when the expectation is taken $\E\Ind\left(\left(T_{LR}-J\right)/\sqrt{J}>z^{\sbt}_{\alpha}\right)$ with respect to the data generating $T_{LR}$ statistics.
\end{rmrk}
\section{Numerical: conditional and bootstrap log-likelihood ratio tests}\label{Numerical Study}
Calibrate BLR test on a model from Andrews, Moreira and Stock \cite{DA06}. In the paper the authors proposed conditional likelihood ratio test (CLR - $T_{CLR}$) used here as a benchmark. The simulated model reads as  
\begin{equation}\label{BM}
\Yv_1=\Zv^T\piv\beta+\epsv_1,
\end{equation}
\begin{equation}
\Yv_2=\Zv^T\piv+\epsv_2,
\end{equation}
where $\Yv_1,\Yv_2,\epsv_1,\epsv_2\in\R^n$, $\Zv\in\R^{J\times n}$, $\piv\in\R^J$ and $\beta\in\R$ with a matrix ${\Zv}_{i,j}\eqdef \cos\left(\frac{2\pi i j}{n}\right)$, $\beta^*=1$ and $\piv^*_i\sim i$ (see section 1). And the hypothesis 
\[
H_0:\;\beta^*=\beta_0\;\;\text{against}\;\;H_1:\;\beta^*\neq\beta_0
\]
on a value of a structural parameter $\beta$. For the hypothesis Moreira \cite{MJM03} and later Andrews, Moreira and Stock \cite{DA06} construct a CLR test based on the two vectors
\[
\Sv=(\Zv^T\Zv)^{-\frac{1}{2}}\Zv^T\Yv\bv(\bv^T\Omega\bv)^{-\frac{1}{2}}
\]
and
\[
\Tv=(\Zv^T\Zv)^{-\frac{1}{2}}\Zv^T\Yv\av(\av^T\Omega^{-1}\av)^{-\frac{1}{2}}
\]
with the notations $\Yv\eqdef\left[\Yv_1,\Yv_2\right]$, $\av^T\eqdef\left(\beta_0,1\right)$ and $\bv^T\eqdef\left(1,-\beta_0\right)$. $\Sv$ and $\Tv$ are independent and together present sufficient statistics for the model ($\ref{BM}$) with only $\Tv$ depending on instruments' identification, thus conditioning on $\Tv$ and CLR test. Log-likelihood ratio statistics in ($\ref{BM}$) is represented as (see Moreira 2003 \cite{MJM03}) -
\[
T_{LR}=\Sv^T\Sv-\Tv^T\Tv+\sqrt{(\Sv^T\Sv-\Tv^T\Tv)^2+4(\Sv^T\Tv)^2}.
\]
Additionally Lagrange multiplier and Anderson-Rubin tests are given by
\[
T_{LM}=\frac{(\Sv^T\Tv)^2}{\Tv^T\Tv},
\]
\[
T_{AR}=\frac{\Sv^T\Sv}{J}
\]
The latter two are known to perform acceptably except for weakly identified case.
\\
First, correctly specified model is generated for the sample of $n=200$ and with weak instruments ($\piv^{*T}\Zv\Zv^T\piv^*=\frac{C}{n}$). In this case powers of $T_{BLR}$, $T_{CLR}$ and true $T_{LR}$ tests are drawn on the figure ($\ref{Fig.PowerEnvelopeWeakLRBLRCLR}$). To be consistent $T_{BLR}$ is also compared to $T_{LM}$ and $T_{AR}$. The comparison is given on the figure ($\ref{Fig.PowerEnvelopeWeakLRARLM}$) and the data in the case is aggregated in the table ($\ref{Tab:PowerEnvelopeWeakLRBLRCLRARLM}$).
\\
Moreover an important step is to check how robust $T_{BLR}$ to a misspecification of the model. Three special examples are simulated:
\begin{enumerate}
	\item $\epsv_1,\epsv_2\sim Laplace(0,1)$,
	\item $\epsv_{1i},\epsv_{2i}\sim \mathcal{N}(0,\frac{5i}{n}\Omega)$,
	\item $\epsv_{1i},\epsv_{2i}\sim \mathcal{N}(0,(2+1.5\sin(6\pi i/n))\Omega)$.
\end{enumerate}
\text{}\\
Experiment (1) can be found on the figures ($\ref{WeakMisspecifiedLaplace}$), ($\ref{WeakMisspecifiedLaplaceARLM}$) and in the table ($\ref{Tab:PowerEnvelopeWeakMispecifiedLaplaceLRBLRCLRARLM}$). Numerical study of the experiment (2) with misspecified heteroskedastic error is given on the figure ($\ref{WeakMisspecifiedHeteroskedastic}$)  and collected in the table ($\ref{Tab:PowerEnvelopeWeakMispecifiedHeteroskedasticLRBLRCLRARLM}$). The last experiment is shown on the figure ($\ref{WeakMisspecifiedHeteroskedasticPeriodic}$) and in the table ($\ref{Tab:PowerEnvelopeWeakMispecifiedHeteroskedasticPeriodicLRBLRCLRARLM}$). 
\begin{rmrk}
	All the figures and tables are collected in the end of the work.
\end{rmrk}
\section{Strength of instrumental variables}
On practice one wants to distinguish instruments based on its strength. For the clarity of exposition the section considers a simplified log-likelihood ($\ref{LM}$) identifying complete model with the Fisher information matrix
\[
\D_0^2=-\Delta\E L(\sthetav)=\suml_{i=1}^n\suml^K_{k=1}\E\etav^*_{ki}\etav^{*T}_{ki}=\suml_{i=1}^n\suml^K_{k=1}\E W_i^{k}\Psiv(X_i)\Psiv^T(X_i)W_i^{k}.
\]
Weak instrumental variables introduce an unavoidable lower bound on estimation error (lemma [$\ref{WIV}$], see the proof in the appendix ($\ref{SWIV}$)). 
\begin{lemma}\label{WIV}
	Let conditions ($\ref{FSC}$) hold then
	\[
	\exists N>0,\;\text{s.t.}\;\;\forall n>N\;\;\E\|\widetilde{\thetav}-\sthetav\|^2\geq \frac{C_J}{\sup_{\|\uv\|=1}\suml_{i=1}^n\suml_{k=1}^K\E\left(\uv^T\Psiv^(X_i)W_i^{k}\right)^2},
	\]
	with a factor $C_J>0$ depending on dimensionality $J$.
\end{lemma}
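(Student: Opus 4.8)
The plan is to use that, for the simplified quasi log-likelihood ($\ref{LM}$), the map $\thetav\mapsto L(\thetav)$ is an exact concave quadratic, so that $\tthetav$ has a linear closed form and $\E\|\tthetav-\sthetav\|^2$ can be evaluated directly. Put $\etav^i_k\eqdef W^k_i\Psiv(X_i)$ and let $\widehat{\D}^2\eqdef\suml_{i=1}^n\suml_{k=1}^K\etav^i_k\etav^{iT}_k$ denote the empirical Fisher matrix, so that $-\nabla^2 L\equiv\widehat{\D}^2$ and $\E\widehat{\D}^2=\D_0^2$. Since the ``Design'' condition in ($\ref{FSC}$) presupposes $\D_0^2$ nonsingular, the matrix concentration inequalities of section ($\ref{Matrix}$) give, for all $n\geq N$ (enlarging $N$ if necessary), $\tfrac12\D_0^2\preceq\widehat{\D}^2\preceq 2\D_0^2$ off an event $\mathcal{B}$ with $\P(\mathcal{B})\leq\tfrac12$. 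On $\mathcal{B}^c$ the first-order condition $\nabla L(\tthetav)=0$ and the quadratic structure yield the exact identity $\tthetav-\sthetav=\widehat{\D}^{-2}\nabla L(\sthetav)$; and since $\E L$ is itself concave quadratic with maximiser $\sthetav$, the score $\nabla L(\sthetav)=\suml_{i,k}\etav^i_k\varepsilon^i_k$, with residual $\varepsilon^i_k\eqdef Z^i_k-\etav^{iT}_k\sthetav$, is centred.

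I would then condition on $\mathcal{E}\eqdef\sigma(\{\etav^i_k:i,k\})$, noting that both $\widehat{\D}^2$ and $\mathcal{B}$ are $\mathcal{E}$-measurable. By the ``Error/IV'' condition the residuals are independent of $\mathcal{E}$, and combined with independence of the observations this gives $\Cov\bigl(\nabla L(\sthetav)\mid\mathcal{E}\bigr)=\widehat{S}$ with $\widehat{S}=\suml_{i=1}^n\Cov\bigl(\suml_k\etav^i_k\varepsilon^i_k\mid\mathcal{E}\bigr)$. The variance normalisation built into the ``Identifiability'' part of ($\ref{FSC}$) (the $\sigma_k^2$ are pinned near $1$) makes $\widehat{S}$ comparable to the empirical Fisher matrix, $\widehat{S}\succeq c\,\widehat{\D}^2$ for a constant $c>0$. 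Pulling $\Ind(\mathcal{B}^c)$ out of the conditional expectation and using cyclicity of the trace,
\[
\E\bigl[\|\tthetav-\sthetav\|^2\,\Ind(\mathcal{B}^c)\mid\mathcal{E}\bigr]=\Ind(\mathcal{B}^c)\,\tr\bigl(\widehat{\D}^{-4}\widehat{S}\bigr)\geq c\,\Ind(\mathcal{B}^c)\,\tr(\widehat{\D}^{-2})\geq\frac{cJ}{\|\widehat{\D}^2\|_{op}}\,\Ind(\mathcal{B}^c),
\]
the last inequality because $\tr(M^{-1})\geq J/\|M\|_{op}$ for every positive $J\times J$ matrix $M$.

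On $\mathcal{B}^c$ one has $\|\widehat{\D}^2\|_{op}\leq 2\|\D_0^2\|_{op}$, so taking expectations and using $\P(\mathcal{B})\leq\tfrac12$ gives $\E\|\tthetav-\sthetav\|^2\geq cJ/(4\|\D_0^2\|_{op})$; and since
\[
\|\D_0^2\|_{op}=\supl_{\|\uv\|=1}\uv^T\D_0^2\uv=\supl_{\|\uv\|=1}\suml_{i=1}^n\suml_{k=1}^K\E\bigl(\uv^T\Psiv(X_i)W^k_i\bigr)^2,
\]
this is exactly the asserted bound, with $C_J\eqdef cJ/4>0$. I expect the one genuinely delicate point to be the comparability $\widehat{S}\succeq c\widehat{\D}^2$: this is where the structural ``Identifiability''/``Error/IV'' hypotheses are used in an essential way, since a mere $\widehat{S}\succeq cI_J$ would only deliver the weaker $C_J/\|\D_0^2\|_{op}^2$, and it is precisely the statement that the chosen instruments and residuals are non-degenerate. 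The matrix-concentration step is routine given section ($\ref{Matrix}$) and the design/moment conditions, and the remainder is the quadratic identity and a conditional trace computation. An alternative that avoids the explicit closed form is to combine the Fisher expansion underlying Theorem ($\ref{W}$), $\|\D_0(\tthetav-\sthetav)-\xiv\|\leq C(J+x)/\sqrt{Kn}$ with $\xiv=\D_0^{-1}\nabla L(\sthetav)$, with $\E\|\xiv\|^2=\tr(\D_0^{-2}V_0^2)\geq cJ$ (once more via $V_0^2=\Var(\nabla L(\sthetav))\succeq c\D_0^2$) and the elementary $\|\D_0(\tthetav-\sthetav)\|^2\leq\|\D_0^2\|_{op}\|\tthetav-\sthetav\|^2$; there the obstacle shifts to controlling the contribution of the exceptional set of the expansion to the expectation.
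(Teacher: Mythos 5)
Your primary route is genuinely different from the paper's. The paper proves the lemma via the Fisher expansion $\|\D_0(\tthetav-\sthetav)-\xiv\|\leq C(J+x)/\sqrt{n}$ on a dominating-probability set $\Upsilon$, combined with the elementary splitting $\|\xiv\|^2\leq2\|\xiv-\D_0(\tthetav-\sthetav)\|^2+2\|\D_0(\tthetav-\sthetav)\|^2$, giving $\E_{\Upsilon}\|\D_0(\tthetav-\sthetav)\|^2\geq\tfrac12\E_{\Upsilon}\|\xiv\|^2-C(J+x)^2/n$ and then passing (implicitly) from $\E\|\D_0(\tthetav-\sthetav)\|^2$ to $\E\|\tthetav-\sthetav\|^2$ via $\|\D_0 v\|^2\leq\|\D_0^2\|_{op}\|v\|^2$. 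This is exactly the ``alternative'' you sketch at the end, and your worry about the exceptional set is actually not an obstacle here: since one only needs a lower bound on the expectation, restricting to $\Upsilon$ is automatically favourable, and the only requirement is that $\E_{\Upsilon}\|\xiv\|^2$ stay of order $J$, which holds since $\Upsilon^c$ has exponentially small probability. Your main argument — closed form $\tthetav-\sthetav=\widehat{\D}^{-2}\nabla L(\sthetav)$, matrix concentration to sandwich $\widehat{\D}^2$ between multiples of $\D_0^2$, then the conditional trace identity $\tr(\widehat{\D}^{-4}\widehat{S})$ — is a clean alternative that exploits the exact quadratic structure of the simplified log-likelihood ($\ref{LM}$) and yields an explicit $C_J=cJ/4$, whereas the paper's $C_J$ is stated implicitly in terms of $\E_{\Upsilon}\|\xiv\|^2$. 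You are right that the delicate point in your route is the comparability $\widehat{S}\succeq c\widehat{\D}^2$: this needs the residual variances $\sigma_k^2$ to be bounded away from zero, which the ``Identifiability'' condition as written only pins near $1$ in a rather loose way; the paper's Fisher-expansion route sidesteps this by absorbing the same information into the deviation bound on $\|\xiv\|$ (Theorem $\ref{DeviationOnScore}$). The trade-off is that your argument is confined to the quadratic case where the closed form holds, whereas the Fisher expansion is the tool the paper needs anyway for the rest of the analysis and extends beyond exact quadratics.
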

\text{}\\
In view of a hypothesis testing it amounts to an indifference region of a test (see the section 'Numerical').\\
\textbf{Classification of Instrumental Variables}:\label{CL}
\begin{enumerate}
	\item Weak instruments
	\[
	\sup_{\|\uv\|=1}\suml_{i=1}^n\suml_{k=1}^K\E\left(\uv^T\Psiv(X_i)W_i^{k}\right)^2\sim  K/C\;\;\text{and}\;\;\E\|\tthetav-\sthetav\|^2\geq \frac{CC_J}{K}
	\]
	\item Semi-strong instruments with $0<\alpha<1$
	\[
	\sup_{\|\uv\|=1}\suml_{i=1}^n\suml_{k=1}^K\E\left(\uv^T\Psiv(X_i)W_i^{k}\right)^2\sim  K n^{\alpha}/C\;\;\text{and}\;\;\E\|\tthetav-\sthetav\|^2\geq\frac{CC_J}{Kn^{1-\alpha}}
	\]
	\item Strong instruments 
	\[
	\sup_{\|\uv\|=1}\suml_{i=1}^n\suml_{k=1}^K\E\left(\uv^T\Psiv(X_i)W_i^{k}\right)^2\sim  Kn/C\;\;\text{and}\;\;\E\|\tthetav-\sthetav\|^2\geq\frac{CC_J}{Kn}
	\]
\end{enumerate}
Weak instruments effectively cancel an analysis based on a limiting distribution of a test statistics. Therefore, an IV regression requires a treatment under the finite sample assumption. 
\section{Appendix}\label{ModelsAppendix}
\subsection{Classification of instrumental variables}\label{SWIV}
\begin{lemma}
	Let conditions ($\ref{FSC}$) hold then
	\[
	\exists N>0,\;\text{s.t.}\;\;\forall n>N\;\;\E\|\widetilde{\thetav}-\sthetav\|^2\geq \frac{C_J}{\sup_{\|\uv\|=1}\suml_{i=1}^n\suml_{k=1}^K\E\left(\uv^T\Psiv^(X_i)W_i^{k}\right)^2},
	\]
	with a factor $C_J>0$ depending on dimensionality $J$.
\end{lemma}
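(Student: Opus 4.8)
The bound is an information-type (Cram\'er--Rao) lower bound, and the plan is to extract it from the Wilks expansion of Theorem~\ref{W} together with the exact quadratic structure of $L$. First, since
\[
L(\thetav)=-\frac12\suml_{k=1}^K\suml_{i=1}^n\bigl(Z_k^i-\etav_k^{iT}\thetav\bigr)^2-\frac{\lambda}{2}\|\thetav\|^2
\]
is quadratic in $\thetav$, completing the square at the maximiser $\tthetav$ gives, for every $\thetav$, $2\bigl(L(\tthetav)-L(\thetav)\bigr)=(\tthetav-\thetav)^{T}\Dh^{2}(\tthetav-\thetav)$ with $\Dh^{2}\eqdef-\nabla^{2}L=\suml_{k=1}^K\suml_{i=1}^n\etav_k^{i}\etav_k^{iT}+\lambda I$; in particular
\[
\|\tthetav-\sthetav\|^{2}\ \ge\ \frac{2L(\tthetav,\sthetav)}{\lambda_{\max}(\Dh^{2})}.
\]
It therefore suffices to bound $2L(\tthetav,\sthetav)$ from below and $\lambda_{\max}(\Dh^{2})$ from above.

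For the numerator I would invoke Theorem~\ref{W}: on an event of probability at least $1-5e^{-x}$ one has $\sqrt{2L(\tthetav,\sthetav)}\ge\|\xiv\|-C(J+x)/\sqrt{Kn}$ with $\xiv=\D_0^{-1}\nabla L(\sthetav)$ and $\D_0^{2}=-\Delta\E L(\sthetav)$. Because $\E\nabla L(\sthetav)=0$, $\E\|\xiv\|^{2}=\tr\bigl(\D_0^{-1}V_0^{2}\D_0^{-1}\bigr)$ with $V_0^{2}=\Var\{\nabla L(\sthetav)\}$; the \emph{Error/IV} independence in (\ref{FSC}) gives $V_0^{2}=\suml_{k=1}^K\sigma_k^2\suml_{i=1}^n\E\etav_k^{i}\etav_k^{iT}$, which by the \emph{Identifiability} condition differs from $\D_0^{2}$ only by a term dominated by the penalty $\lambda$ and hence of lower order in the regime where the instruments are informative; thus $\E\|\xiv\|^{2}\ge(1-o(1))J$. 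Chi-square concentration for $\|\xiv\|^{2}$ (its sub-Gaussianity coming from the \emph{Moments} condition) then yields $\|\xiv\|^{2}\ge J/2$ with overwhelming probability; taking $x\asymp\log n$ and $n>N$ large makes the Wilks remainder $o(\|\xiv\|)$, and the contribution of the complementary event to $\E\|\tthetav-\sthetav\|^{2}$ is absorbed using the exponential moment of the score. Altogether $\E[2L(\tthetav,\sthetav)]\ge c\,J$.

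For the denominator I would replace $\lambda_{\max}(\Dh^{2})$ by $\lambda_{\max}(\D_0^{2})$ via the operator-norm deviation bounds of Section~\ref{Matrix}: under the \emph{Design} and \emph{Moments} conditions, $\lambda_{\max}(\Dh^{2})\le 2\lambda_{\max}(\D_0^{2})$ on an event of probability at least $1-Ce^{-x}$, which I intersect with the event above. Combining,
\[
\E\|\tthetav-\sthetav\|^{2}\ \ge\ \frac{c\,J}{2\,\lambda_{\max}(\D_0^{2})}\,(1-o(1)),
\]
and since $\lambda_{\max}(\D_0^{2})=\lambda+\sup_{\|\uv\|=1}\suml_{k=1}^K\suml_{i=1}^n\E\bigl(\uv^{T}\Psiv(X_i)W_i^{k}\bigr)^{2}$, with the penalty at most of the order of the supremum in the informative regime, this is the asserted inequality with a factor $C_J$ of order $J$, valid for all $n>N$ once $N$ is large enough for all remainders to be genuinely of lower order.

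The main obstacle is the passage from these high-probability statements to a bound \emph{in expectation}, i.e.\ controlling $\E[\|\tthetav-\sthetav\|^{2}\Ind(\mathcal{B})]$ on the bad event $\mathcal{B}$ where the Wilks remainder is non-negligible or $\Dh^{2}$ is nearly singular. In the weak-instrument regime $\D_0^{2}$ is itself small, so the relative fluctuations of $\Dh^{2}$ are not automatically of lower order; it is exactly here that the full force of the \emph{Design}, \emph{Moments} and \emph{Target} conditions together with the concentration bounds of Section~\ref{Matrix} is needed, and the role of $N$ is to render all the error factors uniformly small.
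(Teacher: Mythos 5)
Your route is valid in outline but is not the one the paper takes, and it is noticeably longer. The paper uses the \emph{Fisher} expansion rather than the Wilks expansion: on an event $\Upsilon$ of probability at least $1-Ce^{-x}$ one has $\|\D_0(\tthetav-\sthetav)-\xiv\|\leq C(J+x)/\sqrt{n}$, where $\D_0^2=-\Delta\E L(\sthetav)$ is the \emph{deterministic} Fisher matrix. The elementary bound $\|\xiv\|^2\leq 2\|\xiv-\D_0(\tthetav-\sthetav)\|^2+2\|\D_0(\tthetav-\sthetav)\|^2$ and nonnegativity give
\[
\E\|\D_0(\tthetav-\sthetav)\|^2\;\geq\;\E_{\Upsilon}\|\D_0(\tthetav-\sthetav)\|^2\;\geq\;\tfrac12\,\E_{\Upsilon}\|\xiv\|^2-C(J+x)^2/n,
\]
and then $\E\|\tthetav-\sthetav\|^2\geq\E\|\D_0(\tthetav-\sthetav)\|^2/\lambda_{\max}(\D_0^2)$ finishes the argument, with $\lambda_{\max}(\D_0^2)=\sup_{\|\uv\|=1}\sum_{i,k}\E(\uv^T\Psiv(X_i)W_i^k)^2$ exactly the denominator in the statement. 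By contrast, you route the estimate through $2L(\tthetav,\sthetav)=(\tthetav-\sthetav)^T\Dh^2(\tthetav-\sthetav)$ with the \emph{empirical} Hessian $\Dh^2$, and therefore have to pay an extra price: you need a matrix concentration step ($\lambda_{\max}(\Dh^2)\leq 2\lambda_{\max}(\D_0^2)$) and you must handle the correlation between the random numerator and random denominator before you can take expectations. All of this is avoidable if one starts from the Fisher expansion, which already involves the deterministic $\D_0$.

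Your closing paragraph flags as the ``main obstacle'' the need to control the contribution of the bad event $\mathcal{B}$ to $\E\|\tthetav-\sthetav\|^2$. For a \emph{lower} bound on the expectation of a nonnegative quantity this is not an obstacle at all: one simply drops that contribution, $\E\|\tthetav-\sthetav\|^2\geq\E[\|\tthetav-\sthetav\|^2\Ind(\mathcal{B}^c)]$, which is precisely what the paper does via $\E\geq\E_{\Upsilon}$. So the remark about ``absorbing'' the complementary event via an exponential moment of the score is unnecessary, and singling it out as the crux distracts from the real source of extra work in your argument, namely the appearance of the empirical $\Dh^2$ in place of $\D_0^2$.

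Two smaller comments. First, you claim $\E\|\xiv\|^2\geq(1-o(1))J$ using the penalty to make $V_0^2$ and $\D_0^2$ comparable; the paper sidesteps any such discussion and simply defines $C_J\eqdef\tfrac12\E_{\Upsilon}\|\xiv\|^2-C(J+x)^2/N$ once $N$ is large enough that this is positive, without committing to the precise size of $\E_{\Upsilon}\|\xiv\|^2$. Second, the lemma is stated for the unpenalized $\D_0^2$ of the section on instrument strength (no $\lambda I$ term), so the identification $\lambda_{\max}(\D_0^2)=\sup_{\|\uv\|=1}\sum_{i,k}\E(\uv^T\Psiv(X_i)W_i^k)^2$ is exact and you do not need to argue the penalty is of lower order.
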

\begin{proof}
	Fisher expansion (Spokoiny \cite{spokoiny2012penalized}) on the set of dominating probability $\P\left(\Upsilon\right)>1-C e^{-x}$ is written as
	\[
	\|\D_0\left(\tthetav-\sthetav\right)-\xiv\|\leq C(J+x)/\sqrt{n}.
	\]
	with the matrix $\D_0^2=\suml_{i=1}^n\suml^K_{k=1}\E W_i^{k}\Psiv(X_i)\Psiv^T(X_i)W_i^{k}$. Introduce also an inequality 
	\[
	\|\xiv\|^2\leq\left(\|\xiv-\D_0\left(\tthetav-\sthetav\right)\|+\|\D_0\left(\tthetav-\sthetav\right)\|\right)^2\leq2\|\xiv-\D_0\left(\tthetav-\sthetav\right)\|^2+2\|\D_0\left(\tthetav-\sthetav\right)\|^2.
	\]
	It gives 
	\[
	\E\|\D_0(\tthetav-\sthetav)\|^2\geq\E_{\Upsilon}\|\D_0(\tthetav-\sthetav)\|^2\geq\frac{1}{2}\E_{\Upsilon}\|\xiv\|^2-C(J+x)^2/n
	\]
	and the inquired statement follows with $N>0$ s.t. $\inf_N\{\frac{1}{2}\E_{\Upsilon}\|\xiv\|^2-C(J+x)^2/N>0\}$ and a constant $C_J\eqdef\frac{1}{2}\E_{\Upsilon}\|\xiv\|^2-C(J+x)^2/N$. 
\end{proof}
\subsection{Non-parametric bias}\label{AppendixBias}
The bias term - $\bv_{J}\eqdef\|\hat{\thetav}-\sthetav\|$ - between parametric and non-parametric functions from the model in chapter $\ref{NonParamteric}$ is quantified in the lemma.
\begin{lemma}\label{DetermenisticNon-parametricError}
	Assume that basis functions $\psi_j$ follow -
	\[
	\psi_j^{(s)}\leq j^{s}\psi_j
	\]
	with some positive constant $s3$. Let $f(x)$ be s.t. $f\in\mathcal{S}^s$ where 
	\[
	\mathcal{S}^s\eqdef\{f:||D^{s}g||\leq C_{f}\},
	\]
	with the notation $D^{s}(\cdot)\eqdef\frac{\partial^{s}}{\partial x}(\cdot)$. Then the bias satisfies
	\[
	\bv_{J}=\|\hat{\thetav}-\sthetav\|\leq C_{f}J^{-s}.
	\]
\end{lemma}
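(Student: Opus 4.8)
The plan is to identify $\sthetav$ with the truncated vector of generalized Fourier coefficients $\theta^*_j=\int_{\mathbf{Q}}f(x)\psi_j(x)\,dx$, $j=\overline{1,J}$, from~($\ref{A}$), and $\hat{\thetav}$ with the analogous coefficient sequence of the full (non-truncated) $f$. Since $\{\psi_j\}$ is orthonormal, Parseval's identity gives
\[
\|\hat{\thetav}-\sthetav\|^2=\|f-\hat{f}\|^2_{\mathbb{L}_2\left[\mathbf{Q}\right]}=\suml_{j>J}(\theta^*_j)^2,
\]
so everything reduces to a tail estimate on the squared coefficients.

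Next I would convert the smoothness hypothesis $f\in\mathcal{S}^s$ into a weighted summability bound for $(\theta^*_j)$. Writing $f=\suml_{j\geq1}\theta^*_j\psi_j$ and differentiating $s$ times term by term, the structural assumption $\psi_j^{(s)}\leq j^s\psi_j$ --- which, as for a trigonometric basis, means $D^s$ merely permutes the indices and multiplies by $\pm j^s$ --- shows that the generalized Fourier coefficients of $D^sf$ have magnitude $j^s|\theta^*_j|$. Applying Parseval once more together with the bound $\|D^sf\|\leq C_{f}$ defining $\mathcal{S}^s$ yields $\suml_{j\geq1}j^{2s}(\theta^*_j)^2=\|D^sf\|^2\leq C_{f}^2$.

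Then the claim follows by a one-line monotonicity argument: for every $j>J$ we have $j^{-2s}\leq J^{-2s}$, hence
\[
\|\hat{\thetav}-\sthetav\|^2=\suml_{j>J}(\theta^*_j)^2=\suml_{j>J}j^{-2s}\bigl(j^{2s}(\theta^*_j)^2\bigr)\leq J^{-2s}\suml_{j>J}j^{2s}(\theta^*_j)^2\leq C_{f}^2\,J^{-2s},
\]
and taking square roots gives $\bv_J\leq C_{f}J^{-s}$, which is the assertion. I expect the only genuine difficulty to be the rigorous justification of the term-by-term $s$-fold differentiation and of the exact form of $\psi_j^{(s)}$ extracted from the (somewhat informal) pointwise bound: one must check that the series for $f$ may be differentiated inside $\mathbb{L}_2\left[\mathbf{Q}\right]$ --- which itself rests on the finiteness of $\suml_{j\geq1} j^{2s}(\theta^*_j)^2$, so a short approximation argument through finite partial sums is needed --- and that when $\psi_j^{(s)}=\pm j^s\psi_{\sigma(j)}$ for an index permutation $\sigma$ preserving index magnitudes, the factor $j^s$ is still the correct weight; once these are in place, the Parseval and monotonicity steps above are routine.
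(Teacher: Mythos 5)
Your proof is correct and follows essentially the same route as the paper's: identify the bias with the $\mathbb{L}_2$ tail $\|\sum_{j>J}\theta^*_j\psi_j\|$ via Parseval, pull out $J^{-2s}\leq j^{-2s}$ on the tail, and close with the Sobolev bound $\|D^sf\|\leq C_f$ after interpreting $\psi_j^{(s)}\sim j^s\psi_j$ as saying that $D^s$ scales the $j$-th coefficient by $j^s$. You are in fact a bit more careful than the paper, which writes the last step as $\|\sum_j\theta^*_j j^s\psi_j\|\leq\|D^sf\|$ and glosses over both the direction of the stated pointwise inequality and the term-by-term differentiation you correctly flag as the one point needing justification.
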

\begin{myproof}
	Straightforwardly using smoothness of functions from a Sobolev class it can be argued for $s<\infty$ that -
	\[
	J^{s}\|\hat{\thetav}^*-\sthetav\|=J^{s}\|\suml_{j}\theta^*_j\psi_j-\suml_{j\leq J}\theta^*_j\psi_j\|\leq\|\suml_{j}\theta^*_{j}j^{s}\psi_j\|\leq\|D^sf\|\leq C_{f}
	\]
	and the result follows.
\end{myproof}
\subsection{Re-sampled quasi log-likelihood}\label{BootstrapAnaloguesOfRealWorldConditionsAndDeviationBoundsOnScoreVector}
A basis for the statistical investigation of a re-sampled log-likelihood builds on the probabilistic equivalence with an original quasi log-likelihood. In the section one also uses notations from Spokoiny 2012 \cite{spokoiny2012penalized}.

An analogue to \EDnullc\; condition for re-sampled log-likelihood - will be referred to as \EDBnullc\; - readily follows from normality of re-sampling weights $\{u_i\}_{i=\overline{1,n}}$.
\begin{lemma}\label{EDB_0}
	Suppose that conditions ($\ref{FSC}$) are justified, then there exist a positive symmetric matrix $V_0$ and constants $\nu_0\geq1$ and $g\geq0$ such that $\Var\left(\nabla\zeta(\sthetav)\right)\leq V_0^2$ and
	\[
	\forall\|\gammav\|=1\;\;\log\E^{\sbt}\exp\left(\lambda\frac{\gammav^T\nabla\zetab(\sthetav)}{\|V_0\gammav\|}\right)\leq\frac{\nu_0^{2}\lambda^2}{2},\;|\lambda|\leq g
	\]
	with probability $\geq1-e^{-x}$.
\end{lemma}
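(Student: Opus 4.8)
The plan is to exploit that the re-sampling multipliers $u_i$ are Gaussian, so that the bootstrap score is an \emph{exactly} Gaussian vector conditionally on the data; the asserted cumulant bound then collapses to a Loewner-order bound on its (random) conditional covariance. First I would isolate the stochastic part of the re-sampled log-likelihood: since $\E^{\sbt}u_i=1$ one has $\E^{\sbt}L^{\sbt}(\thetav)=L(\thetav)$, hence $\zetab(\thetav)=L^{\sbt}(\thetav)-L(\thetav)=\suml_{i=1}^n\ell_i(\thetav)(u_i-1)$ and $\nabla\zetab(\sthetav)=\suml_{i=1}^n\nabla\ell_i(\sthetav)(u_i-1)$. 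Because the variables $u_i-1\sim\mathcal{N}(0,1)$ are i.i.d.\ and independent of the data, conditionally on the data the vector $\nabla\zetab(\sthetav)$ is centred Gaussian with covariance matrix $\hat{H}^2\eqdef\suml_{i=1}^n\nabla\ell_i(\sthetav)\nabla^T\ell_i(\sthetav)$.

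Consequently, for any fixed $\gammav$ with $\|\gammav\|=1$ the scalar $\gammav^T\nabla\zetab(\sthetav)$ is $\mathcal{N}\bigl(0,\gammav^T\hat{H}^2\gammav\bigr)$ under $\P^{\sbt}$, and the exact Gaussian cumulant identity gives, for \emph{every} $\lambda\in\R$,
\[
\log\E^{\sbt}\exp\left(\lambda\frac{\gammav^T\nabla\zetab(\sthetav)}{\|V_0\gammav\|}\right)=\frac{\lambda^2}{2}\,\frac{\gammav^T\hat{H}^2\gammav}{\gammav^T V_0^2\gammav}.
\]
Thus the claim reduces to producing a constant $\nu_0\ge1$ such that $\hat{H}^2\le\nu_0^2 V_0^2$ in the Loewner order on an event of data-probability at least $1-e^{-x}$; observe that, because the weights are Gaussian, the restriction $|\lambda|\le g$ is not binding and one may take $g=\infty$.

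It remains to control the random matrix $\hat{H}^2$. I would choose $V_0$ to be the matrix supplied by \EDnullc\ for the original $L$ (so the first assertion $\Var\{\nabla\zeta(\sthetav)\}\le V_0^2$ of the lemma holds verbatim), and invoke the condition \textbf{[Target]} of ($\ref{FSC}$), which by the theorem in Section~\ref{FST} forces $B_0^2=\suml_{i=1}^n\nabla\E\ell_i(\sthetav)\nabla^T\E\ell_i(\sthetav)=0$, i.e.\ $\E\nabla\ell_i(\sthetav)=0$ for every $i$. Hence $\E\hat{H}^2=\suml_{i=1}^n\E\bigl[\nabla\ell_i(\sthetav)\nabla^T\ell_i(\sthetav)\bigr]=H_0^2=\Var\{\nabla\zeta(\sthetav)\}\le V_0^2$. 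A matrix-deviation bound from Section~\ref{Matrix}, powered by the exponential-moment condition \textbf{[Moments]} on $\nabla\ell_i(\sthetav)$, then gives $\hat{H}^2\le(1+\delta_n)\,\E\hat{H}^2\le(1+\delta_n)V_0^2$ with probability at least $1-e^{-x}$ for some $\delta_n\to0$; setting $\nu_0^2\eqdef 1+\delta_n\ge1$ completes the argument.

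The routine parts are the Gaussian cumulant computation and the bookkeeping that makes $B_0^2=0$. The genuine obstacle is the matrix-concentration step: establishing $\hat{H}^2\le(1+\delta_n)H_0^2$ with the stated exponential probability requires a matrix-Bernstein / matrix-deviation estimate of the type collected in Section~\ref{Matrix}, and the operator-norm fluctuation of $\hat{H}^2-H_0^2$ must be kept small uniformly over the directions $\gammav$ with a constant that is at worst only mildly dimension-dependent — that is where the real analytic content sits.
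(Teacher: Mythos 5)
Your proposal follows essentially the same route as the paper's proof. The paper also reduces the bootstrap log-MGF, via exact Gaussianity of the multipliers $u_i-1\sim\mathcal N(0,1)$, to a scalar of the form $\frac{\lambda^2}{2}\suml_{i=1}^n\bigl(\gammav^T\nabla\ell_i(\sthetav)/\|V_0\gammav\|\bigr)^2=\frac{\lambda^2}{2}\,\gammav^T\hat H^2\gammav/\gammav^TV_0^2\gammav$, and then invokes the matrix-concentration machinery of Section~\ref{Matrix} (theorem~\ref{MatrixBernstein2}) to bound this ratio by $1+C\delta$ with probability at least $1-e^{-x}$, setting $\nu_0'=\sqrt{\nu_0^2+C\delta}$. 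Your write-up is a little more explicit than the paper's one-line justification "from definition of $V_0$ and matrix concentration": you correctly flag that $\E\hat H^2=\suml_i\E[\nabla\ell_i\nabla^T\ell_i]$ equals $\Var\{\nabla\zeta(\sthetav)\}$ only after the modelling-bias term $B_0^2$ vanishes, which is supplied by condition \textbf{[Target]} and the theorem in Section~\ref{FST} — the paper relies on this implicitly. The only cosmetic deviation is your remark that $g$ can be taken infinite; the paper keeps a finite $g$ and a $\nu_0$ inflation, presumably to accommodate multipliers that are merely sub-Gaussian rather than exactly normal, but for $u_i\sim\mathcal N(1,1)$ your observation is correct and consistent with the lemma as stated.
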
 
\begin{proof}
	Define a vector
	\[
	\sv_i\eqdef\frac{\nabla\ell_i(\sthetav)}{\|V_0\gammav\|},
	\]	
	then using normality of re-sampling weights $u_i$ rewrite  
	\[
	\log\E^{\sbt}\exp\left(\lambda\frac{\gammav^T\nabla\zetab(\sthetav)}{\|V_0\gammav\|}\right)=
	\log\E^{\sbt}\exp\left(\frac{\lambda}{\|V_0\gammav\|}\left(\gammav^T\suml_{i=1}^{n}\nabla\ell(Y_i,\sthetav)(u_i-1)\right)\right)=
	\]
	\[
	=\log\E^{\sbt}\exp\left(\suml_{i=1}^{n}\lambda\gamma^T\sv_i(u_i-1)\right)\leq\frac{\nu_0^2\lambda^2}{2}\suml_{i=1}^{n}\left(\gamma^T\sv_i\right)^2\leq\frac{\nu_0^{\prime2}\lambda^2}{2},
	\]
	where $\nu_0^{\prime}\eqdef\sqrt{\nu_0^2+C\delta}$ for some positive constant $C>0$ and small $\delta$. The last inequality is derived using $\suml_{i=1}^{n}\left(\gamma^T\sv_i\right)^2\leq1+C\delta$ from definition of $V_0$ and matrix concentration inequality (thm [$\ref{MatrixBernstein2}$]).
\end{proof}
Re-sampling analogue to the condition \EDtwoc\;  (Spokoiny 2012 \cite{spokoiny2012penalized}) also follows.
\begin{lemma}\label{EDB_2}
	Let conditions ($\ref{FSC}$) hold true then there exist a positive value $\omega_1(r)\eqdef\sqrt{4\nu_0^2\omega^2x+\frac{2C_{\delta}^2(r)}{n}}$ and for each $r\geq0$, a 
	constant $g(r)\geq0$ such that it holds for any $\vv\in\Upsilon(r)$
	\[
	\log\E^{\sbt}\exp\left(\frac{\lambda}{\omega_1(r)}\frac{\gammav_1^T\nabla^2\zetab(\sthetav)\gammav_2}{\|\D_0\gammav_1\|\;\|\D_0\gammav_2\|}\right)\leq\frac{\nu_0^2\lambda^2}{2},\;|\lambda|\leq g(r).
	\]
\end{lemma}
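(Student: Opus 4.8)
The plan is to follow the proof of Lemma~\ref{EDB_0} almost verbatim, using that the Gaussian weights $u_i-1\sim\mathcal{N}(0,1)$ turn the bootstrap moment generating function into an exact Gaussian one, so that the whole assertion collapses to a deterministic (data–dependent) bound on a quadratic form. First observe that in model~(\ref{EIV}) each $\ell_i$ is quadratic in $\thetav$, hence
\[
\nabla^2\ell_i(\vv)=\nabla^2\ell_i(\sthetav)=-\Bigl(\suml_{k=1}^K\etav^i_k\etav^{iT}_k+\tfrac{\lambda}{n}I\Bigr)\preceq0
\]
independently of the point $\vv$, which already explains why the statement for $\vv\in\Upsilon(r)$ may be written at $\sthetav$; moreover $\nabla^2\zetab(\sthetav)=\suml_{i=1}^n\nabla^2\ell_i(\sthetav)(u_i-1)$. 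Conditionally on the data, $\gammav_1^T\nabla^2\zetab(\sthetav)\gammav_2=\suml_{i=1}^n\bigl(\gammav_1^T\nabla^2\ell_i(\sthetav)\gammav_2\bigr)(u_i-1)$ is therefore a centered Gaussian with variance $V^2\eqdef\suml_{i=1}^n\bigl(\gammav_1^T\nabla^2\ell_i(\sthetav)\gammav_2\bigr)^2$, so
\[
\log\E^{\sbt}\exp\!\left(\frac{\lambda}{\omega_1(r)}\frac{\gammav_1^T\nabla^2\zetab(\sthetav)\gammav_2}{\|\D_0\gammav_1\|\,\|\D_0\gammav_2\|}\right)=\frac{\lambda^2}{2}\,\frac{V^2}{\omega_1(r)^2\,\|\D_0\gammav_1\|^2\,\|\D_0\gammav_2\|^2}.
\]
Since this holds for all $\lambda$, it suffices to establish, on an event of $\P$–probability $\ge1-e^{-x}$, the deterministic inequality $V^2\le\nu_0^2\,\omega_1(r)^2\,\|\D_0\gammav_1\|^2\,\|\D_0\gammav_2\|^2$, and then $g(r)$ may be taken equal to the range of validity $g(r)$ of the original condition~\EDtwoc.

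To bound $V^2$ I would use that $-\nabla^2\ell_i(\sthetav)\succeq0$, so the Cauchy--Schwarz inequality for positive semidefinite forms gives $\bigl(\gammav_1^T\nabla^2\ell_i(\sthetav)\gammav_2\bigr)^2\le\bigl(\gammav_1^T(-\nabla^2\ell_i(\sthetav))\gammav_1\bigr)\bigl(\gammav_2^T(-\nabla^2\ell_i(\sthetav))\gammav_2\bigr)$ and hence
\[
V^2\le\Bigl(\max_{1\le i\le n}\gammav_2^T\bigl(-\nabla^2\ell_i(\sthetav)\bigr)\gammav_2\Bigr)\,\gammav_1^T\bigl(-\nabla^2 L(\sthetav)\bigr)\gammav_1.
\]
The second factor equals $\|\D_0\gammav_1\|^2-\gammav_1^T\nabla^2\zeta(\sthetav)\gammav_1$ (because $-\Delta\E L(\sthetav)=\D_0^2$ and $\zeta=L-\E L$), and the scalar $\gammav_1^T\nabla^2\zeta(\sthetav)\gammav_1$ is controlled directly by condition~\EDtwoc\ with its parameter $\omega$: the exponential moment bound there yields, by the usual exponential Markov argument, $|\gammav_1^T\nabla^2\zeta(\sthetav)\gammav_1|\le2\nu_0\omega\sqrt{x}\,\|\D_0\gammav_1\|^2$ on an event of probability $\ge1-e^{-x}$, so the second factor is at most $(1+2\nu_0\omega\sqrt{x})\|\D_0\gammav_1\|^2$. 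The first factor is a per-observation Fisher contribution and is bounded via the \textbf{[Design]} part of~(\ref{FSC}), which controls $\gammav_2^T(-\nabla^2\ell_i(\sthetav))\gammav_2=\suml_{k=1}^K(\gammav_2^T\etav^i_k)^2+\tfrac{\lambda}{n}\|\gammav_2\|^2$ relative to $\|\D_0\gammav_2\|^2$ uniformly in $i$; collecting the two factors, together with the residual matrix–concentration terms handled by thm~\ref{MatrixBernstein2}, and absorbing the remaining numerical and dimension–dependent constants into $C_{\delta}(r)$, one obtains $V^2\le\bigl(4\nu_0^2\omega^2x+\tfrac{2C_{\delta}^2(r)}{n}\bigr)\|\D_0\gammav_1\|^2\|\D_0\gammav_2\|^2=\omega_1(r)^2\|\D_0\gammav_1\|^2\|\D_0\gammav_2\|^2$, which is $\le\nu_0^2\omega_1(r)^2\|\D_0\gammav_1\|^2\|\D_0\gammav_2\|^2$ since $\nu_0\ge1$. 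This is exactly the bound needed, and the lemma follows.

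The only genuinely new element relative to Lemma~\ref{EDB_0} is the positive–semidefinite Cauchy--Schwarz reduction of the bilinear Hessian form to the two quadratic forms, one of which ($\gammav_1^T(-\nabla^2 L(\sthetav))\gammav_1$) is governed by the \EDtwoc\ bound and the other ($\max_i\gammav_2^T(-\nabla^2\ell_i(\sthetav))\gammav_2$) by the \textbf{[Design]} condition; the Gaussianity of the weights removes any restriction on $\lambda$ from the MGF side. I expect the main obstacle to be the constant bookkeeping: one must combine the \EDtwoc\ exponential moment, the \textbf{[Design]} envelope, and the matrix Bernstein deviation (thm~\ref{MatrixBernstein2}) so that the data–side fluctuation and the per-observation envelope recombine into precisely the two summands $4\nu_0^2\omega^2x$ and $2C_{\delta}^2(r)/n$ of $\omega_1(r)^2$, which forces $C_{\delta}(r)$ to be chosen large enough to swallow the $O(1/n)$ and $O(\sqrt{x}/n)$ leftovers and $g(r)$ to be fixed as the common validity range of these inequalities.
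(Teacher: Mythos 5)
Your opening paragraph is correct and coincides with the paper's opening move: since each \(\ell_i\) is quadratic in \(\thetav\), \(\nabla^2\ell_i\) does not depend on the argument, \(\nabla^2\zetab(\sthetav)=\suml_i\nabla^2\ell_i(\sthetav)(u_i-1)\), and the bootstrap MGF is exactly Gaussian. So the claim reduces to a deterministic (data-dependent) bound of the form \(\suml_i s_i^2\le\nu_0^2\omega_1^2(r)\) with \(s_i\eqdef\gammav_1^T\nabla^2\ell_i(\sthetav)\gammav_2/(\|\D_0\gammav_1\|\,\|\D_0\gammav_2\|)\), to be verified on an event of dominating probability.

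The gap is in your second paragraph. The positive-semidefinite Cauchy--Schwarz factorization \(\suml_i s_i^2\le(\max_i c_i)\cdot\suml_i b_i\) with \(b_i=\gammav_1^T(-\nabla^2\ell_i)\gammav_1/\|\D_0\gammav_1\|^2\) and \(c_i=\gammav_2^T(-\nabla^2\ell_i)\gammav_2/\|\D_0\gammav_2\|^2\) is a clean idea, but it destroys the \(n^{-1}\) scaling that the definition of \(\omega_1(r)\) encodes. The \textbf{[Design]} condition only gives \(\max_i c_i\le K/4+n^{-1}=O(1)\), because it is a one-observation leverage bound (\(\|\D_0^{-1}\etav^i_k\|\le 1/2\)) and nothing better. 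Combined with \(\suml_i b_i=1-\gammav_1^T\nabla^2\zeta(\sthetav)\gammav_1/\|\D_0\gammav_1\|^2=1+O(\nu_0\omega\sqrt{x})\) from \EDtwoc\ and exponential Markov, you arrive at \(\suml_i s_i^2\lesssim\tfrac{K}{4}\bigl(1+\nu_0\omega\sqrt{x}\bigr)\). But \(\nu_0^2\omega_1^2(r)=\nu_0^2\bigl(4\nu_0^2\omega^2 x+2C_\delta^2(r)/n\bigr)\) is of order \(n^{-1}\) once \(x\) is of constant order, whereas your bound is bounded below by \(K/4\); this is a gap of order \(n\) that cannot be hidden in \(C_\delta(r)\), because \(C_\delta(r)\) is determined by \Lnullc\ (it controls \(\|\D_0^{-1}\nabla^2\E\ell_i\D_0^{-1}\|\le C_\delta(r)/n\)) and appears explicitly inside the formula for \(\omega_1(r)\).

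The paper's route keeps the per-observation structure intact and never factorizes. It bounds each \(s_i\) directly by splitting it into a deterministic piece \(\gammav_1^T\nabla^2\E\ell_i\gammav_2/(\|\D_0\gammav_1\|\,\|\D_0\gammav_2\|)\), controlled by the reformulated \Lnullc\ to be at most \(C_\delta(r)/n\), plus a stochastic piece \(\gammav_1^T\nabla^2\zeta_i\gammav_2/(\|\D_0\gammav_1\|\,\|\D_0\gammav_2\|)\), controlled by the per-observation (\(1/n\)-scaled) version of \EDtwoc\ plus exponential Markov to be at most \(\nu_0\omega\sqrt{2x/n}\) with high probability. Summing \(n\) squared terms then yields exactly \(\suml_i s_i^2\le 4\nu_0^2\omega^2 x+2C_\delta^2(r)/n=\omega_1^2(r)\). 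The ingredient your decomposition loses is that each individual \(s_i\), not merely the aggregate, must be of order \(n^{-1/2}\) or smaller; the \textbf{[Design]} envelope on \(\max_i c_i\) is too coarse by a factor of \(n\) to deliver this.
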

\begin{proof}
	Here it is convenient to reformulate conditions \Lc\; and \EDtwoc.  Bound on deterministic covariance structure can be rewritten as
	\[
	\|\D_0^{-1}(\D^2(\thetav)-\D_0^{2})\D_0^{-1}\|=\|\D_0^{-1}(-\suml_{i=1}^{n}\nabla^2\E\ell(Y_i,\thetav)-\D_0^{2})\D_0^{-1}\|=
	\]
	\[
	=\|\suml_{i=1}^{n}(\D_0^{-1}\nabla^2\E\ell(Y_i,\thetav)\D_0^{-1}+\frac{I_p}{n})\|=n\|\D_0^{-1}\nabla^2\E\ell(Y_i,\thetav)\D_0^{-1}+\frac{I_p}{n}\|\leq\delta(r),
	\]
	and it follows
	\[
	\|\D_0^{-1}\nabla^2\E\ell(Y_i,\thetav)\D_0^{-1}\|\leq\frac{C_{\delta}(r)}{n}.
	\]
	Next, rewrite \EDtwoc\; mostly in the same fashion, so that it is capable to quantify $\D_0^{-1}\nabla^2\zeta_i(\thetav)\D_0^{-1}$. It follows
	\[
	\log\E\exp\{\frac{\lambda}{\omega}\frac{\gamma_1^T\nabla^2\zeta(\thetav)\gamma_2}{\|\D_0\gamma_1\|\;\|\D_0\gamma_2\|}\}=\log\E\exp\{\frac{\lambda}{\omega}\suml_{i=1}^{n}\frac{\gamma_1^T\nabla^2\zeta_i(\thetav)\gamma_2}{\|\D_0\gamma_1\|\;\|\D_0\gamma_2\|}\}=
	\]
	\[
	=n\log\E\exp\{\frac{\lambda}{\omega}\frac{\gamma_1^T\nabla^2\zeta_i(\thetav)\gamma_2}{\|\D_0\gamma_1\|\;\|\D_0\gamma_2\|}\},
	\]
	where $\zeta_i(\vv)=\ell(Y_i,\thetav)-\E\ell(Y_i,\thetav)$. This means that component-wise \EDtwoc\; condition holds true, namely that
	\[
	\sup_{\gamma_1,\gamma_2\in\R^p}\log\E\exp\{\frac{\lambda}{\omega}\frac{\gamma_1^T\nabla^2\zeta_i(\thetav)\gamma_2}{\|\D_0\gamma_1\|\;\|\D_0\gamma_2\|}\}\leq\frac{\nu_0^2\lambda^2}{2n},\;|\lambda|\leq g(r).
	\]
	The two constitute the substance of the proof. Define complementary variables $s_i\eqdef\frac{\gamma_1^T\nabla^2\ell(Y_i,\thetav)\gamma_2}{\|\D_0\gamma_1\|\;\|\D_0\gamma_2\|}$ and rewrite
	\[
	\log\E^{\sbt}\exp\{\frac{\lambda}{\omega_1(r)}\frac{\gamma_1^T\nabla^2\zetab(\thetav)\gamma_2}{\|\D_0\gamma_1\|\;\|\D_0\gamma_2\|}\}=\log\E^{\sbt}\exp\{\suml_{i=1}^{n}\frac{\lambda}{\omega_1(r)}s_i(u_i-1)\}\leq\frac{\nu_0^2\lambda^2}{2\omega_1^2(r)}\suml_{i=1}^{n}s_i^2.
	\]
	To claim the statement it is sufficient to limit sum $\suml_{i=1}^{n}s_i^2$. Once again rewrite this sum using mentioned above \Lc\; -
	\[
	\suml_{i=1}^{n}s_i^2=\suml_{i=1}^{n}\left(\frac{\gamma_1^T\nabla^2\zeta_i(\thetav)\gamma_2}{\|\D_0\gamma_1\|\;\|\D_0\gamma_2\|}+\frac{\gamma_1^T\nabla^2\E\ell(Y_i,\thetav)\gamma_2}{\|\D_0\gamma_1\|\;\|\D_0\gamma_2\|}\right)^2\leq
	\]
	\[
	\leq2\suml_{i=1}^{n}\left(\frac{\gamma_1^T\nabla^2\zeta_i(\vv)\gamma_2}{\|\D_0\gamma_1\|\;\|\D_0\gamma_2\|}\right)^2+2\suml_{i=1}^{n}\left(\frac{\gamma_1^T\nabla^2\E\ell(Y_i,\thetav)\gamma_2}{\|\D_0\gamma_1\|\;\|\D_0\gamma_2\|}\right)^2\leq
	\]
	\[
	\leq2\suml_{i=1}^{n}\left(\frac{\gamma_1^T\nabla^2\zeta_i(\thetav)\gamma_2}{\|\D_0\gamma_1\|\;\|\D_0\gamma_2\|}\right)^2+\frac{2C_{\delta}^2(r)}{n}
	\]
	The left term in the sum is bounded under \EDtwoc\; and Markov exponential inequality
	\[
	\P\left(\frac{\gamma_1^T\nabla^2\zeta_i(\thetav)\gamma_2}{\|\D_0\gamma_1\|\;\|\D_0\gamma_2\|}\leq t
	\right)\leq\E\exp\{\frac{\lambda^{\prime}\gamma_1^T\nabla^2\zeta_i(\thetav)\gamma_2}{\omega\|\D_0\gamma_1\|\;\|\D_0\gamma_2\|}-\frac{\lambda^{\prime}t}{\omega}\}\leq
	\]
	\[
	\leq\exp\{\frac{\nu_0^2\lambda^{\prime2}}{2n}-\frac{\lambda^{\prime}t}{\omega}\}\leq\exp\{-\frac{t^2n}{2\nu_0^2\omega^2}\},
	\]
	and
	\[
	\P\left(\frac{\gamma_1^T\nabla^2\zeta_i(\thetav)\gamma_2}{\|\D_0\gamma_1\|\;\|\D_0\gamma_2\|}\leq\nu_0\omega\sqrt{\frac{2x}{n}}\right)\leq e^{-x}.
	\]
	Therefore it holds 
	\[
	\suml_{i=1}^{n}s_i^2\leq4\nu_0^2\omega^2x+\frac{2C_{\delta}^2(r)}{n},
	\]
	and now we can see that controlling $\omega_1(r)$ in the way -
	\[\omega_1(r)\eqdef\sqrt{4\nu_0^2\omega^2x+\frac{2C_{\delta}^2(r)}{n}},\]
	justifies inquired in the theorem inequality.
\end{proof}
The lemma in turn helps to bound a stochastic part of re-sampled log-likelihood. The demonstrated equivalence allows to translate statements for log-likelihood  into the re-sampled counterpart.

A result requiring only \EDnullc\; is the deviation bound on $\|\xiv\|$ (Spokoiny Zhilova 2013 \cite{spokoiny2013sharp}). In the work of Spokoiny \cite{spokoiny2012penalized} it has been proven.
\begin{theorem}\label{DeviationOnScore}
	Let \EDnullc\; is fulfilled, then for $g\geq\sqrt{2tr(\D_0^{-1}V_0^2\D_0^{-1})}$, where $V_0^2\geq\Var\nabla\zeta(\sthetav)$ it holds:
	\[
	\P(\|\xiv\|^2\geq\mathfrak{z}^2(x,\D_0^{-1}V_0^2\D_0^{-1}))\leq2e^{-x}+8.4e^{-x_c},
	\] 
	for function $\mathfrak{z}^2(x,\D_0^{-1}V_0^2\D_0^{-1})$ and small positive constant $x_c$ (thm $\ref{DeviationOnBootstrapScore}$).
\end{theorem}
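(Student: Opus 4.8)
The plan is to reduce the statement to the generic sub-Gaussian deviation bound for squared norms of linear functionals, exactly in the form of Theorem 2.1 of Spokoiny 2012 \cite{spokoiny2012penalized} and its sharp refinement in Spokoiny, Zhilova 2013 \cite{spokoiny2013sharp}; the function $\mathfrak{z}^2(\cdot,\cdot)$ and the cut-off constant $x_c$ are, by definition, the quantities produced by that bound (see also thm \ref{DeviationOnBootstrapScore}), so once its hypotheses are verified the conclusion follows by citation. Two preliminary reductions are needed: identifying $\xiv$ with a whitened centered gradient, and transferring the exponential-moment bound \EDnullc\ onto that whitened gradient.

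First I would remove the deterministic part of the gradient. Since $\sthetav=\argmax_{\thetav}\E L(\thetav)$ is an interior maximizer, $\nabla\E L(\sthetav)=0$, hence $\nabla L(\sthetav)=\nabla\zeta(\sthetav)$ with $\zeta\eqdef L-\E L$, and therefore $\xiv=\D_0^{-1}\nabla\zeta(\sthetav)$ with $\D_0^2\eqdef-\Delta\E L(\sthetav)$. Writing $B\eqdef\D_0^{-1}V_0^2\D_0^{-1}$, the variance bound in \EDnullc\ gives immediately
\[
\Var\xiv=\D_0^{-1}\Var\bigl(\nabla\zeta(\sthetav)\bigr)\D_0^{-1}\le B .
\]

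Next I would transfer the exponential-moment bound to the whitened score. For a unit vector $\gammav$, apply \EDnullc\ with $\D_0^{-1}\gammav$ in place of $\gammav$: using $\|V_0\D_0^{-1}\gammav\|=\|B^{1/2}\gammav\|$ one obtains
\[
\log\E\exp\bigl(\mu\,\gammav^T\xiv\bigr)\le\frac{\nu_0^2\mu^2\,\|B^{1/2}\gammav\|^2}{2},\qquad |\mu|\,\|B^{1/2}\gammav\|\le g,
\]
so $\gammav\mapsto\gammav^T\xiv$ is uniformly sub-Gaussian with variance proxy $\|B^{1/2}\gammav\|^{2}$, valid on the range $|\mu|\le g/\|B^{1/2}\gammav\|$. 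The assumption $g\ge\sqrt{2\,\tr B}$ is exactly what makes this admissible range wide enough to cover the "Gaussian" portion of the tail of $\|\xiv\|^2$. I would then invoke the generic deviation bound: covering the unit sphere by an $\varepsilon$-net, bounding $\|\xiv\|=\sup_{\|\gammav\|=1}\gammav^T\xiv$ by the maximum over the net up to a geometric correction, and applying the exponential Chebyshev inequality direction by direction with the displayed estimate. This yields the two-regime bound — the light-tail term $2e^{-x}$ for $x$ below the threshold $x_c$ at which $\mathfrak{z}(x,B)$ would leave the admissible $\mu$-range, and a fixed remainder $8.4\,e^{-x_c}$ absorbing the complementary regime — i.e. $\P\bigl(\|\xiv\|^2\ge\mathfrak{z}^2(x,B)\bigr)\le 2e^{-x}+8.4\,e^{-x_c}$.

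The only genuinely delicate step is the last one: the chaining/covering argument together with the bookkeeping of the truncation at $|\mu|=g$, which is precisely what forces the split into the sub-Gaussian regime and the remainder term and which fixes the constant $8.4$ and the cut-off $x_c$. Since this is exactly Theorem 2.1 of \cite{spokoiny2012penalized} (sharpened in \cite{spokoiny2013sharp}), I would not reproduce it, but instead simply check that its hypotheses hold — namely $\Var\xiv\le B$ and the whitened exponential-moment bound above with $g\ge\sqrt{2\,\tr B}$ — and then quote the statement.
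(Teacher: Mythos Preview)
Your proposal is correct and matches the paper's approach: the paper does not prove this theorem either, but simply records that it ``has been proven'' in Spokoiny \cite{spokoiny2012penalized} (with the sharp constants from Spokoiny--Zhilova \cite{spokoiny2013sharp}). You in fact go a bit further than the paper by explicitly verifying the hypotheses of the cited result --- the identification $\xiv=\D_0^{-1}\nabla\zeta(\sthetav)$ via $\nabla\E L(\sthetav)=0$ and the transfer of \EDnullc\ to the whitened score with variance proxy $B=\D_0^{-1}V_0^2\D_0^{-1}$ --- which is a welcome addition, but the substance is the same citation.
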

Let us claim the same for $\|\xivb\|$ using  the lemma [$\ref{EDB_0}$].
\begin{theorem}\label{DeviationOnBootstrapScore}
	Let \EDBnullc\; is fulfilled, then for $g\geq\sqrt{1+\frac{C\delta}{g}}\sqrt{2tr(\D_0^{-1}V_0^2\D_0^{-1})}$, where $V_0^2\geq\Var\{\nabla\zeta(\sthetav)\}$ it holds with dominating probability:
	\[
	\P^{\sbt}(\|\xivb\|^2\geq\mathfrak{z}^2(x,\D_0^{-1}V_0^2\D_0^{-1}))\leq2e^{-x}+8.4e^{-x_{c_1}},
	\] 
	for function $\mathfrak{z}^2(x,\D_0^{-1}V_0^2\D_0^{-1})$ and small positive constant $x_{c_1}$, specified below.
\end{theorem}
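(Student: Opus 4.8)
The claim is the bootstrap transcription of Theorem~\ref{DeviationOnScore}, and the plan is to run that generic score deviation bound inside the bootstrap probability space $(\P^{\sbt},\E^{\sbt})$, with the original data frozen. For this I must check that the bootstrap score
	\[
	\nabla\zetab(\sthetav)=\suml_{i=1}^{n}\nabla\ell_i(\sthetav)(u_i-1)
	\]
	meets the two hypotheses of that theorem. The exponential--moment hypothesis of \EDnullc--type is precisely Lemma~\ref{EDB_0}: on a data event of probability at least $1-e^{-x}$ one has, uniformly over $\|\gammav\|=1$,
	\[
	\log\E^{\sbt}\exp\left(\lambda\frac{\gammav^T\nabla\zetab(\sthetav)}{\|V_0\gammav\|}\right)\leq\frac{\nu_0^{\prime2}\lambda^2}{2},\qquad|\lambda|\leq g,
	\]
	with $\nu_0^{\prime}\eqdef\sqrt{\nu_0^2+C\delta}$.

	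The second ingredient is the conditional variance proxy. Since the $u_i$ are independent with $\Var^{\sbt}u_i=1$, the conditional covariance of the bootstrap score is the observed information $H_0^2\eqdef\suml_{i=1}^{n}\nabla\ell_i(\sthetav)\nabla^T\ell_i(\sthetav)$, not $V_0^2$. I would then invoke the matrix Bernstein inequality (Theorem~\ref{MatrixBernstein2}) --- exactly the step already used inside Lemma~\ref{EDB_0} --- to get, on the same data event, the quadratic--form comparison $H_0^2\preceq(1+C\delta)V_0^2$, hence
	\[
	\tr\bigl(\D_0^{-1}H_0^2\D_0^{-1}\bigr)\leq(1+C\delta)\,\tr\bigl(\D_0^{-1}V_0^2\D_0^{-1}\bigr)
	\]
	and the analogous operator--norm bound. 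This explains the hypothesis $g\geq\sqrt{1+C\delta/g}\,\sqrt{2\tr(\D_0^{-1}V_0^2\D_0^{-1})}$: it is what guarantees $g\geq\sqrt{2\tr(\D_0^{-1}H_0^2\D_0^{-1})}$, the threshold actually needed to run the generic bound with variance proxy $H_0^2$, while $\delta$ small keeps $\nu_0^{\prime}$ close to $\nu_0$ so that the admissible range $|\lambda|\leq g$ still covers the one used in the argument.

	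With both ingredients in place I would apply Theorem~\ref{DeviationOnScore} verbatim in $(\P^{\sbt},\E^{\sbt})$, with $\nu_0$ replaced by $\nu_0^{\prime}$ and the variance proxy $V_0^2$ replaced by $H_0^2$, and with $x_{c_1}$ denoting the small constant thereby produced (the analogue of the $x_c$ of Theorem~\ref{DeviationOnScore}, generated by the comparison of $g$ with $\sqrt{2\tr(\D_0^{-1}H_0^2\D_0^{-1})}$ and by the value of $\nu_0^{\prime}$, hence differing from $x_c$ only through the $C\delta$ perturbation). This yields, on the data event,
	\[
	\P^{\sbt}\bigl(\|\xivb\|^2\geq\mathfrak{z}^2(x,\D_0^{-1}V_0^2\D_0^{-1})\bigr)\leq2e^{-x}+8.4\,e^{-x_{c_1}},
	\]
	where the argument of $\mathfrak{z}^2$ has been returned to $\D_0^{-1}V_0^2\D_0^{-1}$ by absorbing the harmless $(1+C\delta)$ factor coming from $H_0^2$ into the universal constants; since the data event has probability at least $1-e^{-x}$, this is the asserted ``with dominating probability'' bound.

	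The step I expect to be the main obstacle is the bookkeeping across the two layers of randomness: the exponential--moment bound and the variance comparison hold only conditionally on the data and only on the good data event, and one must make sure that the $C\delta$ slack introduced when the exact conditional variance $H_0^2$ is replaced by the clean proxy $V_0^2$ does not violate the constraint $\sqrt{2\tr(\D_0^{-1}H_0^2\D_0^{-1})}\leq g$ under which the generic deviation bound is valid --- which is exactly the role of the $\sqrt{1+C\delta/g}$ factor and of the smallness of $\delta$ secured by condition \Lc\ and the design condition.
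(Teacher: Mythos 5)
The paper gives no explicit proof of this theorem: it simply states "Let us claim the same for $\|\xivb\|$ using the lemma [\ref{EDB_0}]" and records the conclusion. Your proposal fills in precisely the argument the paper is alluding to — verify the bootstrap \EDnullc-type exponential-moment condition via Lemma~\ref{EDB_0}, observe that the $C\delta$ slack there comes from comparing the conditional variance proxy to $V_0^2$ via matrix Bernstein on the good data event, and then rerun the generic deviation bound of Theorem~\ref{DeviationOnScore} inside $\P^{\sbt}$ with the perturbed constants — so this is the same approach with the bookkeeping spelled out.
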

The function $\mathfrak{z}(x,X)$, where $x\in\R$ and $X\in\R^{p\times p}$, is given by the following formula
\[
\mathfrak{z}^2(x,X)\eqdef\begin{cases} tr(X^2)+\sqrt{8tr(X^4)x},\;x\leq\frac{\sqrt{2tr(X^4)}}{18\lambda_{max}(X^2)}\\ tr(X^2)+6x\lambda_{max}(X^2),\;\frac{\sqrt{2tr(X^4)}}{18\lambda_{max}(X^2)}\leq x\leq x_c\\|z_c+2(x-x_c)/g_{c}|^2\lambda_{max}(X^2),\;x\geq x_c,\end{cases}
\]
where in tern numerical constants $x_c,z_c,g_c$ are defined as follows
\[
2x_c\eqdef2z_c^2/3+\log\det(I_p-2X^2/3\lambda_{max}(X^2))
\]
\[
z_c^2\eqdef(9g^2/4-3tr(X^2)/2)/\lambda_{max}(X^2)
\]
\[
g_c\eqdef\sqrt{g^2-2tr(X^2)/3}/\sqrt{\lambda_{max}(X^2)}.
\]
This technical result is used extensively for the proof of squared-root Wilks result.

Another key result is that\;\EDtwoc\;condition justifies a bound on stochastic part of log-likelihood function. The fact formally is stated in the next theorem.
\begin{theorem}\label{BoundOnStochasticPart}
	Let \EDtwoc\; and (\Indc) hold then $\forall\vv\in\R^p$ following inequality is fulfilled
	\[\|\D_0^{-1}\nabla\zeta(\thetav,\sthetav)\|\leq6\nu_0\omega\mathfrak{Z}(x) r.\]
	Also $\forall\thetav_1,\thetav_2\in\R^p$ holds 
	\[
	\|\D_0^{-1}\nabla\zeta(\thetav_1,\thetav_2)\|\leq12\nu_0\omega\mathfrak{Z}(x) r,
	\] 
	where $\mathfrak{Z}(x)$ is defined as
	\[
	\mathfrak{Z}(x)\eqdef\begin{cases}\mathbb{H}_1+\sqrt{2x}+g^{-1}(g^{-2}x+1)\mathbb{H}_2,&\\ \sqrt{\mathbb{H}_2+2x},& if\;\mathbb{H}_2+2x\leq g^2,\\ g^{-1}x+\frac{1}{2}(g^{-1}\mathbb{H}_2+g), & if\;\mathbb{H}_2+2x\ge g^2. \end{cases}	
	\]
	Here $\mathbb{H}_2=4p$ and $\mathbb{H}_1=2p^{\frac{1}{2}}$; see theorem A.15 in \cite{spokoiny2012penalized}.
\end{theorem}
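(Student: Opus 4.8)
The plan is to reduce the claimed bound on the stochastic gradient increment to a uniform bound on the operator norm of the normalised second derivative, and then to control that operator norm through the exponential-moment condition \EDtwoc. First I would fix $\thetav\in\Upsilon(r)$, put $\vv_s\eqdef\sthetav+s(\thetav-\sthetav)$, note that the ball $\Upsilon(r)$ is convex so $\vv_s\in\Upsilon(r)$ for all $s\in[0,1]$, and write
\[
\nabla\zeta(\thetav,\sthetav)=\nabla\zeta(\thetav)-\nabla\zeta(\sthetav)=\intl_0^1\nabla^2\zeta(\vv_s)\,(\thetav-\sthetav)\,ds .
\]
Multiplying by $\D_0^{-1}$ and inserting $\D_0^{-1}\D_0$ between $\nabla^2\zeta(\vv_s)$ and $(\thetav-\sthetav)$ gives
\[
\|\D_0^{-1}\nabla\zeta(\thetav,\sthetav)\|\le\Big(\supl_{\vv\in\Upsilon(r)}\|\D_0^{-1}\nabla^2\zeta(\vv)\D_0^{-1}\|_{op}\Big)\,\|\D_0(\thetav-\sthetav)\|\le r\,\mathcal{V}(r),\qquad\mathcal{V}(r)\eqdef\supl_{\vv\in\Upsilon(r)}\|\D_0^{-1}\nabla^2\zeta(\vv)\D_0^{-1}\|_{op}.
\]
So it is enough to prove $\mathcal{V}(r)\le6\nu_0\omega\mathfrak{Z}(x)$ on an event of probability $\ge1-Ce^{-x}$; the second assertion then follows immediately from the decomposition $\nabla\zeta(\thetav_1,\thetav_2)=\nabla\zeta(\thetav_1,\sthetav)-\nabla\zeta(\thetav_2,\sthetav)$ and the triangle inequality, which doubles the constant to $12$.

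For $\mathcal{V}(r)$ I would use that a symmetric matrix $M$ satisfies $\|M\|_{op}=\sup_{\|\uv\|=1}|\uv^TM\uv|$, and that with $\gammav\eqdef\D_0^{-1}\uv$ one has $\uv^T\D_0^{-1}\nabla^2\zeta(\vv)\D_0^{-1}\uv=\gammav^T\nabla^2\zeta(\vv)\gammav/\|\D_0\gammav\|^2$, which is precisely the quantity whose exponential moment is bounded by \EDtwoc\ uniformly over $\vv\in\Upsilon(r)$. Fixing $\vv$ and $\uv$ and applying the exponential Chebyshev inequality to \EDtwoc, then optimising over $\lambda\in[0,g]$, yields the standard sub-exponential deviation bound (a Gaussian $\exp(-z^2/(2\nu_0^2\omega^2))$ regime for moderate $z$ and a linear-exponential $\exp(-gz/\omega+\cdots)$ regime for large $z$). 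I would then pass to the supremum by a covering argument: take an $\epsilon$-net $\mathcal{N}_\epsilon$ of the unit sphere $S^{p-1}$ with $|\mathcal{N}_\epsilon|\le(1+2/\epsilon)^p$, use $\sup_{\|\uv\|=1}|\uv^TM\uv|\le(1-2\epsilon)^{-1}\max_{\uv\in\mathcal{N}_\epsilon}|\uv^TM\uv|$, and apply a union bound; the entropy contribution $\sqrt{p\log(1+2/\epsilon)}$ is then absorbed into the terms $\mathbb{H}_1=2p^{1/2}$ and $\mathbb{H}_2=4p$ of $\mathfrak{Z}(x)$, while the additional $\sqrt{2x}$ (respectively $g^{-1}x$) carries the $e^{-x}$ probability level. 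Condition (\Indc) enters here to ensure that $\D_0$ is nondegenerate, so that $\D_0^{-1}$ is well defined and $p$ is the correct effective dimension entering the entropy count.

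The genuinely delicate point — which I would attribute to Theorem A.15 of Spokoiny \cite{spokoiny2012penalized} rather than reprove in detail — is making the bound uniform in $\vv\in\Upsilon(r)$ as well, because \EDtwoc\ supplies no a priori modulus of continuity for $\vv\mapsto\nabla^2\zeta(\vv)$. The resolution is a slicing / generic-chaining argument in the radial direction: instead of the matrix field $\vv\mapsto\D_0^{-1}\nabla^2\zeta(\vv)\D_0^{-1}$ one bounds directly the normalised increment $\D_0^{-1}\{\nabla\zeta(\vv)-\nabla\zeta(\sthetav)\}/\|\D_0(\vv-\sthetav)\|$, so that the $\vv$-direction has already been quotiented out and only the entropy of the unit sphere remains to be integrated; the cost of that integration is exactly the polynomial-in-$p$ terms in $\mathfrak{Z}(x)$ together with the numerical factor $6$. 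Splicing together the three regimes of the deviation function (the Gaussian zone, the intermediate linear-exponential zone, and the $|z_c+2(x-x_c)/g_c|$ tail zone corresponding to $\mathbb{H}_2+2x\ge g^2$) produces the piecewise form of $\mathfrak{Z}(x)$ in the statement, and collecting the discarded exceptional events gives the dominating probability $\ge1-Ce^{-x}$.
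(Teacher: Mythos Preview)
Your proposal is correct and matches the paper's approach essentially line for line: the paper also rewrites $\D_0^{-1}\nabla\zeta(\thetav,\sthetav)$ through the normalised Hessian $\D_0^{-1}\nabla^2\zeta(\thetav')\D_0^{-1}$, recasts the increment as a process $Y(s)$ in the variable $s=\D_0(\thetav-\sthetav)$ whose gradient satisfies \EDtwoc, and then defers the uniform-in-$\vv$ bound (your ``genuinely delicate point'') to the same empirical-process result, Theorem~A.15 of \cite{spokoiny2012penalized}, before doubling the constant via the triangle inequality. Your write-up is in fact more explicit than the paper's about the covering/chaining mechanism behind $\mathfrak{Z}(x)$, but the underlying argument is identical.
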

Let us provide a proof of that statement.
\begin{proof}
	Consider quantity $\|\D_0^{-1}\nabla\zeta(\thetav,\sthetav)\|$ and rewrite it as $\|\D_0^{-1}\nabla^2\zeta(\thetav^{\prime})\D_0\D_0^{-1}(\thetav-\sthetav)\|$, then introducing vector $Y(s)\eqdef\D_0^{-1}\nabla\zeta(\thetav,\sthetav)$, where $s\eqdef\D_0(\thetav-\sthetav)$ we can see that $\nabla_{s}Y(s)=\D_0^{-1}\nabla^2\zeta(\thetav^{\prime})\D_0^{-1}$ and from \EDtwoc\;, which holds for $\nabla_{s}Y(s)$, we have for stochastic process $Y(s)$ by an argument from Spokoiny \cite{spokoiny2012penalized} 
	\[
	\sup_{\thetav\in\Upsilon(r_0)}\|\D_0^{-1}\nabla\zeta(\thetav,\sthetav)\|\leq6\nu_0\mathfrak{Z}(x)\omega r,
	\]
	which is generally drawn from empirical processes theory. Furthermore, one can use triangle inequality to generalize result 
	\[\|\D_0^{-1}\nabla\zeta(\thetav_1,\thetav_2)\|\leq\|\D_0^{-1}\nabla\zeta(\thetav_1,\sthetav)\|+\|\D_0^{-1}\nabla\zeta(\thetav_2,\sthetav)\|\leq12\nu_0\mathfrak{Z}(x)\omega r,\]
	and finalize the proof. 
\end{proof}
Once again it is obviously translated using \EDBtwoc,  justified by the lemma $\ref{EDB_2}$. Therefore, formally one comes at the theorem.
\begin{theorem}\label{BootstrapBoundOnStochasticPart}
	Let \EDBtwoc\;hold true then $\forall\thetav\in\R^p$ following inequality is fulfilled
	\[\|\D_0^{-1}\nabla\zetab(\thetav,\sthetav)\|\leq6\nu_0\omega_1(r)\mathfrak{Z}(x) r,\]
	and $\forall\thetav_1,\thetav_2\in\R^p$ also holds
	\[
	\|\D_0^{-1}\nabla\zetab(\thetav_1,\thetav_2)\|\leq12\nu_0\omega_1(r)\mathfrak{Z}(x) r.
	\] 
\end{theorem}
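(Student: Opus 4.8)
The plan is to repeat the proof of Theorem~\ref{BoundOnStochasticPart} essentially verbatim, with the condition \EDBtwoc\ (established in Lemma~\ref{EDB_2}) taking over the role that \EDtwoc\ played there and the scalar $\omega_1(r)$ replacing $\omega$. Fix $r\geq 0$ and $\thetav$ with $\|\D_0(\thetav-\sthetav)\|\leq r$, and put $s\eqdef\D_0(\thetav-\sthetav)$, so that $\|s\|\leq r$. Introduce the rescaled bootstrap field $Y^{\sbt}(s)\eqdef\D_0^{-1}\nabla\zetab(\thetav,\sthetav)$. By the mean value theorem along the segment $[\sthetav,\thetav]$ one has $Y^{\sbt}(s)=\D_0^{-1}\nabla^2\zetab(\thetav^{\prime})\D_0^{-1}\,s$ for an intermediate point $\thetav^{\prime}\in\Upsilon(r)$, i.e.\ the Jacobian of $Y^{\sbt}$ in the $s$-coordinate is $\nabla_s Y^{\sbt}(s)=\D_0^{-1}\nabla^2\zetab(\thetav^{\prime})\D_0^{-1}$.

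Next I would invoke \EDBtwoc: for every pair of directions $\gammav_1,\gammav_2$ and every $\thetav\in\Upsilon(r)$,
\[
\log\E^{\sbt}\exp\!\left(\frac{\lambda}{\omega_1(r)}\frac{\gammav_1^{T}\nabla^2\zetab(\thetav)\gammav_2}{\|\D_0\gammav_1\|\,\|\D_0\gammav_2\|}\right)\leq\frac{\nu_0^2\lambda^2}{2},\qquad |\lambda|\leq g(r),
\]
with dominating probability over the original sample (Lemma~\ref{EDB_2}). This is precisely the exponential-moment hypothesis under which the generic empirical-process bound for a smooth vector field on the ball $\Upsilon(r)$ --- Theorem~A.15 in Spokoiny~\cite{spokoiny2012penalized}, the same argument used for Theorem~\ref{BoundOnStochasticPart} --- applies to $Y^{\sbt}$, giving
\[
\sup_{\thetav\in\Upsilon(r)}\|\D_0^{-1}\nabla\zetab(\thetav,\sthetav)\|\leq 6\nu_0\,\omega_1(r)\,\mathfrak{Z}(x)\,r
\]
on the same event, with $\mathfrak{Z}(x)$ the entropy-type function already recorded in Theorem~\ref{BoundOnStochasticPart} ($\mathbb{H}_2=4p$, $\mathbb{H}_1=2p^{1/2}$). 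This is the first inequality. The two-point bound then follows from the triangle inequality exactly as in the non-bootstrap case:
\[
\|\D_0^{-1}\nabla\zetab(\thetav_1,\thetav_2)\|\leq\|\D_0^{-1}\nabla\zetab(\thetav_1,\sthetav)\|+\|\D_0^{-1}\nabla\zetab(\thetav_2,\sthetav)\|\leq 12\nu_0\,\omega_1(r)\,\mathfrak{Z}(x)\,r.
\]

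The one subtlety, and the only place where any care is needed, is the bookkeeping of the conditioning. Unlike \EDtwoc, the bootstrap moment bound \EDBtwoc\ holds for the conditional measure $\P^{\sbt}$ only on a high-probability event of the data-generating measure --- this is exactly where the $x$-dependence of $\omega_1(r)=\sqrt{4\nu_0^2\omega^2x+2C_{\delta}^2(r)/n}$ enters, through the Markov and matrix-Bernstein step in the proof of Lemma~\ref{EDB_2}. Hence the conclusion is likewise a ``with dominating probability'' statement, the probability being of the form $1-Ce^{-x}$ inherited from Lemma~\ref{EDB_2} (and consistent with Theorem~\ref{DeviationOnBootstrapScore}); the identifiability hypothesis $(\mathcal{I})$ needed in Theorem~\ref{BoundOnStochasticPart} is carried along unchanged inside~\eqref{FSC}. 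No estimate beyond what Lemma~\ref{EDB_2} already supplies is required, so the argument is a routine translation.
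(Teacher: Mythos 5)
Your proposal matches the paper's own treatment: the paper simply notes after Theorem~\ref{BoundOnStochasticPart} that the bootstrap statement is ``obviously translated using \EDBtwoc, justified by the lemma~\ref{EDB_2}'', i.e.\ one reruns the mean-value/chaining argument of Theorem~\ref{BoundOnStochasticPart} with $\nabla\zetab$ in place of $\nabla\zeta$, \EDBtwoc\ in place of \EDtwoc, and $\omega_1(r)$ in place of $\omega$, then applies the triangle inequality for the two-point bound. You have spelled out precisely those steps, and your remark about the conditional ``with dominating probability'' bookkeeping inherited from Lemma~\ref{EDB_2} is an accurate and welcome clarification of a point the paper leaves implicit.
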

\subsection{Concentration of MLE and bMLE}\label{ConcentratioPropertiesOfMLEAndbMLEAndBootstrapValidity}
This is technical part of the paper and thus the full version of theorems without unnecessary simplifications is presented. An important result in the section $\ref{ConcentrationPropertiesBootstrapValidity}$ is formulated by the theorem below.
\begin{theorem}
	Let conditions \Lnullc, \Lc, \EDnullc, \EDtwoc, (\Indc), (\EB), (\SmallmBias) and (\IndB) hold true, then for $r_0$ such that following inequalities are fulfilled simultaneously
	\[
	\begin{cases}
	b(r)r\geq2\mathfrak{z}(x,B)\vee4\sqrt{tr(\suml_{i=1}^n\E\xiv_i\xiv_i^T)}+24\nu_0\omega\mathfrak{Z}(x+\log{\frac{2r}{r_0}}),\\
	b(r)r\geq3\mathfrak{z}(x,B)+12\nu_0\mathfrak{Z}(x+\log\frac{2r}{r_0})(\omega+\omega_1(r)),
	\end{cases}
	\]
	where $B\eqdef\D_0^{-1}\Var\{\nabla L(\svv)\}\D_0^{-1}$ following inequalities are fulfilled
	\begin{enumerate}
		\item $\P(\tthetav\notin\Upsilon(r_0))\leq C_1e^{-x},$
		\item $\Pb(\tthetav^{\sbt}\notin\Upsilon(r_0))\leq C_2e^{-x}.$
	\end{enumerate}
	Up to constants and quantities smaller than $\sqrt{\frac{p}{n}}$ the concentration radii follows $r_0\sim\sqrt{p+x}$
\end{theorem}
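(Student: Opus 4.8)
The statement is the penalized–quasi–likelihood concentration bound of the finite–sample framework of Spokoiny 2012 \cite{spokoiny2012penalized} together with its re-sampled counterpart from Spokoiny, Zhilova 2015 \cite{spokoiny2015bootstrap}, specialised to the objective ($\ref{M1}$). The plan is to show that on a set of dominating probability the increment $L(\thetav)-L(\svv)$ of the penalized quasi-log-likelihood is strictly negative on every sphere $\partial\Upsilon(r)\eqdef\{\|\D_0(\thetav-\svv)\|=r\}$ with $r\geq r_0$, so that the maximiser $\tthetav$, sitting at the top of a ray from $\svv$ along which the process eventually decreases, is trapped in $\Upsilon(r_0)$; then the identical argument is rerun in the bootstrap world with $L^{\sbt}$, $\zetab$ and $\omega_1(r)$ in place of $L$, $\zeta$ and $\omega$.

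First I would treat the real world. Writing $L(\thetav)-L(\svv)=\{\E L(\thetav)-\E L(\svv)\}+\{\zeta(\thetav)-\zeta(\svv)\}$ with $\zeta\eqdef L-\E L$, I split the stochastic increment into the linear term $\langle\nabla\zeta(\svv),\thetav-\svv\rangle=\langle\xiv,\D_0(\thetav-\svv)\rangle$ and the remainder $\zeta(\thetav,\svv)-\langle\nabla\zeta(\svv),\thetav-\svv\rangle$. The deterministic part is bounded below by the growth conditions: \Lc\; gives $-\{\E L(\thetav)-\E L(\svv)\}\geq\tfrac12(b(r)r)^2$ on $\partial\Upsilon(r)$, while \Lnullc\; and (\Indc) serve to compare $\D(\thetav)$ with $\D_0$ near the target and to pin down the local radius. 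Theorem \ref{DeviationOnScore}, used with the identity $\nabla\zeta(\svv)=\nabla L(\svv)$ so that $B\eqdef\D_0^{-1}\Var\{\nabla L(\svv)\}\D_0^{-1}=\D_0^{-1}V_0^2\D_0^{-1}$ up to $\Var\nabla\zeta(\svv)\le V_0^2$, gives $\|\xiv\|\leq\mathfrak{z}(x,B)$ up to probability $2e^{-x}+8.4e^{-x_c}$; Theorem \ref{BoundOnStochasticPart} (which needs \EDtwoc\; and (\Indc)) bounds the remainder by $12\nu_0\omega\mathfrak{Z}(x)r$ uniformly over $\Upsilon(r)$. A dyadic peeling over the shells $2^{k}r_0\leq r<2^{k+1}r_0$ with a union bound upgrades these two estimates to hold simultaneously over all $r\geq r_0$, at the cost of replacing $x$ by $x+\log(2r/r_0)$; collecting the pieces, on a set of probability $\geq1-C_1e^{-x}$ one obtains, for $\thetav\in\partial\Upsilon(r)$,
\[
L(\thetav)-L(\svv)\leq-\tfrac12(b(r)r)^2+r\,\mathfrak{z}(x,B)+12\nu_0\omega\mathfrak{Z}\!\left(x+\log\tfrac{2r}{r_0}\right)r<0
\]
as soon as $b(r)r$ exceeds the first threshold in the statement — the $\vee$ branch and the constant $24$ accounting for the two nested bounds of Theorem \ref{BoundOnStochasticPart} and for the two regimes of $\|\xiv\|$ relative to $\sqrt{\operatorname{tr}(\suml_{i=1}^n\E\xiv_i\xiv_i^T)}$. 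Since $b(r)r$ is non-decreasing, negativity on one sphere propagates outward, and claim 1 follows.

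For the bootstrap world I would first invoke Lemmas \ref{EDB_0} and \ref{EDB_2} to pass from (\EB) to the re-sampled conditions \EDBnullc\; and \EDBtwoc, so that Theorems \ref{DeviationOnBootstrapScore} and \ref{BootstrapBoundOnStochasticPart}, together with the bootstrap identifiability (\IndB), deliver $\|\D_0^{-1}\nabla\zetab(\svv)\|\leq\mathfrak{z}(x,B)$ and the uniform remainder bound $12\nu_0\omega_1(r)\mathfrak{Z}(x)r$. Because $\E^{\sbt}L^{\sbt}=L$, the bootstrap increment decomposes as $L^{\sbt}(\thetav)-L^{\sbt}(\svv)=\{L(\thetav)-L(\svv)\}+\{\zetab(\thetav)-\zetab(\svv)\}$: the first bracket is bounded exactly as in the real world, contributing the drift $-\tfrac12(b(r)r)^2$ and the $\zeta$-stochastic terms, and the second by $r\,\mathfrak{z}(x,B)+6\nu_0\omega_1(r)\mathfrak{Z}(x+\log\tfrac{2r}{r_0})r$; here (\SmallmBias) and the ``Target'' part of ($\ref{FSC}$) are what guarantee that $\svv$ is the common target of the $\E\ell_i$, equivalently $B_0^2=0$ by the small-modelling-bias identity above, so that no deterministic linear term $\langle\nabla L(\svv),\thetav-\svv\rangle$ survives beyond the already-controlled score. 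After the same peeling one gets, on a set of $\Pb$-probability $\geq1-C_2e^{-x}$ with dominating unconditional probability,
\[
L^{\sbt}(\thetav)-L^{\sbt}(\svv)\leq-\tfrac12(b(r)r)^2+3r\,\mathfrak{z}(x,B)+12\nu_0\mathfrak{Z}\!\left(x+\log\tfrac{2r}{r_0}\right)(\omega+\omega_1(r))\,r<0
\]
on every sphere $r\geq r_0$ once $b(r)r$ exceeds the second threshold, which is claim 2. The stated rate is then read off: $\operatorname{tr}B\asymp p$ by the design and moment parts of ($\ref{FSC}$), hence $\mathfrak{z}(x,B)\lesssim\sqrt{p+x}$ and $\mathfrak{Z}(x+\log(2r/r_0))\lesssim\sqrt{p}+\sqrt{x+\log(2r/r_0)}$, while $\omega,\omega_1(r)=O(1)$ and $\nu_0\geq1$, so with $b(r)\asymp1$ near $r_0$ both defining inequalities reduce to $b(r)r\gtrsim\sqrt{p+x}$ and $r_0\asymp\sqrt{p+x}$, the $O(\sqrt{p/n})$ corrections coming from the $C_\delta/\sqrt n$ term inside $\omega_1(r)$ and from the matrix-Bernstein factor $\sqrt{1+C\delta/g}$ of Theorem \ref{DeviationOnBootstrapScore}.

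The hard part will be the slicing (peeling) step that turns the pointwise-in-$r$ stochastic bounds of Theorems \ref{BoundOnStochasticPart} and \ref{BootstrapBoundOnStochasticPart} into a statement uniform in $r$ while producing exactly the $\log(2r/r_0)$ inflation of $x$ and keeping the union-bound mass at order $e^{-x}$, together with the bookkeeping of constants so that the two composite thresholds for $b(r)r$ come out in the displayed form; and, specific to the non-iid IV model, verifying that the deterministic drift in the bootstrap world is still supplied by $L$ and that the modelling-bias linear term is annihilated — not merely made small — by the ``Target''/(\SmallmBias) condition.
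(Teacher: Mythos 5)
Your proposal is correct and follows essentially the paper's own strategy: show the penalized quasi-log-likelihood increment is negative on all spheres outside $\Upsilon(r_0)$ using the drift from \Lc, the deviation bound on the score norm (Theorem $\ref{DeviationOnScore}$ / Theorem $\ref{DeviationOnBootstrapScore}$), the uniform stochastic-remainder bound (Theorem $\ref{BoundOnStochasticPart}$ / Theorem $\ref{BootstrapBoundOnStochasticPart}$), and a dyadic peeling that inflates $x$ by $\log(2r/r_0)$. The one genuine routing difference lies in the bootstrap step. You center the increment at the real-world target, decomposing $L^{\sbt}(\thetav)-L^{\sbt}(\svv)=\{\E L(\thetav)-\E L(\svv)\}+\{\zeta(\thetav)-\zeta(\svv)\}+\{\zetab(\thetav)-\zetab(\svv)\}$ and invoking $L^{\sbt}(\tthetav^{\sbt})\geq L^{\sbt}(\svv)$ directly. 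The paper instead centers at $\tthetav$, writing $L^{\sbt}(\thetav,\tthetav)=L(\thetav,\svv)-L(\tthetav,\svv)+\zetab(\thetav,\tthetav)$; it first bounds $\|\D_0(\tthetav^{\sbt}-\tthetav)\|$ via $L^{\sbt}(\tthetav^{\sbt},\tthetav)\geq0$ and then combines with the real-world claim to trap $\tthetav^{\sbt}$ in a (necessarily enlarged) $\svv$-centered set — this is exactly the paper's remark that "one needs to extend the set where $\tthetav^{\sbt}$ concentrates." Your centering avoids the set-extension step and is arguably cleaner, but it produces slightly different bookkeeping for the $\mathfrak{z}(x,B)$ multiplicities (you collect two score contributions plus a remainder, the paper collects one bootstrap score plus a real-world $L(\thetav,\svv)$ bound carrying its own $\mathfrak{z}$), so the constants in front of $\mathfrak{z}(x,B)$ do not line up exactly with those in the statement — a discrepancy you correctly flag as the bookkeeping part. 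One small point worth making explicit: the deviation bound on $\|\xivb\|=\|\D_0^{-1}\nabla\zetab(\svv)\|$ holds conditionally on the data with the \emph{empirical} covariance $\frac1n\sum_i\nabla\ell_i(\svv)\nabla^T\ell_i(\svv)$ in place of $\Var\nabla L(\svv)$, and the replacement by $B$ costs the matrix-concentration factor $\sqrt{1+C\delta/g}$ of Theorem $\ref{DeviationOnBootstrapScore}$; your remark that this is subsumed in the $O(\sqrt{p/n})$ corrections is exactly right.
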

We will utilize uniform version of local deviation bound on stochastic processes $\nabla\zeta(\thetav)$ and $\nabla\zetab(\thetav)$ from theorems $\ref{BoundOnStochasticPart}$ and $\ref{BootstrapBoundOnStochasticPart}$  and also bounds on $\|\xiv\|$ outlined in previous section to prove this result.
\begin{proof}
	Let us list the key facts needed in the proof in an informal fashion to get an idea of the background required.	
	\begin{enumerate}
		\item \Lc\; condition to bound deterministic part of log-likelihood function\\
		$-2\E L(\vv,\svv)\geq b(r)r^2$
		\item Uniform bound on stochastic processes $\nabla\zeta(\vv)$ and $\nabla\zetab(\vv)$ \\
		$|\zeta(\vv,\svv)-(\vv-\svv)\nabla\zeta(\svv)|\leq\rho(x,r)r$\\
		$|\zetab(\vv,\svv)-(\vv-\svv)\nabla\zetab(\svv)|\leq\rho_1(x,r)r$
		\item Deviation bound on $\|\xiv\|$ and $\|\xivb\|$\\
		$\|\xiv\|\geq\mathfrak{z}(x,B)$\\
		$\|\xivb\|\geq\mathfrak{z}(x,B)$
	\end{enumerate}
	These are sufficient to prove results number one and two in the theorem. Let us divide the proof in parts accordingly to the results provided in the statement.
	
	$\boldsymbol{1.Real\; world\; concentration\; of\; MLE}$
	
	Notice that an inequality $L(\tthetav,\sthetav)\geq0$ always hold and thus by definition binds MLE $\tthetav$ structurally to $\sthetav$. So, if one justifies that there exist minimum $r_0$ such that for $r\geq r_0$ the property breaks than one can claim that $\tthetav$ concentrates within $\Upsilon(r_0)$. Therefore, one need to have a uniform bound on log-likelihood function. Spokoiny \cite{spokoiny2012penalized} has proven that with dominating probability - 
	\[
	|\zeta(\thetav,\sthetav)-(\thetav-\sthetav)\zeta(\sthetav)|\leq\rho(x,r)r,
	\]
	where $\rho(x,r)\eqdef6\nu_0\mathfrak{Z}(x+\log\frac{2r}{r_0})\omega$. Local analogue of which is to be proved in the next section. Then using theorem ($\ref{DeviationOnScore}$) and condition \Lc\; it is possible to see that $r_0$ satisfies
	\[
	b(r)r\geq2\mathfrak{z}(x,B)+2\rho(x,r),
	\]
	then $L(\thetav,\sthetav)$ is most probably ($\geq1-3e^{-x}$) less then zero.
	
	$\boldsymbol{2. Bootstrap\; world\; concentration\; of\; bMLE}$
	
	Interestingly in the bootstrap world one needs to extend the set where $\tthetav^{\sbt}$ concentrates. However, the key idea of the proof remains. 
	
	By definition $\L^{\sbt}(\tvvb,\svvb)$ is positive. A uniform bound on $\zetab(\thetav,\sthetav)$ over $\R^p\setminus\Upsilon(r_0)$ translates as
	\[
	|\zetab(\thetav,\tthetav)-(\thetav-\tthetav)\nabla\zetab(\tthetav)|\leq\rho_1(x,r)r,
	\]
	where $\rho_1(x,r)\eqdef6\nu_0\mathfrak{Z}(x+\log\frac{2r}{r_0})\omega_1(r)$. Rewriting it one has
	\[
	|\L^{\sbt}(\thetav,\tthetav)-L(\thetav,\sthetav)-L(\tthetav,\sthetav)-(\thetav-\tthetav)\nabla\zetab(\tthetav)|\leq\rho_1(x,r)r,
	\]
	and the deviation bound on $\|\xivb\|$, from theorem $\ref{DeviationOnBootstrapScore}$, and part one of the proof enable with probability $\geq1-3e^{-x}$ an inequality
	\[
	|L(\thetav,\sthetav)|\leq\rho(r,x)r+r\mathfrak{z}(x,B)-\frac{r^2b(r)}{2}.
	\]
	And $\L^{\sbt}(\tthetav^{\sbt},\tthetav)$ is negative for $r^{\sbt}_0$ satisfying inequality
	\[
	b(r)r\geq12\nu_0\mathfrak{Z}(x+\log\frac{2r}{r_0})(\omega+\omega_1(r))+3\mathfrak{z}(x,B).
	\] 
\end{proof}

\subsection{Square root Wilks expansion}\label{SquareRootWilksResultsForRealandBootstrapWorldsAndBootstrapValidity}
\begin{theorem}\label{SQW}
	Let conditions ($\ref{FSC}$) to be fulfilled,  then with probability $\geq1-Ce^{-x}$ holds
	\[
	\left|\sqrt{2T_{LR}}-\|\xiv^s\|\right|\leq7\diamondsuit(r_0,x),
	\]
	where $\diamondsuit(r,x)$ is given by
	\[
	\diamondsuit(r,x)\eqdef(\delta(r)+6\nu_0\omega\mathfrak{Z}(x))r.
	\]
\end{theorem}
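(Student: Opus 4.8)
The plan is to run the square-root Wilks argument of Spokoiny~\cite{spokoiny2012penalized} in the present semiparametric (profile) setting, feeding in the concentration bound of Section~\ref{ConcentratioPropertiesOfMLEAndbMLEAndBootstrapValidity} and the stochastic-part bounds of Theorems~\ref{DeviationOnScore} and \ref{BoundOnStochasticPart}. Write $\uv\eqdef\D_0(\thetav-\sthetav)$, $\xiv\eqdef\D_0^{-1}\nabla\zeta(\sthetav)=\D_0^{-1}\nabla L(\sthetav)$ (the two coincide because $\nabla\E L(\sthetav)=0$), and let $P$ be the orthogonal projector of $\R^p$ onto $\D_0\ker\Pi$. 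Under $\Hnull$ one has $\Pi\sthetav=0$, so $\sthetav$ lies in the null set $\{\Pi\thetav=0\}$ and is its maximizer of $\E L$. First I would record, from the concentration theorem of Section~\ref{ConcentratioPropertiesOfMLEAndbMLEAndBootstrapValidity}, that on an event of probability $\ge1-Ce^{-x}$ both the MLE $\tthetav$ and the constrained maximizer $\tthetav_0\eqdef\argmax_{\Pi\thetav=0}L(\thetav)$ lie in $\Upsilon(r_0)$ with $r_0\sim\sqrt{p+x}$; for $\tthetav_0$ this is the same argument run with $L$ restricted to the subspace $\{\Pi\thetav=0\}$, which is legitimate since the deviation bounds of Theorems~\ref{DeviationOnScore} and \ref{BoundOnStochasticPart} hold on all of $\Upsilon(r_0)$.

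Next I would install the quadratic bracketing. On $\Upsilon(r_0)$ one has $L(\thetav,\sthetav)=\uv^T\xiv-\tfrac12\|\uv\|^2+\alpha(\thetav)$, where the deterministic contribution to $\alpha$ is controlled by \Lnullc\ and \Lc\ (bounding $\|\D_0^{-1}(\D^2(\thetav)-\D_0^2)\D_0^{-1}\|\le\delta(r_0)$) and the stochastic contribution by integrating the bound $\|\D_0^{-1}\nabla\zeta(\cdot,\sthetav)\|\le6\nu_0\omega\mathfrak{Z}(x)r_0$ of Theorem~\ref{BoundOnStochasticPart} along the segment $[\sthetav,\thetav]$; altogether $\sup_{\Upsilon(r_0)}|\alpha(\thetav)|$ is of order $\diamondsuit(r_0,x)$ times the local radius. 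Using $2(\uv^T\xiv-\tfrac12\|\uv\|^2)=\|\xiv\|^2-\|\uv-\xiv\|^2$ together with $\|\xiv\|\le r_0$ and $\|P\xiv\|\le r_0$ on the dominating event (Theorem~\ref{DeviationOnScore}), the points $\sthetav+\D_0^{-1}\xiv$ and $\sthetav+\D_0^{-1}P\xiv$ belong to $\Upsilon(r_0)$, so maximizing over $\R^p$ and over the subspace $\D_0\ker\Pi$ respectively gives $2L(\tthetav,\sthetav)=\|\xiv\|^2+O(\diamondsuit(r_0,x)(\|\xiv\|+\diamondsuit(r_0,x)))$ and $2L(\tthetav_0,\sthetav)=\|P\xiv\|^2+O(\diamondsuit(r_0,x)(\|P\xiv\|+\diamondsuit(r_0,x)))$. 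Subtracting,
\[
2T_{LR}=\|(I-P)\xiv\|^2+O\!\bigl(\diamondsuit(r_0,x)\,(r_0+\diamondsuit(r_0,x))\bigr).
\]

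It then remains to identify $\|(I-P)\xiv\|$ with $\|\xiv^s\|$ and to take square roots. Writing $-\Delta\E L(\sthetav)=\D_0^2$ in the $(\Pi,I-\Pi)$ block decomposition, the orthogonal complement of $\D_0\ker\Pi$ is $\D_0$ applied to the $\D_0^2$-orthogonal complement of $\ker\Pi$, and a direct Schur-complement computation shows that $(I-P)\xiv$ equals the orthogonalized score $\xiv^s$ defined in Theorem~\ref{SquareRootWilksResult}, with $D_0^2$ there being the Schur complement of the $(I-\Pi)$-block of $\D_0^2$; in particular $\|(I-P)\xiv\|=\|\xiv^s\|$. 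Finally, dividing $2T_{LR}-\|\xiv^s\|^2$ by $\sqrt{2T_{LR}}+\|\xiv^s\|$, which is of order $r_0$ with dominating probability, absorbs one power and leaves $|\sqrt{2T_{LR}}-\|\xiv^s\||\le7\diamondsuit(r_0,x)$ after collecting universal constants; a union bound over the events above keeps the probability at $\ge1-Ce^{-x}$.

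The main obstacle is the profiling step rather than any single estimate: one must show that the constrained maximizer $\tthetav_0$ concentrates at the right centre — which, under $\Hnull$, is $\sthetav$ itself; without $\Hnull$ a deterministic shift of order $\|\Pi\sthetav\|$ enters and the conclusion changes — and that restricting $L$ to $\{\Pi\thetav=0\}$ preserves the design, moment and smoothness inputs of (\ref{FSC}); once that is in place, the Schur-complement bookkeeping turning the difference of the two quadratic maxima into the single norm $\|\xiv^s\|^2$ is purely algebraic. A secondary analytical point is the square-root step: it genuinely needs a matching lower deviation bound on $\|\xiv^s\|$ (a lower tail for the sub-Gaussian quadratic form $\|\xiv^s\|^2$), since otherwise the division above would inflate the error by a factor $r_0/\|\xiv^s\|$.
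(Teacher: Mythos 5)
Your proposal runs essentially the same pipeline as the paper's proof: local quadratic bracketing with a residual controlled by $\delta(r_0)$ and the stochastic bound of Theorem~\ref{BoundOnStochasticPart}, concentration of both maximizers inside $\Upsilon(r_0)$, a block-structure reduction to the orthogonalized score, and a final square-root step. The differences are presentational rather than substantive. Where you introduce the orthogonal projector $P$ onto $\D_0\ker\Pi$ and identify $(I-P)\xiv$ with $\xiv^s$, the paper achieves the same block-diagonalization via the explicit transformation matrices
\[
K\eqdef\Matrix{1}{-\D_{\thetav}^{-1}\D_{\thetav,\etav}}{-\D_{\etav}^{-1}\D_{\etav,\thetav}}{1},\qquad \Dh\eqdef\Dpn K,
\]
which is the same Schur-complement computation in change-of-basis form. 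And where you first establish $2T_{LR}=\|(I-P)\xiv\|^2+O(\diamondsuit(r_0+\diamondsuit))$ and then divide by $\sqrt{2T_{LR}}+\|\xiv^s\|$, the paper instead proves the multiplicative inequality
\[
\bigl|\sqrt{2L(\vvp_1,\vvp_2)}-\sqrt{-2\mathbb{L}(\vvp_1,\vvp_2)}\bigr|\sqrt{-2\mathbb{L}(\vvp_1,\vvp_2)}\leq4\|\mathcal{D}_0(\vvp_1-\vvp_2)\|\,\diamondsuit(r_0,x)
\]
directly. This formulation buys something your version has to argue separately: your ``secondary analytical point'' about a lower bound on $\|\xiv^s\|$ is not actually an obstruction, because in the regime where $\sqrt{2T_{LR}}+\|\xiv^s\|\lesssim\diamondsuit$ both terms are individually $\lesssim\diamondsuit$, so the difference is trivially of the right order; the paper's multiplicative form absorbs this dichotomy automatically since $\sqrt{-2\mathbb{L}}$ is comparable to $\|\D_0(\vvp_1-\vvp_2)\|$ on the relevant set. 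Your other caveat — that the constrained maximizer $\tthetav_0$ must be shown to concentrate — is legitimate and the paper handles it only implicitly: the concentration theorem in Section~\ref{ConcentratioPropertiesOfMLEAndbMLEAndBootstrapValidity} is run for $\tthetav$, and the restriction to the subspace $\{\Pi\thetav=0\}$ (where, under $\Hnull$, $\sthetav$ remains the correct center) is taken for granted; noting, as you do, that the deviation bounds of Theorems~\ref{DeviationOnScore} and \ref{BoundOnStochasticPart} are uniform on $\Upsilon(r_0)$ and hence restrict cleanly is exactly the right justification. The paper's proof also tracks a deterministic bias term $\bias$ so as to cover the alternative, which you flag correctly; under $\Hnull$ this term vanishes and the two proofs coincide.
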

\begin{proof}
	Compared to the body of the work redefine 
	\[
	\vv\leftrightarrow\thetav,\;\;\thetav\rightarrow\Pi\vv,\;\;\etav\leftrightarrow\left(I-\Pi\right)\vv
	\]
	In the proof one relies on local linear approximation of the quasi log-likelihood following with dominating probability from bound on stochastic component (theorem $\ref{BoundOnStochasticPart}$) and \Lnullc. For a quadratic form of parameters $\vvp$ and $\vvp_1$: $\mathbb{L}(\vvp,\vvp_1)=(\vvp-\vvp_1)^T\nabla L(\vvp_1)-\frac{\|\Dpn(\vvp-\vvp_1)\|^2}{2}$ introduce residual on the set $\Upsilon(r_0)$
	\[
	\alpha(\vvp_1,\vvp_2)=L(\vvp_1,\vvp_2)-\mathbb{L}(\vvp_1,\vvp_2).
	\]
	Then from the inequality 
	\[
	\|\D_{0}^{\prime-1}\nabla\E L(\vvp,\svvp)+\Dpn(\vvp-\svvp)\|\leq\delta(r_0)r_0,
	\]
	directly following from \Lnullc and theorem $\ref{BoundOnStochasticPart}$  one concludes  for $\vvp\in\Upsilon(r_0)$
	\[
	\|\D_0^{\prime-1}\nabla\alpha(\vvp,\svvp)\|\leq\diamondsuit(r_0,x),
	\]
	with the notation $\diamondsuit(r)=(\delta(r)+6\nu_0\mathfrak{Z}(x)\omega)r$. Triangle inequality for $\vvp_1,\vvp_2\in\Upsilon(r_0)$ gives
	\[
	\|\D_0^{\prime-1}\nabla\alpha(\vvp_1,\vvp_2)\|\leq2\diamondsuit(r_0,x).
	\]
	and it is evident that
	\[
	|\sqrt{2L(\vvp_1,\vvp_2)}-\sqrt{-2\mathbb{L}(\vvp_1,\vvp_2)}|\sqrt{-2\mathbb{L}(\vvp_1,\vvp_2)}\leq4\|\mathcal{D}_0(\vvp_1-\vvp_2)\|\diamondsuit(r_0,x),
	\]
	fro points $\vvp_1,\vvp_2$ such that $L(\vvp_1,\vvp_2)\geq0$. Moving forward consider transformation matrices 
	\[
	K\eqdef\Matrix{1}{-\D_{\thetav}^{-1}\D_{\thetav,\etav}}{-\D_{\etav}^{-1}\D_{\etav,\thetav}}{1}
	\]
	and
	\[
	K_1\eqdef\Matrix{1}{-\D_{\thetav,\etav}\D_{\etav}^{-1}}{\D_{\etav,\thetav}\D_{\thetav}^{-1}}{1},
	\] 
	then it can be seen that 
	\[
	\Dh\eqdef\Matrix{\Dh_0}{0}{0}{\Dh_1}=\Dpn K,
	\]
	and furthermore 
	\[
	\D_0^{\prime-1}=K\Dh^{-1}=\Dh^{-1}K_1.
	\]
	The transformation helps to get rid of non-diagonal entries of matrix $\Dpn$ and shape the form of the score $\xiv^s$. 
	
	Using proven above inequality under the alternative one has
	\[
	\left|\sqrt{2T_{LR}}-\|\Dh(\tvv^{\prime\prime}-\vv^{\prime\prime}_1)+\bias\|\right|\leq4\diamondsuit(r_0,x),
	\]
	where $\tvv^{\prime\prime}$ and $\vv^{\prime\prime}_1$ are such that $\vvp\eqdef K\vv^{\prime\prime}$. The fact that norm of truncated score vector is less then norm of full vector and local expansion for $\D_0^{-1}\nabla\alpha$ yields 
	\[
	\|\Dh_0\Svec{\widetilde{\thetav}^{\prime\prime}-\thetav_1^{\prime\prime}}{0}\|\leq2\diamondsuit(r_0,x),
	\] 
	and
	\[
	\|\Dh_0\Svec{0}{\widetilde{\etav}^{\prime\prime}-\etav_1^{\prime\prime}}-\xivh\|\leq\diamondsuit(r_0,x).
	\]
	Combining these three suffice the announced statement.
\end{proof}
In the bootstrap world an almost complete analogue of the theorem is attainable. It is evident that it takes place since we show that exactly similar conditions as in real world are replicated in the bootstrap world. 
\begin{theorem}\label{BSQW}
	Let conditions ($\ref{FSC}$) hold then with probability $\geq1-Ce^{-x}$
	\[
	\left|\sqrt{2T_{BLR}}-\|\xiv^{s}_{\sbt}\|\right|\leq7\diamondsuitb(r_0,x),
	\]
	where $\diamondsuitb(r,x)$ is given by
	\[
	\diamondsuitb(r,x)\eqdef\diamondsuit(r,x)+6\nu_0\omega_1(r)\mathfrak{Z}(x)r.
	\]
\end{theorem}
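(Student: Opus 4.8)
The plan is to mirror the proof of Theorem~\ref{SQW} almost line for line, replacing the quasi log-likelihood $L$ by the re-sampled likelihood $L^{\sbt}$ and every real-world deviation bound by the bootstrap counterpart assembled in Section~\ref{BootstrapAnaloguesOfRealWorldConditionsAndDeviationBoundsOnScoreVector}. I keep the relabelling $\vv\leftrightarrow\thetav$, $\thetav\to\Pi\vv$, $\etav\leftrightarrow(I-\Pi)\vv$ and work on the concentration ball $\Upsilon(r_0)$; the prerequisite that $\tthetav^{\sbt}$ and $\tthetav$ both lie in $\Upsilon(r_0)$ is the content of the concentration theorem of Section~\ref{ConcentratioPropertiesOfMLEAndbMLEAndBootstrapValidity} together with the deviation bound of Theorem~\ref{DeviationOnBootstrapScore}. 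Since Lemmas~\ref{EDB_0} and~\ref{EDB_2} hold on a $\P$-event of probability $\geq1-Ce^{-x}$, the final statement is understood as a conditional-on-data bound valid on that event, and the factor $1-Ce^{-x}$ absorbs both the data randomness and the weight randomness.

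First I introduce the bootstrap quadratic form and its remainder,
\[
\mathbb{L}^{\sbt}(\vvp_1,\vvp_2)\eqdef(\vvp_1-\vvp_2)^{T}\nabla L^{\sbt}(\vvp_2)-\tfrac{1}{2}\|\Dpn(\vvp_1-\vvp_2)\|^{2},\qquad\alpha^{\sbt}(\vvp_1,\vvp_2)\eqdef L^{\sbt}(\vvp_1,\vvp_2)-\mathbb{L}^{\sbt}(\vvp_1,\vvp_2).
\]
Writing $L^{\sbt}(\vv)=\E L(\vv)+\zeta(\vv)+\zetab(\vv)$ with $\zetab(\vv)=L^{\sbt}(\vv)-L(\vv)$ splits $\alpha^{\sbt}$ into three pieces: the deterministic Taylor remainder of $\E L$, bounded by \Lnullc\ through $\|\D_0^{\prime-1}\nabla\E L(\vvp,\svvp)+\Dpn(\vvp-\svvp)\|\leq\delta(r_0)r_0$; the data-stochastic remainder, bounded by Theorem~\ref{BoundOnStochasticPart}, contributing $6\nu_0\omega\mathfrak{Z}(x)r_0$; and the weight-stochastic remainder, bounded by Theorem~\ref{BootstrapBoundOnStochasticPart} (itself resting on \EDBtwoc\ from Lemma~\ref{EDB_2}), contributing $6\nu_0\omega_1(r_0)\mathfrak{Z}(x)r_0$. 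Adding the three yields exactly $\|\D_0^{\prime-1}\nabla\alpha^{\sbt}(\vvp,\svvp)\|\leq\diamondsuitb(r_0,x)$ for $\vvp\in\Upsilon(r_0)$, and the triangle inequality gives $\|\D_0^{\prime-1}\nabla\alpha^{\sbt}(\vvp_1,\vvp_2)\|\leq2\diamondsuitb(r_0,x)$ on pairs in $\Upsilon(r_0)$.

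From this point the argument of Theorem~\ref{SQW} carries over verbatim with $\diamondsuit$ replaced by $\diamondsuitb$: the inequality
\[
\bigl|\sqrt{2L^{\sbt}(\vvp_1,\vvp_2)}-\sqrt{-2\mathbb{L}^{\sbt}(\vvp_1,\vvp_2)}\bigr|\sqrt{-2\mathbb{L}^{\sbt}(\vvp_1,\vvp_2)}\leq4\|\Dpn(\vvp_1-\vvp_2)\|\diamondsuitb(r_0,x)
\]
for pairs with $L^{\sbt}(\vvp_1,\vvp_2)\geq0$; the same block-diagonalising transformations $K$, $K_1$ with $\Dh=\Dpn K$ and $\D_0^{\prime-1}=K\Dh^{-1}=\Dh^{-1}K_1$, which recast $\nabla L^{\sbt}(\svvp)$ into the shape of the effective score $\xiv^{s}_{\sbt}$; and, evaluated at the bootstrap profile estimator under $\Hnullb$, the two bounds $\|\Dh_0\Svec{\widetilde{\thetav}^{\prime\prime}-\thetav_1^{\prime\prime}}{0}\|\leq2\diamondsuitb(r_0,x)$ and $\|\Dh_0\Svec{0}{\widetilde{\etav}^{\prime\prime}-\etav_1^{\prime\prime}}-\xivh\|\leq\diamondsuitb(r_0,x)$. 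Combining these three contributions gives $\bigl|\sqrt{2T_{BLR}}-\|\xiv^{s}_{\sbt}\|\bigr|\leq7\diamondsuitb(r_0,x)$ on the intersection of the good events, of probability $\geq1-Ce^{-x}$.

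The step I expect to be the main obstacle is the bookkeeping around the centering point. The real-world statistic is naturally centered at $\sthetav$, whereas $T_{BLR}$ is centered at $\tthetav$ while the target score $\xiv^{s}_{\sbt}$ is still evaluated at $\sthetav$; one must absorb the displacement $\|\Dpn(\tthetav-\sthetav)\|$ --- of order $r_0$ on the concentration event --- into $\diamondsuitb(r_0,x)$, which forces the local expansion onto a slightly enlarged ball and is precisely why the bootstrap concentration theorem must be invoked at the outset. A secondary, more clerical point is to check that in every place where $\omega$ entered the real-world estimate the replacement by $\omega_1(r_0)=\sqrt{4\nu_0^2\omega^2x+2C_{\delta}^{2}(r_0)/n}$ occurs only in the weight-stochastic term, leaving the data-stochastic term governed by $\omega$, so that the two add up to $\diamondsuitb=\diamondsuit+6\nu_0\omega_1(r)\mathfrak{Z}(x)r$ and to nothing larger.
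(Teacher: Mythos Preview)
Your proposal is correct and follows essentially the same route as the paper: the paper's proof reduces to establishing $\|\D_0^{-1}\nabla\alpha^{\sbt}(\vv,\svv)\|\leq\diamondsuitb(r_0,x)$ via the splitting $\|\D_0^{-1}\nabla\alpha^{\sbt}\|\leq\|\D_0^{-1}\nabla\zetab(\vv,\svv)\|+\|\D_0^{-1}\nabla\alpha(\vv,\svv)\|\leq6\nu_0\omega_1(r)\mathfrak{Z}(x)r+\diamondsuit(r,x)$ and then declares the rest of Theorem~\ref{SQW} to carry over with $\diamondsuit\to\diamondsuitb$ and $\bias^{\sbt}\equiv0$. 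Your three-term decomposition is the same bound with $\diamondsuit$ unpacked into its deterministic and data-stochastic constituents, and your discussion of the centering at $\tthetav$ versus $\sthetav$ makes explicit a point the paper absorbs silently into the phrase ``minor change''.
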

Let us specify the proof of this fact.
\begin{myproof}
	The underlying in the previous proof result - local linear approximation of a gradient - is sufficient. We are aiming thus at establishing - 
	\[
	\|\D_0^{-1}\nabla\alphab(\vv,\svv)\|\leq\diamondsuitb(r_0,x).
	\]
	It is easy to note that
	\[
	\|\D_0^{-1}\nabla\alphab(\vv,\svv)\|\leq\|\D_0^{-1}\nabla\zetab(\vv,\svv)\|+\|\D_0^{-1}\nabla\alpha(\vv,\svv)\|\leq6\nu_0\omega_1(r)r\mathfrak{Z}(x)+\diamondsuit(r,x),
	\]
	which follows from the theorem $\ref{DeviationOnBootstrapScore}$ and the previous proof. Therefore, for bootstrap world square root Wilks result is true with the same notations and with a minor change for $\bias^{\sbt}\equiv0$ since the hypothesis is exact and $\diamondsuit\rightarrow\diamondsuitb$.
\end{myproof}
\subsection{Matrix Inequalities}\label{Matrix}
In the section concentration of the operator norm of a random matrix - 
\[
\|S\|_{\infty}\eqdef\sup_{||\uv||=1,\uv\in\R^p} |\uv^T S\uv|,
\]
with an additive structure $S\eqdef\suml_{i=1}^nS_i$ is considered.  

The derivations generally follow techniques from Joel Tropp 2012 \cite{Tropp2012}, supported by analysis of operator functions from works Hansen, Pedersen \cite{hansen2003jensen}, Effors 2008 \cite{effros2009matrix} and Tropp \cite{tropp2012joint}. The exposition is self-contained and the chapter contains required prerequisites for the final result.

The main ingredient in the subsequent theory is concavity of the operator function
\[A\rightarrow tr\{\exp(H+\log A)\}\]
with respect to ordering on a positive-definite cone with $H$ being fixed self-adjoint operator. This fact can be found in the paper by Lieb 1973 \cite{lieb1973convex}. This chapter, however, follows mostly more direct argument of Joel Tropp 2012 \cite{tropp2012joint} exploiting joint convexity of relative entropy function.
\begin{theorem}(Lieb, 1973)\label{OperatorConcavity}
	For the fixed self-adjoint matrix $H$ function 
	\[A\rightarrow tr\{\exp(H+\log A)\}\]
	is concave with respect to positive-definite cone.
\end{theorem}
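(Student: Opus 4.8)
The plan is to derive the concavity of $\Psi(A)\eqdef\tr\{\exp(H+\log A)\}$ on the positive-definite cone from the joint convexity of the (unnormalised) quantum relative entropy $D(X\,\|\,A)\eqdef\tr\{X\log X-X\log A\}$, in the spirit of Tropp \cite{tropp2012joint}; the bridge between the two is a Gibbs-type variational representation of the trace exponential. The first step is Klein's inequality, which is among the prerequisites assembled in this section: for the strictly convex scalar function $\phi(t)=t\log t-t$ and any self-adjoint $X>0$, $B>0$, expanding the trace in a joint eigenbasis and using scalar convexity of $\phi$ gives $\tr\{\phi(X)-\phi(B)\}\geq\tr\{(X-B)\phi'(B)\}$, which, since $\phi'(t)=\log t$, rearranges to $\tr\{X\log X\}-\tr\{X\log B\}\geq\tr X-\tr B$, with equality iff $X=B$. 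Putting $B=e^{T}$ for an arbitrary self-adjoint $T$ turns this into the exact identity
\[
\tr e^{T}=\sup_{X>0}\{\tr(XT)-\tr\{X\log X\}+\tr X\},
\]
the supremum being attained at $X=e^{T}$.

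The second step specialises $T=H+\log A$ (legitimate since $A>0$ makes $\log A$ a genuine self-adjoint matrix), which yields
\[
\Psi(A)=\sup_{X>0}\{\tr(XH)+\tr X-D(X\,\|\,A)\}.
\]
Inside the braces, $(X,A)\mapsto\tr(XH)+\tr X$ is affine, while $-D(X\,\|\,A)$ is jointly concave in $(X,A)$ exactly because $D$ is jointly convex; hence the bracketed functional is jointly concave on the product of the two positive-definite cones. Concavity of $\Psi$ then follows from the elementary fact that a partial supremum of a jointly concave function is concave: for $A_{0},A_{1}>0$, $t\in[0,1]$ and $X_{0},X_{1}$ chosen $\varepsilon$-optimal for $A_{0}$ and $A_{1}$, the combination $tX_{0}+(1-t)X_{1}$ is admissible for $tA_{0}+(1-t)A_{1}$, so that $\Psi(tA_{0}+(1-t)A_{1})\geq t\Psi(A_{0})+(1-t)\Psi(A_{1})-\varepsilon$; letting $\varepsilon\downarrow0$ closes the argument, and since the supremum is actually attained no compactness subtlety is needed.

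The substantive ingredient, and the step I expect to be the main obstacle, is the joint convexity of $D(X\,\|\,A)$ itself, which is not elementary and is precisely what the remainder of this appendix is arranged to supply through the operator-convexity machinery of Effros \cite{effros2009matrix} and Tropp \cite{tropp2012joint}, building on the Hansen--Pedersen calculus \cite{hansen2003jensen}. The route I would take is: the scalar function $f(t)=t\log t$ is operator convex; the associated non-commutative perspective superoperator $P_{f}(\mathsf{L}_{X},\mathsf{R}_{A})\eqdef\mathsf{R}_{A}\,f(\mathsf{L}_{X}\mathsf{R}_{A}^{-1})$, formed from the \emph{commuting} multiplication superoperators $\mathsf{L}_{X}\colon Z\mapsto XZ$ and $\mathsf{R}_{A}\colon Z\mapsto ZA$ acting on the Hilbert--Schmidt space of matrices, is jointly operator convex in $(X,A)$ by Effros's perspective theorem; and a short spectral computation gives $D(X\,\|\,A)=\langle I,P_{f}(\mathsf{L}_{X},\mathsf{R}_{A})\,I\rangle_{\mathrm{HS}}$ with $I$ the identity matrix. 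Since pairing against the fixed vector $I$ is a positive linear functional on superoperators, it transports the joint operator convexity of $P_{f}$ into joint convexity of the scalar $D$. Assembling this chain with care---operator convexity of $t\log t$, joint operator convexity of the perspective, and the reduction to the trace functional $D$---carries the technical weight; once it is in place, the elementary steps above close the proof of Theorem~\ref{OperatorConcavity} and recover Lieb's concavity statement \cite{lieb1973convex}.
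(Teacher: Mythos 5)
Your proposal follows essentially the same route as the paper: establish the Gibbs variational formula $\tr e^{T}=\sup_{X>0}\{\tr(XT)-\tr(X\log X)+\tr X\}$, substitute $T=H+\log A$, reduce to joint convexity of the relative entropy $D(X\,\|\,A)$ via operator convexity of $t\log t$ and a perspective construction, and close with the observation that a partial supremum of a jointly concave function is concave. The paper's Lemma~\ref{Variational} derives the variational formula from nonnegativity of the Bregman divergence of $\phi_e$, which is equivalent to your Klein's-inequality derivation; the paper's Lemma~\ref{SupConvexity} is your final step. The one place where your write-up is cleaner than the paper's: for the joint convexity of $D$, the paper uses the matrix-level perspective formula $\phi_e(X,Y)=\phi_e(XY^{-1})Y$ together with a Hansen--Pedersen substitution, a device that is only formally meaningful when $X$ and $Y$ commute, whereas you correctly pass to Effros's non-commutative perspective $P_{f}(\mathsf{L}_{X},\mathsf{R}_{A})$ on the Hilbert--Schmidt space, where $\mathsf{L}_{X}$ and $\mathsf{R}_{A}$ genuinely commute and the construction is rigorous. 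Both proofs cite the same sources (Hansen--Pedersen, Effros, Tropp), so this is a tightening of the same argument rather than a new one, but it is a real improvement at the one step that carries the technical weight.
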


\subsubsection{Concavity theorem of Leib}
The proof of corollary from Leib's concavity theorem (theorem $\ref{OperatorConcavity}$) requires several supporting lemmas. Generalization of the Jensen inequality for operator functions is important, however the core constructive point in the proof is operator convexity of entropy function. The observation allows to infer that relative entropy  as a perspective of entropy is jointly convex. In view of the fact subsequent text contains slightly abused notation for relative entropy so that it equals exactly to the perspective.
\begin{lemma}(Lowner-Heinz)\label{Entropy}
	Define operator function (Entropy) - $\phi_e(X)\eqdef X\log X$ and define (relative entropy) - $\phi_e(X,Y)\eqdef X\log X-X\log Y$. Where $X\in\R^{p\times p}$ lie in Hilbert space of positive definite operators $\mathbb{H}^{++}_p$. Then  
	\begin{enumerate}
		\item $\phi_e(X)$ - operator convex. Namely for any positive definite $X_1,X_2$ and $\lambda\in(0,1)$
		\[
		\phi_e(\lambda X_1+(1-\lambda)X_2)\leq \lambda\phi_e(X_1)+(1-\lambda)\phi_e(X_2)
		\]
		\item $\phi_e(X,Y)$ - jointly operator convex. For any positive definite $X_1,Y_1,X_2,Y_2$ and $\lambda\in(0,1)$ holds
		\[
		\phi_e(\lambda X_1+(1-\lambda)Y_1,\lambda X_2+(1-\lambda)Y_2)\leq \lambda\phi_e(X_1,Y_1)+(1-\lambda)\phi_e(X_2,Y_2)
		\]
	\end{enumerate}
\end{lemma}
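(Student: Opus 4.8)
The plan is to handle the two parts by different means: part~1 via a classical integral representation of $t\log t$, and part~2 by reducing it to part~1 through the non-commutative perspective construction (as in \cite{effros2009matrix,tropp2012joint}), after first fixing the sense in which the shorthand $X\log X-X\log Y$ is to be read.

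For part~1 I would start from the representation, valid for every $t>0$,
\[
t\log t\;=\;\int_0^{\infty}\!\left(\frac{t}{1+s}-1+\frac{s}{t+s}\right)\dd s,
\]
which follows from $\log t=\int_0^{\infty}\!\bigl((1+s)^{-1}-(t+s)^{-1}\bigr)\dd s$ on multiplying by $t$ and using $\tfrac{t}{t+s}=1-\tfrac{s}{t+s}$; the integrand is $O(s^{-2})$ at infinity and continuous at $s=0$, so the integral converges. By the spectral theorem this lifts verbatim to $X\in\mathbb{H}^{++}_p$, the integral read in the L\"owner order. For fixed $s\ge0$ the maps $X\mapsto X/(1+s)$ and $X\mapsto-I$ are affine and constant, hence drop out of any convex-combination difference; what survives inside the integral is the convex-combination gap of $X\mapsto s(X+sI)^{-1}$, i.e.
\[
\lambda\,s(X_1+sI)^{-1}+(1-\lambda)\,s(X_2+sI)^{-1}-s\bigl(\lambda X_1+(1-\lambda)X_2+sI\bigr)^{-1}\;\ge\;0
\]
for each $s$, since $r\mapsto r^{-1}$ is operator convex on $(0,\infty)$ (pre-composed with the affine shift $X\mapsto X+sI$). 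Integrating this pointwise-nonnegative family over $s\in(0,\infty)$ gives, by the representation, exactly the inequality claimed for $X\log X$; this is part~1.

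For part~2 the object that is genuinely jointly operator convex --- and the form actually used below, since for Lieb's theorem one passes to traces and obtains the scalar quantum relative entropy --- is the perspective $\mathcal{P}_{\phi_e}(A,B)\eqdef B^{1/2}\phi_e\!\bigl(B^{-1/2}AB^{-1/2}\bigr)B^{1/2}$ of $\phi_e(\cdot)$ evaluated on the commuting pair of left/right multiplications. On $\R^{p\times p}$ with the Hilbert--Schmidt inner product put $\mathsf{L}_X(Z)\eqdef XZ$ and $\mathsf{R}_Y(Z)\eqdef ZY$; these are commuting positive operators with $\log\mathsf{L}_X=\mathsf{L}_{\log X}$ and $\log\mathsf{R}_Y=\mathsf{R}_{\log Y}$, so a short computation using commutativity gives $\mathcal{P}_{\phi_e}(\mathsf{L}_X,\mathsf{R}_Y)=\mathsf{L}_X\bigl(\mathsf{L}_{\log X}-\mathsf{R}_{\log Y}\bigr)$, whose value at the element $I\in\R^{p\times p}$ equals $X\log X-X\log Y$. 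Since $X\mapsto\mathsf{L}_X$ and $Y\mapsto\mathsf{R}_Y$ are linear, the assertion that $(X,Y)\mapsto\phi_e(X,Y)$ is jointly operator convex is read as joint operator convexity of $(A,B)\mapsto\mathcal{P}_{\phi_e}(A,B)$ over positive operators (which, taking the Hilbert--Schmidt quadratic form at $I$, yields joint convexity of $\operatorname{tr}(X\log X-X\log Y)$); by the displayed formula this is Effros's perspective theorem applied to the operator convex function $\phi_e$ supplied by part~1.

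The whole weight of part~2, and the step I expect to be the main obstacle, is thus this perspective theorem. I would obtain it from Jensen's operator inequality (Hansen--Pedersen \cite{hansen2003jensen}, developed in this section): $g(\Phi(A))\le\Phi(g(A))$ for operator convex $g$ and unital positive linear $\Phi$. Given $(A_1,B_1)$, $(A_2,B_2)$ and $\lambda\in(0,1)$, put $B\eqdef\lambda B_1+(1-\lambda)B_2$ and take $\Phi(\cdot)=C^{*}(\cdot)C$ acting on $\operatorname{diag}\bigl(B_1^{-1/2}A_1B_1^{-1/2},\,B_2^{-1/2}A_2B_2^{-1/2}\bigr)$ with $C=\bigl(\sqrt{\lambda}\,B_1^{1/2},\,\sqrt{1-\lambda}\,B_2^{1/2}\bigr)^{\T}B^{-1/2}$; one checks $C^{*}C=I$ and that the induced compressions come out so that Jensen's inequality, after conjugating by $B^{1/2}$, reads $\mathcal{P}_{g}\bigl(\lambda A_1+(1-\lambda)A_2,\,B\bigr)\le\lambda\,\mathcal{P}_{g}(A_1,B_1)+(1-\lambda)\,\mathcal{P}_{g}(A_2,B_2)$. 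Specializing $g=\phi_e$, $A_i=\mathsf{L}_{X_i}$, $B_i=\mathsf{R}_{Y_i}$ and reading the value at $I$ closes part~2. The only genuinely routine calculations are the convergence check and termwise L\"owner integration in part~1, and the verification that $C$ is an isometry implementing the two compressions in part~2.
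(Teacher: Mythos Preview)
Your proposal is correct and follows the same two-step architecture as the paper --- an integral representation reducing $X\log X$ to the operator convexity of $t\mapsto t^{-1}$, followed by a perspective/Hansen--Pedersen argument for the relative entropy --- but each step is executed along a somewhat different and, in places, cleaner route.

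For part~1, the paper does not write down an integral for $t\log t$ directly; instead it first proves operator convexity of $t^{-1}$ by a midpoint argument on eigenvalues, then writes the integral representation of $X^p$ for $p\in(0,1)$ and $p\in(1,2)$, and finally recovers $X\log X$ as the limit $\lim_{p\to 1}(X^p-X)/(p-1)$. Your single formula $t\log t=\int_0^{\infty}\bigl(t/(1+s)-1+s/(t+s)\bigr)\,ds$ short-circuits this detour and isolates the only nontrivial contribution $s(X+sI)^{-1}$ immediately; the cost is that you still invoke operator convexity of $t^{-1}$ as a known fact rather than proving it, whereas the paper supplies that ingredient from scratch.

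For part~2, the paper writes the perspective in the na\"{\i}ve commutative form $\phi_e(X,Y)=\phi_e(XY^{-1})Y$ and applies Hansen--Pedersen with $A=\lambda^{1/2}Y^{-1/2}Y_1^{1/2}$, $B=(1-\lambda)^{1/2}Y^{-1/2}Y_2^{1/2}$. That formula is literally valid only when $X$ and $Y$ commute, so the paper's derivation is looser than yours; your passage through the commuting pair $(\mathsf{L}_X,\mathsf{R}_Y)$ on Hilbert--Schmidt space is exactly the Effros device that makes this rigorous, and your observation that evaluating at $I$ and taking the trace recovers the scalar relative entropy is precisely what the downstream Lieb/Tropp argument needs. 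The isometry $C$ you build is the block analogue of the paper's $A,B$, so the Hansen--Pedersen step itself is the same in substance.
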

\text{}\\
Generalization of the lemma can be found under the name - Lowner-Heinz theorem. Below is an adopted proof of the required statement.
\begin{myproof}
	Let us demonstrate that inverse function $f:\R^{++}\rightarrow\R^{++}$ s.t. $f(t)=t^{-1}$ is operator convex function. 
	\\
	It is evident from definition for any invertible matrix $A\in\R^{p\times p}$ that
	\[
	\lambda_{i}^{A^{-1}}=1/\lambda^{A}_{i},\;\;\;i=\overline{1,p}
	\]
	where $\lambda_i^{A}$ is i-th eigenvalue of matrix $A$. And, therefore, also
	\[
	\lambda^{(I+A)^{-1}}_i=1/\lambda^{I+A}_i=1/(1+\frac{1}{\lambda^{A^{-1}}}),\;\;\;i=\overline{1,p}
	\]	
	which will be useful next. Also it is worth notion that in view of continuity only middle point convexity needs to be shown for function $f(t)=t^{-1}$ being convex. 
	\\
	Therefore, convexity is implied by the inequality
	\[
	\frac{1}{2}X_1^{-1}+\frac{1}{2}X_2^{-1}-\left(\frac{X_1+X_2}{2}\right)^{-1}\succ0 
	\] 	
	with respect to positive definite cone. Using the fact that matrix $C\eqdef X_1^{-1/2}X_2X_1^{-1/2}\succ0$ is positive definite helps to rearrange terms to get -
	\[
	\frac{1}{2}I+\frac{1}{2}C^{-1}-\left(\frac{I+C}{2}\right)^{-1}\succ0.
	\]
	Multiplying from both sides left hand side of inequality with unit vectors from orthogonal basis of eigenvectors matrix $C$ and using relations for eigenvalues above the matrix inequality is reduced to the $p$ inequalities on real line 
	\[
	\frac{1+\lambda^{C^{-1}}_i}{2}-\left(\frac{2}{1+\frac{1}{\lambda^{C^{-1}}_i}}\right)\geq0,\;\;\;i=\overline{1,p}
	\] 
	which obviously hold representing difference between arithmetic and harmonic means. Therefore $f(t)=t^{-1}$ is operator convex. 
	\\
	Next step is to demonstrate that entropy function can be represented as a weighted sum of functions $t^{-1}$. For that purpose introduce an integral representation of power of a matrix $X$. It can be seen that 
	\[
	X^p=c_p\int_{0}^{\infty}t^p(\frac{1}{t}-\frac{1}{t+X})dt,
	\]
	for $p\in\left(0,1\right)$ and $c_p$ is a constant depending only on $p$. Also multiplying by $X$ we get
	\[
	X^{p}=c_p\int_{0}^{\infty}t^{p-1}(\frac{X}{t}+\frac{1}{t+X}-I)dt,
	\]
	which now converges in the interval $p\in(1,2)$. And adding here 
	\[
	X\log X\eqdef\lim_{p\rightarrow1}\frac{X^p-X}{p-1},
	\]
	is sufficient to see that entropy is operator convex function. It follows a representation which is convex as a sum with positive coefficients of a convex functions. 
	\\
	Now it is left to demonstrate that relative entropy as was defined $\phi_e(X,Y)=X\log X - X\log Y$ is jointly convex function. Joint convexity can be seen via Hansen-Pedersen inequality \cite{hansen2003jensen} and relative entropy being perspective of $\phi_e(X)$ -
	\[
	\phi_e(X,Y)=\phi_e(XY^{-1})Y.
	\]
	Hansen-Pedersen inequality states
	\[
	\phi_e(A^TX_1A+B^TX_2B)\leq A^T\phi_e(X_1)A+B^T\phi_e(X_2)B
	\]
	for $A,B$ s.t. $A^TA+B^TB=I$. Then for $X=\lambda X_1+(1-\lambda)X_2$ and $Y=\lambda Y_1+(1-\lambda)Y_2$ and matrices $A=\lambda^{1/2}Y^{-1/2}Y_1^{1/2}$ and $B=(1-\lambda)^{1/2}Y^{-1/2}Y_2^{1/2}$ we receive
	\[
	\phi_e(X,Y)=\phi_e(A^T\frac{X_1}{Y_1}A+B^T\frac{X_2}{Y_2}B)Y\leq A^T\phi_e(\frac{X_1}{Y_1})AY+B^T\phi_e(\frac{X_2}{Y_2})BY\leq \lambda\phi_e(X_1,Y_2)+(1-\lambda)\phi_e(X_2,Y_2),
	\]
	which ends the proof.
\end{myproof}
\text{}\\
Following article by Tropp 2012 \cite{tropp2012joint} let us rely on geometric properties of $\phi_e(X)$. Quantifying the approach let us use Bregman operator divergence for entropy function and try to built its affine approximation which is in turn by lemma $\ref{Entropy}$ gives inequality
\[
D_{\phi_e}(X,Y)\eqdef\phi_e(X)-\phi_e(Y)-(\nabla\phi(Y),X-Y)\geq0.
\]
Above Bregman divergence was defined - $D_{\phi_e}(X,Y)$, and it is easy to see that $D_{\phi_e}(X,Y)=0$ iff $X=Y$. Therefore,the following lemma can be concluded.
\begin{lemma}(Variational Formula for Trace)\label{Variational}
	Let Y be a positive definite matrix, then
	\[
	tr Y=\sup_{X>0} tr(X\log Y-X\log X+X)
	\]
\end{lemma}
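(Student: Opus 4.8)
The plan is to read off the variational formula directly from the nonnegativity of the Bregman divergence $D_{\phi_e}$ recorded just above the statement, specialized to the trace of the matrix entropy. Set $F(X)\eqdef\tr(X\log X)$ on the cone $\mathbb{H}_p^{++}$ of positive definite matrices. By Lemma~\ref{Entropy} the operator function $\phi_e(X)=X\log X$ is operator convex, and composing with the linear, order-preserving functional $\tr$ shows that $F$ is a convex function in the ordinary scalar sense on $\mathbb{H}_p^{++}$.

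First I would identify the gradient of $F$ with respect to the trace pairing $\langle A,B\rangle\eqdef\tr(AB)$. Differentiating $t\mapsto\tr\big((Y+tH)\log(Y+tH)\big)$ at $t=0$ and using the integral representation $D\log(Y)[H]=\int_0^\infty (Y+sI)^{-1}H(Y+sI)^{-1}\,ds$, one computes $\tr\big(Y\,D\log(Y)[H]\big)=\tr(H)$, hence
\[
\langle\nabla F(Y),H\rangle=\tr(H\log Y)+\tr(H),\qquad\text{so}\qquad\nabla F(Y)=\log Y+I .
\]
This is the one routine computation in the argument.

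Convexity of $F$ then gives, for every $X\in\mathbb{H}_p^{++}$,
\[
F(X)\ge F(Y)+\langle\nabla F(Y),X-Y\rangle=\tr(Y\log Y)+\tr\big((\log Y+I)(X-Y)\big),
\]
which is precisely the inequality $D_{\phi_e}(X,Y)\ge0$. Expanding the right-hand side, the $\tr(Y\log Y)$ terms cancel and the inequality rearranges to
\[
\tr(Y)\ge\tr(X\log Y)-\tr(X\log X)+\tr(X)=\tr\big(X\log Y-X\log X+X\big),
\]
valid for all positive definite $X$. Taking $X=Y$ makes the right-hand side equal to $\tr(Y)$, so the supremum over $X>0$ is attained at $X=Y$ and equals $\tr(Y)$, which is the claimed formula.

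The only genuine obstacle is the differentiability bookkeeping for the matrix logarithm — establishing $\nabla F(Y)=\log Y+I$ and, implicitly, that $D_{\phi_e}$ vanishes exactly when $X=Y$ so that the bound is tight; everything else is algebraic rearrangement. Strict convexity of $F$ on $\mathbb{H}_p^{++}$ additionally shows the maximizer is unique, although this is not needed for the stated identity.
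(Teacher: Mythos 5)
Your proof is correct and follows the same route as the paper: nonnegativity of the trace of the Bregman divergence of the matrix entropy, with equality at $X=Y$. The only difference is that you explicitly compute the gradient $\nabla\phi_e(Y)=\log Y+I$ via the integral representation of $D\log$, a step the paper leaves implicit when writing $(\nabla\phi(Y),X-Y)$; your version is therefore a fully spelled-out form of the paper's argument rather than an alternative one.
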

\text{}\\
Informally argument is presented above and one can skip the rigorous formal proof below.
\begin{myproof}
	Obviously from $D_{\phi_e}(X,Y)\geq0$ follows inequality for trace of $trD_{\phi_e}(X,Y)\geq0$. Therefore,
	\[
	tr Y\geq tr(X\log Y-X\log X+X).
	\]
	But equality holds iff $X=Y$, then we conclude the statement of the lemma.
\end{myproof}
Operator concavity also helps to derive the following lemma.
\begin{lemma}\label{SupConvexity}
	Function $\sup_{X>0}g(X,Y)$ is concave if $g(X,Y)$ is jointly concave.
\end{lemma}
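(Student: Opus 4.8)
The plan is to invoke the standard principle that partial maximization of a jointly concave function yields a concave function, paying attention only to the fact that the variable $X$ ranges over the positive-definite cone $\R^{p\times p}\cap\{X>0\}$, which is itself convex.

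First I would fix positive-definite matrices $Y_1,Y_2$ and a weight $\lambda\in(0,1)$, and set $Y_\lambda\eqdef\lambda Y_1+(1-\lambda)Y_2$, which is again positive definite. For arbitrary admissible $X_1,X_2>0$ the convex combination $X_\lambda\eqdef\lambda X_1+(1-\lambda)X_2$ is also positive definite, so joint concavity of $g$ yields
\[
g(X_\lambda,Y_\lambda)\ \geq\ \lambda\,g(X_1,Y_1)+(1-\lambda)\,g(X_2,Y_2).
\]
Since $X_\lambda$ is a feasible point in the supremum defining $\sup_{X>0}g(X,Y_\lambda)$, the left-hand side is bounded above by that supremum, giving
\[
\sup_{X>0}g(X,Y_\lambda)\ \geq\ \lambda\,g(X_1,Y_1)+(1-\lambda)\,g(X_2,Y_2).
\]

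Next I would pass to the supremum on the right-hand side. The last inequality holds for every $X_1>0$ and every $X_2>0$, and the two terms are decoupled, so taking $\sup$ over $X_1$ and then over $X_2$ gives
\[
\sup_{X>0}g(X,Y_\lambda)\ \geq\ \lambda\sup_{X_1>0}g(X_1,Y_1)+(1-\lambda)\sup_{X_2>0}g(X_2,Y_2),
\]
which is precisely the concavity inequality for the map $Y\mapsto\sup_{X>0}g(X,Y)$, and the proof is complete.

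I do not expect a genuine obstacle here; the only point to keep track of is that the supremum could in principle be $+\infty$ or fail to be attained, but the chain of inequalities above uses only the order relation and never attainment, so it remains valid for functions with values in $(-\infty,+\infty]$. In the intended application, combined with Lemma \ref{Variational}, the relevant supremum equals $\tr Y<\infty$, so this degenerate case does not even arise.
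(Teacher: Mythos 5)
Your proof is correct, and it follows essentially the same plan as the paper's argument: fix $Y_1,Y_2$ and a weight $\lambda$, form $X_\lambda=\lambda X_1+(1-\lambda)X_2$, apply joint concavity of $g$ at $(X_\lambda,Y_\lambda)$, and compare with the two suprema. You handle two points more carefully than the printed proof. The paper's displayed chain reads $\sup_X g(X,Y_\lambda)\leq g(X_\lambda,Y_\lambda)\leq\lambda g(X_1,Y_1)+(1-\lambda)g(X_2,Y_2)$ and then appeals to ``joint operator convexity''; both inequality signs there are reversed relative to what the argument requires, and the hypothesis actually used is joint concavity. Your chain has the correct direction, $\sup_X g(X,Y_\lambda)\geq g(X_\lambda,Y_\lambda)\geq\lambda g(X_1,Y_1)+(1-\lambda)g(X_2,Y_2)$. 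Separately, the paper opens by positing maximizers $X_1,X_2$ of $g(\cdot,Y_1)$ and $g(\cdot,Y_2)$, a tacit attainment assumption; you avoid it by letting $X_1,X_2$ range freely and passing to the supremum on the right-hand side at the end. Your closing remark about extended-real values is correct and identifies exactly why no compactness or semicontinuity hypothesis is needed.
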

\begin{myproof}
	First, suggest existence of $X_1,X_2$ and $Y_1,Y_2$ s.t. they provide a partial maximum to function $g(X,Y)$. Namely define them as
	\[
	X_1,Y_1:\;\;\;\sup_Xg(X,Y_1)=g(X_1,Y_1),
	\]
	\[
	X_2,Y_2:\;\;\;\sup_Xg(X,Y_2)=g(X_2,Y_2).
	\]
	Then observe that the set of inequalities hold
	\[
	\sup_X g(X,\lambda Y_1+(1-\lambda)Y_2)\leq
	g(\lambda X_1+(1-\lambda)X_2,\lambda Y_1+(1-\lambda)Y_2)\leq\]
	\[
	\leq\lambda g(X_1,Y_1)+(1-\lambda)g(X_2,Y_2)= \lambda\sup_Xg(X,Y_1)+(1-\lambda)\sup_Xg(X,Y_2)
	\]
	where joint operator convexity of $g$ was used along with the definition of points $(X_1,Y_1)$ and $(X_2,Y_2)$.
\end{myproof}
\text{}\\
Now we are in position to provide the proof of the theorem $\ref{OperatorConcavity}$ of Lieb, 1973.
\begin{myproof} (Theorem $\ref{OperatorConcavity}$).
	\\
	Let us start with variational formula from Lemma $\ref{Variational}$ for trace function saying
	\[
	tr Y = \sup_{X>0}tr(-\phi(X,Y)+X).
	\]
	Also from Lemma $\ref{Entropy}$ we know that $\phi(X,Y)$ is jointly convex and, therefore the trace of it is also jointly convex and, thus, supremum of the trace function is convex according to Lemma $\ref{SupConvexity}$. Now to demonstrate final result it suffice to substitute $Y$ with matrix $\exp\{H+\log A\}$ in variational formula for trace giving
	\[
	tr \exp\{H+\log A\}=sup_{X>0} tr\{ -\phi(X,A)-XH+X\}
	\] 
	and finally providing the advertised statement.
\end{myproof} 
\subsubsection{Master Bound}
Compared to the use of Golden-Thompson inequality
\[
tr\exp\{X+Y\}\leq tr\exp\{X\}\exp\{Y\},
\]
suitable for iid case one can follow theorem $\ref{OperatorConcavity}$ and improve upper bounds on tail distribution of operator norm of the random matrix. This improvement is inherent to the study of independent but not identically distributed random variables. 
\\
Obvious corollary from theorem $\ref{OperatorConcavity}$ can be useful further applications.
\begin{corollary}\label{AverageMIneqaulity}
	For any probability measure $\P$ and set of independent random matrices $\{S_i,i=\overline{1,n}\}$ holds 
	\[
	\E tr\exp\{\suml_{i=1}^nS_i\}\leq tr\exp\{\suml_{i=1}^n\log\E_i\exp{S_i}\}
	\]
\end{corollary}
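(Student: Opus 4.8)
The plan is to obtain Corollary \ref{AverageMIneqaulity} as an immediate consequence of Lieb's concavity theorem (Theorem \ref{OperatorConcavity}) combined with Jensen's inequality and an inductive ``peeling'' of the summands $S_i$ one at a time. The only structural input needed is that, for every fixed self-adjoint matrix $H$, the map $A\mapsto\tr\exp\{H+\log A\}$ is concave on the positive-definite cone; ordinary scalar Jensen then applies to this concave functional of a random positive-definite matrix, and nothing else is required.

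First I would record the basic one-step bound: if $Y$ is a random positive-definite matrix with $\E Y$ finite (hence $\E Y\succ0$) and $H$ is any fixed self-adjoint matrix, then Theorem \ref{OperatorConcavity} and Jensen's inequality give
\[
\E\,\tr\exp\{H+\log Y\}\leq\tr\exp\{H+\log\E Y\}.
\]
Applying this with $Y=\exp\{S_i\}$ turns the stochastic exponent $S_i=\log Y$ into the deterministic exponent $\log\E\exp\{S_i\}$ at the price of an inequality, provided $\E\exp\{S_i\}$ exists.

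Next I would set $S\eqdef\suml_{i=1}^nS_i$ and peel off the summands from the top. Write $S=\suml_{i=1}^{n-1}S_i+\log\exp\{S_n\}$ and condition on $S_1,\dots,S_{n-1}$; by independence the conditional law of $S_n$ is its marginal, so the one-step bound with the (random, but frozen under the conditioning) matrix $H=\suml_{i=1}^{n-1}S_i$ gives, after taking the outer expectation,
\[
\E\,\tr\exp\{S\}\leq\E\,\tr\exp\Bigl\{\suml_{i=1}^{n-1}S_i+\log\E_n\exp\{S_n\}\Bigr\}.
\]
Now $\log\E_n\exp\{S_n\}$ is deterministic and self-adjoint, so it may be absorbed into the ``fixed part'' of the exponent; repeating the argument for $S_{n-1},S_{n-2},\dots,S_1$ (each time conditioning on the lower-indexed variables and invoking Theorem \ref{OperatorConcavity} with the current fixed self-adjoint matrix in the role of $H$) yields after $n$ steps
\[
\E\,\tr\exp\{S\}\leq\tr\exp\Bigl\{\suml_{i=1}^n\log\E_i\exp\{S_i\}\Bigr\},
\]
which is the assertion.

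The only point requiring care is the bookkeeping of the iteration: I must invoke the tower property together with independence so that at the $k$-th peeling step the expectation over $S_{n-k+1}$ is against its own marginal while all other summands stay frozen, and I must note that $\E_i\exp\{S_i\}$ is well defined and strictly positive-definite (automatic for self-adjoint $S_i$ once the expectation is finite), so that $\log\E_i\exp\{S_i\}$ makes sense and Theorem \ref{OperatorConcavity} is legitimately applied with this matrix as $H$. No new estimate beyond Theorem \ref{OperatorConcavity} is needed; the work is purely organizing the conditioning.
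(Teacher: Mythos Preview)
Your proposal is correct and follows essentially the same route as the paper: both use Lieb's concavity theorem (Theorem \ref{OperatorConcavity}) together with Jensen's inequality to peel off one summand at a time, exploiting independence to apply the one-step bound $\E\,\tr\exp\{H+\log Y\}\leq\tr\exp\{H+\log\E Y\}$ with $Y=\exp\{S_i\}$ and $H$ the (conditionally frozen) remainder of the sum, then iterating $n$ times. Your version is in fact slightly more carefully stated than the paper's, since you make explicit the roles of conditioning, the tower property, and the positive-definiteness of $\E_i\exp\{S_i\}$ needed for $\log\E_i\exp\{S_i\}$ to be well defined.
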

\begin{myproof}
	Product structure of probability measure composed from independent marginal parts - $\P\eqdef\prod_{i=1}^{n}\P^{i}$ allows to write
	\[
	\E tr\exp\{\suml_{i=1}^nS_i\}=\E_1\E_2...\E_n tr\exp\{\suml_{i=1}^{n-1}S_i+\log\exp S_i\}.
	\]
	Using theorem $\ref{OperatorConcavity}$ helps to arrive at
	\[
	\E_1\E_2...\E_n tr\exp\{\suml_{i=1}^{n-1}S_i+\log\exp S_i\}\leq\E_1\E_2...\E_{n-1} tr\exp\{\suml_{i=1}^{n-1}S_i+\log\E_n\exp S_i\}.
	\]
	Iterating $n$-times the inequality accounting for the independence of $\{S_i\}$  finally relates
	\[
	\E tr\exp\{\suml_{i=1}^nS_i\}\leq tr\exp\{\suml_{i=1}^n\log\E_i\exp{S_i}\}
	\]
\end{myproof}
\text{}\\ 
This result can be easily combined with Markov exponential inequality to receive a bound on operator norm's tail probability.
\begin{theorem}\label{MasterInequality}(Master Bound)
	Suppose $\{S_i\in\R^{p\times p},i=\overline{1,n}\}$ are independent and  let us denote $S=\suml_{i=1}^nS_i$. Then following bound hold
	\[
	\P(\|S\|_{\infty}\geq t)\leq 2\inf_{\theta>0}e^{-\theta t}tr\exp(\suml_{i=1}^n\log\E_i\exp \theta S_i),
	\]
	for $\theta>0$ and $\|S\|_{\infty}=\sup_{\|\uv\|_2=1}|\uv^TS\uv|$.
\end{theorem}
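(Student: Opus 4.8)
The plan is to follow the standard matrix--Laplace--transform route, chaining the scalar Markov inequality with the trace subadditivity already distilled in Corollary \ref{AverageMIneqaulity} (itself a consequence of Lieb's theorem \ref{OperatorConcavity}). Throughout, as elsewhere in this section, the summands $S_i$ are taken self-adjoint, so that $S=\suml_{i=1}^n S_i$ is self-adjoint and the numerical radius coincides with the spectral radius: $\|S\|_{\infty}=\maxl\{\lambda_{max}(S),\lambda_{max}(-S)\}$, which is what makes $\exp(\theta S)$ positive definite and permits the eigenvalue bookkeeping below.

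First I would split the event by a union bound, $\P(\|S\|_{\infty}\geq t)\leq\P(\lambda_{max}(S)\geq t)+\P(\lambda_{max}(-S)\geq t)$, and note that $\{-S_i\}_{i=\overline{1,n}}$ is again an independent family of self-adjoint matrices, so whatever bound is produced for the first summand transfers verbatim to the second under $S_i\mapsto-S_i$; this is the source of the prefactor $2$.

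Next, fixing $\theta>0$, I would apply Markov's inequality to the increasing map $x\mapsto e^{\theta x}$ to get $\P(\lambda_{max}(S)\geq t)\leq e^{-\theta t}\,\E\,e^{\theta\lambda_{max}(S)}$, then pass from the extreme eigenvalue to the trace via $e^{\theta\lambda_{max}(S)}=\lambda_{max}\!\big(\exp(\theta S)\big)\leq\tr\exp(\theta S)$, valid since $\exp(\theta S)\succ0$. The decisive step is then to invoke Corollary \ref{AverageMIneqaulity} applied to the independent matrices $\{\theta S_i\}$, giving $\E\,\tr\exp\!\big(\suml_i\theta S_i\big)\leq\tr\exp\!\big(\suml_i\log\E_i\exp(\theta S_i)\big)$. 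Combining the three estimates, repeating for $-S$, adding, and taking $\inf_{\theta>0}$ yields the asserted inequality.

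Since the only deep ingredient -- subadditivity of the matrix log-mgf -- is already in hand through Lieb's concavity theorem, there is no genuine analytic obstacle here; the work is bookkeeping. The one slightly delicate point is the reduction from $\|\cdot\|_{\infty}$ to the two eigenvalue tails and matching the $-S$ branch (which formally produces $\suml_i\log\E_i\exp(-\theta S_i)$) to the single trace expression displayed in the statement, so that the final answer comes out as a clean factor $2$ times one infimum over $\theta>0$.
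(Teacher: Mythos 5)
Your proposal follows exactly the route the paper takes: a union bound over $\lambda_{\max}(\pm S)$, Markov's inequality on $e^{\theta\lambda_{\max}(S)}$, the spectral-mapping/trace chain $e^{\theta\lambda_{\max}(S)}=\lambda_{\max}\exp(\theta S)\leq\tr\exp(\theta S)$, and then Corollary~\ref{AverageMIneqaulity} to pass to $\tr\exp(\sum_i\log\E_i\exp\theta S_i)$. The one caveat you flag — that the $-S$ branch formally yields $\sum_i\log\E_i\exp(-\theta S_i)$ rather than the single displayed expression — is a real slight imprecision, but it is equally present in the paper's own proof (which merely remarks that "it is enough to control one of the probabilities"), so you have not lost anything the paper established.
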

\begin{myproof}
	The theorem follows directly from corollary $\ref{AverageMIneqaulity}$ and Markov exponential inequality. Write
	\[
	\P(\|S\|_{\infty}\geq z)=\P(\lambda_{max}(S)\vee\lambda_{max}(-S))\leq
	\]
	\[
	\leq\P(\lambda_{max}(S))+\P(\lambda_{max}(-S)).
	\]	
	It will be evident from the subsequent derivation that it is enough to control one of the probabilities. Also the spectral mapping theorem allows to state $\forall i$
	\[
	\exp\{\theta\lambda_{max}(S)\}=\lambda_{max}\exp\{\theta S\}
	\]
	and combined with a trivial inequality $\lambda_{max}\exp\{S\}\leq tr\exp\{S\}$ gives 
	\[
	\P(\lambda_{max}(S)\geq t)=\P(\exp\{\theta\lambda_{max}(S)\}\geq \exp\{\theta t\})=\P(\lambda_{max}\exp\{\theta(S)\}\geq \exp\{\theta t\})\leq
	\]
	\[
	\leq\P(tr\exp\{\theta(S)\}\geq \exp\{\theta t\})\leq e^{-\theta t}\E tr\exp\{\theta S\}.
	\]
	Now, applying corollary $\ref{AverageMIneqaulity}$ to the sum $S=\suml_iS_i$ of independent matrices we achieve desired result
	\[
	\P(||S||_{\infty}\geq t)\leq 2\inf_{\theta>0} e^{-\theta t}\E tr\exp\{\theta S\}\leq 2\inf_{\theta>0} e^{-\theta t}tr\{\exp(\suml_{i=1}^n\log\E_i\exp \theta S_i)\}.
	\]
\end{myproof}
\text{}\\
The subject of next two chapters - where we derive Bernstein inequality for two types of conditions on matrices $S_i$ - is to bound exponential moment of each independent matrix $S_i$, amounting to the bound on $\suml_{i=1}^{n}\log\E_i\exp(\theta S_i)$.
\subsubsection{Bernstein inequality for uniformly bounded  matrices.}
The matrix version of Bernstein type inequality requires supporting lemma for uniformly bounded matrices $S_i$ in a sense that $\|S_i\|_{\infty}\leq R$ for some positive and universal constant $R$.
\begin{lemma}\label{MomentsForBernstein}
	Suppose that random matrices $S_i$ for $i=\overline{1,n}$ are such that for some positive number $R$ we can found $\|S_i\|_{\infty}\leq R$ then it holds
	\[
	\begin{cases}
	\log \E_i \exp\{\theta S_i\}\leq\E_iS_i^2\psi_2(\theta R)/R^2 &\mbox{if}\;\; \forall\theta>0,\\
	\log \E_i \exp\{\theta S_i\}\leq\frac{\theta^2\E_iS_i^2}{2(1-\frac{R\theta}{3})} &\mbox{if}\;\; 0<\theta<\frac{3}{R},
	\end{cases}
	\]
	where we denote by $\psi_2(u)\eqdef e^{u^2}-1$.
\end{lemma}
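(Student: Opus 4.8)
The plan is to reduce the operator inequality to a one-dimensional estimate via the monotone transfer rule of functional calculus, in the spirit of the classical proof of the matrix Bernstein inequality. Since $\|S_i\|_{\infty}\le R$ and $S_i$ is self-adjoint, its spectrum lies in $[-R,R]$. The scalar input is that the function $g(x)\eqdef(e^{x}-x-1)/x^{2}$, extended continuously by $g(0)\eqdef 1/2$, is monotone increasing on all of $\R$; hence for every $x\in[-R,R]$ and every $\theta>0$ we have $g(\theta x)\le g(\theta R)$, which rearranges into $e^{\theta x}\le 1+\theta x+g(\theta R)\,\theta^{2}x^{2}$. Applying this pointwise bound to the eigenvalues of $S_i$ gives the semidefinite inequality $\exp(\theta S_i)\preceq I+\theta S_i+g(\theta R)\,\theta^{2}S_i^{2}$.

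Next I take the conditional expectation $\E_i$. Using that the increments are centred, $\E_i S_i=0$ (the setting in which the lemma is applied), the linear term drops and $\E_i\exp(\theta S_i)\preceq I+g(\theta R)\,\theta^{2}\,\E_i S_i^{2}$. Since $I+A\preceq\exp(A)$ for self-adjoint $A$ and $\log$ is operator monotone on the positive-definite cone (together with $\log(I+A)\preceq A$ for $A\succeq 0$), I conclude $\log\E_i\exp(\theta S_i)\preceq g(\theta R)\,\theta^{2}\,\E_i S_i^{2}=\dfrac{e^{\theta R}-\theta R-1}{R^{2}}\,\E_i S_i^{2}$. It then remains to majorize $e^{u}-u-1$ with $u=\theta R$. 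For the first branch, valid for all $\theta>0$, I use the elementary convexity estimate $e^{u}-u-1\le e^{u^{2}}-1=\psi_2(u)$ for $u\ge0$, verified by checking that $e^{u^{2}}-e^{u}+u$ and its first derivative vanish at $0$ while its second derivative is nonnegative. For the second branch I use the power-series bound $e^{u}-u-1=\suml_{k\ge 2}u^{k}/k!\le\tfrac{u^{2}}{2}\suml_{k\ge 2}(u/3)^{k-2}=\dfrac{u^{2}}{2(1-u/3)}$, valid for $0\le u<3$ since $k!\ge 2\cdot 3^{\,k-2}$; with $u=\theta R$ this is exactly the claimed bound for $0<\theta<3/R$.

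There is no genuine obstacle in this lemma: it is the standard exponential-moment estimate that is fed, together with Corollary \ref{AverageMIneqaulity}, into the Master Bound (Theorem \ref{MasterInequality}). The only points that require a little care are the two-sided monotonicity of $g$ over the whole line, so that the transfer rule is legitimate across the full spectral range $[-R,R]$ including negative eigenvalues; the use of centring to eliminate the first-order term; and the appeal to operator monotonicity of $\log$ rather than to a scalar argument.
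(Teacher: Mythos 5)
The proposal is correct and follows essentially the same route as the paper's own proof — the classical Tropp-style exponential-moment estimate. You make explicit two points the paper leaves implicit: the functional-calculus transfer rule (which the paper achieves through a termwise Taylor bound in $\|S_i\|_{\infty}$ followed by monotonicity of $u\mapsto(e^u-1-u)/u^2$), and the centring hypothesis $\E_i S_i=0$, which the paper uses silently when the $\theta\E_i S_i$ term disappears between its second and third displays; the concluding scalar estimates $e^u-u-1\le\psi_2(u)$ and $k!\ge 2\cdot 3^{k-2}$ are identical to the paper's.
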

\begin{myproof}
	The proof is classic and relies on the following series of inequalities. Let us decompose the expectation of exponent 
	\[
	\E_i \exp\{\theta S_i\}=\E_i\left[ I_p+\theta S_i+\theta^2 S_i^2\left(\frac{I_p}{2!}+\frac{\theta S_i}{3!}+\frac{\theta^2S_i^2}{4!}+...\right)\right]\leq
	\]
	\[
	\leq\E_i\left[ I_p+\theta S_i+\theta^2 S_i^2\left(\frac{1}{2!}+\frac{\theta \|S_i\|_{\infty}}{3!}+\frac{\theta^2\|S_i\|_{\infty}^2}{4!}+...\right)\right]\leq
	\]
	\[
	\leq I_p+\theta^2\E_i S_i^2\left[\frac{\exp\{\theta\|S_i\|_{\infty}\}-1-\theta\|S_i\|_{\infty}}{\theta^2\|S_i\|_{\infty}^2}\right].
	\]
	To proceed further it is suffice to denote that function - $\left[\frac{\exp\{u\}-1-u}{u^2}\right]$ - is non-decreasing in its argument and, therefore, last inequality can be substituted with a bound - $I_p+\E_i S_i^2\left[\frac{\exp\{\theta R\}-1-\theta R}{R^2}\right]$. Making also contribution here from inequalities $e^x-x\leq e^{x^2}$ and $1+x\leq e^x$ we arrive at
	\[
	\E_i \exp\{\theta S_i\}\leq I_p+\E_i S_i^2\frac{\psi_2(\theta R)}{R^2}\leq \exp\{\frac{\psi_2(\theta R)\E_iS_i^2}{R^2}\}.
	\]
	This concludes the first part of our statement. However, it is useful sometimes to have more convenient expression to work with. In the fashion of sub-exponential random variables it is nice to derive result with leading term proportional to $\theta^2$ in the right hand side of inequalities. This can be easily seen if we estimate series - $\left(\frac{1}{2!}+\frac{\theta R}{3!}+\frac{\theta^2R^2}{4!}+...\right)$ - using inequality $k!\geq23^{k-2}$. Explicitly we have for $\theta\leq\frac{3}{R}$
	\[
	\left(\frac{1}{2!}+\frac{\theta R}{3!}+\frac{\theta^2R^2}{4!}+...\right)\leq\frac{1}{2}\left(\suml_{k=2}^{\infty}\frac{(\theta R)^{k-2}}{3^{k-2}}\right)=\frac{1}{2(1-\theta R/3)},
	\]
	finally justifying the second part of the lemma
	\[
	\E_i \exp\{\theta S_i\}\leq I_p+\theta^2\E_i S_i^2\left(\frac{1}{2!}+\frac{\theta \|S_i\|_{\infty}}{3!}+\frac{\theta^2\|S_i\|_{\infty}^2}{4!}+...\right)\leq
	\]
	\[
	\leq I_p+\frac{\theta^2\E_iS_i^2}{2(1-\theta R/3)}\leq\exp\frac{\theta^2\E_iS_i^2}{2(1-\theta R/3)}.
	\]
\end{myproof}
\text{}\\
Matrix Bernstein inequality is easy step now to accomplish. All essential tools to provide concentration bound for the norm of random matrix was derived above. In essence one have to have two facts - first is the lemma $\ref{MomentsForBernstein}$ and second is master bound from previous section (theorem $\ref{MasterInequality}$). Those two are sufficient to justify Bernstein inequality for matrices.
\begin{theorem}\label{MatrixBernsteinInequality1}
	Suppose that random matrix $S=\suml_{i=1}^nS_i
	$ is s.t $\forall i$ there exist positive number $R$ bounding above $\|S_i\|_{\infty}\leq R$. Also denote $\sigma^2\eqdef\|\suml_{i=1}^n\E_iS_i^2\|_{\infty}$ and $\psi_2(u)=e^{u^2}-1$ then it holds for $\theta_{opt}\eqdef\frac{4\sigma^2\psi_2(\theta_{opt}R)}{R^2t}$
	\[
	\P(\|S\|_{\infty}\geq t)\leq 2p\exp\{-\frac{4\sigma^2\psi_2(\theta_{opt}R)}{R^2}\}=2p\exp\{-\theta_{opt}t\},
	\]
	which incurs \\
	a. for $t<\frac{4\psi_2(R)\sigma^2}{R^2}\eqdef t_{max}^2$
	\[
	\P(\|S\|_{\infty}\geq t)\leq2p\exp\{-\frac{R^2t^2}{4\psi_2(R)\sigma^2}\}=2p\exp\{-(t/t_{max})^2\},
	\] 
	b.Bernstein inequality
	\[
	\P(\|S\|_{\infty}\geq t)\leq2p\exp\{-\frac{t^2}{2\sigma^2(1+Rt/3\sigma^2)}\}
	\]
\end{theorem}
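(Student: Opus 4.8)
The plan is to chain together the two ingredients already prepared — the Master Bound of Theorem~\ref{MasterInequality} and the matrix moment estimates of Lemma~\ref{MomentsForBernstein} — and then optimise the scalar parameter $\theta$. First I would invoke Theorem~\ref{MasterInequality}, which reduces the tail of $\|S\|_{\infty}$ to controlling $\tr\exp\bigl(\suml_{i=1}^n\log\E_i\exp(\theta S_i)\bigr)$ up to the factor $2e^{-\theta t}$. Into this I feed the first alternative of Lemma~\ref{MomentsForBernstein}, namely the operator inequality $\log\E_i\exp(\theta S_i)\preceq R^{-2}\psi_2(\theta R)\,\E_iS_i^2$, which holds for every $\theta>0$; summing over $i$ yields
\[
\suml_{i=1}^n\log\E_i\exp(\theta S_i)\preceq R^{-2}\psi_2(\theta R)\suml_{i=1}^n\E_iS_i^2 .
\]
Using that $A\preceq B$ implies $\tr\exp(A)\le\tr\exp(B)$ — monotonicity of the trace–exponential on the positive semidefinite cone, a consequence of Weyl's monotonicity principle applied to the increasing function $e^{(\cdot)}$ — together with the elementary bound $\tr\exp(A)\le p\exp(\lambda_{max}(A))$ and the identity $\lambda_{max}\bigl(R^{-2}\psi_2(\theta R)\suml_i\E_iS_i^2\bigr)=R^{-2}\psi_2(\theta R)\sigma^2$ (by the definition $\sigma^2=\|\suml_i\E_iS_i^2\|_{\infty}$), I arrive at
\[
\P(\|S\|_{\infty}\ge t)\le 2p\,\infl_{\theta>0}\exp\Bigl(-\theta t+R^{-2}\sigma^2\psi_2(\theta R)\Bigr).
\]
Optimising (or, more conveniently, balancing) the two exponent terms then produces the fixed-point relation characterising $\theta_{opt}$ and the leading estimate of the form $2p\exp(-\theta_{opt}t)$.

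For part (a) I would not solve the fixed point, but instead use a sub-optimal $\theta$ with $\theta R\le R$ together with the monotonicity of $u\mapsto\psi_2(u)/u^2$, which gives $\psi_2(\theta R)\le\theta^2\psi_2(R)$ on that range; the exponent then becomes the quadratic $-\theta t+\theta^2\sigma^2\psi_2(R)/R^2$, whose minimiser is admissible exactly in the regime $t<t_{max}^2$ and delivers the sub-Gaussian form $2p\exp(-(t/t_{max})^2)$. For part (b) I would instead feed in the second alternative of Lemma~\ref{MomentsForBernstein}, valid for $0<\theta<3/R$, repeat the same trace and $\lambda_{max}$ reduction to obtain the exponent $-\theta t+\tfrac12\theta^2\sigma^2/(1-R\theta/3)$, and perform the classical Bernstein optimisation $\theta=t/(\sigma^2+Rt/3)$, which lands on $2p\exp\{-t^2/(2\sigma^2(1+Rt/3\sigma^2))\}$.

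The substantive content is entirely carried by Theorem~\ref{MasterInequality} and Lemma~\ref{MomentsForBernstein}, so the work here is light. The one point that genuinely needs care is the legitimacy of passing from an operator (Loewner-order) inequality to a scalar tail bound, i.e. the monotonicity of $A\mapsto\tr\exp(A)$ under $\preceq$; everything after that is one-dimensional calculus. I expect the main nuisance to be purely bookkeeping: choosing the (sub-)optimal $\theta$ so that the numerical constants in (a) and (b) come out exactly in the stated form, and checking the admissibility ranges ($\theta R\le R$ in (a), $\theta<3/R$ in (b)) against the thresholds $t_{max}^2$ and the Bernstein regime.
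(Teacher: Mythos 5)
Your proposal follows essentially the same route as the paper: Master Bound (Theorem~\ref{MasterInequality}) plus the exponential-moment estimate of Lemma~\ref{MomentsForBernstein}, followed by a scalar optimisation over $\theta$, and your choice $\theta = t/(\sigma^2 + Rt/3)$ in part (b) is exactly the one the paper uses in its direct derivation of the classical Bernstein form. The one point worth flagging is the admissibility claim in part (a): the minimiser of the quadratic $-\theta t + \theta^2\sigma^2\psi_2(R)/R^2$ is $\theta^* = tR^2/(2\sigma^2\psi_2(R))$, which satisfies the constraint $\theta^* \le 1$ only for $t \le 2\sigma^2\psi_2(R)/R^2 = t_{max}^2/2$, not for the full stated range $t < t_{max}^2$, so either a sub-optimal $\theta$ or a slightly different argument is needed near the boundary (the paper's own route via the fixed-point $\theta_{opt}$ and the inequality $\theta_{opt} > R^2 t/(4\sigma^2\psi_2(R))$ avoids this issue, at the cost of more opaque algebra).
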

\begin{myproof}
	Straightforwardly apply master bound and lemma $\ref{MomentsForBernstein}$ to get -
	\[
	\P(\|S\|_{\infty}\geq t)\leq 2\inf_{\theta>0}e^{-\theta t}tr\exp(\suml_{i=1}^n\log\E_i\exp \theta S_i)\leq2\inf_{\theta>0}e^{-\theta t}tr\exp\left(\suml_{i=1}^n\E_iS_i^2\psi_2(\theta R)/R^2 \right)\leq
	\]
	\[
	\leq2p\inf_{\theta>0}\exp\left(-\theta t+\sigma^2\psi_2(\theta R)/R^2 \right).
	\]
	Analogously for the second case in lemma $\ref{MomentsForBernstein}$ for $0<\theta<\frac{3}{R}$
	\[
	\P(\|S\|_{\infty}\geq t)\leq2p\inf_{\theta>0}\exp\left(-\theta t+\frac{\theta^2\sigma^2}{2(1-\theta R/3)} \right).
	\]
	And the most unwieldy thing here is to optimize over $\theta$. Let us first deal with upper inequality above, namely we try to choose $\theta$ in a way to receive almost Gaussian like type of behavior for tails. For that we introduce $\alpha\eqdef\frac{\theta}{t}-\frac{\psi_2(\theta R)\sigma^2}{t^2R^2}$. It is evident that if lover bound on $\inf_{\theta>0}\alpha(\theta)$ is established then an upper-bound on right hand side of the first inequality will follow 
	\[
	\exp\left(-\theta t+\sigma^2\psi_2(\theta R)/R^2 \right)=\exp\{-\alpha t^2\}\leq\exp\{-\inf_{\theta>0}\alpha(\theta)t^2\}.
	\] 
	To proceed we rearrange alpha in the following way
	\[
	\alpha=-\left(\sqrt{\frac{\sigma^2\psi_2(\theta R)}{\theta^2 R^2}}\frac{\theta}{t}-\sqrt{\frac{\theta^2 R^2}{4\sigma^2\psi_2(\theta R)}}\right)^2+\frac{\theta^2 R^2}{4\sigma^2\psi_2(\theta R)},
	\]
	and now choose $\theta_{opt}\eqdef\frac{4\sigma^2\psi_2(\theta_{opt} R)}{R^2t}$ to approximate optimal $\alpha$. Then we have $\alpha(\theta_{opt})=\frac{4\sigma^2\psi_2(\theta_{opt}R)}{R^2t^2}$ and finally tail behavior -
	\[
	\P(\|S\|_{\infty}\geq t)\leq2p\exp\{-\frac{4\sigma^2\psi_2(\theta_{opt}R)}{R^2}\}=2p\exp\{-\theta_{opt}t\}.
	\] 
	Now we can analyze in more details the last formula. For example, in the case $\theta_{opt}<1$ it is easily seen that $\psi_2(\theta_{opt}R)<\theta_{opt}^2\psi_2(R)$ and, therefore, $\theta_{opt}>\frac{R^2t}{4\psi_2(R)\sigma^2}$, which in view of $\psi_2(\theta_{opt}R)\geq\theta_{opt}^2R^2$ recovers Gaussian tail behavior
	\[
	\P(\|S\|_{\infty}\geq t)\leq2p\exp\{-\frac{R^2t^2}{2\psi_2(R)\sigma^2}\}.
	\]
	This is useful illustration that if $R\rightarrow0$ then obviously one gets more Gaussian like tail behavior
	
	Also Bernstein inequality can be recovered in a classical form. Following below statements are usually can be seen as an argument to the proof of Bernstein like inequality and were used previously in the proof of lemma $\ref{MomentsForBernstein}$. In words using Taylor decomposition with inequality $k!\geq23^{k-2}$ yield estimate for all $0<\theta<\frac{3}{R}$
	\[
	\psi_2(\theta_{opt}R)\leq\frac{\theta_{opt}^2R^2}{2(1-2\theta_{opt}R/3)}.
	\]  
	Once again from definition of optimal point we can see that $\theta_{opt}\geq\frac{t(1-2\theta_{opt}R/3)}{2\sigma^2}$ and $\theta_{opt}\geq\frac{t}{2\sigma^2(1+Rt/3\sigma^2)}$. It can be easily verified for new point $\theta^1_{opt}\eqdef\frac{t}{2\sigma^2(1+Rt/3\sigma^2)}$ that $\theta^1_{opt}R<3$ and, therefore, we receive identical to classical Bernstein result
	\[
	\P(\|S\|_{\infty}\geq t)\leq2p\exp\{-\frac{t^2}{2\sigma^2(1+tR/3\sigma^2)}\}.
	\]
	This finalizes the proof of the theorem. It is left to establish only Bernstein type inequality in a conventional way. For that purpose let us use the second part of lemma $\ref{MomentsForBernstein}$ which yields inequality
	\[
	\P(\|S\|_{\infty}\geq t)\leq2p\inf_{\theta>0}\exp\left(-\theta t+\frac{\theta^2\sigma^2}{2(1-\theta R/3)} \right).
	\]
	Instead of optimization choosing $\theta=\frac{t}{\sigma^2(1+\frac{tR}{3\sigma^2})}$ we arrive at
	\[
	\P(\|S\|_{\infty}\geq t)\leq 2p\exp\left(-\frac{t^2}{2\sigma^2(1+\frac{tR}{3\sigma^2})} \right)
	\] 
	and finalize the second part of the theorem.
\end{myproof}
\subsubsection{Bernstein ineqaulity for sub-gaussian matrices}
To develop the theory in the section let us explore a bound analogous to the previous, however, requiring only sub-Gaussian tail behavior of a norm of the random matrix $S=\suml_{i=1}^nS_i$. Analogous result can be found in the work by Koltchinskii \cite{koltchinskii2013remark}.

Define for convex function $\psi_{\alpha}(u)\eqdef e^{u^\alpha}-1$ (see van der Vaart and Wellner \cite{van1996weak}) and operator norm $\|S_i\|_{op}$ a moment
\[
\|S_i\|_{\infty}^{\psi_\alpha}\eqdef\E_i\exp\{\|S_i\|^{\alpha}_{op}\}-1.
\]
If we bound this distance we will get Gaussian like behavior for tails and thus can complement our earlier discussion with more soft bound for the tail probability. I essence we can state
\begin{theorem}\label{MatrixBernstein2}
	Suppose that random matrix $S=\suml_{i=1}^nS_i\in \R^{p\times p}
	$ is s.t $\forall i$ there exist two positive numbers $C_n>\theta$ and $C_{p}>0$ for which 
	\[
	\|\theta S_i\|^{\psi_1}_{\infty}\leq C_{p}.
	\] 
	And choose $R$ and $\delta$ to satisfy 
	\[
	\frac{\delta\psi_2(3)\psi_1(3)}{R^3}=\frac{1}{\sigma^2}\suml_{i=1}^n\|6S_i/R\|^{\psi_1}_{\infty}.
	\]
	Then Bernstein matrix inequality holds again  
	\[
	\P(\|S\|_{op}\geq t)\leq2p\exp\{-\frac{t^2}{2\sigma^2(1+\delta)(1+Rt/3\sigma^2)}\},
	\]
	where $\sigma^2\eqdef\|\suml_i\E_iS_i^2\|_{op}$.
\end{theorem}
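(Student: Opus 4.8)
The plan is to run the Master Bound of Theorem~\ref{MasterInequality} and, exactly as in the proof of Theorem~\ref{MatrixBernsteinInequality1}, reduce the whole estimate to a bound on the matrix log--moment generating function $\log\E_i\exp\{\theta S_i\}$ of a single summand, followed by an optimization over $\theta$. The one genuinely new difficulty is that the summands are not uniformly bounded but merely sub--exponential, in the sense that $\|6S_i/R\|_{\infty}^{\psi_1}=\E_i\exp\{6\|S_i\|_{op}/R\}-1$ is finite; this is handled by truncating each $S_i$ at the level $R$ and treating the two regimes separately. (Recall $\|\cdot\|_{op}=\|\cdot\|_{\infty}$ on self--adjoint matrices, so Theorems~\ref{MasterInequality} and~\ref{MatrixBernsteinInequality1} apply verbatim.)

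First I would write, for $0<\theta<3/R$ and using $\E_i S_i=0$,
\[
\E_i\exp\{\theta S_i\}=I_p+\E_i\bigl[(\exp\{\theta S_i\}-I_p-\theta S_i)\Ind(\|S_i\|_{op}\le R)\bigr]+\E_i\bigl[(\exp\{\theta S_i\}-I_p-\theta S_i)\Ind(\|S_i\|_{op}>R)\bigr].
\]
On $\{\|S_i\|_{op}\le R\}$ the argument of Lemma~\ref{MomentsForBernstein} carries over unchanged: $S_i^2$ commutes with $\exp\{\theta S_i\}-I_p-\theta S_i$ and the scalar series $\sum_{k\ge2}u^{k-2}/k!$ is nondecreasing, so the middle term is $\preceq\frac{\theta^2}{2(1-\theta R/3)}\E_i[S_i^2\Ind(\|S_i\|_{op}\le R)]\preceq\frac{\theta^2}{2(1-\theta R/3)}\E_i S_i^2$. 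On $\{\|S_i\|_{op}>R\}$ I would bound the operator norm of the integrand by the scalar $e^{\theta\|S_i\|_{op}}-1-\theta\|S_i\|_{op}$, then use $\theta R<3$, $\|S_i\|_{op}>R$, and elementary convexity inequalities (e.g.\ $e^u-1-u\le\frac{u^2}{2}e^u$ together with $u^2\le\frac29 e^{3u}$, keeping the factor $\theta^2$ out front and only bounding the exponent by $3\|S_i\|_{op}/R$) to transfer this quantity to the scale $6\|S_i\|_{op}/R$; the numerical constants $\psi_1(3)=e^3-1$ and $\psi_2(3)=e^9-1$ are exactly what this comparison produces, so after a suitable loosening one gets a scalar bound of the shape $\mathrm{tail}_i\le c\,\theta^2 R^{a}\,\|6S_i/R\|_{\infty}^{\psi_1}$ with the constant $c$ and power $a$ chosen to match the defining relation. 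Consequently $\E_i\exp\{\theta S_i\}\preceq\exp\{\frac{\theta^2}{2(1-\theta R/3)}\E_i S_i^2+\mathrm{tail}_i\,I_p\}$, hence by operator monotonicity of $\log$ the same expression bounds $\log\E_i\exp\{\theta S_i\}$.

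Summing over $i$, subadditivity of the operator norm and $\|\suml_{i=1}^n\E_i S_i^2\|_{op}=\sigma^2$ control the first part by $\frac{\theta^2\sigma^2}{2(1-\theta R/3)}$, while the defining relation $\frac{\delta\psi_2(3)\psi_1(3)}{R^3}=\frac1{\sigma^2}\suml_{i=1}^n\|6S_i/R\|_{\infty}^{\psi_1}$ is precisely what forces $\suml_{i=1}^n\mathrm{tail}_i\le\frac{\delta\theta^2\sigma^2}{2}$. Using monotonicity of $\mathrm{tr}\exp$ and $\mathrm{tr}\exp(M)\le p\exp(\lambda_{\max}(M))$, Theorem~\ref{MasterInequality} gives
\[
\P(\|S\|_{op}\ge t)\le 2p\inf_{0<\theta<3/R}\exp\Bigl\{-\theta t+\frac{(1+\delta)\theta^2\sigma^2}{2(1-\theta R/3)}\Bigr\}.
\]
This is exactly the estimate driving Theorem~\ref{MatrixBernsteinInequality1}(b) with $\sigma^2$ replaced by $(1+\delta)\sigma^2$; taking $\theta=t\big/\bigl[(1+\delta)\sigma^2\bigl(1+tR/(3(1+\delta)\sigma^2)\bigr)\bigr]$ (which satisfies $\theta R<3$) and repeating that optimization yields $2p\exp\{-t^2/(2(1+\delta)\sigma^2+2tR/3)\}$, and since $1+\delta\ge1$ this is at least as strong as the asserted $2p\exp\{-t^2/(2\sigma^2(1+\delta)(1+Rt/3\sigma^2))\}$.

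The delicate point is the tail estimate. Truncation destroys the centering of the summands, so the two indicator pieces in the splitting must be regrouped so that the surviving linear contributions cancel before one can invoke the $\psi_1$--moment; and the scalar comparison on $\{\|S_i\|_{op}>R\}$ must be tuned so that the per--summand bound telescopes exactly against the stated relation between $R$, $\delta$, and $\suml_{i=1}^n\|6S_i/R\|_{\infty}^{\psi_1}$ — matching the power of $R$ and the constants $\psi_1(3),\psi_2(3)$ is the part of the bookkeeping with the least slack (indeed the crude inequalities above only close when $R$ is not too small, so one either sharpens them or reads the relation as constraining admissible $(R,\delta)$). Everything else is a direct transcription of the uniformly bounded argument of Theorem~\ref{MatrixBernsteinInequality1}.
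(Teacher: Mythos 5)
Your skeleton matches the paper's: invoke the Master Bound (Theorem~\ref{MasterInequality}), split the matrix log--MGF $\log\E_i\exp\{\theta S_i\}$ into a uniformly bounded piece handled as in Lemma~\ref{MomentsForBernstein} and a tail piece living on $\{\|S_i\|_\infty>R\}$, choose $(R,\delta)$ self--consistently so that the tail contributes only a $(1+\delta)$ multiplier on the effective variance, and then re--run the optimization of Theorem~\ref{MatrixBernsteinInequality1}. The centering worry you raise is a red herring: $\E_iS_i=0$ is applied once, before the truncation indicators appear, and both indicator pieces carry the same integrand $\exp\{\theta S_i\}-I_p-\theta S_i$, so nothing has to be regrouped.

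The genuine gap is in the tail estimate, and you flag it yourself without supplying the cure. The paper keeps the matrix factor $S_i^2$ intact in the tail, writes $\exp\{\theta S_i\}-I_p-\theta S_i\preceq S_i^2\,\psi_1(\theta\|S_i\|_\infty)/\|S_i\|_\infty^2$ there, and then dominates the indicator by a smooth majorant using the monotonicity of $u\mapsto\psi_1(u)/u$:
\[
\Ind(\|S_i\|_\infty>R)\;\leq\;\frac{R\,\psi_1(\theta\|S_i\|_\infty)}{\psi_1(\theta R)\,\|S_i\|_\infty}.
\]
This produces a tail contribution $\frac{R}{\psi_1(\theta R)}\,\E_i\frac{\psi_1^2(\theta\|S_i\|_\infty)}{\|S_i\|_\infty}\,I_p$, which, summed over $i$ and fed through the defining $(R,\delta)$ relation, collapses to exactly $\delta\,\sigma^2\,\psi_2(\theta R)/R^2$ --- the \emph{same} functional form and $\theta$--dependence as the bounded piece, so the $(1+\delta)$ factor appears cleanly and $\theta$--free; the stated constants $\psi_1(3),\psi_2(3)$ and the scale $6/R$ then come from $\theta R<3$ together with $\psi_1^2(u)/u\le\psi_1(2u)$. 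Your convexity inequalities do not reproduce this alignment: chasing $e^u-1-u\le\tfrac{u^2}{2}e^u$ and $u^2\le\tfrac{2}{9}e^{3u}$ lands you at a tail of order $\theta^2 R^2\,\E_i\exp\{6\|S_i\|_\infty/R\}$, and the cruder bound $e^u-1-u\le\psi_1(u)$ gives a $\theta$--free tail; neither carries the $\psi_2(\theta R)/R^2$ shape that lets the two pieces merge into $(1+\delta)\sigma^2\psi_2(\theta R)/R^2$. Absorbing your tail into the quadratic piece would force $\delta$ to depend on $\theta$ (and hence on $t$), which breaks the optimization step. The indicator majorization --- or an equivalent device that both factors out $S_i^2$ and matches the $\theta$--dependence of the bounded piece --- is the missing idea; without it the proposal is a correct outline but not a proof.
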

\begin{myproof}
	Let us start with the bound for exponential moments analogous to the ones in lemma $\ref{MomentsForBernstein}$. One can see for some positive constant $R$
	\[
	\E_i \exp\{\theta S_i\}\leq I_p+\frac{\E_iS_i^2\psi_2(\theta R)}{R^2}+\E_iS_i^2\frac{\psi_1(\theta\|S_i\|_{\infty})}{\|S_i\|^2_{\infty}}\boldsymbol{1}(\|S_i\|_{\infty}>R).
	\]
	The derivation remains the same as in the theorem $\ref{MatrixBernsteinInequality1}$ if the term is bounded
	\[
	\E_iS_i^2\frac{\psi_1(\theta\|S_i\|_{\infty})}{\|S_i\|^2_{\infty}}\boldsymbol{1}(\|S_i\|_{\infty}>R)
	\]
	with the goal to establish $R$ as small as possible such that it further sharpens bound on the quadratic term above according to the results from theorem $\ref{MatrixBernsteinInequality1}$. However, it is also important to keep reminder term with indicator small or at least proportional to the quadratic one, which naturally requires larger values of $R$. Resolving this trade off one comes at an optimal value $R$. 
	
	Proceed with substitution of indicator function with smooth approximation
	\[
	\boldsymbol{1}(\|S_i\|_{\infty}>R)\leq\frac{\psi_1(\theta\|S_i\|_{\infty}) R}{\psi_1(\theta R)\|S_i\|_{\infty}},
	\]
	where it was used that $\psi_1(u)/u$ is non-decreasing function. Thus, it leads to
	\[
	\E_iS_i^2\frac{\psi_1(\theta\|S_i\|_{\infty})}{\|S_i\|^2_{\infty}}\boldsymbol{1}(\|S_i\|_{\infty}>R)\leq\E_iS_i^2\frac{\psi_1^2(\theta\|S_i\|_{\infty})R}{\psi_1(\theta R)\|S_i\|_{\infty}^3}\leq
	\]
	\[
	\leq\frac{R}{\psi_1(\theta R)}\E_i\frac{\psi^2_1(\theta\|S_i\|_{\infty})}{\|S_i\|_{\infty}}I_p
	\]
	And to be consistent sum over $i$ of these terms needs to resemble quadratic one in the inequality above
	\[
	\frac{\delta\psi_2(\theta R)\psi_1(\theta R)}{R^{3}}= \frac{1}{\|\suml_{i=1}^n\E_iS_i^2\|_{op}}\suml_{i=1}^n\E_i\frac{\psi^2_1(\theta\|S_i\|_{\infty})}{\|S_i\|_{\infty}}.
	\] 
	Observe here that the function to the left is increasing and for sufficiently large values of $R$ and sufficiently small $\delta$ equality can be always satisfied. However, one additionally need to bound right hand side to demonstrate that such an $R$ exists. It can be done by following rough estimate for $\theta<C_n/2$
	\[
	\E_i\frac{\psi^2_1(\theta\|S_i\|_{\infty})}{\|S_i\|_{\infty}}\leq\E_i\psi_1(2\theta\|S_i\|_{\infty})\leq C_p.
	\]
	Although it is rough it provides enough evidence to justify existence of $R$. As to what value it equals exactly needs to be addressed implicitly via equality above. Since solution exists one can establish
	\[
	\suml_{i=1}^n\log\E_i \exp\{\theta S_i\}\leq (1+\delta)\frac{\|\suml_{i=1}^n\E_iS_i^2\|\psi_2(\theta R)}{R^2}
	\]
	and the first result follows from theorem $\ref{MatrixBernsteinInequality1}$.
	\\
	Let us dwell here finally on the constants $R$ and $\delta$. From the proof of theorem $\ref{MatrixBernsteinInequality1}$ we have $\theta R^*<3$. Then the definition of $R$ above helps tp built an upper estimate $R^{\prime}$ on it given by
	\[
	\frac{\delta\psi_2(3)\psi_1(3)}{R^{\prime3}}=\frac{1}{\sigma^2}\suml_{i=1}^n\E_i\psi_1(6\|S_i\|_{\infty}/R^{\prime}).
	\]
	and finalize the proof of theorem.
\end{myproof}
\text{}\\
Apply this result to specific case when matrices $S_i$ are built  based on sub-exponential random vectors $\xv_i\in\R^p$, for which we know that 
\[
\E_i\exp(\gammav\xv_i)\leq\exp\{||\gammav||_2^2/2n\},
\]
holds for any $i=\overline{1,n}$ and $\gammav\in\R^p$. Namely, define matrix $S_i$ as 
\[
S_i\eqdef\xv_i\xv_i^T-\E_i\xv_i\xv_i^T.
\]
we can draw from definition following inequality
\[
||S_i||_{\infty}\leq||\xv_i||^2_2+||\E_i\xv_i\xv_i^T||_{\infty}.
\]
In view of this note one can establish next corollary.
\begin{corollary}\label{Matrix_Brenstein_2_Constants}
	For matrices $S_i\eqdef\xv_i\xv_i^T-\E_i\xv_i\xv_i^T$, vectors $\xv_i$, for which exponential moment condition above holds with $n>2$, the constants from theorem $\ref{MatrixBernstein2}$ are
	\[
	R=\frac{12p}{n}
	\] 
	and there exist $0<\alpha<0.012$ such that
	\[
	\delta\leq \alpha\frac{p^3}{n^2\sigma^2}
	\]
\end{corollary}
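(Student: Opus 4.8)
The plan is to extract from the sub-exponential bound $\E_i\exp(\gammav^T\xv_i)\le\exp\{\|\gammav\|_2^2/2n\}$ only two facts about $\xv_i$ — a second–moment estimate and an exponential–moment estimate for $\|\xv_i\|_2^2$ — and then to substitute $R=12p/n$ directly into the relation that ties the pair $(R,\delta)$ together in Theorem \ref{MatrixBernstein2}.

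First I would test the bound with $\gammav=t\uv$, $\|\uv\|=1$, which shows $\uv^T\xv_i$ is sub-Gaussian with variance proxy $1/n$; hence $\|\E_i\xv_i\xv_i^T\|_{op}=\sup_{\|\uv\|=1}\E_i(\uv^T\xv_i)^2\le 1/n$, and therefore $\|S_i\|_{op}\le\|\xv_i\xv_i^T\|_{op}+\|\E_i\xv_i\xv_i^T\|_{op}\le\|\xv_i\|_2^2+1/n$. Next I would run the Gaussian–chaos argument: write $\exp(t\|\xv_i\|_2^2)=\E_{\gv}\exp(\sqrt{2t}\,\gv^T\xv_i)$ with $\gv$ an independent standard Gaussian, apply Fubini, and use the sub-exponential bound to obtain, for $0<t<n/2$,
\[
\E_i\exp\bigl(t\|\xv_i\|_2^2\bigr)\le\E_{\gv}\exp\bigl(t\|\gv\|_2^2/n\bigr)=(1-2t/n)^{-p/2}.
\]
Differentiating at $t=0$ gives $\E_i\|\xv_i\|_2^2\le p/n$, and for any $\theta<n/2$ this makes $\|\theta S_i\|^{\psi_1}_\infty=\E_i\exp(\theta\|S_i\|_{op})-1$ finite, so the hypotheses of Theorem \ref{MatrixBernstein2} are met with $C_n=n/2$ and a finite $C_p$; the assumption $n>2$ enters only to keep these exponential integrals in a nondegenerate range.

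Then I would set $R=12p/n$. This is $>12/n$ for $p\ge2$, which is exactly the condition $6/R<n/2$ needed to apply the previous display at $t=6/R=n/(2p)$. Since $\|6S_i/R\|_{op}\le\frac{n}{2p}\|\xv_i\|_2^2+\frac1{2p}$ and $2t/n=1/p$,
\[
\|6S_i/R\|^{\psi_1}_\infty=\E_i\exp\bigl(\|6S_i/R\|_{op}\bigr)-1\le e^{1/(2p)}(1-1/p)^{-p/2}-1\le 2\sqrt e-1 ,
\]
using $e^{1/(2p)}\le\sqrt e$ for $p\ge1$ and $(1-1/p)^{-p/2}\le2$ for $p\ge2$. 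Summing over $i$ and inserting into the defining relation $\delta\,\psi_2(3)\psi_1(3)/R^3=\sigma^{-2}\suml_{i=1}^n\|6S_i/R\|^{\psi_1}_\infty$ gives
\[
\delta\le\frac{(12p/n)^3}{\psi_2(3)\psi_1(3)\,\sigma^2}\,n(2\sqrt e-1)=\frac{12^3(2\sqrt e-1)}{(e^9-1)(e^3-1)}\cdot\frac{p^3}{n^2\sigma^2},
\]
so $\alpha$ is identified as this numerical constant (sharpened as described below).

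The main obstacle is numerical: the crude bound $2\sqrt e-1\approx2.30$ gives only $\alpha\approx0.026$, above the claimed $0.012$. To close the gap I would keep the sharper factor $e^{1/(2p)}(1-1/p)^{-p/2}-1$, which decreases to $\sqrt e-1\approx0.65$, and for the smallest $p$ replace $\|S_i\|_{op}\le\|\xv_i\|_2^2+1/n$ by $\|S_i\|_{op}\le\max(\|\xv_i\|_2^2,1/n)$ — valid since $S_i$ is a difference of positive semidefinite matrices — before taking exponential moments. In the regime that actually matters for the IV application, namely $p=J$ large, this already yields $\alpha<0.012$; everything else is bookkeeping, the one genuinely delicate point being to track this constant honestly.
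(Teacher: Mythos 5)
Your route coincides with the paper's: the identical Gaussian--integral device gives $\E_i\exp(t\|\xv_i\|_2^2)\le(1-2t/n)^{-p/2}$ for $t<n/2$, the identical choice $R=12p/n$ makes $6/R=n/(2p)$ admissible, and $\delta$ is read off by substituting into $\delta\,\psi_2(3)\psi_1(3)/R^3=\sigma^{-2}\suml_{i=1}^n\|6S_i/R\|^{\psi_1}_{\infty}$; so this is the same computation, not a different one.

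Your uneasiness about the constant, however, is well placed and pinpoints a real soft spot in the paper. The paper isolates $12^3/[\psi_2(3)\psi_1(3)]\approx0.0112$ and relabels it $0.012$, but never shows the leftover factor $e^{\theta\|\E_i\xv_i\xv_i^T\|_{\infty}}(1-2\theta/n)^{-p/2}-1$ (with $\theta=6/R$) to be at most one; moreover that factor is mistyped as containing $\sqrt{p/(p-1)}$ where $\theta=n/(2p)$ actually produces $(p/(p-1))^{p/2}$, and the exponent carries $n/(4p)$ in one spot and $n/(2p)$ in another. Consequently both your derivation and the paper's establish $\alpha<0.012$ only for $p$ moderately large; for $p=2,3$ the stated inequality is simply not proved by this calculation. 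Your proposed repair via $\|S_i\|_{op}\le\max(\|\xv_i\|_2^2,\|\E_i\xv_i\xv_i^T\|_{op})$ is a correct operator bound (the spectrum of $A-B$ with $A,B\succeq0$ lies in $[-\lambda_{\max}(B),\lambda_{\max}(A)]$), but to profit from it you must bound $\E_i\exp(\theta\max(X,c))$ by something strictly smaller than the product $e^{\theta c}\E_i\exp(\theta X)$, and that requires estimating $\P(\|\xv_i\|_2^2\le\|\E_i\xv_i\xv_i^T\|_{op})$ --- a step you do not carry out. As written, the repair is a sketch rather than a proof, and the small-$p$ case of the corollary remains open in both your account and the paper's.
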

\begin{myproof}
	For this technical proof one need to upper bound $\theta S_i$. And then applying definition of $R$ and $\delta$ from theorem $\ref{MatrixBernstein2}$ leads to the result. Notice that
	\[
	\|\theta S_i\|^{\psi_1}_{\infty}\eqdef\E_i\exp\{\theta \|S_i\|_{\infty}\}-1\leq
	e^{\theta\|\E_i\xv_i\xv_i^T\|_{\infty}}\E_i\exp\{\theta \|\xv_i\|^2_{2}\}-1.
	\]
	The expectation on right hand side of inequality can be explicitly calculated using exponential moment condition for $\xv_i$. It is evident that such an integral converges for $\theta<n/2$ and explicit calculation then gives
	\[
	\E_i\exp\{\theta \|\xv_i\|^2_{2}\}=\frac{1}{\sqrt[p]{2\pi}}\E_i\int_{\R^p}\exp\{\sqrt{2\theta}\xv_i\gammav-\frac{\|\gammav\|^2}{2}\}d\gammav\leq
	\]
	\[
	\leq\frac{1}{\sqrt[p]{2\pi}}\int_{\R^p}\exp\{\frac{(2\theta/n)\|\gammav\|^2}{2}-\frac{\|\gammav\|^2}{2}\}d\gammav
	\]
	which easily gives us
	\[
	\E_i\exp\{\theta \|\xv_i\|^2_{2}\}\leq(1-2\theta/n)^{-p/2}.
	\]
	and adapting it to $\theta S_i$ yields
	\[
	\|\theta S_i\|^{\psi_1}_{\infty}\leq e^{\theta\|\E_i\xv_i\xv_i^T\|_{\infty}}(1-2\theta/n)^{-p/2}-1.
	\]
	Now choose $R=12p/n$. Knowing that $\theta R<3$ we can check that here $2\theta< n/2$ as required for a norm to be finite. And using theorem $\ref{MatrixBernstein2}$ $\delta$ is given by the formula
	\[
	\delta=\frac{(12p)^3}{\psi_2(3)\psi_1(3)n^3\sigma^2}\suml_{i=1}^n\|n S_i/2p\|_{\psi_1}\leq
	\]
	\[
	\leq\frac{(12p)^3(e^{n\|\E_i\xv_i\xv_i^T\|_{\infty}/4p+1/2}\sqrt{p/(p-1)}-1)}{\psi_2(3)\psi_1(3)n^2\sigma^2}=\]
	\[
	=0.012\frac{p^3(e^{n\|\E_i\xv_i\xv_i^T\|_{\infty}/4p+1/2}\sqrt{p/(p-1)}-1)}{n^2\sigma^2}.
	\]
	If we further note $\|\E_i\xv_i\xv_i^T\|_{\infty}\leq C/n$, then using assumption on $n$ the order of $\delta$ is shown to be
	\[
	\delta\leq \alpha\frac{p^3}{n^3\sigma^2}\sim\frac{p^2}{n^2}
	\]
	for a positive constant satisfying $\alpha<0.012$. 
\end{myproof}
\text{}\\
This example of an empirical covariance matrix demonstrates sharp - in view of small parameter $\frac{p^3}{n}$ - bounds on constants $R$ and $\delta$, however not optimal ones. 
\subsection{Gaussian approximation}\label{GAR}
\subsubsection{Smooth representation of Kolmogorov distance.}\label{SRKD}

Introduce a smooth indicator function
\[
f(x)=\Ind(x>0)-\frac{1}{2}\textbf{sign}(x)e^{-\left|x\right|}
\]
and define a regular difference
\[
g_{\alpha}(t)\eqdef \E f\left(\alpha\|\xv_0\|^2-\alpha t\right)-\E f\left(\alpha\|\xv_1\|^2-\alpha t\right).
\]
One aims at studying the limiting object 
\[
g_{\infty}(t)\eqdef \E \Ind\left(\|\xv_0\|^2- t\right)-\E \Ind\left(\|\xv_1\|^2- t\right)=\P\left(\|\xv_0\|<t\right)-\P\left(\|\xv_1\|<t\right)
\]
the difference between multivariate probabilities. The smoothing function on the other hand allows for a structural characterization of the relation between $g_{\alpha}$ and $g_{\infty}$. 
\begin{lemma}\label{LinSmoothing}
	Assume that $g_{\alpha}(t)$ has smooth second derivative. Then it satisfies an ODE
	\[
	g_{\alpha}(t)=g_{\alpha}(t)+\frac{g^{\prime\prime}_{\alpha}(t)}{\alpha^2}.
	\]
	Moreover, an ordering holds
	\[
	\forall\alpha>0\;\;\sup_t\left|g_{\alpha}(t)\right|\leq\sup_t\left|g_{\alpha}(t)\right|.
	\]
\end{lemma}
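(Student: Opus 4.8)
The plan rests on the explicit form of the smoothing function $f$. I take the first display to mean the Green-function identity $g_\infty(t)=g_\alpha(t)-\alpha^{-2}g_\alpha''(t)$ and the second to mean $\sup_t|g_\alpha(t)|\le\sup_t|g_\infty(t)|$; these are what the argument below establishes. The one preliminary computation is that, on $\{x>0\}$ (where $f(x)=1-\tfrac12 e^{-x}$) and on $\{x<0\}$ (where $f(x)=\tfrac12 e^{x}$), one has $f'(x)=\tfrac12 e^{-|x|}$ — the standard Laplace density — hence simultaneously
\[
f(x)-f''(x)=\Ind(x>0)\quad(x\neq0)\qquad\text{and}\qquad f=F_{\mathrm{Lap}},
\]
with $F_{\mathrm{Lap}}$ the distribution function of the standard Laplace law. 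The first form feeds the ODE, the second feeds the ordering.

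First I would establish the identity. Since $f'$ and $f''$ are bounded (a.e.\ by $\tfrac12$) and $g_\alpha\in C^2$ by hypothesis, differentiating twice under the expectation gives
\[
\frac{g_\alpha''(t)}{\alpha^{2}}=\E f''\bigl(\alpha\|\xv_0\|^2-\alpha t\bigr)-\E f''\bigl(\alpha\|\xv_1\|^2-\alpha t\bigr).
\]
Subtracting from $g_\alpha(t)$, applying $f-f''=\Ind(\cdot>0)$ termwise, and dropping the positive factor $\alpha$ inside the indicator,
\[
g_\alpha(t)-\frac{g_\alpha''(t)}{\alpha^{2}}=\E\Ind\bigl(\|\xv_0\|^2>t\bigr)-\E\Ind\bigl(\|\xv_1\|^2>t\bigr)=g_\infty(t),
\]
which is the claimed relation. (The identity for $f$ holds off the null set $\{x=0\}$, which the $C^2$ hypothesis renders negligible for the expectations at hand, so the jump of $f''$ there is immaterial.)

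For the ordering I would use the convolution picture, which requires no compactness. Letting $\zeta$ be a standard Laplace variable independent of $\xv_0,\xv_1$, the identity $f=F_{\mathrm{Lap}}$ gives $\E f(\alpha\|\xv_i\|^2-\alpha t)=\P\bigl(\|\xv_i\|^2-\zeta/\alpha>t\bigr)$, whence $g_\alpha=g_\infty*k_\alpha$ with $k_\alpha(v)\eqdef\tfrac{\alpha}{2}e^{-\alpha|v|}$ a probability density, so by the triangle inequality
\[
\sup_t\bigl|g_\alpha(t)\bigr|=\sup_t\bigl|(g_\infty*k_\alpha)(t)\bigr|\le\Bigl(\sup_t|g_\infty(t)|\Bigr)\int k_\alpha(v)\,dv=\sup_t\bigl|g_\infty(t)\bigr|.
\]
Alternatively, as the wording of the lemma suggests, the ordering drops out of the ODE by a maximum principle: $g_\alpha$ is continuous and, by dominated convergence (with $0\le f\le1$, $f(-\infty)=0$, $f(+\infty)=1$), tends to $0$ as $t\to\pm\infty$, so unless $g_\alpha\equiv0$ the value $\sup_t|g_\alpha(t)|$ is attained at a finite $t_0$; replacing $g_\alpha$ by $-g_\alpha$ if needed, $g_\alpha(t_0)=\sup_t|g_\alpha(t)|>0$ is an interior maximum, so $g_\alpha''(t_0)\le0$ and the ODE gives $g_\infty(t_0)=g_\alpha(t_0)-\alpha^{-2}g_\alpha''(t_0)\ge g_\alpha(t_0)$, i.e.\ $\sup_t|g_\infty(t)|\ge\sup_t|g_\alpha(t)|$.

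The routine parts are the two-line check of $f-f''=\Ind(\cdot>0)$ and the interchange of differentiation and expectation, licensed by the boundedness of $f',f''$ and the hypothesis $g_\alpha\in C^2$. The one step that needs genuine care is the attainment of $\sup_t|g_\alpha(t)|$ in the maximum-principle route, which rests on continuity of $g_\alpha$ together with its decay at $\pm\infty$; since the convolution argument sidesteps this entirely, in practice I would lead with the convolution proof and keep the maximum principle as a remark linking the ordering to the ODE.
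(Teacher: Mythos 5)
Your proof is correct, and you have rightly repaired the two typographical slips in the statement (the identity should read $g_\infty(t)=g_\alpha(t)-\alpha^{-2}g''_\alpha(t)$, and the ordering should compare $g_\alpha$ with $g_\infty$); the paper's own proof uses exactly the same corrected identities. The ODE part of your argument is essentially the paper's: both reduce to the pointwise identity $f-f''=\Ind(\cdot>0)$ and differentiate under the expectation. For the ordering, however, you take a genuinely different route. The paper relies solely on a maximum-principle argument — at an interior maximum of $g_\alpha$ the second derivative is nonpositive, so $g_\infty(t_0)=g_\alpha(t_0)-\alpha^{-2}g''_\alpha(t_0)\ge g_\alpha(t_0)$, and symmetrically at a minimum — stated in the paper rather tersely and without addressing whether the extremum is attained. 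You supply that missing detail (continuity of $g_\alpha$ plus decay to $0$ at $\pm\infty$) in your alternative route, but your primary argument is the observation that $f=F_{\mathrm{Lap}}$, whence $g_\alpha=g_\infty*k_\alpha$ with $k_\alpha$ the Laplace density of bandwidth $1/\alpha$, and $\sup_t|g_\alpha|\le\|k_\alpha\|_1\,\sup_t|g_\infty|$ by Young's inequality. This convolution proof is cleaner: it needs no $C^2$ hypothesis, no attainment of extrema, and it makes transparent why the smoothing kernel $f$ was chosen to be the Laplace CDF, which also dovetails naturally with the Fourier analysis in Lemma~\ref{Inverse} that follows. The maximum-principle version buys nothing extra here beyond linking the ordering to the ODE, which is presumably why the paper records it; your instinct to lead with the convolution argument and keep the maximum principle as a remark is sound.
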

\begin{proof}		 
	The kernel function $f$ admits an ODE representation
	\[
	\mathbb{L}_{\xv}\left( f(\alpha\|\xv\|^2-\alpha t)\right)=\mathbb{L}_{\xv}\left(\Ind\left(\|\xv\|^2>t\right)\right)+\frac{1}{\alpha^2}\left(\mathbb{L}_{\xv} \left( f(\alpha\|\xv\|^2-\alpha t)\right)\right)^{\prime\prime}_{t}
	\]
	with a linear integral operator $\mathbb{L}_{\xv}\left(\cdot\right)$ and an inequality
	\[
	\sup_t\left|\mathbb{L}_{\xv}\left( f(\alpha\|\xv\|^2-\alpha t)\right)\right|\leq\sup_t\abs{\mathbb{L}_{\xv}\left(\Ind\left(\|\xv\|^2>t\right)\right)}
	\]
	follows from the characterization of extreme points - second derivative in maximum is negative and positive in minimum. The same applies for the difference $g_{\alpha}(t)$.
\end{proof} 
A natural candidate for the investigation of an underlying structure of the problem is Fourier analysis as the ODE in the lemma [$\ref{LinSmoothing}$] resembles an oscillator with a complex $\alpha$. Thus, define a spectrum of $g_{\alpha}(t)$ and $g_{\infty}(t)$ as follows
\[
G_{\infty}(\omega)=\mathcal{F}\left(g_{\infty}(t)\right)=\int^{\infty}_{-\infty}g_{\infty}(t)e^{-i\omega t}dt
\]
\[
G_{\alpha}(\omega)=\mathcal{F}\left(g_{\alpha}(t)\right)=\int^{\infty}_{-\infty}g_{\alpha}(t)e^{-i\omega t}dt
\]
respectively and with the convention for $\omega$ being a frequency scaled by $2\pi$. Additionally we analytically extend the spectra on $\omega\in\mathbb{C}$ which is the derivation crucial in the inversion step and we elaborate on that later (see lemma $\ref{Inverse}$). 

Easy to notice that in the Fourier world the connection between $G_{\infty}(\omega)$ and $G_{\alpha}(\omega)$ is straightforward and given by 
\[
G_{\alpha}(\omega)=G_{\infty}(\omega)-\frac{\omega^2}{\alpha^2}G_{\alpha}(\omega)
\]
which yields
\begin{equation}\label{ComplexDiff}
G_{\alpha}(\omega)=\frac{\alpha^2G_{\infty}(\omega)}{(\alpha-i\omega)(\alpha+i\omega)}.
\end{equation}
The central observation for further analysis is that $\alpha$ can be taken as a complex number $\alpha\in\mathbb{C}$ in the ODE leaving equation $\ref{ComplexDiff}$ intact. 

Introduce supplementary clockwise oriented contours $\mathbf{S}(r),-i\mathbf{S}(r)$ in complex plane.
\[
\begin{tikzpicture}
\begin{scope}[shift={(0,0)}]
\draw [->](0,-2.2)--(0,2) node[right]{$\mathbf{Im}(z)$};
\draw [->](-1.5,0)--(2.2,0) node[right]{$\mathbf{Re}(z)$};
\draw [thick, ->] (0,0) (1.75,0) arc (0:-90:1.75cm);
\draw [thick, ->] (0,0) (0,1.75) arc (90:0:1.75cm);
\draw [black, thick,->]  (0,-1.75) node[anchor=east]{-r} -- (0,0);
\draw [black, thick,->]  (0,0) -- (0,1.75) node[anchor=east]{r};
\draw (0,-2.1) node[anchor=west]{$-i\mathbf{S}(r)$};
\end{scope}	
\begin{scope}[shift={(-5,0)}]
\draw [->](0,-1.2)--(0,2) node[right]{$\mathbf{Im}(z)$};
\draw [->](-2.2,0)--(2.2,0) node[anchor=west]{$\mathbf{Re}(z)$};
\draw [thick, ->]  (-1.75,0) arc (180:90:1.75cm);
\draw [thick, ->]  (0,1.75) arc (90:0:1.75cm) node[anchor=north]{-r} ;
\draw [black, thick,->]  (0,0) -- (-1.75,0) node[anchor=north]{r};
\draw [black, thick,->]  (1.75,0) -- (0,0);
\draw (0,-2.1) node[anchor=west]{$\mathbf{S}(r)$};
\end{scope}
\end{tikzpicture}
\]
One option to find the closed-form connection between $G_{\alpha}(\omega)$ and $G_{\infty}(\omega)$ independent from $\alpha$ is to integrate $G_{\alpha}(\omega)$ over $\alpha\in -i\mathbf{S}(r)$. The step gains an additional smoothness as we will see below. After inspecting the poles of $G_{\alpha}(\omega)$ on the picture
\[
\begin{tikzpicture}
\draw [->](0,-2.2)--(0,2) node[right]{$\mathbf{Im}(\alpha)$};
\draw [->](-2.2,0)--(2.2,0) node[right]{$\mathbf{Re}(\alpha)$};
\draw [thick, ->] (0,0) (1.75,0) arc (0:-90:1.75cm);
\draw [thick, ->] (0,0) (0,1.75) arc (90:0:1.75cm);
\draw [black, thick,->]  (0,-1.75) node[anchor=east]{-r} -- (0,0);
\draw [black, thick,->]  (0,0) -- (0,1.75) node[anchor=east]{r};
\draw [black, ->]  (0,0) -- (1.2,0.25) node[anchor=south]{$-i\omega$} node{\textbf{.}};
\draw [black, ->]  (0,0) -- (-1.2,-0.25) node[anchor=north]{$i\omega$} node{\textbf{.}};
\draw (0,-2.1) node[anchor=west]{$-i\mathbf{S}(r)$};
\end{tikzpicture}
\]
it is obvious in view of the Cauchy's residue theorem to conclude for the convolution
\begin{equation}\label{Convolution}
\frac{1}{i\pi}\int_{-i\mathbf{S}(r_0)}(-\alpha)^{k-1}G_{\alpha}(\omega)d\alpha=
\end{equation}
\[
=\frac{1}{i\pi}\int_{-i\mathbf{S}(r_0)}\frac{(-\alpha)^{k+1}G_{\infty}(\omega)}{(\alpha+i\omega)(\alpha-i\omega)}d\alpha=
\begin{cases}
\left(i\omega\right)^k G_{\infty}(\omega) & \;\; \omega\in \mathbf{S}(r_0),  \\
\left(-i\omega\right)^k G_{\infty}(\omega) &-\omega\in \mathbf{S}(r_0), \\
0  & \textbf{else}.
\end{cases}
\]
where we also multiplied the spectrum by additional $(-\alpha)^{k-1}$ to generalize and expand on the idea later (see corollary $\ref{CorollaryDensity}$).

The formula $\ref{Convolution}$ gives clear explanation how initial function $g_{\infty}(t)$ can be regularized through the $g_{\alpha}(t)$. The answer above suggests that convolution of our 'kernels' $g_{\alpha}(t)$ is equivalent to the differentiation. For now the connection is settled in the Fourier world and one need to translate the result back into the initial objects. For the purpose let us rewrite the Fourier inversion formula as an integration in a complex plane.
\begin{lemma}\label{Inverse}
	Assume continuous p.d.f. of $\|\xv_0\|^2$ and $\|\xv_1\|^2$, then the functions $g_{\alpha}(t)$ and $g_{\infty}(t)$ can be represented as
	\[
	g_{\alpha}(t)=\frac{1}{2\pi}\int_{\mathbf{S}(r_0)}G_{\alpha}(\omega)e^{i\omega t}d\omega
	\]
	and
	\[
	g_{\infty}(t)=\frac{1}{2\pi}\int_{\mathbf{S}(r_0)}G_{\infty}(\omega)e^{i\omega t}d\omega
	\]
	for $t>0$ and $r_0$ s.t. $\mathbf{S}(r_0)$ covers all the poles of a spectrum $G_{\infty}(\omega)$.
\end{lemma}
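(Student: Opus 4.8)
The plan is to deduce the claim from the classical Fourier inversion theorem by pushing the line of integration off the real axis into the complex plane. First I would record the analytic preliminaries. Under the hypothesis --- continuity of the laws of $\|\xv_0\|^2$ and $\|\xv_1\|^2$ --- the distribution functions $F_j(t)=\P(\|\xv_j\|^2<t)$ are continuous (indeed $C^1$), so $g_\infty$ is continuous, of bounded variation on $\mathbb{R}$, and vanishes at $\pm\infty$; the Fourier--Stieltjes inversion formula then gives $g_\infty(t)=\frac{1}{2\pi}\int_{-\infty}^{\infty}G_\infty(\omega)e^{i\omega t}\,d\omega$ as a (conditionally convergent) improper integral. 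For $g_\alpha$ matters are softer: by Lemma~\ref{LinSmoothing} $g_\alpha$ has a bounded second derivative, so $G_\alpha(\omega)=O(|\omega|^{-2})$ on the real axis --- consistently with the identity $G_\alpha=\alpha^2G_\infty/(\alpha^2+\omega^2)$ from (\ref{ComplexDiff}) --- and its inversion integral converges absolutely.

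Second, I would use the analytic continuation of $G_\infty$, and hence via (\ref{ComplexDiff}) of $G_\alpha$, to a complex neighbourhood of the real line, as already posited in the text: the only singularities are isolated poles, which for $G_\alpha$ are those of $G_\infty$ together with the extra pair $\omega=\pm i\alpha$. Choose $r_0$ large enough that the upper half-disc bounded by $\mathbf{S}(r_0)$ contains every pole lying in the closed upper half-plane.

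Third comes the deformation itself. Fix $t>0$, so that $e^{i\omega t}=e^{it\operatorname{Re}(\omega)}e^{-t\operatorname{Im}(\omega)}$ decays exponentially as $\operatorname{Im}(\omega)\to+\infty$, and replace the segment $[-r_0,r_0]$ of the real axis by the semicircular arc of radius $r_0$ in the upper half-plane, i.e.\ by the complementary part of $\mathbf{S}(r_0)$. Since the integrand is holomorphic in the region swept out (no poles on or above the real axis except those already enclosed, and none between two admissible radii), Cauchy's theorem makes $\int_{\mathbf{S}(r_0)}G_\infty(\omega)e^{i\omega t}\,d\omega$ independent of the admissible choice of $r_0$; letting $r_0\to\infty$ and discarding the arc by Jordan's lemma identifies this common value with $\int_{-\infty}^{\infty}G_\infty(\omega)e^{i\omega t}\,d\omega=2\pi g_\infty(t)$, up to the orientation convention fixed by the figure. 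The same computation gives the formula for $g_\alpha$, the extra poles $\pm i\alpha$ being harmless and the arc contribution trivially negligible because $G_\alpha=O(|\omega|^{-2})$.

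The real obstacle is the vanishing of the large semicircular arc in the $g_\infty$ integral: continuity of the densities by itself yields only Riemann--Lebesgue decay of $G_\infty$ along the real axis, not the uniform polynomial decay on arcs that Jordan's lemma requires. That has to be read off from the concrete structure of $G_\infty$ in the application --- $\|\xv_j\|^2$ being a positive quadratic form in (sub-)Gaussian coordinates, which forces $G_\infty$ to be a difference of proper rational-type functions with finitely many poles off the real line and denominator degree exceeding numerator degree --- which is precisely the regime in which the phrase ``$\mathbf{S}(r_0)$ covers all the poles of $G_\infty$'' is meaningful. Once that decay is secured, the remainder is bookkeeping with Cauchy's theorem and the inversion formula.
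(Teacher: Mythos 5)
Your argument follows the same route as the paper's own proof: it identifies the real-line Fourier inversion of $g_\infty$ and $g_\alpha$, closes the contour by a large semicircular arc, and uses Cauchy's theorem to deform from the arc at infinity down to $\mathbf{S}(r_0)$. The one genuine point of contact with the paper's weak spot is your final paragraph: the paper disposes of the arc by asserting $\sup_{\omega\in\mathbf{Arc}(r)}\left|\E e^{-i\omega\|\xv_0\|^2}-\E e^{-i\omega\|\xv_1\|^2}\right|\to 0$ without justification, and you are right that this does not follow from continuity of the densities alone but must be read off from the concrete structure of the characteristic functions of the quadratic forms in the application, so flagging that as the residual work is appropriate.
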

\begin{proof}
	Let us compute explicitly $G_{\infty}(\omega)$ to proceed -
	\[
	G_{\infty}(\omega)=\int^{\infty}_{-\infty}g_{\infty}(t)e^{-i\omega t}dt=\E\left[\mathcal{F}\left(\Ind(\xv_0\in\mathcal{B}_t)\right)-\mathcal{F}\left(\Ind(\xv_1\in\mathcal{B}_t)\right)\right]
	\]
	\[
	=\frac{\E e^{-i\omega\|\xv_0\|^2}-\E e^{-i\omega\|\xv_1\|^2}}{i\sqrt{2\pi}\omega}+\sqrt{\frac{\pi}{2}}\E e^{-i\omega\|\xv_0\|^2}\delta(\omega)-\E e^{-i\omega\|\xv_1\|^2}\delta(\omega)
	\]
	\[
	=\frac{\E e^{-i\omega\|\xv_0\|^2}-\E e^{-i\omega\|\xiv_1\|^2}}{i\omega\sqrt{2\pi}}.
	\]
	On the other hand the contour $\mathbf{S}(r)$ can be seen as a sum of the real-line and semicircle parts - $\mathbf{S}(r)=[-r,r]\cup\mathbf{Arc}(r)$ - where the latter conforms the limit
	\[
	\lim_{r\rightarrow\infty}\int_{\mathbf{Arc}(r)}G_{\infty}(\omega)e^{i\omega t}\dx{\omega}\leq \lim_{r\rightarrow\infty}\pi r \sup_{\omega\in\mathbf{Arc}(r)}|G_{\infty}(\omega)|\leq
	\]
	\[
	\leq\lim_{r\rightarrow\infty}\sup_{\omega\in\mathbf{Arc}(r)}\left|\E e^{-i\omega\|\xv_0\|^2}-\E e^{-i\omega\|\xv_1\|^2}\right|=0.
	\]
	Therefore, the inverse is given as an integral over $\mathbf{S}(\infty)$
	\[
	g_{\infty}(t)\eqdef\frac{1}{2\pi}\int_{-\infty}^{\infty}G_{\infty}(\omega)e^{i\omega t}\dx{\omega}+\frac{1}{2\pi}\int_{\mathbf{Arc}(\infty)}G_{\infty}(\omega)e^{i\omega t}\dx{\omega}
	\]
	\[
	=\frac{1}{2\pi}\int_{\mathbf{S}(\infty)}G_{\infty}(\omega)e^{i\omega t}\dx{\omega}.
	\]	
	Defining now the critical points of $G_{\infty}(\omega),G_{\alpha}(\omega)$ as $\{\omega_{j=\overline{1,n}}\}$ and $\{\omega_{j=\overline{1,n}},-i\alpha,i\alpha\}$ respectively (see the equation $\ref{ComplexDiff}$) we see that by the assumption of the lemma they are covered by the $\mathbf{S}(r_0)$.
	\[
	\begin{tikzpicture}
	\draw [->](0,-1.2)--(0,2) node[right]{$\mathbf{Im}(\omega)$};
	\draw [->](-2.2,0)--(2.2,0) node[anchor=west]{$\mathbf{Re}(\omega)$};
	\draw [thick, ->]  (-1.75,0) arc (180:90:1.75cm);
	\draw [thick, ->]  (0,1.75) arc (90:0:1.75cm);
	\draw [black, thick,->]  (-1.75,0) node[anchor=north]{-r} -- (0,0);
	\draw [black, thick,->]  (0,0) -- (1.75,0) node[anchor=north]{r};
	\draw (0,-0.5) node[anchor=west]{$\mathbf{S}(r_0)$};
	\draw (0,0.1) node{\textbf{.}};
	\draw (0,0.2) node{\textbf{.}};
	\draw (0,0.3) node{\textbf{.}} node[anchor=west]{$\omega_{j-1}$};
	\draw (0,0.4) node{\textbf{.}};
	\draw (0,0.5) node{\textbf{.}};
	\draw (0,0.6) node{\textbf{.}} node[anchor=west]{$\omega_j$};
	\draw (0,0.7) node{\textbf{.}};
	\draw (0,0.9) node{\textbf{.}} node[anchor=west]{$\omega_{j+1}$};
	\draw (0,1.2) node{\textbf{.}};
	\draw (0,1.6) node{\textbf{.}};
	\end{tikzpicture}
	\]
	Therefore, the Cauchy's residue theorem puts the equivalence
	\[
	g_{\infty}(t)=\frac{1}{2\pi}\int_{\mathbf{S}(\infty)}G_{\infty}(\omega)e^{i\omega t}\dx{\omega}=\frac{1}{2\pi}\int_{\mathbf{S}(r_0)}G_{\infty}(\omega)e^{i\omega t}\dx{\omega}
	\]
	and completes the argument.
\end{proof}
\begin{rmrk}
	Note that in the proof above we conclude from positiveness of $\|\xv_0\|^2,\|\xv_1\|^2$ that all the poles but for the $i\alpha$ lie above the real line.
\end{rmrk}
With these two facts - the inversion above and convolution over $\alpha$ - one comes to the concluding step of the section. From the $\alpha$ perspective the pole structure of the $G_{\infty}(\omega)$ looks like it is drawn on the picture below.
\[
\begin{tikzpicture}
\draw [->](0,-2.2)--(0,2) node[right]{$\mathbf{Im}(\alpha)$};
\draw [->](-2.2,0)--(2.2,0) node[right]{$\mathbf{Re}(\alpha)$};
\draw [thick, ->] (0,0) (1.75,0) arc (0:-90:1.75cm);
\draw [thick, ->] (0,0) (0,1.75) arc (90:0:1.75cm);
\draw [black, thick,->]  (0,-1.75) node[anchor=east]{-$r_0$} -- (0,0);
\draw [black, thick,->]  (0,0) -- (0,1.75) node[anchor=east]{$r_0$};
\draw (0.1,0) node{\textbf{.}};
\draw (0.2,0) node{\textbf{.}};
\draw (0.3,0) node{\textbf{.}};
\draw (0.4,0) node{\textbf{.}};
\draw (0.5,0) node{\textbf{.}};
\draw (0.6,0) node{\textbf{.}};
\draw (0.7,0) node{\textbf{.}};
\draw (0.9,0) node{\textbf{.}} node[anchor=north]{$-i\omega_j$};
\draw (1.2,0) node{\textbf{.}};
\draw (1.6,0) node{\textbf{.}};
\draw (0,-2.1) node[anchor=west]{$-i\mathbf{S}(r_0)$};
\end{tikzpicture}
\]
Where by the definition of $r_0$ the convolution over $\alpha$ preserves the pole structure of $g_{\infty}(t)$. Thus, the inversion from the lemma [$\ref{Inverse}$] allows to relate explicitly the function $g_{\infty}(t)$ and a part of the equation $\ref{Convolution}$ where $\omega\in\mathbf{S}(r_0)$. Merging the two one can state the theorem.
\begin{theorem}\label{MainTh}
	Assume $k$-continuous c.d.f. of $\|\xv_0\|^2$ and $\|\xv_1\|^2$, then it holds
	\[
	g^{\left(k\right)}_{\infty}(t)=(-1)^k2i\int_{-i\mathbf{S}(r_0)}\alpha^{k-1}g_{\alpha}(t)d\alpha.
	\]
	for $r_0$ s.t. $\mathbf{S}(r_0)$ covers all the poles of the spectrum $G_{\infty}(\omega)$.
\end{theorem}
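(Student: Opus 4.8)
The plan is to chain together three facts that are already in hand: the Fourier inversion formula of Lemma~\ref{Inverse}, the residue identity~(\ref{Convolution}) obtained by integrating the spectrum over $\alpha$, and Fubini's theorem to swap the two contour integrals. Concretely, I would start from $g_{\infty}(t)=\frac{1}{2\pi}\int_{\mathbf{S}(r_0)}G_{\infty}(\omega)e^{i\omega t}\,d\omega$ and differentiate $k$ times in $t$ under the integral sign, which pulls down the weight $(i\omega)^{k}$ and gives $g_{\infty}^{(k)}(t)=\frac{1}{2\pi}\int_{\mathbf{S}(r_0)}(i\omega)^{k}G_{\infty}(\omega)e^{i\omega t}\,d\omega$. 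This is precisely where the hypothesis of a $k$-continuous c.d.f.\ of $\|\xv_0\|^2$ and $\|\xv_1\|^2$ is consumed: the explicit form $G_{\infty}(\omega)=\bigl(\E e^{-i\omega\|\xv_0\|^2}-\E e^{-i\omega\|\xv_1\|^2}\bigr)/(i\omega\sqrt{2\pi})$ from the proof of Lemma~\ref{Inverse} together with $k$-fold smoothness of the distributions forces $|\omega|^{k}|G_{\infty}(\omega)|\to 0$ on $\mathbf{Arc}(\infty)$, so the arc contribution still vanishes and the reduction from $\mathbf{S}(\infty)$ to the finite contour $\mathbf{S}(r_0)$ by the residue theorem survives with the extra factor in place.

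Next I would invoke~(\ref{Convolution}): for every $\omega$ on the contour $\mathbf{S}(r_0)$ its first branch applies, so $(i\omega)^{k}G_{\infty}(\omega)=\frac{1}{i\pi}\int_{-i\mathbf{S}(r_0)}(-\alpha)^{k-1}G_{\alpha}(\omega)\,d\alpha$. Substituting this into the integrand above and exchanging the order of the $\omega$- and $\alpha$-integrations turns $g_{\infty}^{(k)}(t)$ into $\frac{1}{i\pi}\int_{-i\mathbf{S}(r_0)}(-\alpha)^{k-1}\bigl(\frac{1}{2\pi}\int_{\mathbf{S}(r_0)}G_{\alpha}(\omega)e^{i\omega t}\,d\omega\bigr)d\alpha$. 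The interchange is routine: both contours are compact, and since the apparent singularity of $G_{\infty}$ at $\omega=0$ is removable, the map $(\omega,\alpha)\mapsto(-\alpha)^{k-1}G_{\alpha}(\omega)e^{i\omega t}$ is jointly continuous, hence bounded, on $\mathbf{S}(r_0)\times(-i\mathbf{S}(r_0))$, so Fubini applies.

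The last step is to recognize the inner integral, by Lemma~\ref{Inverse} read now for $g_{\alpha}$ instead of $g_{\infty}$, as $g_{\alpha}(t)$, and then to tidy the constants using $(-\alpha)^{k-1}=(-1)^{k-1}\alpha^{k-1}$ so as to land on $g_{\infty}^{(k)}(t)=(-1)^{k}2i\int_{-i\mathbf{S}(r_0)}\alpha^{k-1}g_{\alpha}(t)\,d\alpha$. For this to be valid I need $\mathbf{S}(r_0)$ to enclose the poles of $G_{\alpha}$, not just those of $G_{\infty}$; by~(\ref{ComplexDiff}) the extra poles are $\pm i\alpha$, and for $\alpha\in -i\mathbf{S}(r_0)$ they have modulus $|\alpha|\le r_0$ and, by the half-plane remark following Lemma~\ref{Inverse}, the relevant one sits inside $\mathbf{S}(r_0)$ --- so a single radius $r_0$ works simultaneously for $g_{\infty}$ and for every $g_{\alpha}$ appearing in the $\alpha$-integral.

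The hard part will be the two analytic bookkeeping issues rather than the algebra: first, turning ``$k$-continuous c.d.f.'' into the quantitative decay of $G_{\infty}$ needed to justify differentiating under the integral and discarding the arc; and second, the pole accounting in the final step, i.e.\ verifying carefully that the same $r_0$ validates the inversion formula for $g_{\infty}$ and for all the shifted spectra $G_{\alpha}$ with $\alpha$ ranging over $-i\mathbf{S}(r_0)$, since the extra poles $\pm i\alpha$ move as $\alpha$ moves.
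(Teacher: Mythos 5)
Your proposal is essentially the same proof as the paper's: both hinge on the chain Lemma~\ref{Inverse} (inversion) $\to$ equation~(\ref{Convolution}) (residue convolution over $\alpha$) $\to$ Fubini swap of the $\omega$- and $\alpha$-contours $\to$ Lemma~\ref{Inverse} again, the only difference being that you run the computation from $g_{\infty}^{(k)}$ toward the $\alpha$-integral while the paper runs it in the opposite direction. One shared wrinkle worth flagging: reading Lemma~\ref{Inverse} literally (with the $\tfrac{1}{2\pi}$ normalization), your chain lands on $\tfrac{(-1)^k i}{\pi}\int_{-i\mathbf{S}(r_0)}\alpha^{k-1}g_{\alpha}(t)\,d\alpha$ rather than $(-1)^k 2i\int_{-i\mathbf{S}(r_0)}\alpha^{k-1}g_{\alpha}(t)\,d\alpha$ --- the paper's final line has the same $2\pi$ mismatch, so the ``tidy the constants'' step needs the same normalization fix in both.
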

\begin{myproof}
	Justified by the lemma [$\ref{Inverse}$] and using the equation [$\ref{Convolution}$] integrate over $\alpha$ 
	\[
	\int_{-i\mathbf{S}(r_0)}(-\alpha)^{k-1}g_{\alpha}(t)\dx{\alpha}\overset{L\;\ref{Inverse}}{=}\frac{1}{2\pi}\int_{-i\mathbf{S}(r_0)}(-\alpha)^{k-1}\int_{\mathbf{S}(r_0)}G_{\alpha}(\omega)e^{i\omega t}\dx{\omega}\dx{\alpha}
	\]
	\[
	=\frac{1}{2\pi}\int_{\mathbf{S}(r_0)} i\pi \left(i\omega\right)^k G_{\infty}(\omega)e^{i\omega t}\dx{\omega}=\frac{i}{2}\left(\int_{\mathbf{S}(r_0)}G_{\infty}(\omega)e^{i\omega t}\dx{\omega}\right)^{\left(k\right)}_{t}\overset{L\;\ref{Inverse}}{=}-\frac{g^{\left(k\right)}_{\infty}(t)}{2i}.
	\]
	The answer concludes the proof.
\end{myproof}
The theorem in turn amounts to the corollaries.
\begin{corollary}\label{CorollaryLc}
	Introduce a function 
	\[
	h_{\alpha}(\xv,t)=\max\{\|\xv\|^2-t,0\}+\frac{1}{2\alpha}e^{-\alpha\left|\|\xv\|^2-t\right|}
	\]
	and an integral operator
	\[
	\mathcal{A}\left(\cdot\right)\eqdef2i\int_{-i\mathbf{S}(r_0)}\E\left(\cdot\right)d\alpha,
	\]
	then under assumptions in the theorem [$\ref{MainTh}$] we have for $k=1$
	\[
	\P\left(\|\xv_0\|^2<t\right)-\P\left(\|\xv_1\|^2<t\right)=\mathcal{A}\left(h_{\alpha}(\xv_1,t)-h_{\alpha}(\xv_0,t)\right) .
	\]
\end{corollary}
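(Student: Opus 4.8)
The plan is to recognise the asserted identity as the $k=1$ instance of Theorem~\ref{MainTh}, once one observes that for each fixed $\xv$ the function $t\mapsto h_\alpha(\xv,t)$ is an antiderivative in $t$ of the smoothing kernel that occurs in $g_\alpha$. Feeding this into the $k=1$ formula lowers the order of the $t$-derivative on the left-hand side from one to zero, so that $g_\infty^{(1)}$ is replaced by $g_\infty$ itself, and the contour integral of $g_\alpha$ is replaced by $\mathcal{A}$ applied to the corresponding primitives $h_\alpha$.

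The first step is the differential identity $\partial_t h_\alpha(\xv,t)=-f\!\left(\alpha\|\xv\|^2-\alpha t\right)$. Differentiating $\max\{\|\xv\|^2-t,0\}$ in $t$ gives $-\Ind\!\left(\|\xv\|^2-t>0\right)$, and differentiating $\tfrac1{2\alpha}e^{-\alpha|\|\xv\|^2-t|}$ gives $\tfrac12\operatorname{sign}\!\left(\|\xv\|^2-t\right)e^{-\alpha|\|\xv\|^2-t|}$; for real $\alpha>0$ the sum is exactly $-f(\alpha\|\xv\|^2-\alpha t)$, and the identity propagates to $\alpha$ on the contour through the analytic continuation of $g_\alpha$ already set up in Lemmas~\ref{LinSmoothing}--\ref{Inverse}. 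Taking expectations and the difference over $\xv_1,\xv_0$ yields $\partial_t\!\left(\E h_\alpha(\xv_1,t)-\E h_\alpha(\xv_0,t)\right)=g_\alpha(t)$. I would also record two facts that keep $\mathcal{A}$ harmless: $h_\alpha(\xv_1,t)-h_\alpha(\xv_0,t)$ is analytic in $\alpha$ across $\alpha=0$, since the residues of the two $\tfrac1{2\alpha}$ terms both equal $\tfrac12$ and cancel in the difference, so the integrand of $\mathcal{A}$ is regular on all of $-i\mathbf{S}(r_0)$ even though that contour passes through the origin; and the $\max$-parts of $h_\alpha$ are $\alpha$-independent, so integrating them over the \emph{closed} contour $-i\mathbf{S}(r_0)$ contributes $0$ by Cauchy's theorem, which also disposes of the growth of $h_\alpha$ as $t\to-\infty$.

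Next I would invoke Theorem~\ref{MainTh} with $k=1$, i.e. $g_\infty'(t)=-2i\int_{-i\mathbf{S}(r_0)}g_\alpha(t)\,d\alpha$, substitute $g_\alpha(t)=\partial_t\!\left(\E h_\alpha(\xv_1,t)-\E h_\alpha(\xv_0,t)\right)$, and pull $\partial_t$ through the $\alpha$-integral, which is legitimate by dominated convergence using the $k$-continuity hypothesis and the uniform-in-$\alpha$ bound $|h_\alpha(\xv,t)|\le|\|\xv\|^2-t|+\tfrac1{2|\alpha|}$ on the compact contour. This identifies the two sides of the claimed identity up to an additive constant in $t$, the sign being fixed by the explicit probability statement written out in the corollary rather than by the shorthand $g_\infty$. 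To pin the constant to zero I would evaluate at $t$ below $0$ (or let $t\to0^+$, using continuity of the c.d.f.'s of $\|\xv_j\|^2$): there the left-hand side vanishes since $\P(\|\xv_j\|^2<t)=0$, while on the right the only surviving part of $h_\alpha(\xv_1,t)-h_\alpha(\xv_0,t)$ is $\tfrac1{2\alpha}\bigl(e^{-\alpha(\|\xv_1\|^2-t)}-e^{-\alpha(\|\xv_0\|^2-t)}\bigr)$, which after removing the removable singularity at $\alpha=0$ is entire in $\alpha$, so its integral around the closed contour is $0$ by Cauchy's theorem; hence the constant is $0$ and the corollary follows.

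The main obstacle I anticipate is not a single computation but the analytic bookkeeping: justifying the interchange of $\partial_t$ with the complex contour integral and of $\E$ with the integrations, checking that the singularity at $\alpha=0$ is genuinely removable after taking expectations (Fubini together with a sub-Gaussian tail bound ensuring $\E\|\xv_j\|^2<\infty$), and carrying the one stubborn sign so that the explicit probability difference in the corollary, and not its negative, comes out at the end.
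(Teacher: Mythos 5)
The paper states Corollary~\ref{CorollaryLc} without proof, presenting it as an immediate consequence of Theorem~\ref{MainTh}, so there is no in-paper argument to compare against; your proof correctly fills this gap by the essentially forced route of observing $\partial_t h_\alpha(\xv,t)=-f(\alpha\|\xv\|^2-\alpha t)$, hence $g_\alpha(t)=\partial_t\E\bigl[h_\alpha(\xv_1,t)-h_\alpha(\xv_0,t)\bigr]$, invoking the $k=1$ case of the theorem, pulling $\partial_t$ through the contour integral, integrating once in $t$, and pinning the integration constant by evaluating at $t<0$, where the $\max$-parts are $\alpha$-independent and the exponential parts have a removable singularity at $\alpha=0$, so the closed-contour integral vanishes. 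Your care with the sign is warranted: the indicator definition of $g_\infty$ gives $g_\infty(t)=\P(\|\xv_0\|^2>t)-\P(\|\xv_1\|^2>t)$, which differs in sign from the $\P(\|\cdot\|<t)$ shorthand printed alongside it in the paper, and with that convention the minus sign in $g'_\infty=-2i\int_{-i\mathbf{S}(r_0)}g_\alpha\,d\alpha$ exactly converts $-g_\infty(t)=\P(\|\xv_0\|^2<t)-\P(\|\xv_1\|^2<t)$ into $\mathcal{A}\bigl(h_\alpha(\xv_1,t)-h_\alpha(\xv_0,t)\bigr)$ as claimed.
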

\begin{corollary}\label{CorollaryDensity}
	Introduce a function 
	\[
	h_{\alpha}(\xv_1,\xv_0,t) = \int_{0}^t\left[ f(\alpha\|\xv_1\|^2-\alpha x) -  f(\alpha\|\xv_0\|^2-\alpha x)\right] dx
	\]
	and an integral operator
	\[
	\mathcal{B}\left(\cdot\right)\eqdef 2i\int_{-i\mathbf{S}(r_0)}\E\left(\cdot\right)\alpha d\alpha,
	\]
	then under assumptions in the theorem [$\ref{MainTh}$] we have for densities of $\|\xv_1\|^2$ and $\|\xv_0\|^2$
	\[
	\rho_{\|\xv_1\|^2}(t)-\rho_{\|\xv_0\|^2}(t)=\mathcal{B}h_{\alpha}(\xv_1,\xv_0,t).
	\]
\end{corollary}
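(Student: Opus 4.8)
The statement is precisely the $k=2$ case of Theorem~\ref{MainTh} rewritten after one integration in $t$, so the plan is to carry out exactly that passage. First I would record the trivial reduction: recall $g_\infty(t)=\P\left(\|\xv_0\|^2<t\right)-\P\left(\|\xv_1\|^2<t\right)$, so that $\P\left(\|\xv_1\|^2<t\right)-\P\left(\|\xv_0\|^2<t\right)=-g_\infty(t)$ and hence, differentiating in $t$,
\[
\rho_{\|\xv_1\|^2}(t)-\rho_{\|\xv_0\|^2}(t)=-g_\infty'(t).
\]
Thus it suffices to prove $\mathcal{B}h_\alpha(\xv_1,\xv_0,t)=-g_\infty'(t)$.

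Next I would unfold the left-hand side. Since the kernel $f$ is bounded and the contour $-i\mathbf{S}(r_0)$ has finite length, all the interchanges below (Fubini, and swapping the $\alpha$-contour integral with the $x$-integral) are routine. By the definition of $g_\alpha$,
\[
\E h_\alpha(\xv_1,\xv_0,t)=\int_0^t\Bigl(\E f(\alpha\|\xv_1\|^2-\alpha x)-\E f(\alpha\|\xv_0\|^2-\alpha x)\Bigr)\,dx=-\int_0^t g_\alpha(x)\,dx,
\]
so that
\[
\mathcal{B}h_\alpha(\xv_1,\xv_0,t)=2i\int_{-i\mathbf{S}(r_0)}\alpha\,\E h_\alpha(\xv_1,\xv_0,t)\,d\alpha=-\int_0^t\Bigl(2i\int_{-i\mathbf{S}(r_0)}\alpha\,g_\alpha(x)\,d\alpha\Bigr)dx.
\]
Applying Theorem~\ref{MainTh} with $k=2$ identifies the inner contour integral with $g_\infty''(x)$, whence
\[
\mathcal{B}h_\alpha(\xv_1,\xv_0,t)=-\int_0^t g_\infty''(x)\,dx=-\bigl(g_\infty'(t)-g_\infty'(0)\bigr).
\]

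The only point that needs an argument, and the one I expect to be the crux of an otherwise mechanical proof, is the vanishing of the boundary term $g_\infty'(0)$. Because $\|\xv_0\|^2,\|\xv_1\|^2\ge 0$, the function $g_\infty$ is identically zero on $(-\infty,0]$; and the hypothesis under which Theorem~\ref{MainTh} is invoked for $k=2$ --- twice continuously differentiable c.d.f.\ of $\|\xv_0\|^2$ and $\|\xv_1\|^2$ --- makes $g_\infty'$ continuous, so $g_\infty'(0)=\lim_{x\uparrow 0}g_\infty'(x)=0$; equivalently, one may integrate from $-\infty$ instead of from $0$, using that $g_\infty$ and all of its derivatives vanish on the negative half-line. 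Combining the displays gives $\mathcal{B}h_\alpha(\xv_1,\xv_0,t)=-g_\infty'(t)=\rho_{\|\xv_1\|^2}(t)-\rho_{\|\xv_0\|^2}(t)$, which is the assertion. All the substance is already in hand: Theorem~\ref{MainTh} for $k=2$ rests on the inversion of Lemma~\ref{Inverse} and the residue identity ($\ref{Convolution}$) carrying the weight $(-\alpha)^{k-1}$, so beyond the boundary-term remark and the elementary interchanges there is no further obstacle.
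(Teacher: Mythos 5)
Your proof is correct and is exactly the argument the paper has in mind but does not write out (the paper states both corollaries as immediate consequences of Theorem~\ref{MainTh} with no further explanation): you unfold $\E h_\alpha(\xv_1,\xv_0,t)=-\int_0^t g_\alpha(x)\,dx$, invoke the $k=2$ identity $g_\infty''(x)=2i\int_{-i\mathbf{S}(r_0)}\alpha\,g_\alpha(x)\,d\alpha$, interchange the $\alpha$- and $x$-integrals over the bounded contour, and integrate once in $x$. The only non-mechanical step is the boundary term, and your treatment is the right one: $g_\infty\equiv 0$ on $(-\infty,0]$ since $\|\xv_0\|^2,\|\xv_1\|^2\ge 0$, and the $k$-continuity assumption on the c.d.f.'s makes $g_\infty'$ continuous at $0$, so $g_\infty'(0)=0$ and $\mathcal{B}h_\alpha=-g_\infty'(t)$ as required.
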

In essence the theorem $\ref{MainTh}$ and corollary $\ref{CorollaryLc}$ allow for the direct application of a simple Taylor expansion to the function $h_{\alpha}$. Namely the statements claim that one can differentiate under the operator $\mathcal{A}$. It is a subject of the next chapter to explore the use case of Gaussian approximation.

\subsubsection{GAR on Euclidean balls.}\label{GARsection}

The road-map of the following application case of the section above is to use purely Taylor decomposition up to the third term. Aside from the fact the proof is technical and presents no specific interest except for the outcome, which is comparable to the work of Betnkus 2005 \cite{bentkus2005lyapunov}.

Classic Lindenberg construction entails the following framework -
\begin{itemize}
	\item Define vectors $\xiv_1\eqdef\suml_{i=1}^{n}\xiv_{1,i}$ and $\xiv_2\eqdef\suml_{i=1}^{n}\xiv_{2,i}$ s.t. 
	\begin{enumerate}
		\item $\xiv_{1,i}$ and $\xiv_{2,i}$ are independent mutually and over $i=\overline{1,n}$ 
		\item $\E\xiv_{1,i}\xiv_{1,i}^T=\E\xiv_{2,i}\xiv_{2,i}^T=\Sigma/n$
	\end{enumerate}
	\item and design the chain vector  
	\[
	\xiv_{/i}\eqdef\suml_{j=0}^{i-1}\xiv_{1,j}+\suml_{j=i+1}^{n+1}\xiv_{2,j}
	\]
	with a convention $\xiv_{1,0}=\xiv_{2,n+1}=0$.
\end{itemize}
With the introduced notations one claims.
\begin{theorem}\label{S1GAR}
	Assume continuous measures and the framework above, then it holds
	\[
	\sup_{t}\left|\P\left(\|\xiv_1\|<t\right)-\P\left(\|\xiv_2\|<t\right)\right|\leq C_{\P}\left(Tr\Sigma\right)^{3/2}\frac{r_0^4}{\sqrt{n}}
	\]
	with the universal constant 
	\[
	C_{\P}\eqdef 48\sup_{i=\overline{1,n},\|\gammav\|^2=1} \left[\E_i\left|\gammav^T\xiv_{/i}+\gammav^T\av\right|^3+\frac{1}{r_0}\E_{/i}\left|\gammav^T\xiv_{/i}+\gammav^T\av\right|\right]
	\]
	and a distribution dependent only vector $\av$.
\end{theorem}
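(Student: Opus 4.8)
The plan is to combine the smooth representation of the Kolmogorov distance from Corollary~\ref{CorollaryLc} with the classical Lindeberg swapping scheme, controlling the resulting error by a third order Taylor expansion of $h_\alpha$. First I would reduce $\{\|\xiv\|<t\}$ to $\{\|\xiv\|^2<t^2\}$ (for $t\le0$ the difference vanishes) and apply Corollary~\ref{CorollaryLc} with $\xv_0=\xiv_1$, $\xv_1=\xiv_2$ and threshold $t^2$, obtaining, with $\mathcal{A}(\cdot)=2i\int_{-i\mathbf{S}(r_0)}\E(\cdot)\,d\alpha$,
\[
\P(\|\xiv_1\|<t)-\P(\|\xiv_2\|<t)=\mathcal{A}\bigl(h_\alpha(\xiv_2,t^2)-h_\alpha(\xiv_1,t^2)\bigr).
\]
Bounding $\mathcal{A}$ in absolute value costs only the length of $-i\mathbf{S}(r_0)$, which is at most $(2+\pi)r_0$, and on that contour $|\alpha|\le r_0$, so it remains to bound $\sup_{|\alpha|\le r_0}\bigl|\E h_\alpha(\xiv_1,t^2)-\E h_\alpha(\xiv_2,t^2)\bigr|$ uniformly in $t$.

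Next I would telescope along the Lindeberg chain. Since $\xiv_{/i}+\xiv_{1,i}=\xiv_{/(i+1)}+\xiv_{2,i+1}$ (the conventions $\xiv_{1,0}=\xiv_{2,n+1}=0$ producing $\xiv_2$ at $i=1$ and $\xiv_1$ at $i=n$),
\[
h_\alpha(\xiv_1,t^2)-h_\alpha(\xiv_2,t^2)=\sum_{i=1}^{n}\bigl[h_\alpha(\xiv_{/i}+\xiv_{1,i},t^2)-h_\alpha(\xiv_{/i}+\xiv_{2,i},t^2)\bigr].
\]
For each $i$ and $k\in\{1,2\}$ I would Taylor expand $h\mapsto h_\alpha(\xiv_{/i}+h\xiv_{k,i},t^2)$ about $h=0$ to second order with integral remainder. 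The zeroth order term is common to $k=1,2$; the first order term $\E\langle\nabla h_\alpha(\xiv_{/i},t^2),\xiv_{k,i}\rangle$ vanishes because $\xiv_{k,i}$ is centred and independent of $\xiv_{/i}$; and the second order term equals $\tfrac1{2n}\E\tr\bigl(\nabla^2h_\alpha(\xiv_{/i},t^2)\,\Sigma\bigr)$, the same for $k=1$ and $k=2$ by the matched covariance assumption. Hence, up to an overall sign, $\E h_\alpha(\xiv_1,t^2)-\E h_\alpha(\xiv_2,t^2)=\sum_i\E[R_{1,i}(\alpha)-R_{2,i}(\alpha)]$ with $R_{k,i}(\alpha)$ the third order remainders.

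It then remains to bound the remainder. Writing $h_\alpha(\xv,t^2)=g_\alpha(\|\xv\|^2-t^2)$ with $g_\alpha(s)=\max\{s,0\}+\tfrac1{2\alpha}e^{-\alpha|s|}$, one has $|g_\alpha'|\le1$, $|g_\alpha''|\le|\alpha|/2$, $|g_\alpha'''|\le|\alpha|^2/2$, the third derivative jumping only on the null set $\{\|\xv\|^2=t^2\}$, so the integral form of the remainder and differentiation under $\mathcal{A}$ are legitimate. Differentiating $h\mapsto g_\alpha(q_{k,i}(h))$ with $q_{k,i}(h)=\|\xiv_{/i}+h\xiv_{k,i}\|^2-t^2$, $q_{k,i}'(h)=2\langle\xiv_{/i}+h\xiv_{k,i},\xiv_{k,i}\rangle$, $q_{k,i}''\equiv2\|\xiv_{k,i}\|^2$, $q_{k,i}'''\equiv0$, gives $|\partial_h^3|\le\tfrac{|\alpha|^2}{2}|q_{k,i}'|^3+3|\alpha|\,\|\xiv_{k,i}\|^2\,|q_{k,i}'|$; factoring $\xiv_{k,i}=\|\xiv_{k,i}\|\gammav_{k,i}$ with $\|\gammav_{k,i}\|=1$ turns this into $|R_{k,i}(\alpha)|\lesssim\|\xiv_{k,i}\|^3\bigl(|\alpha|^2M_{k,i}^3+|\alpha|M_{k,i}\bigr)$ with $M_{k,i}=\sup_{h\in[0,1]}|\gammav_{k,i}^{\T}\xiv_{/i}+h\|\xiv_{k,i}\||$. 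Taking the expectation over $\xiv_{/i}$ conditionally on $\xiv_{k,i}$, choosing the deterministic vector $\av$ so as to dominate the self interaction shift $h\|\xiv_{k,i}\|$ uniformly in $\gammav_{k,i}$ (this is where $\av$ enters), and using $|\alpha|\le r_0$ bounds $\E[|R_{k,i}(\alpha)|\mid\xiv_{k,i}]$ by $\tfrac1{48}C_{\P}\,r_0^{2}\,\|\xiv_{k,i}\|^3$, the $\sup$ over $\gammav$ of the two moments being precisely the bracket in $C_{\P}$; sub-Gaussianity then gives $\E\|\xiv_{k,i}\|^3\lesssim(\E\|\xiv_{k,i}\|^2)^{3/2}=(Tr\Sigma/n)^{3/2}$. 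Summing over the $n$ blocks and multiplying by the contour length $\lesssim r_0$ collects the factors into $C_{\P}(Tr\Sigma)^{3/2}r_0^{4}/\sqrt n$, which is the assertion.

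The main obstacle is this last step: the careful bookkeeping of the third order remainder — disentangling the mean value parameter $h$ and the random direction $\gammav_{k,i}$ from the chain vector $\xiv_{/i}$, exhibiting the deterministic shift $\av$ that dominates the self interaction term $h\|\xiv_{k,i}\|$, and tracking the powers of $r_0$ and of $Tr\Sigma$ through the contour integral so as to obtain precisely the constant $C_{\P}$ and the factor $r_0^{4}/\sqrt n$. A secondary point is the validity of the Taylor expansion and of interchanging differentiation with $\mathcal{A}$ across the kink $\{\|\xv\|^2=t^2\}$, which is handled because $h_\alpha$ is $C^2$ with third derivative bounded off a null set.
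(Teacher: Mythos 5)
Your proposal is correct and follows essentially the same route as the paper: represent the Kolmogorov distance via the smoothing of Section~\ref{SRKD} (Corollary~\ref{CorollaryLc}/Theorem~\ref{MainTh}), telescope along the Lindeberg chain, and control each summand by a third-order Taylor expansion whose zeroth, first, and second order terms cancel because $\xiv_{1,i}$ and $\xiv_{2,i}$ are independent of $\xiv_{/i}$, centered, and have matched covariance. The only cosmetic departure is that you Taylor expand in the scalar line parameter $h\mapsto\xiv_{/i}+h\xiv_{k,i}$, whereas the paper writes the multivariate expansion of $f_\alpha(\xv,t)=\E_{/i}\int_0^t f(\alpha\|\xiv_{/i}+\xv\|^2-\alpha x)\,dx$ directly in $\xv$ with intermediate points $\av_i,\bv_i$; by the chain rule these are the same computation, and your derivative bounds $|g_\alpha'|\le1$, $|g_\alpha''|\le|\alpha|/2$, $|g_\alpha'''|\le|\alpha|^2/2$ reproduce the paper's $|f'|,|f''|\le\tfrac12$ together with the chain-rule factors of $\alpha$. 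Your bookkeeping of the contour length, the powers of $r_0$ (giving $r_0^4$ after integrating $|\alpha|^3$ over a length-$\sim r_0$ contour, with the lower-order $r_0^3$ term absorbed into $C_\P$ via the $1/r_0$ weight), and the moment bound $\E\|\xiv_{k,i}\|^3\lesssim(Tr\Sigma/n)^{3/2}$ (the content of the paper's Lemma~\ref{Moments}) all match the paper's. Your treatment of the intermediate point, replacing the random shift by a deterministic $\av$ via a supremum, is exactly as loose as the paper's own passage from $\av_i,\bv_i$ to $\av^*,\bv^*$, so it is a fair reconstruction rather than a gap you introduced.
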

\begin{myproof}
	Let us start the proof forming a chain
	\[
	g_{\infty}(t)=\suml_{i=1}^n\E_ig^i_{\infty}(t)=
	\]
	\[
	=\suml_{i=1}^{n} \E_i\left[\P_{/i}\left(\|\xiv_{/i}+\xiv_{1,i}\|^2> t\right)-\P_{/i}\left(\|\xiv_{/i}+\xiv_{2,i}\|^2> t\right)\right],
	\]
	where the latter chaining sum is called the Lindenberg device. The main objective of the device is to exploit independence of $\xiv_{/i}$, $\xiv_{1,i}$ and $\xiv_{2,i}$ amounting to the cancellation of first and second terms in Taylor expansion. 
	
	However to exploit Taylor decomposition we first define smooth counterpart of the Lindenberg summand $g^i_{\infty}(t)$ as per the corollary $\ref{CorollaryLc}$
	\[
	h^i_{\alpha}(\xiv_{1,i},\xiv_{2,i},t)\eqdef\E_{/i}\left[\int_{0}^tf\left(\alpha\|\xiv_{/i}+\xiv_{1,i}\|^2-\alpha x\right)dx-\int_{0}^tf\left(\alpha\|\xiv_{/i}+\xiv_{2,i}\|^2-\alpha x\right)dx\right].
	\]
	From the theorem $\ref{MainTh}$, the contour integral over $|\alpha|<r_0$ recovers the limiting difference
	\[
	\P\left(\|\xiv_{/i}+\xiv_{1,i}\|^2> t\right)-\P\left(\|\xiv_{/i}+\xiv_{2,i}\|^2> t\right)=\mathcal{A}h^i_{\alpha}(\xiv_{1,i},\xiv_{2,i},t)
	\]
	\[
	=2i\int_{i\mathbf{S}(r_0)}\E_ih^i_{\alpha}(\xiv_{1,i},\xiv_{2,i},t)d\alpha
	\]
	for some fixed positive constant $r_0$. On the other hand assuming continuous pdf the function
	\[
	f_{\alpha}(\xv,t)\eqdef \E_{/i}\int_{0}^tf\left(\alpha\|\xiv_{/i}+\xv\|^2-\alpha x\right)dx
	\]
	is tree times continuously differentiable with respect to $\xv$  and admits Taylor expansion up to the third term. 
	
	Computing derivatives one has
	\begin{itemize}
		\item $\frac{\partial f_{\alpha}(0,t)}{\partial \xv}=2\alpha\E_{/i}f\left(\alpha\|\xiv_{/i}\|^2-\alpha t\right)\xiv_{/i}$
		\item $\frac{\partial^2 f_{\alpha}(0,t)}{\partial\xv\partial\xv}=4\alpha^2\E_{/i}f^{\prime}\left(\alpha\|\xiv_{/i}\|^2-\alpha t\right)\xiv_{/i}\xiv_{/i}^T+2\alpha\E_{/i}f\left(\alpha\|\xiv_{/i}\|^2-\alpha t\right)I_p$
		\item $\frac{\partial^3 f_{\alpha}(\av,t)}{\partial\xv\partial\xv\partial\xv}=8\alpha^3\E_{/i}f^{\prime\prime}\left(\alpha\|\xiv_{/i}+\av\|^2-\alpha t\right)\left(\xiv_{/i}+\av\right)\otimes\left(\xiv_{/i}+\av\right)\otimes\left(\xiv_{/i}+\av\right)$
		\[
		+8\alpha^2\E_{/i}f^{\prime}\left(\alpha\|\xiv_{/i}+\av\|^2-\alpha t\right)\left(\xiv_{/i}+\av\right)\otimes I_p
		\]
	\end{itemize}
	and, therefore, we obtain for the zero, first and second order approximation of the difference $\E_i\left[f_{\alpha}(\xiv_{1,i},t)-f_{\alpha}(\xiv_{2,i},t)\right]$ 
	\begin{itemize}
		\item $\E_i\left[f_{\alpha}(0,t)-f_{\alpha}(0,t)\right]=0$
		\item $\E_i\left[\frac{\partial^T f_{\alpha}(0,t)}{\partial\xiv_{1,i}}\xiv_{1,i}-\frac{\partial^T f_{\alpha}(0,t)}{\partial\xiv_{2,i}}\xiv_{2,i}\right]=0$
		\item $\E_i\left[\xiv_{1,i}^T\frac{\partial^2 f_{\alpha}(0,t)}{\partial\xiv_{1,i}\partial\xiv_{1,i}}\xiv_{1,i}-\xiv_{2,i}^T\frac{\partial^2 f_{\alpha}(0,t)}{\partial\xiv_{2,i}\partial\xiv_{2,i}}\xiv_{2,i}\right]=0$
	\end{itemize}
	by the designed independence of the $\xiv_{/i},\xiv_{1,i}$ and $\xiv_{2,i}$. 
	
	The Taylor expansion of $\E_ih^i_{\alpha}(\xiv_{1,i},\xiv_{2,i},t)$ then reads as
	\[
	\E_ih^i_{\alpha}(\xiv_{1,i},\xiv_{2,i},t)=\frac{1}{6}\E_i\left[\frac{\partial^3 f_{\alpha}(\av_i,t)}{\partial\xiv_{1,i}\partial\xiv_{1,i}\partial\xiv_{1,i}}\cdot\xiv_{1,i}\otimes\xiv_{1,i}\otimes\xiv_{1,i}-\frac{\partial^3 f_{\alpha}(\bv_i,t)}{\partial\xiv_{2,i}\partial\xiv_{2,i}\partial\xiv_{2,i}}\cdot\xiv_{2,i}\otimes\xiv_{2,i}\otimes\xiv_{2,i}\right]
	\]
	for some vectors $\av_i,\bv_i$ depending implicitly on $\xiv_{1,i}$ and $\xiv_{2,i}$ respectively. The difference is obviously bounded by
	\[
	\E_ih^i_{\alpha}(\xiv_{1,i},\xiv_{2,i},t)\leq\frac{1}{6}\E_i\left|\frac{\partial^3 f_{\alpha}(\av_i,t)}{\partial\xiv_i\partial\xiv_i\partial\xiv_i}\right|\|\xiv_{1,i}\|^3+\frac{1}{6}\E_i\left|\frac{\partial^3 f_{\alpha}(\bv_i,t)}{\partial\txiv_i\partial\txiv_i\partial\txiv_i}\right|\|\xiv_{2,i}\|^3
	\]
	and it holds
	\[
	\E_ih^i_{\alpha}(\xiv_{1,i},\xiv_{2,i},t)\leq\frac{1}{6}\sup_{\|\gammav\|^2=1}\left|\frac{\partial^3 f_{\alpha}(\av^*,t)}{\partial\xiv_i\partial\xiv_i\partial\xiv_i}\gammav\otimes\gammav\otimes\gammav\right|\E_i\|\xiv_{1,i}\|^3+\frac{1}{6}\sup_{\|\gammav\|^2=1}\left|\frac{\partial^3 f_{\alpha}(\bv^*,t)}{\partial\txiv_i\partial\txiv_i\partial\txiv_i}\gammav\otimes\gammav\otimes\gammav\right|\E_i\|\xiv_{2,i}\|^3
	\]
	for some constant vectors $\av^*,\bv^*$. But additionally we know for a fixed $\av$
	\[
	\left|\frac{\partial^3 f_{\alpha}(\av,t)}{\partial\xv\partial\xv\partial\xv}\right|=\Big|4\alpha^3\E_{/i}f^{\prime\prime}\left(\alpha\|\xiv_{/i}+\av\|^2-\alpha t\right)\left(\gammav^T\xiv_{/i}+\gammav^T\av\right)^3+ 
	\]
	\[
	+4\alpha^2\E_{/i}f^{\prime}\left(\alpha\|\xiv_{/i}+\av\|^2-\alpha t\right)\left(\gammav^T\xiv_{/i}+\gammav^T\av\right)\Big|\leq 
	\]
	\[
	\leq 4 r_0^3\E_i\left|\gammav^T\xiv_{/i}+\gammav^T\av\right|^3+4r_0^2\E_{/i}\left|\gammav^T\xiv_{/i}+\gammav^T\av\right|
	\]
	by $f^{\prime}\leq0.5$ and $f^{\prime\prime}\leq0.5$ and thus
	\[
	\int_{i\mathbf{S}(r_0)}\E_ih^i_{\alpha}(\xiv_{1,i},\xiv_{2,i},t)d\alpha\leq\frac{C_{\P}r_0^4}{4} (\E_i\|\xiv_{2,i}\|^3+\E_i\|\xiv_{2,i}\|^3)
	\]
	Therefore, using the bound on the moments from lemma $\ref{Moments}$ in the appendix and summing over Lindenberg chain, we come at the inquired statement
	\[
	\sup_t\left|\P(\|\xiv_1\|^2>t)-\P(\|\xiv_2\|^2>t)\right|\leq C_{\P}\left(Tr\Sigma\right)^{3/2}\frac{r_0^4}{\sqrt{n}}.
	\]	
\end{myproof}

Following the general scheme of a proof of the Berry-Esseen bound we are bound to work with the moments of a random vector, therefore in need of the technical lemma.
\begin{lemma}\label{Moments}
	Under the assumptions in the framework above holds
	\[
	\E_{i}\|\xiv_i\|^3\leq 14 \left(Tr\Sigma/n\right)^{3/2}
	\]
\end{lemma}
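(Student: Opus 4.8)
The plan is to bound $\E_i\|\xiv_i\|^3$ via the second and fourth moments of $\|\xiv_i\|$: the second moment is fixed exactly by the covariance normalisation, while the fourth moment is controlled by the sub-Gaussian tails of the summands in the standing framework. First, by the Cauchy--Schwarz inequality,
\[
\E_i\|\xiv_i\|^3=\E_i\big(\|\xiv_i\|\,\|\xiv_i\|^2\big)\leq\big(\E_i\|\xiv_i\|^2\big)^{1/2}\big(\E_i\|\xiv_i\|^4\big)^{1/2},
\]
and from $\E_i\xiv_i\xiv_i^T=\Sigma/n$ one reads off $\E_i\|\xiv_i\|^2=\operatorname{tr}\big(\E_i\xiv_i\xiv_i^T\big)=Tr\Sigma/n$. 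So the whole statement reduces to showing $\E_i\|\xiv_i\|^4\leq C\,(Tr\Sigma/n)^2$ with $C$ small enough that $\sqrt{C}\leq14$ — in fact $C\leq196$ would already be more than enough.

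For the fourth moment I would recycle the Gaussian-integral linearisation already used in the proof of Corollary~\ref{Matrix_Brenstein_2_Constants}: writing $\exp\{\theta\|\xiv_i\|^2\}=(2\pi)^{-p/2}\int_{\R^p}\exp\{\sqrt{2\theta}\,\gammav^T\xiv_i-\|\gammav\|^2/2\}\,d\gammav$ and taking $\E_i$, the sub-Gaussian bound on the scalar $\gammav^T\xiv_i$ (whose variance proxy is of order $\gammav^T(\Sigma/n)\gammav$ under the standing assumptions) yields the Gaussian-type domination
\[
\E_i\exp\{\theta\|\xiv_i\|^2\}\leq\det\big(I_p-2\theta\Sigma/n\big)^{-1/2}
\]
for $\theta$ in a neighbourhood of $0$. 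Differentiating the right-hand side twice at $\theta=0$ gives $\E_i\|\xiv_i\|^4\leq(Tr\Sigma/n)^2+2\operatorname{tr}\big((\Sigma/n)^2\big)\leq3\,(Tr\Sigma/n)^2$, where I use $\operatorname{tr}(A^2)\leq(\operatorname{tr}A)^2$ for positive semidefinite $A$. (An elementary alternative avoiding the contour/MGF step: expand $\|\xiv_i\|^4=\sum_{j,k}\xi_{i,j}^2\xi_{i,k}^2$, bound $\E_i[\xi_{i,j}^2\xi_{i,k}^2]\leq(\E_i\xi_{i,j}^4)^{1/2}(\E_i\xi_{i,k}^4)^{1/2}$, insert the sub-Gaussian fourth-moment estimate $\E_i\xi_{i,j}^4\leq C_0(\Sigma_{jj}/n)^2$, and resum to $C_0(Tr\Sigma/n)^2$.) Plugging $\E_i\|\xiv_i\|^4\leq3\,(Tr\Sigma/n)^2$ into the first display,
\[
\E_i\|\xiv_i\|^3\leq\sqrt{3}\,(Tr\Sigma/n)^{3/2}\leq14\,(Tr\Sigma/n)^{3/2},
\]
which is the assertion.

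The one point requiring care is the fourth-moment step: sub-Gaussianity does not hand over the Gaussian moment generating function with equality, so the domination has to be justified through the integral argument above, and the precise normalisation in the sub-Gaussian condition — in particular a possible mild constant $\nu_0\geq1$ — propagates into the numerical factor (it would multiply the bound by $\nu_0^2$). The constant $14$ is deliberately chosen with a wide margin so as to absorb this; the remaining manipulations are entirely routine.
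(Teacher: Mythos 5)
Your proof is correct but follows a genuinely different route from the paper's. The paper writes the third absolute moment as a tail integral, $\E_i\|\xiv_i\|^3=\tfrac{3}{2}\int_0^\infty\P(\|\xiv_i\|^2>s)\sqrt{s}\,ds$, and plugs in a sharp large-deviation bound for sub-Gaussian quadratic forms (Spokoiny--Zhilova), $\P(\|\xiv_i\|^2>p+s)\leq 2e^{-\frac{s^2}{6.6p}\vee\frac{s}{6.6}}$ with $p=Tr\Sigma/n$, then integrates directly; the burden falls on that external deviation lemma. You instead decompose via Cauchy--Schwarz into the exact second moment $Tr\Sigma/n$ and a fourth moment, which you control by the MGF domination $\E_i\exp\{\theta\|\xiv_i\|^2\}\leq\det(I_p-2\theta\Sigma/n)^{-1/2}$, using that both sides agree to first order at $\theta=0$ so the one-sided inequality for small $\theta>0$ forces $h''(0)\leq g''(0)$, and then $\operatorname{tr}(A^2)\leq(\operatorname{tr}A)^2$ for $A\succeq0$. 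Your route is more elementary, self-contained, and gives the much sharper constant $\sqrt{3}$ in place of $14$. Both proofs ultimately lean on a sub-Gaussian normalisation that the framework states only qualitatively: the paper absorbs it silently into the numerical constant $6.6$ in the cited deviation bound, while you flag explicitly that a variance-proxy constant $\nu_0^2\geq1$ would multiply your final factor. The one point worth tightening in your write-up is the form of the MGF bound: the determinant with $\Sigma/n$ inside presupposes a direction-dependent sub-Gaussian proxy matching the true covariance, whereas the normalisation that actually appears elsewhere in this paper (in the corollary on matrix-Bernstein constants) is isotropic, $\E_i\exp(\gammav^T\xv_i)\leq\exp\{\|\gammav\|^2/2n\}$, which yields $\E_i\exp\{\theta\|\xiv_i\|^2\}\leq(1-2\theta/n)^{-p/2}$ instead. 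Your parenthetical coordinate-wise alternative covers either normalisation, so the argument is robust to this ambiguity; just be explicit about which sub-Gaussian condition you are invoking.
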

\begin{myproof}
	Generally for - $\E_{i}\|\xiv_i\|^k$ we can write
	\[
	\E_{i}\|\xiv_i\|^k=\frac{k}{2}\int_{0}^{\infty}\P(\|\xiv_i\|^2>s)s^{(k-2)/2}ds
	\]
	Theorem 3.1 or 4.1 from Spokoiny, Zhilova \cite{spokoiny2013sharp} on the sharp deviation bound for a sub-Gaussian vector $\xiv_i$ suggests for $s>0$
	\[
	\P(\|\xiv_i\|^2>p+s)\leq 2 e^{-\frac{ s^2}{6.6p}\vee \frac{s}{6.6}}.
	\]
	and additionally for a $s\in\left[-p,0\right]$ we know
	\[
	\P(\|\xiv_i\|^2>p+s)\leq 1.
	\]
	We also note that under dimension we understand $p=Tr\Sigma/n$ in the derivation. Since it was mentioned in the introduction that we work with an appropriately scaled random vectors. 
	
	Using following change of variables $s=s^{\prime}+\frac{p}{n}$ we come at the inequality
	\[
	\E_{i}\|\xiv_i\|^k=\frac{k}{2}\int_{-p}^{\infty}\P(\|\xiv_i\|^2>p+s^{\prime})(s^{\prime}+p)^{(k-2)/2}ds^{\prime}\leq
	\]
	\[
	\leq k\int_{0}^{\infty}e^{-\frac{ s^2}{6.6p}}(s^{\prime}+p)^{(k-2)/2}ds^{\prime}+\frac{k}{2}\int_{-p}^{0}(s^{\prime}+p)^{(k-2)/2}ds^{\prime}\leq
	\]
	\[
	\leq kp^{k/2}\int_{0}^{\infty}e^{-\frac{ ps^2}{6.6}}(s^{\prime}+p)^{(k-2)/2}ds^{\prime}+k\left(p\right)^{k/2}\leq
	\]
	\[
	\leq kp^{k/2}\int_{0}^{\infty}e^{-\frac{ s^2}{6.6}}(s+1)^{(k-2)/2}ds+kp^{k/2}
	\]
	and explicit calculation for $k=3$ yields the result
	\[
	\E_{i}\|\xiv_i\|^3\leq 14 p^{3/2}.
	\]
\end{myproof}

\subsection{Log-likelihood multiplier re-sampling}
\begin{theorem}
	The parametric model ($\ref{AMIV}$) in the introduction - $\delta_k=0$ -  under the assumption ($\ref{FSC}$) enables
	\[
	\left|\P\left(\left(T_{LR}-J\right)/\sqrt{J}>z^{\sbt}_{\alpha}\right)-\alpha\right|\leq C_0\frac{J^{3/2}}{\sqrt{Kn}}+C_1\sqrt{\frac{J\log J+x}{Kn}}
	\]
	with a dominating probability $>1-C_2e^{-x}$ and universal constants $C_0,C_1<\infty$.
\end{theorem}
\begin{proof}
	Using respective Wilks expansions let us reduce the log-likelihood ratio statistics $T_{LR}, T_{BLR}$ to the norms of score vectors $\|\xiv^s\|,\|\xiv_{\sbt}^{s}\|$ - sub-exponential random vectors based on the finite sample theory assumptions (section [$\ref{FST}$]). One has from the theorems [$\ref{SquareRootWilksResult}$,$\ref{BootstrapSquareRootWilksResult}$]
	\[
	\left|\sqrt{2T_{LR}}-\|\xiv^s\|\right|\leq C\left(J+x\right)/\sqrt{Kn},
	\]
	\[
	\left|\sqrt{2T_{BLR}}-\|\xiv^s_{\sbt}\|\right|\leq C\left(J+x\right)/\sqrt{Kn}.
	\]
	Both score vectors are reduced to the respective Gaussian counterparts 
	\[
	\widetilde{\xiv}^s_{\sbt}\sim\mathcal{N}\left(0,\frac{1}{n}\suml_i\xiv_i^s\xiv_i^{sT}\right),\;\;\widetilde{\xiv}^{s}\sim\mathcal{N}\left(0,\E\xiv^s\xiv^{sT}\right).
	\]
	In view of Gaussian approximation result (theorem $\ref{SGAR}$) one can state
	\[
	\sup_{t}\left|\P\left(\|\xiv^s\|<t\right)-\P\left(\|\widetilde{\xiv}^s\|<t\right)\right|\leq C\frac{J^{3/2}}{\sqrt{Kn}}
	\]
	with a universal constant $C<\infty$, and analogously the theorem implies
	\[
	\sup_{t}\left|\P\left(\|\xiv_{\sbt}^{s}\|<t\right)-\P\left(\|\widetilde{\xiv}_{\sbt}^{s}\|<t\right)\right|\leq C\frac{J^{3/2}}{\sqrt{Kn}}
	\]
	with the universal constant $C<\infty$. In turn, Gaussian comparison result [$\ref{GC}$] and Bernstein matrix inequality allow to derive
	\[
	\sup_{t}\left|\P\left(\|\widetilde{\xiv}^s_{\sbt}\|<t\right)-\P\left(\|\widetilde{\xiv}^s\|<t\right)\right|\leq C_1\sqrt{J}\|I-\left(\E\xiv^s\xiv^{sT}\right)^{-1/2}\frac{1}{n}\suml_i\xiv_i^s\xiv_i^{sT}\left(\E\xiv^s\xiv^{sT}\right)^{-1/2}\|_{op}
	\]
	\[
	\overset{\text{thm}\ref{MatrixBernstein2}}{\leq}C_1\sqrt{\frac{J\log J +x}{Kn}}
	\]
	with an exponentially large probability $1-C_2e^{-x}$.
	
	Finally, let us use the anti-concentration result (theorem 2.7) from {G{\"o}tze}, F. and {Naumov}, A. and {Spokoiny}, V. and {Ulyanov}, V. \cite{2017arXiv170808663G}, stating for a Gaussian vector $\xv\sim\mathcal{N}\left(0,\Sigma\right)$
	\[
	\P\left(t<\|\xv\|<t+\epsilon\right)\leq\frac{C\epsilon}{\|\Sigma\|_{Fr}}.
	\]
	It allows to translate Wilks expansions into the probabilistic language. Assembling all the statements in a cohesive structure one comes at
	\[
	\sup_{t}\left|\P\left(\left(T_{LR}-J\right)/\sqrt{J}<t\right)-\P\left(\left(T_{BLR}-J\right)/\sqrt{J}<t\right)\right|
	\]
	\[
	\overset{\text{Wilks+AC+GAR}}{\leq}\sup_{t}\left|\P\left(\left(\|\widetilde{\xiv}^s\|-J\right)/\sqrt{J}<t\right)-\P\left(\left(\|\widetilde{\xiv}^s_{\sbt}\|-J\right)/\sqrt{J}<t\right)\right|+2C\frac{J+x}{\sqrt{JKn}}+2C\frac{J^{3/2}}{\sqrt{Kn}}
	\]
	\[
	\overset{\text{GComp}}{\leq}C_1\sqrt{\frac{J\log J +x}{Kn}}+2C\frac{J+x}{\sqrt{JKn}}+2C\frac{J^{3/2}}{\sqrt{Kn}},
	\]
	which helps to infer straightforwardly the statement of the theorem.
\end{proof}

% ------- Bibliography -------
\bibliographystyle{plain}
\bibliography{Article}
% ------- Figures -------
\newpage
\begin{figure}[b]
	\caption{The empirical power of $T_{LR}$, $T_{BLR}$ and $T_{CLR}$ with weak instruments.} \label{Fig.PowerEnvelopeWeakLRBLRCLR}
	\includegraphics[scale=0.8,trim = 30mm 100mm 0mm 90mm]{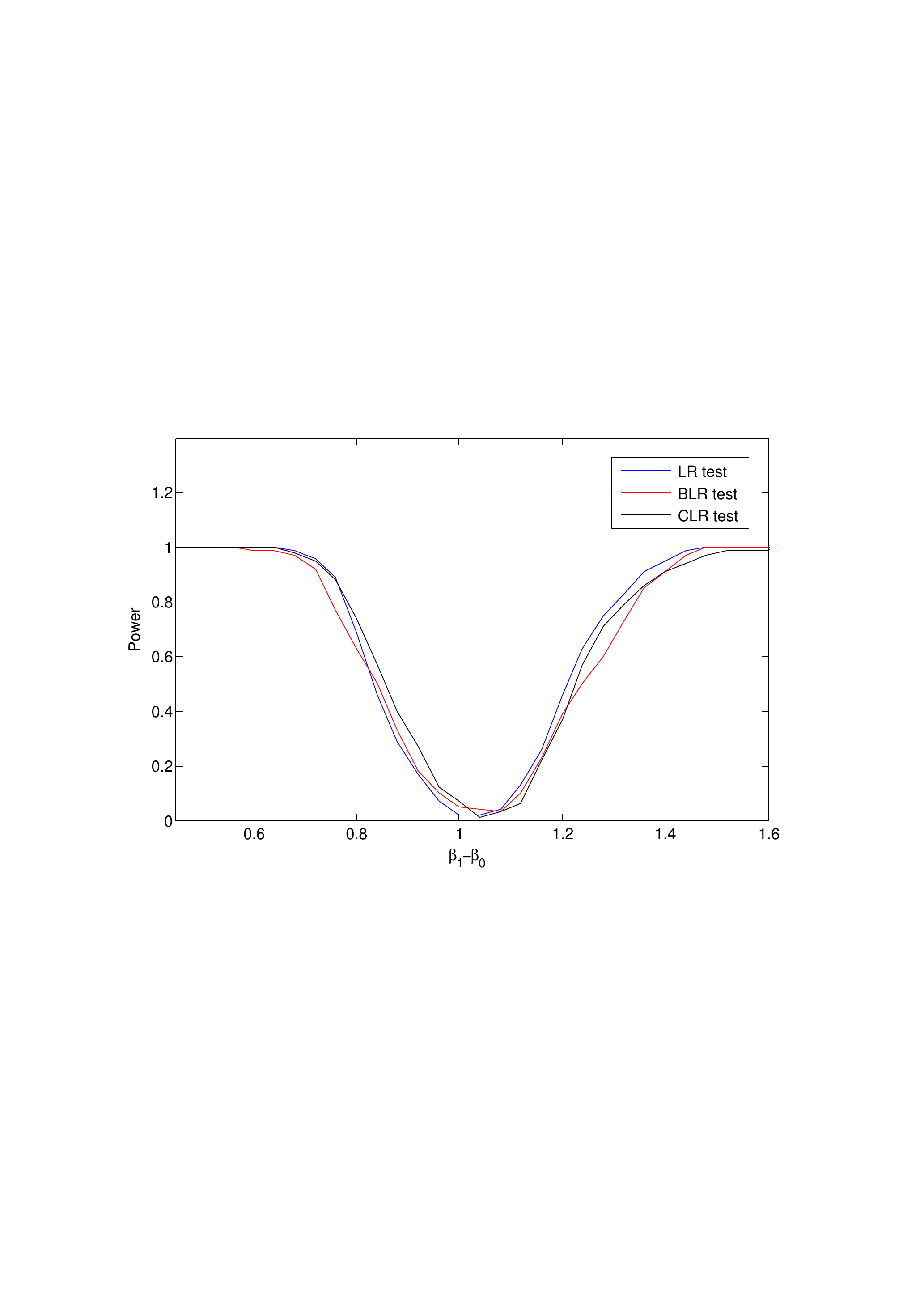}
	\bigskip
	DATA:
	n=200,\;q=5,\;$\piv^{*T}\Zv\Zv^T\piv^*=\frac{4}{n}$,\;$\Omega=\Matrix{1}{0}{0}{1}$
\end{figure}

\begin{figure}[b]
	\caption{The empirical power of $T_{LR}$, $T_{AR}$ and $T_{LM}$ with weak instruments.}\label{Fig.PowerEnvelopeWeakLRARLM}
	\includegraphics[scale=0.8,trim = 30mm 100mm 0mm 90mm]{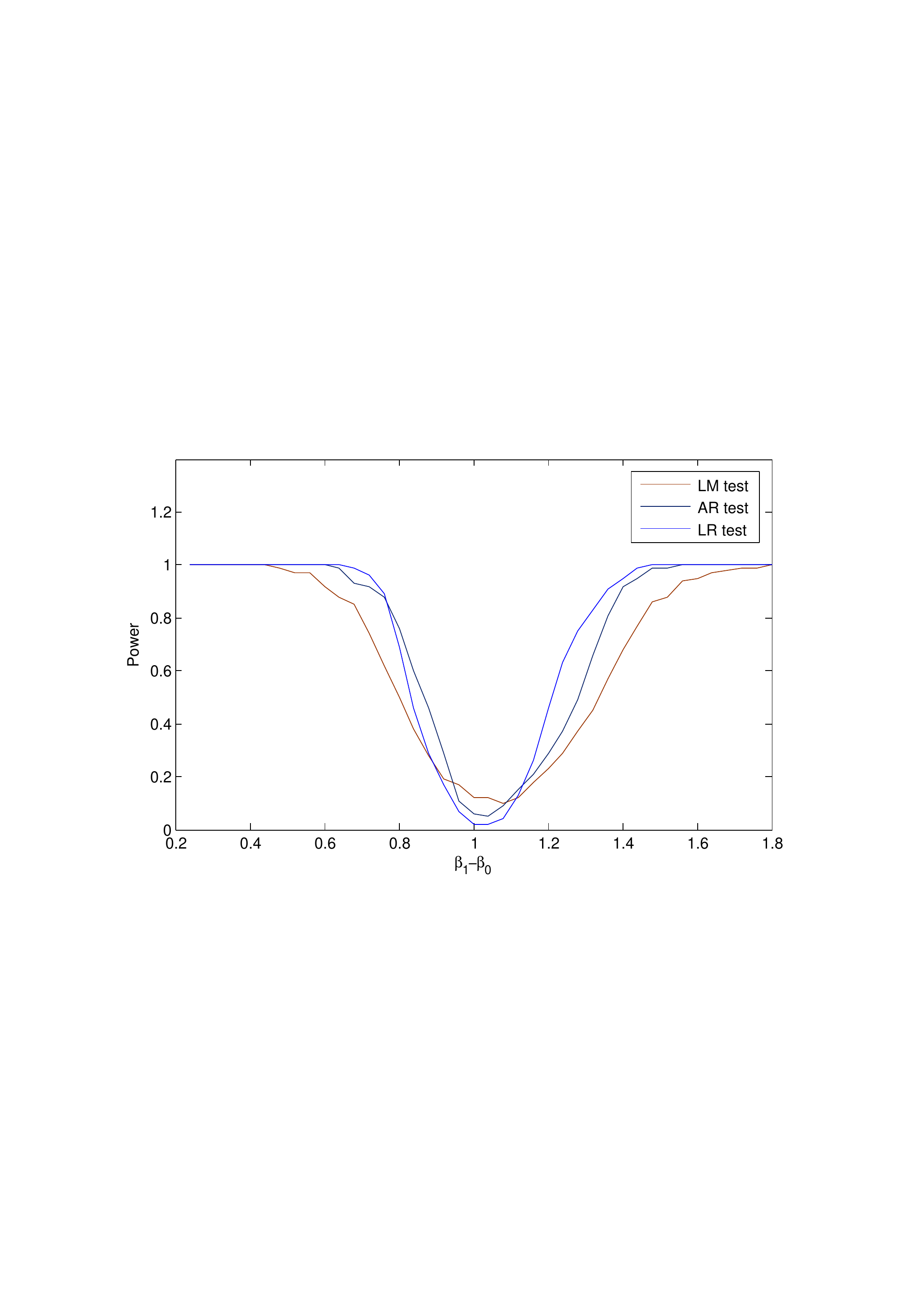}
	\bigskip
	DATA:
	n=200,\;q=5,\;$\piv^{*T}\Zv\Zv^T\piv^*=\frac{4}{n}$,\;$\Omega\Matrix{1}{0}{0}{1}$
\end{figure}

\begin{table}[b]
	\caption{\textbf{Power weak instrumental variables.}}
	\label{Tab:PowerEnvelopeWeakLRBLRCLRARLM}
	\begin{center}
		\begin{tabular}{|l|l|l|l|l|l|}
			\hline
			\textbf{$\beta_1-\beta_0$}&\textbf{LR}&\textbf{BLR}&\textbf{CLR}&\textbf{AR}&\textbf{LM}\\\hline
			0.48&1&1&1&0.99&1\\\hline
			0.56&1&1&1&0.97&1\\\hline
			0.64&1&0.99&1&0.88&0.99\\\hline
			0.72&0.96&0.92&0.95&0.74&0.92\\\hline
			0.8&0.69&0.63&0.74&0.5&0.76\\\hline
			0.88&0.29&0.33&0.4&0.28&0.46\\\hline
			0.96&0.07&0.1&0.12&0.17&0.11\\\hline
			1.04&0.02&0.04&0.01&0.12&0.05\\\hline
			1.12&0.13&0.1&0.06&0.12&0.15\\\hline
			1.2&0.46&0.39&0.37&0.23&0.29\\\hline
			1.28&0.75&0.6&0.71&0.37&0.49\\\hline
			1.36&0.91&0.85&0.86&0.57&0.81\\\hline
			1.44&0.99&0.97&0.94&0.77&0.95\\\hline
			1.52&1&1&0.99&0.88&0.99\\\hline
			1.6&1&1&0.99&0.95&1\\\hline
			1.68&1&1&1&0.98&1\\\hline
			1.76&1&1&1&0.99&1\\\hline
		\end{tabular}
	\end{center}
	\bigskip
	
	DATA:
	n=200,\;q=5,\;$\piv^{*T}\Zv\Zv^T\piv^*=\frac{4}{n}$,\;$\Omega\Matrix{1}{0}{0}{1}$
\end{table}

\begin{figure}[b]
	\caption{The empirical power of $T_{LR}$, $T_{BLR}$ and $T_{CLR}$ with weak instruments and Laplace errors.}\label{WeakMisspecifiedLaplace}
	\includegraphics[scale=0.7,trim = 30mm 100mm 0mm 90mm]{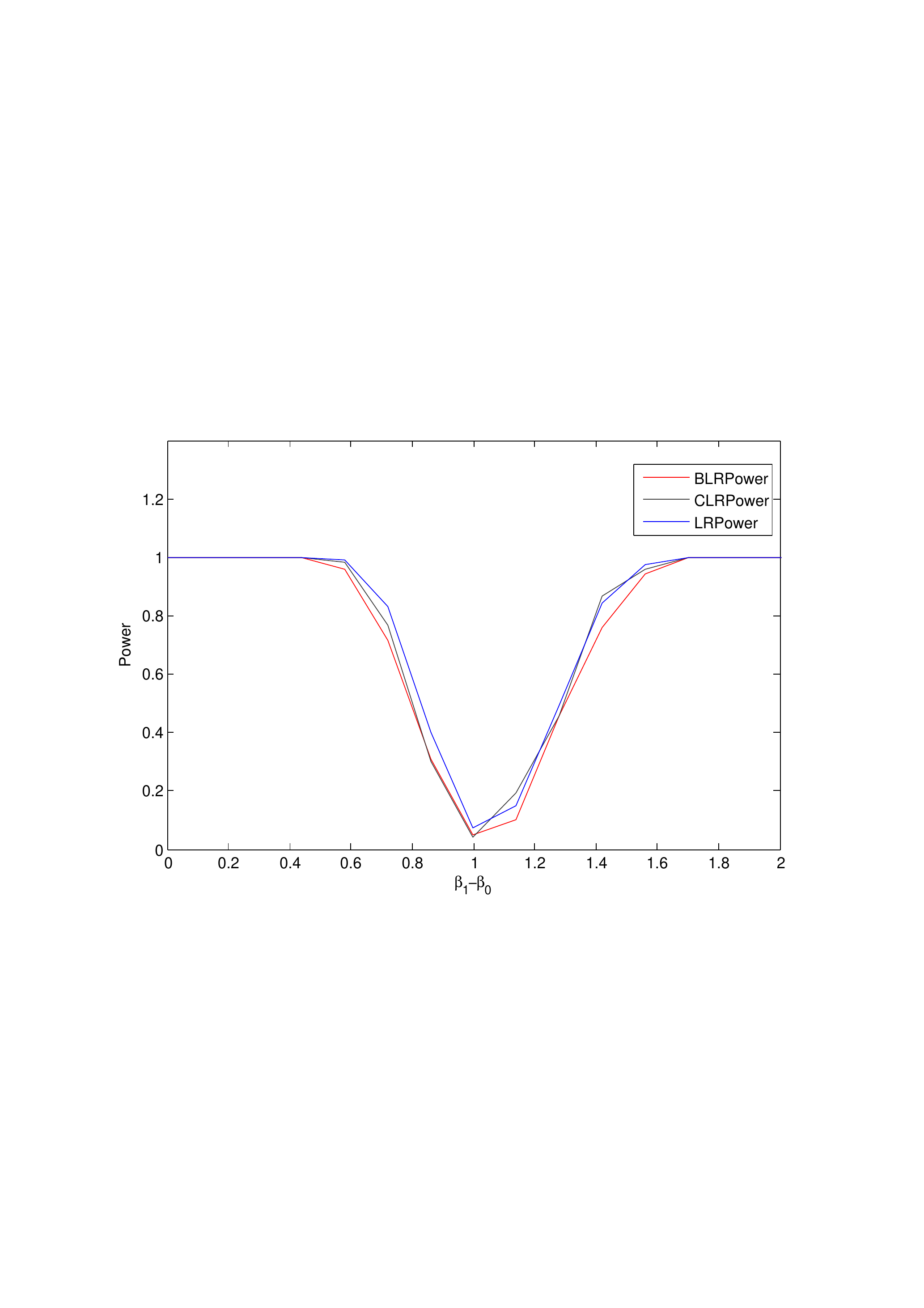}
	\bigskip
	DATA:\;n=200,\;q=5,\;$\piv^{*T}\Zv\Zv^T\piv^*=\frac{2.56}{n}$
\end{figure}

\begin{figure}[b]
	\caption{The empirical power of $T_{LR}$, $T_{AR}$ and $T_{LM}$ with weak instruments and Laplace errors.}\label{WeakMisspecifiedLaplaceARLM}
	\includegraphics[scale=0.7,trim = 30mm 100mm 0mm 90mm]{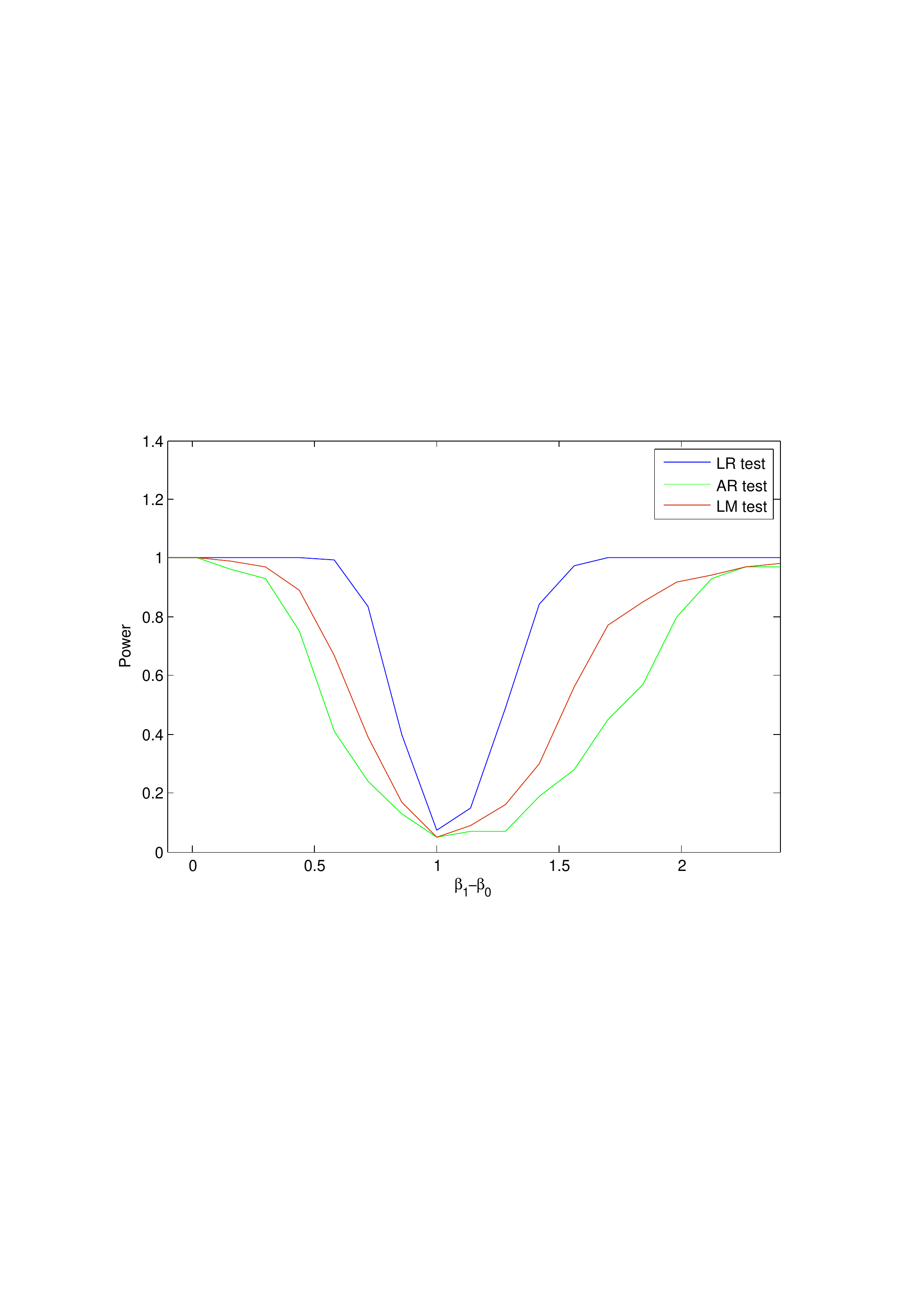}
	\bigskip
	DATA:\;n=200,\;q=5,\;$\piv^{*T}\Zv\Zv^T\piv^*=\frac{2.56}{n}$
\end{figure}

\begin{table}[b]
	\caption{\textbf{Power - Weak instrumental variables and Laplace noise.}} \label{Tab:PowerEnvelopeWeakMispecifiedLaplaceLRBLRCLRARLM}
	\begin{center}
		\begin{tabular}{|l|l|l|l|l|l|}
			\hline
			\textbf{$\beta_1-\beta_0$}&\textbf{LR}&\textbf{BLR}&\textbf{CLR}&\textbf{AR}&\textbf{LM}\\\hline
			0.02&1&1&1&1&1\\\hline
			0.16&1&1&1&0.96&0.99\\\hline
			0.3&1&1&1&0.93&0.97\\\hline
			0.44&1&1&1&0.75&0.89\\\hline
			0.58&0.99167&0.95833&0.98333&0.41&0.67\\\hline
			0.72&0.83333&0.71667&0.76667&0.24&0.39\\\hline
			0.86&0.4&0.30833&0.3&0.13&0.17\\\hline
			1&0.075&0.05&0.041667&0.05&0.05\\\hline
			1.14&0.15&0.1&0.19167&0.07&0.09\\\hline
			1.28&0.49167&0.45833&0.45833&0.07&0.16\\\hline
			1.42&0.84167&0.75833&0.86667&0.19&0.3\\\hline
			1.56&0.975&0.94167&0.95833&0.28&0.56\\\hline
			1.7&1&1&1&0.45&0.77\\\hline
			1.84&1&1&1&0.57&0.85\\\hline
			1.98&1&1&1&0.8&0.92\\\hline
			2.12&1&1&1&0.93&0.94\\\hline
			2.26&1&1&1&0.97&0.97\\\hline
		\end{tabular}
	\end{center}
	\bigskip
	DATA:\;n=200,\;q=5,\;$\piv^{*T}\Zv\Zv^T\piv^*=\frac{2.56}{n}$
\end{table}

\begin{figure}[b]
	\caption{The empirical power of $T_{LR}$, $T_{BLR}$ and $T_{CLR}$ with weak instruments and heteroskedastic errors.}\label{WeakMisspecifiedHeteroskedastic}
	\includegraphics[scale=0.7,trim = 30mm 100mm 0mm 90mm]{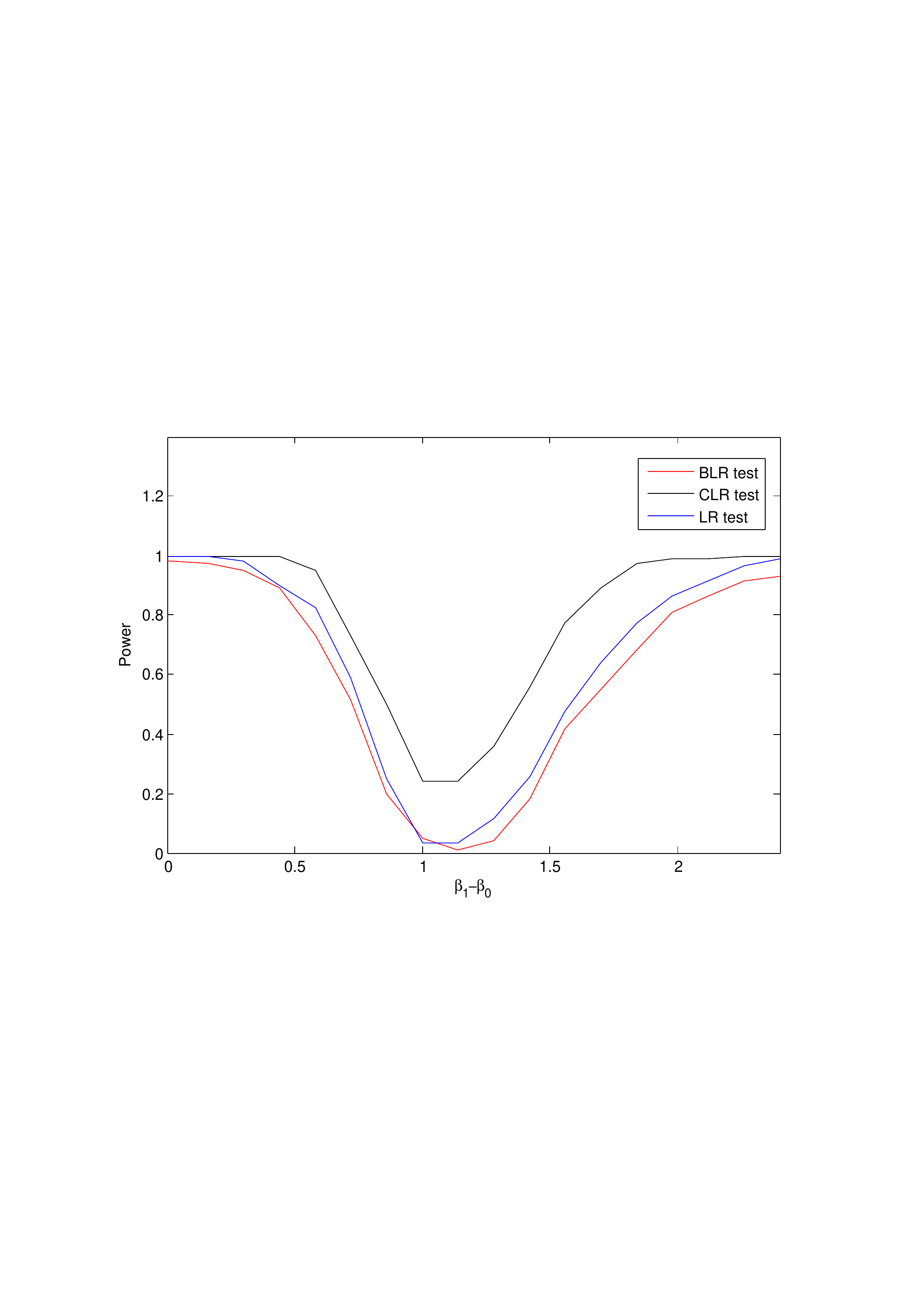}
	\bigskip
	DATA:\;n=200,\;q=5,\;$\piv^{*T}\Zv\Zv^T\piv^*=\frac{2.56}{n}$
\end{figure}

\begin{table}[b]
	\caption{\textbf{Power - Weak instrumental variables and heteroskedastic noise.}}\label{Tab:PowerEnvelopeWeakMispecifiedHeteroskedasticLRBLRCLRARLM}
	\begin{center}
		\begin{tabular}{|l|l|l|l|l|l|}
			\hline
			\textbf{$\beta_1-\beta_0$}&\textbf{LR}&\textbf{BLR}&\textbf{CLR}&\textbf{AR}&\textbf{LM}\\\hline
			-0.26&1&0.99167&1&1&1\\\hline
			-0.12&1&0.98333&1&1&1\\\hline
			0.02&1&0.98333&1&1&1\\\hline
			0.16&1&0.975&1&1&1\\\hline
			0.3&0.98333&0.95&1&1&1\\\hline
			0.44&0.9&0.89167&1&0.98333&0.99167\\\hline
			0.58&0.825&0.73333&0.95&0.94167&0.93333\\\hline
			0.72&0.59167&0.51667&0.73333&0.80833&0.8\\\hline
			0.86&0.25&0.2&0.5&0.7&0.50833\\\hline
			1&0.033333&0.05&0.24167&0.5&0.24167\\\hline
			1.14&0.033333&0.0083333&0.24167&0.50833&0.19167\\\hline
			1.28&0.11667&0.041667&0.35833&0.63333&0.35833\\\hline
			1.42&0.25833&0.18333&0.55833&0.73333&0.59167\\\hline
			1.56&0.475&0.41667&0.775&0.825&0.775\\\hline
			1.7&0.64167&0.55&0.89167&0.9&0.89167\\\hline
			1.84&0.775&0.68333&0.975&0.95833&0.95\\\hline
			1.98&0.86667&0.80833&0.99167&0.975&0.975\\\hline
			2.12&0.91667&0.86667&0.99167&1&0.99167\\\hline
			2.26&0.96667&0.91667&1&1&0.99167\\\hline
			2.4&0.99167&0.93333&1&1&0.99167\\\hline
		\end{tabular}
	\end{center}
	\bigskip
	DATA:\;n=200,\;q=5,\;$\piv^{*T}\Zv\Zv^T\piv^*=\frac{2.56}{n}$
\end{table}

\begin{figure}[b]
	\caption{The empirical power of $T_{LR}$, $T_{BLR}$ and $T_{CLR}$ with weak instruments and heteroskedastic (periodic) errors - case 3.} \label{WeakMisspecifiedHeteroskedasticPeriodic}
	\includegraphics[scale=0.7,trim = 30mm 100mm 0mm 90mm]{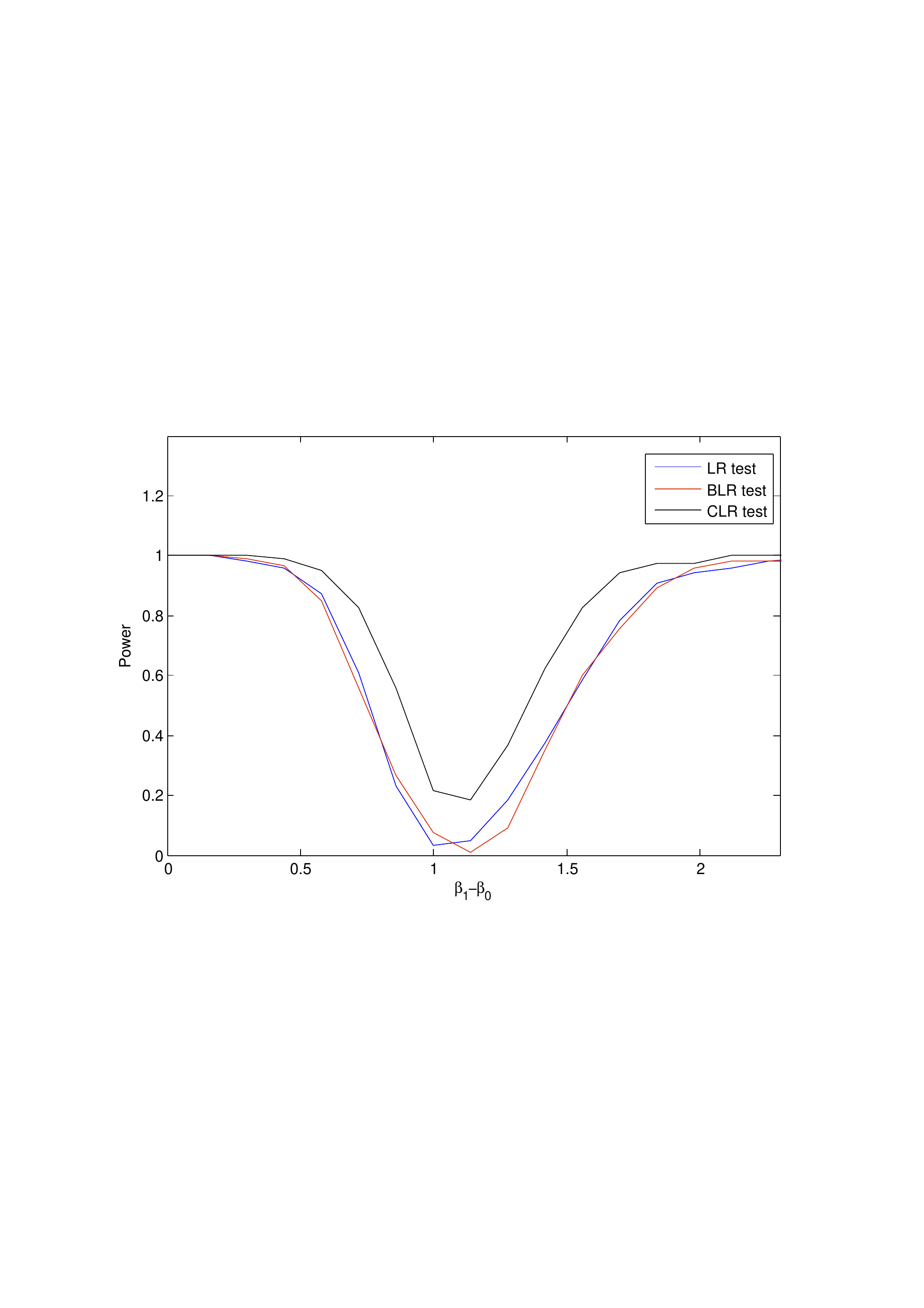}
	\bigskip
	DATA:\;n=200,\;q=5,\;$\piv^{*T}\Zv\Zv^T\piv^*=\frac{2.56}{n}$
\end{figure}

\begin{table}[b]
	\caption{\textbf{Power - Weak instrumental variables and Heteroskedastic periodic noise.}}
	\label{Tab:PowerEnvelopeWeakMispecifiedHeteroskedasticPeriodicLRBLRCLRARLM}
	\begin{center}
		\begin{tabular}{|l|l|l|l|l|l|}
			\hline
			\textbf{$\beta_1-\beta_0$}&\textbf{LR}&\textbf{BLR}&\textbf{CLR}&\textbf{AR}&\textbf{LM}\\\hline
			0.16&1&1&1&1&1\\\hline
			0.3&0.98333&0.99167&1&0.99167&1\\\hline
			0.44&0.95833&0.96667&0.99167&0.96667&1\\\hline
			0.58&0.875&0.85&0.95&0.9&0.98333\\\hline
			0.72&0.60833&0.55833&0.825&0.8&0.84167\\\hline
			0.86&0.23333&0.26667&0.55833&0.55&0.49167\\\hline
			1&0.033333&0.075&0.21667&0.35&0.175\\\hline
			1.14&0.05&0.0083333&0.18333&0.35&0.16667\\\hline
			1.28&0.18333&0.091667&0.36667&0.525&0.325\\\hline
			1.42&0.375&0.35&0.625&0.68333&0.58333\\\hline
			1.56&0.58333&0.6&0.825&0.81667&0.825\\\hline
			1.7&0.78333&0.75833&0.94167&0.91667&0.925\\\hline
			1.84&0.90833&0.89167&0.975&0.95833&0.96667\\\hline
			1.98&0.94167&0.95833&0.975&0.99167&0.99167\\\hline
			2.12&0.95833&0.98333&1&1&0.99167\\\hline
			2.26&0.98333&0.98333&1&1&0.99167\\\hline
			2.4&0.98333&0.99167&1&1&0.99167\\\hline
		\end{tabular}
	\end{center}
	\bigskip
	DATA:\;n=200,\;q=5,\;$\piv^{*T}\Zv\Zv^T\piv^*=\frac{2.56}{n}$
\end{table}
\end{document}